\definecolor{linkcolor}{rgb}{0,0,0.6}
\title{On the cohomology of the basic unramified PEL unitary Rapoport-Zink space of signature $(1,n-1)$}
\author{J.Muller}
\date{}
\def\msquare{\mathord{\scalerel*{\Box}{gX}}}
\begin{document}

\newtheorem{theo}{Theorem}[section]
\newtheorem{prop}[theo]{Proposition}
\newtheorem{lem}[theo]{Lemma}
\newtheorem{corol}[theo]{Corollary}
\newtheorem{conj}[theo]{Conjecture}
\newtheorem*{theo*}{Theorem}
\newtheorem*{prop*}{Proposition}
\newtheorem*{corol*}{Corollary}

\theoremstyle{remark}
\newtheorem{rk}[theo]{Remark}
\newtheorem{rks}[theo]{Remarks}
\newtheorem{ex}[theo]{Example}

\theoremstyle{definition}
\newtheorem{defi}[theo]{Definition}
\newtheorem*{notation}{Notation}
\newtheorem*{notations}{Notations}


\newcommand{\thetamax}{\theta_{\mathrm{max}}}


\maketitle

\begin{center}

\parbox{16cm}{\small
\textbf{Abstract} : \it In this paper, we study the cohomology of the unitary unramified PEL Rapoport-Zink space of signature $(1,n-1)$ at maximal level. Our method revolves around the spectral sequence associated to the open cover by the analytical tubes of the closed Bruhat-Tits strata in the special fiber, which were constructed by Vollaard and Wedhorn. The cohomology of these strata, which are isomorphic to generalized Deligne-Lusztig varieties, has been computed in \cite{muller}. This spectral sequence allows us to prove the semisimplicity of the Frobenius action and the non-admissibility of the cohomology in general. Via $p$-adic uniformization, we relate the cohomology of the Rapoport-Zink space to the cohomology of the supersingular locus of a Shimura variety with no level at $p$. In the case $n=3$ or $4$, we give a complete description of the cohomology of the supersingular locus in terms of automorphic representations.}

\vspace{0.5cm}
\end{center}

\tableofcontents

\vspace{1.5cm}

\textbf{\textsc{Introduction:}} By defining moduli problems classifying deformations of $p$-divisible groups with additional structures, Rapoport and Zink have constructed their eponymous spaces which consist in a projective system $(\mathcal M_{K_p})$ of non-archimedean analytic spaces. The set of data defining the moduli problem determines two $p$-adic groups $G(\mathbb Q_p)$ and $J(\mathbb Q_p)$ which both act on the tower. Its cohomology is therefore equipped with an action of $G(\mathbb Q_p) \times J(\mathbb Q_p) \times W$ where $W$ is the absolute Weil group of a finite extension of $\mathbb Q_p$, called the local reflex field. This is expected to give a geometric incarnation of the local Langlands correspondence. So far, relatively little is known about the cohomology of Rapoport-Zink spaces in general. The Kottwitz conjecture describes the $G(\mathbb Q_p)\times J(\mathbb Q_p)$-supercuspidal part of the cohomology but it is only known in a handful of cases. It was first proved for the Lubin-Tate tower in \cite{boyer99} and in \cite{harris}, from which the Drinfeld case follows by duality. The case of basic unramified EL Rapoport-Zink spaces has been treated in \cite{fargues} and \cite{shin}. As for the PEL case, it was proved for basic unramified unitary Rapoport-Zink spaces with signature $(1,n-1)$ with $n$ odd in \cite{nguyen}, and in \cite{nguyenmeli} for an arbitrary signature with an odd number of variables. Beyond the Kottwitz conjecture, one would like to understand the individual cohomology groups of the Rapoport-Zink spaces entirely. This has been done in \cite{boyer2} for the Lubin-Tate case (and, dually, for the Drinfeld case as well) using a vanishing cycle approach. Boyer's results were later used in \cite{dat} to recover the action of the monodrony and give an elegant form of geometric Jacquet-Langlands correspondence. However, this method relied heavily on the particuliar geometry of the Lubin-Tate tower, and we are faced with technical issues in other situations where we do not have a satisfactory understanding of the geometry of the Rapoport-Zink spaces.\\
In this paper, we aim at pursuing the goal of describing the individual cohomology groups of the Rapoport-Zink spaces in the basic PEL unramified unitary case with signature $(1,n-1)$. Here, $G(\mathbb Q_p)$ is an unramified group of unitary similitudes in $n$ variables and $J(\mathbb Q_p)$ is an inner form of $G(\mathbb Q_p)$. In fact, $J(\mathbb Q_p)$ is isomorphic to $G(\mathbb Q_p)$ when $n$ is odd and $J(\mathbb Q_p)$ is the non quasi-split inner form when $n$ is even. Our approach is based on the geometric description of the reduced special fiber $\mathcal M_{\mathrm{red}}$ given in \cite{vw1} and \cite{vw2}. In these papers, Vollaard and Wedhorn built the Bruhat-Tits stratification $\{\mathcal M_{\Lambda}\}_{\Lambda}$ on $\mathcal M_{\mathrm{red}}$ which is interesting for two reasons: 

\begin{enumerate}[label={--},noitemsep,topsep=0pt]
\item the closed strata $\mathcal M_{\Lambda}$ are indexed by the vertices of the Bruhat-Tits building $\mathrm{BT}(J,\mathbb Q_p)$ of $J(\mathbb Q_p)$. The combinatorics of the stratification can be read on the building. 
\item each individual stratum $\mathcal M_{\Lambda}$ is isomorphic to a generalized Deligne-Lusztig variety for a finite group of Lie type of the form $\mathrm{GU}_{2\theta+1}(\mathbb F_p)$, arising in the maximal reductive quotient of the maximal parahoric subgroup $J_{\Lambda} := \mathrm{Fix}_J(\Lambda)$. Here $1\leq 2\theta +1 =: t(\Lambda) \leq n$ is an odd integer called the orbit type of $\Lambda \in \mathrm{BT}(J,\mathbb Q_p)$.
\end{enumerate}

Let $\thetamax := \left\lfloor\frac{n-1}{2}\right\rfloor$ so that we have $0 \leq \theta \leq \thetamax$ for all vertices $\Lambda \in \mathrm{BT}(J,\mathbb Q_p)$. In \cite{muller}, by exploiting the Ekedahl-Oort stratification on a given stratum $\mathcal M_{\Lambda}$, we computed the cohomology groups $\mathrm H^{\bullet}(\mathcal M_{\Lambda}\otimes \overline{\mathbb F_p},\overline{\mathbb Q_{\ell}})$ in terms of representations of $\mathrm{GU}_{2\theta+1}(\mathbb F_p)$ with a Frobenius action. We consider the Rapoport-Zink space $\mathcal M^{\mathrm{an}} := \mathcal M_{K_0}$ at maximal level, where $K_0 \subset G(\mathbb Q_p)$ is a hyperspecial maximal open compact subgroup. Then $\mathcal M^{\mathrm{an}}$ is an analytic space of dimension $n-1$. It admits an open cover by the analytical tubes $U_{\Lambda}$ of the closed Bruhat-Tits strata $\mathcal M_{\Lambda}$. This induces a $J(\mathbb Q_p)\times W$-equivariant Čech spectral sequence computing the cohomology of $\mathcal M^{\mathrm{an}}$
$$E_{1}^{a,b}: \bigoplus_{\gamma \in I_{-a+1}} \mathrm H^b_c(U_{\Lambda(\gamma)} \widehat{\otimes} \, \mathbb C_p,\overline{\mathbb Q_{\ell}}) \implies \mathrm H^{a+b}_c(\mathcal M^{\mathrm{an}},\overline{\mathbb Q_{\ell}}),$$ 
where for $s\geq 1$ the index set is given by
$$I_{s} := \left\{\gamma = (\Lambda^1,\ldots ,\Lambda^s) \in \mathrm{BT}(J,\mathbb Q_p)^s \,|\, \forall i, t(\Lambda^i) = 2\thetamax+1 \text{ and } U(\gamma) := \bigcap_{i=1}^s U_{\Lambda^i} \not = \emptyset\right\}.$$
In the remaining of the introduction, we omit the Using Berkovich's comparison theorem, the cohomology of the tubes $U_{\Lambda}$ can be identified, up to a shift in indices and a suitable Tate twist, with the cohomology of the closed Bruhat-Tits strata $\mathcal M_{\Lambda}$. Let $\mathrm{Frob} \in W$ be a lift of the geometric Frobenius and let $\tau$ denote the action of the element $(p\cdot\mathrm{id},\mathrm{Frob}) \in J(\mathbb Q_p)\times W$ on the cohomology. We refer to $\tau$ as the \enquote{rational Frobenius}. Then the action of $\tau$ on the cohomology of $U_{\Lambda}$ is identified with the Frobenius action on the cohomology of $\mathcal M_{\Lambda}$. 

\begin{prop*}
The spectral sequence degenerates on the second page $E_2$. For $0 \leq b \leq 2(n-1)$, the induced filtration on $\mathrm H_c^b(\mathcal M^{\mathrm{an}}\widehat{\otimes} \, \mathbb C_p,\overline{\mathbb Q_{\ell}})$ splits, ie. we have an isomorphism 
$$\mathrm H_c^b(\mathcal M^{\mathrm{an}}\widehat{\otimes} \, \mathbb C_p,\overline{\mathbb Q_{\ell}}) \simeq \bigoplus_{b \leq b' \leq 2(n-1)} E_2^{b-b',b'}.$$
The action of $W$ on $\mathrm H_c^b(\mathcal M^{\mathrm{an}}\widehat{\otimes} \, \mathbb C_p,\overline{\mathbb Q_{\ell}})$ is trivial on the inertia subgroup and the action of the rational Frobenius $\tau$ is semisimple. The subspace $E_2^{b-b',b'}$ is identified with the eigenspace of $\tau$ associated to the eigenvalue $(-p)^{b'}$.
\end{prop*}

Let us fix a maximal simplex $\{\Lambda_0,\ldots ,\Lambda_{\thetamax}\}$ in $\mathrm{BT}(J,\mathbb Q_p)$ such that $t(\Lambda_{\theta}) = 2\theta +1$ for all $0\leq \theta \leq \thetamax$, and let us write $J_{\theta}$ instead of $J_{\Lambda_{\theta}}$. In order to study the $J(\mathbb Q_p)$-action, we rewrite the terms $E_1^{a,b}$ using compactly induced representations
$$E_1^{a,b} \simeq \bigoplus_{\theta = 0}^{\thetamax} \mathrm{c-Ind}_{J_{\theta}}^J \, \left( \mathrm H_c^b(U_{\Lambda_{\theta}}\widehat{\otimes} \, \mathbb C_p,\overline{\mathbb Q_{\ell}}) \otimes \overline{\mathbb Q_{\ell}}[K_{-a+1}^{(\theta)}]\right).$$
Here for $s\geq 1$ and $0 \leq \theta \leq \thetamax$ the finite set $K_{s}^{(\theta)} \subset I_{s}$ is given by 
$$K_s^{(\theta)} := \{\gamma \in I_s \,|\, U(\gamma) = U_{\Lambda_{\theta}}\}.$$
It is equipped with an action of $J_{\theta}$ and $\overline{\mathbb Q_{\ell}}[K_s^{(\theta)}]$ is the associated permutation module. The various $J_{\theta}$'s are maximal parahoric subgroups of $J(\mathbb Q_p)$, and the representations $\mathrm H_c^b(U_{\Lambda_{\theta}}\widehat{\otimes} \, \mathbb C_p,\overline{\mathbb Q_{\ell}}) \otimes \overline{\mathbb Q_{\ell}}[K_{-a+1}^{(\theta)}]$ are trivial on the unipotent radical $J_{\theta}^+$. In particular, they are representations of the finite group of Lie type $\mathcal J_{\theta} := J_{\theta}/J_{\theta}^+ \simeq \mathrm{G}(\mathrm U_{2\theta+1}(\mathbb F_p)\times \mathrm U_{n-2\theta-1}(\mathbb F_p))$. By exploiting this spectral sequence and the underlying combinatorics of the Bruhat-Tits building of $J(\mathbb Q_p)$, we are able to compute the cohomology groups of $\mathcal M^{\mathrm{an}}$ of highest degree $2(n-1)$, and when $n=3$ or $4$ the group of degree $2(n-1)-1$ as well. We denote by $J^{\circ}$ the subgroup of $J(\mathbb Q_p)$ consisting of all the unitary similitudes in $J(\mathbb Q_p)$ whose multipliers are a unit. We note that $J^{\circ}$ is normal in $J(\mathbb Q_p)$ with quotient $J/J^{\circ} \simeq \mathbb Z$.

\begin{prop*}
There is an isomorphism 
$$\mathrm H_c^{2(n-1)}(\mathcal M^{\mathrm{an}}\widehat{\otimes} \, \mathbb C_p,\overline{\mathbb Q_{\ell}}) \simeq \mathrm{c-Ind}_{J^{\circ}}^J \, \mathbf 1,$$
and the rational Frobenius $\tau$ acts via multiplication by $p^{2(n-1)}$. 
\end{prop*}

For $\lambda$ a partition of $2\thetamax+1$, we denote by $\rho_{\lambda}$ the associated irreducible unipotent representation of $\mathrm{GU}_{2\thetamax+1}(\mathbb F_p)$ via the classification of \cite{ls} which we recall in Section 2. We also write $\rho_{\lambda}$ for its inflation to the maximal parahoric subgroup $J_{\thetamax}$. In particular, if $2\thetamax+1$ is equal to $\frac{t(t+1)}{2}$ for some integer $t\geq 1$, we write $\Delta_t := (t,t-1,\ldots ,1)$ for the partition of $2\thetamax+1$ whose Young diagram is a staircase. The unipotent representation $\rho_{\Delta_t}$ of $\mathrm{GU}_{2\thetamax+1}(\mathbb F_p)$ is cuspidal. 

\begin{theo*}
Assume that $\thetamax = 1$, ie. $n=3$ or $4$. We have 
$$\mathrm H_c^{2(n-1)-1}(\mathcal M^{\mathrm{an}}\widehat{\otimes} \, \mathbb C_p,\overline{\mathbb Q_{\ell}}) \simeq \mathrm{c-Ind}_{J_1}^{J} \, \rho_{\Delta_2},$$
with the rational Frobenius $\tau$ acting via multiplication by $-p^{2(n-1)-1}$.
\end{theo*}

In general, the terms $E_2^{a,b}$ in the second page may be difficult to compute. However, the terms corresponding to $a=0$ and $b \in \{2(n-1-\thetamax),2(n-1-\thetamax)+1\}$ are not touched by any non-zero differential in the alternating version of the \v{C}ech spectral sequence, making their computations accessible. We note that $2(n-1-\thetamax)$ is equal to the middle degree when $n$ is odd, and to one plus the middle degree when $n$ is even. 

\begin{prop*}
We have an isomorphism of $J(\mathbb Q_p)$-representations
$$E_2^{0,2(n-1-\thetamax)} \simeq \mathrm{c-Ind}_{J_{\thetamax}}^J \, \rho_{(2\thetamax+1)}.$$
If $n\geq 3$ then we also have an isomorphism
$$E_2^{0,2(n-1-\thetamax)+1} \simeq \mathrm{c-Ind}_{J_{\thetamax}}^J \, \rho_{(2\thetamax,1)}.$$
\end{prop*}

We note that the representation $\rho_{(2\thetamax+1)}$ is the trivial representation. Using type theory, we may describe the inertial supports of the irreducible subquotients of such compactly induced representations. An inertial class is a pair $[L,\tau]$ where $L$ is a Levi complement of $J(\mathbb Q_p)$ and $\tau$ is a supercuspidal representation of $L$, up to conjugation and twist by an unramified character. Any smooth irreducible representation $\pi$ of $J(\mathbb Q_p)$ determines a unique inertial class $\ell(\pi)$. If $\mathfrak s$ is an inertial class, let $\mathrm{Rep}^{\mathfrak s}(J(\mathbb Q_p))$ be the category of smooth representations of $J(\mathbb Q_p)$ all of whose irreducible subquotients $\pi$ satisfy $\ell(\pi) = \mathfrak s$. For $\mathfrak S$ a set of inertial classes, let $\mathrm{Rep}^{\mathfrak S}(J(\mathbb Q_p))$ be the direct product of the categories $\mathrm{Rep}^{\mathfrak s}(J(\mathbb Q_p))$ for $\mathfrak s \in \mathfrak S$.\\
Let $(\mathbf V,\{\cdot,\cdot\})$ be the $n$-dimensional $\mathbb Q_{p^2}$-hermitian space whose group of unitary similitudes is $J(\mathbb Q_p)$. The Witt index of $\mathbf V$ is $\thetamax$. Let 
$$\mathbf V = H_1 \oplus \ldots \oplus H_{\thetamax} \oplus \mathbf V^{\mathrm{an}}$$
be a Witt decomposition, where for all $1\leq i \leq \thetamax$, $H_i$ is a hyperbolic plane and where $\mathbf V^{\mathrm{an}}$ is anisotropic. Note that $\mathbf V^{\mathrm{an}}$ has dimension $1$ or $2$ depending on whether $n$ is odd or even respectively. For $0 \leq f \leq \thetamax$, we consider
$$L_f := \mathrm{G}\left(\mathrm{U}(H_1 \oplus \ldots \oplus H_f  \oplus \mathbf V^{\mathrm{an}})\times T_{f+1} \times \ldots \times T_{\thetamax} \right),$$
where for $1\leq i \leq \thetamax$, $T_i \subset \mathrm{GU}(H_i)$ is a maximal torus. Then $L_f$ can be seen as a Levi complement in $J(\mathbb Q_p)$, and $L_{\thetamax} = J(\mathbb Q_p)$. In particular $L_0$ is a minimal Levi complement. Let $\tau_0$ denote the trivial representation of $L_0$, and let $\tau_1$ denote the representation of $L_1$ obtained by letting the $T_i$'s for $i\geq 2$ act trivially, and $\mathrm{GU}(H_1 \oplus \mathbf V^{\mathrm{an}})$ act through the compact induction of the inflation to a special maximal parahoric subgroup of the unique cuspidal unipotent representation $\rho_{\Delta_2}$ of $\mathrm{GU}_3(\mathbb F_p)$. For $f=0,1$, the irreducible representation $\tau_f$ of $L_f$ is supercuspidal. For $V$ a smooth representation of $J(\mathbb Q_p)$ and $\chi$ a continuous character of the center $\mathrm Z(J(\mathbb Q_p))$, we denote by $V_{\chi}$ the maximal quotient of $V$ on which the center acts like $\chi$. Combining our previous proposition with an analysis of the inertial supports via type theory, we obtain the following proposition.

\begin{prop*}
Let $\chi$ be an unramified character of $\mathrm Z(J)$. 
\begin{enumerate}[label={--},noitemsep]
\item Assume that $n\geq 3$. The representation $(E_2^{0,2(n-1-\thetamax)})_{\chi}$ contains no non-zero admissible subrepresentation, and is not $J(\mathbb Q_p)$-semisimple. Moreover, any irreducible subquotient has inertial support $[L_0,\tau_0]$. If $n\geq 5$, then the same statement holds for $(E_2^{0,2(n-1-\thetamax)+1})_{\chi}$ with the inertial support being $[L_1,\tau_1]$.
\item For $n = 1,2,3,4$, let $b = 0,2,3,5$ respectively. We have $\thetamax = 0$ if $n=1,2$ and $\thetamax = 1$ if $n=3,4$. Let $\chi$ be an unramified character of $\mathrm{Z}(J(\mathbb Q_p))$. The twist $\tau_{\thetamax,\chi}$ of $\tau_{\thetamax}$ by $\chi$ is an irreducible supercuspidal representation of $J(\mathbb Q_p)$, and we have 
$$(E_2^{0,b})_{\chi} \simeq 
\begin{cases}
\tau_{\thetamax,\chi} & \text{if } n = 1,3,4,\\
\tau_{\thetamax,\chi}\oplus\chi_0\tau_{m,\chi} & \text{if } n=2.
\end{cases}$$ 
\end{enumerate}
\end{prop*}

Here, when $n=2$ the subgroup $\mathrm Z(J(\mathbb Q_p))J_0$ has index $2$ in $J(\mathbb Q_p)$. In this situation, $\chi_0$ denotes the unique non-trivial character of $J(\mathbb Q_p)$ which is trivial on $\mathrm Z(J)J_0$. This proposition yields the following important corollary. 

\begin{corol*}
Let $\chi$ be an unramified character of $\mathrm Z(J(\mathbb Q_p))$. If $n\geq 3$ then $\mathrm H_c^{2(n-1-\thetamax)}(\mathcal M^{\mathrm{an}}\widehat{\otimes} \, \mathbb C_p,\overline{\mathbb Q_{\ell}})_{\chi}$ is not $J(\mathbb Q_p)$-admissible. If $n\geq 5$ then the same holds for $\mathrm H_c^{2(n-1-\thetamax)+1}(\mathcal M^{\mathrm{an}}\widehat{\otimes} \, \mathbb C_p,\overline{\mathbb Q_{\ell}})_{\chi}$.
\end{corol*}

Thus the cohomology of Rapoport-Zink spaces need not be admissible nor $J(\mathbb Q_p)$-semisimple in general. Lastly, we introduce the unramified unitary PEL Shimura variety of signature $(1,n-1)$ with no level structure at $p$. It is defined over a quadratic extension $F$ of $\mathbb Q$ in which the prime $p$ is inert. The corresponding Shimura datum gives rise to a reductive group $\mathbb G$ over $\mathbb Q$ such that $\mathbb G_{\mathbb Q_p} = G$ and $\mathbb G(\mathbb R) \simeq \mathrm{GU}(1,n-1)$. The Shimura varieties are indexed by the open compact subgroups $K^p \subset \mathbb G(\mathbb A_f^p)$ which are small enough. Kottwitz constructed integral models $\mathrm S_{K^p}$ at $p$ of these Shimura varieties. Their special fibers are stratified by the Newton strata, and the unique closed stratum is called the supersingular locus, which we denote by $\overline{\mathrm S}_{K^p}^{\mathrm{ss}}$ since it coincides with the supersingular locus. It has dimension $\thetamax$. The $p$-adic uniformization theorem of \cite{RZ} gives a geometric identity between the special fiber $\mathcal M_{\mathrm{red}}$ of the Rapoport-Zink space $\mathcal M$ and the supersingular locus $\overline{\mathrm S}_{K^p}^{\mathrm{ss}}$. In \cite{fargues}, Fargues constructed a Hochschild-Serre spectral sequence associated to this geometric identity, computing the cohomology of the supersingular locus.\\
Let $\xi$ be an irreducible algebraic finite dimensional representation of $\mathbb G$, and let $\overline{\mathcal L_{\xi}}$ be the associated local system on the Shimura variety, restricted to the special fiber. It is a pure sheaf of some weight $w(\xi) \in \mathbb Z_{\geq 0}$. Let $I$ be the inner form of $\mathbb G$ such that $I_{\mathbb Q_p} = J$, $I_{\mathbb A_f^p} = \mathbb G_{\mathbb A_f^p}$ and $I(\mathbb R) \simeq \mathrm{GU}(0,n)$. We denote by $\mathcal A_{\xi}(I)$ the set of automorphic representations of $I$ of type $\widecheck{\xi}$ at infinity, and counted with multiplicities. Fargues' spectral sequence is given in the second page by
$$F_2^{a,b} = \bigoplus_{\Pi\in\mathcal A_{\xi}(I)} \mathrm{Ext}_{J}^a \left (\mathrm H_c^{2(n-1)-b}(\mathcal M^{\mathrm{an}}\widehat{\otimes} \, \mathbb C_p, \overline{\mathbb Q_{\ell}})(1-n), \Pi_p\right) \otimes \Pi^p \implies \mathrm{H}^{a+b}(\overline{\mathrm S}^{\mathrm{ss}} \otimes \, \overline{\mathbb F_p}, \overline{\mathcal L_{\xi}}),$$
where $\mathrm H^{\bullet}(\overline{\mathrm S}^{\mathrm{ss}}\otimes \,\overline{\mathbb F_p},\overline{\mathcal L_{\xi}}) := \varinjlim_{K^p}\mathrm H^{\bullet}(\overline{\mathrm S}_{K^p}^{\mathrm{ss}} \otimes \, \overline{\mathbb F_p},\overline{\mathcal L_{\xi}})$. We point out that the abutment is just the cohomology of the supersingular locus with coefficients in $\overline{\mathcal L_{\xi}}$ because the nearby cycles are trivial thanks to the smoothness of the integral model $\mathrm S_{K^p}$. It is $\mathbb G(\mathbb A_f^p)\times W$-equivariant. When $n=3$ or $4$ this sequence degenerates on the second page, and our knowledge on the cohomology of the Rapoport-Zink space $\mathcal M^{\mathrm{an}}$ allows us to compute every term. We obtain a description of the cohomology of the supersingular locus in terms of automorphic representations.\\
A smooth character of $J(\mathbb Q_p)$ is said to be unramified if it is trivial on all compact subgroups of $J(\mathbb Q_p)$. Let $X^{\mathrm{un}}(J(\mathbb Q_p))$ denote the set of unramified characters of $J(\mathbb Q_p)$. Let $\mathrm{St}_J$ denote the Steinberg representation of $J(\mathbb Q_p)$. If $\Pi\in \mathcal A_{\xi}(I)$, we define $\delta_{\Pi_p} := \omega_{\Pi_p}(p^{-1}\cdot\mathrm{id})p^{-w(\xi)} \in \overline{\mathbb Q_{\ell}}^{\times}$ where $\omega_{\Pi_p}$ is the central character of $\Pi_p$, and $p^{-1}\cdot\mathrm{id}$ lies in the center of $J(\mathbb Q_p)$. For any isomorphism $\iota:\overline{\mathbb Q_{\ell}} \simeq \mathbb C$ we have $|\iota(\delta_{\Pi_p})| = 1$. Eventually, if $x\in \overline{\mathbb Q_{\ell}}^{\times}$, we denote by $\overline{\mathbb Q_{\ell}}[x]$ the $1$-dimensional representation of the Weil group $W$ where the inertia acts trivially and $\mathrm{Frob}$ acts like multiplication by the scalar $x$.

\begin{theo*}
Assume that $n = 3$ or $4$, so that $\overline{\mathrm S}^{\mathrm{ss}}$ is one dimensional. There are $\mathbb G(\mathbb A_f^p) \times W$-equivariant isomorphisms
\begin{align*}
\mathrm{H}^{0}(\overline{\mathrm S}^{\mathrm{ss}} \otimes \, \overline{\mathbb F_p}, \overline{\mathcal L_{\xi}}) & \simeq \bigoplus_{\substack{\Pi\in\mathcal A_{\xi}(I) \\ \Pi_p \in X^{\mathrm{un}}(J)}} \Pi^p \otimes \overline{\mathbb Q_{\ell}}[\delta_{\Pi_p}p^{w(\xi)}], \\
\mathrm{H}^{1}(\overline{\mathrm S}^{\mathrm{ss}} \otimes \, \overline{\mathbb F_p}, \overline{\mathcal L_{\xi}}) & \simeq \bigoplus_{\substack{\Pi\in\mathcal A_{\xi}(I) \\ \exists \chi \in X^{\mathrm{un}}(J),\\ \Pi_p = \chi\cdot\mathrm{St}_J}} \Pi^p \otimes \overline{\mathbb Q_{\ell}}[\delta_{\Pi_p}p^{w(\xi)}] \oplus \bigoplus_{\substack{\Pi\in\mathcal A_{\xi}(I) \\ \exists \chi \in X^{\mathrm{un}}(J),\\ \Pi_p = \chi\cdot\tau_1}} \Pi^p \otimes \overline{\mathbb Q_{\ell}}[-\delta_{\Pi_p}p^{w(\xi)+1}],\\
\mathrm{H}^{2}(\overline{\mathrm S}^{\mathrm{ss}} \otimes \, \overline{\mathbb F_p}, \overline{\mathcal L_{\xi}}) & \simeq \bigoplus_{\substack{\Pi\in\mathcal A_{\xi}(I) \\ \Pi_p^{J_1}\not = 0}} \Pi^p \otimes \overline{\mathbb Q_{\ell}}[\delta_{\Pi_p}p^{w(\xi)+2}].
\end{align*}
\end{theo*}

\textbf{\textsc{Notations:}} Throughout the paper, we fix an integer $n\geq 1$ and we write $\thetamax := \lfloor \frac{n-1}{2} \rfloor$ so that $n = 2\thetamax+1$ or $2(\thetamax+1)$ according to whether $n$ is odd or even. We also fix an odd prime number $p$. If $k$ is a perfect field of characteristic $p$, we denote by $W(k)$ the ring of Witt vectors and by $W(k)_{\mathbb Q}$ its fraction field, which is an unramified extension of $\mathbb Q_p$. We denote by $\sigma: x \mapsto x^p$ the Frobenius on $k$ or its lift to $W(k)$. If $q = p^e$ is a power of $p$, we write $\mathbb F_{q}$ for the field with $q$ elements. In the special case where $q=p^2$, we also use the alternative notation $\mathbb Z_{p^2} = W(\mathbb F_{p^2})$ and $\mathbb Q_{p^2} = W(\mathbb F_{p^2})_{\mathbb Q}$. We fix an algebraic closure $\mathbb F$ of $\mathbb F_p$. For $k\geq 1$, the $k\times k$ identity matrix is denoted by $I_k$, and the matrix with $1$ in the antidiagonal and $0$ everywhere else is denoted by $A_k$. In various situations, the symbol $\mathbf 1$ will always represent the trivial representation of the group we are considering. The symmetric group of $\{1,\ldots ,k\}$ is denoted $\mathfrak S_k$.\\

\textbf{\textsc{Acknowledgement:}} This paper is part of a PhD thesis under the supervision of Pascal Boyer and Naoki Imai. I am grateful for their wise guidance throughout the research. I also wish to address special thanks to Jean-Loup Waldspurger for helpful discussions regarding the structure of compactly induced representations.

\section{The Bruhat-Tits stratification on the PEL unitary \\ Rapoport-Zink space of signature $(1,n-1)$}

\subsection{The PEL unitary Rapoport-Zink space $\mathcal M$ of signature $(1,n-1)$}

In \cite{vw2}, the authors introduce the PEL unitary Rapoport-Zink space $\mathcal M$ of signature $(1,n-1)$ as a moduli space, classifying the deformations of a given $p$-divisible group equipped with additional structures. We briefly recall the construction. Let $E$ be a quadratic unramified extension of $\mathbb Q_p$ with ring of integers $\mathcal O_E$ and with nontrivial Galois involution $a \mapsto a^*$. Let $\varphi_0: K \xrightarrow{\sim} \mathbb Q_{p^2}$ be a $\mathbb Q_p$-linear isomorphism and let $\varphi_1 := \sigma \circ \varphi_0$. Let $\mathrm{Nilp}$ denote the category of schemes over $\mathbb Z_{p^2}$ where $p$ is locally nilpotent. For $S\in \mathrm{Nilp}$, a \textbf{unitary $p$-divisible group of signature $(1,n-1)$} over $S$ is a triple $(X,\iota_X,\lambda_X)$ where 
\begin{enumerate}[label={--}]
\item $X$ is a $p$-divisible group over $S$.
\item $\iota_X: \mathcal O_E\rightarrow \mathrm{End}(X)$ is a $\mathcal O_E$-action on $X$ such that the induced action on its Lie algebra satisfies the \textbf{signature $(1,n-1)$ condition}: for every $a \in \mathcal O_E$, the characteristic polynomial of $\iota_X(a)$ acting on $\mathrm{Lie}(X)$ is given by
$$(T-\varphi_0(a))^1(T-\varphi_1(a))^{n-1} \in \mathbb Z_{p^2}[T] \subset \mathcal O_{S}[T].$$
\item $\lambda_X:X \xrightarrow{\sim} {}^tX$ is an $\mathcal O_E$-linear polarization where ${}^tX$ denotes the Serre dual of $X$. 
\end{enumerate}

The $\mathcal O_E$-linearity of $\lambda_X$ is with respect to the $\mathcal O_E$-actions $\iota_X$ and the induced action $\iota_{{}^tX}$ on the dual. A specific example of unitary $p$-divisible group over $\mathbb F_{p^2}$ is given in \cite{vw2} 2.4 by means of covariant Dieudonné theory. We denote it by $(\mathbb X,\iota_{\mathbb X},\lambda_{\mathbb X})$ and call it the \textbf{standard unitary $p$-divisible group}. The $p$-divisible group $\mathbb X$ is superspecial. The following set-valued functor $\mathcal M$ defines a moduli problem classifying deformations of $\mathbb X$ by quasi-isogenies. More precisely, for $S \in \mathrm{Nilp}$ the set $\mathcal M(S)$ consists of all isomorphism classes of tuples $(X,\iota_X,\lambda_X,\rho_X)$ such that 
\begin{enumerate}[label={--}]
\item $(X,\lambda_X,\rho_X)$ is a unitary $p$-divisible group of signature $(1,n-1)$ over $S$.
\item $\rho_X: X\times_S \overline{S} \rightarrow \mathbb X\times_{\mathbb F_{p^2}} \overline{S}$ is an $\mathcal O_{E}$-linear quasi-isogeny compatible with the polarizations, in the sense that ${}^t\rho_X \circ \lambda_{\mathbb X} \circ \rho_X$ is a $\mathbb Q_p^{\times}$-multiple of $\lambda_X$.
\end{enumerate}

In the second condition, $\overline{S}$ denotes the special fiber of $S$. By \cite{RZ} Corollary 3.40, this moduli problem is represented by a separated formal scheme $\mathcal M$ over $\mathrm{Spf}(\mathbb Z_{p^2})$, called a \textbf{Rapoport-Zink space}. It is formally locally of finite type, and since the associated PEL datum is unramified it is also formally smooth over $\mathbb Z_{p^2}$. The \textbf{reduced special fiber} of $\mathcal M$ is the reduced $\mathbb F_{p^2}$-scheme $\mathcal M_{\mathrm{red}}$ defined by the maximal ideal of definition. Rational points of $\mathcal M$ over a perfect field extension $k$ of $\mathbb F_{p^2}$ can be understood in terms of semi-linear algebra by means of Dieudonné theory. We denote by $M(\mathbb X)$ the (covariant) Dieudonné module of $\mathbb X$, this is a free $\mathbb Z_{p^2}$-module of rank $2n$. We denote by $N(\mathbb X) := M(\mathbb X)\otimes \mathbb Q_{p^2}$ its isocrystal. By construction, the Frobenius $\mathbf F$ and the Verschiebung $\mathbf V$ agree on $N(\mathbb X)$. In particular, we have $\mathbf F^2 = p\cdot\mathrm{id}$ on the isocrystal. The $\mathcal O_E$-action $\iota_{\mathbb X}$ induces a $\mathbb Z/2\mathbb Z$-grading $M(\mathbb X) = M(\mathbb X)_0 \oplus M(\mathbb X)_1$ as a sum of two free $\mathbb Z_{p^2}$-modules of rank $n$, such that $a \in \mathcal O_E$ acts via $\varphi_i(a)$ on $M(\mathbb X)_i$ for $i = 0,1$. The same goes for the isocrystal $N(\mathbb X) = N(\mathbb X)_0 \oplus N(\mathbb X)_1$ where $N(\mathbb X)_i = M(\mathbb X)_i\otimes \mathbb Q_{p^2}$ for $i=0,1$. The polarization $\lambda_{\mathbb X}$ induces a perfect alternating $\mathbb Q_{p^2}$-bilinear pairing $\langle\cdot,\cdot\rangle$ on $N(\mathbb X)$ such that 
\begin{align*}
\forall x,y \in N(\mathbb X), \forall a \in E, & & \langle \mathbf Fx,y\rangle = \langle x,\mathbf Vy\rangle^{\sigma} \text{ and } \langle ax,y \rangle = \langle x, a^*y\rangle.
\end{align*}
Moreover $\langle\cdot,\cdot\rangle$ restricts to a perfect $\mathbb Z_{p^2}$-pairing on the lattice $M(\mathbb X)$. The pieces $N(\mathbb X)_i$ are totally isotropic for $i=0,1$ and dual of each other. Moreover, the Frobenius $\mathbf F$ is $1$-homogeneous with respect to this grading. As in \cite{vw2} 2.6, we define
\begin{align*}
\forall x,y \in N(\mathbb X)_0, && \{x,y\} := \delta\langle x,\mathbf Fy\rangle,
\end{align*}
where $\delta \in \mathbb Z_{p^2}^{\times}$ is a scalar satisfying $\delta^{\sigma} = -\delta$. The pairing $\{\cdot,\cdot\}$ is a perfect $\sigma$-hermitian form on $N(\mathbb X)_0$. 

\begin{notation}
From now on, we will write $\mathbf V := N(\mathbb X)_0$ and $\mathbf M := M(\mathbb X)_0$. 
\end{notation}

Then $\mathbf V$ is a $\mathbb Q_{p^2}$-hermitian space of dimension $n$, and $\mathbf M$ is a given $\mathbb Z_{p^2}$-lattice, ie. a finitely generated $\mathbb Z_{p^2}$-submodule containing a basis of $\mathbf V$. Given two lattices $M_1$ and $M_2$, the notation $M_1 \overset{d}{\subset} M_2$ means that $M_1\subset M_2$ and the quotient module $M_2/M_1$ has length $d$. The integer $d$ is called the \textbf{index} of $M_1$ in $M_2$, and is denoted $d = [M_2:M_1]$. Given a lattice $M\subset \mathbf V$, we define the dual lattice $M^{\vee} := \{v \in \mathbf V \,|\, \{v,M\} \subset \mathbb Z_{p^2}\}$.By construction the lattice $\mathbf M$ satisfies 
$$p\mathbf M^{\vee} \overset{1}{\subset} \mathbf M \overset{n-1}{\subset} \mathbf M^{\vee}.$$
Consider the matrices
$$T_{\text{odd}}:= A_{2\thetamax+1},
\quad \quad 
T_{\text{even}}:= 
\left(\begin{matrix} 
     & & & A_{\thetamax} \\
     & 1 & 0 &   \\
     & 0 & p &   \\
     A_{\thetamax} & & &
\end{matrix} \right).
$$
By \cite{vw1} Proposition 1.15, there exists a basis of $\mathbf V$ such that $\{\cdot,\cdot\}$ is represented by the matrix $T_{\text{odd}}$ is $n$ is odd and by $T_{\text{even}}$ if $n$ is even. A \textbf{Witt decomposition} on $\mathbf V$ is a set $\{L_i\}_{i\in I}$ of isotropic lines in $\mathbf V$ such that the following conditions are satisfied:
\setlist{nolistsep}
\begin{enumerate}[label={--},noitemsep]
\item for every $i \in I$, there is a unique $i'\in I$ such that $\{L_i,L_{i'}\} \not = 0$,
\item the sum of the $L_i$'s is direct,
\item the orthogonal of the direct sum of the $L_i$'s is an anisotropic subspace of $\mathbf V$. 
\end{enumerate}
Since each line $L_i$ is isotropic, in the first condition one necessarily has $(i')' = i$ and $i \not = i'$. As a consequence, we have $\#I = 2w(\mathbf V)$ for some integer $w = w(\mathbf V$ called the \textbf{Witt index} of $\mathbf V$. It does not depend on the choice of a Witt decomposition. We write $L^{\mathrm{an}}$ for the orthogonal of the direct sum of the $L_i$'s. The dimension of $L^{\mathrm{an}}$ is $n^{\mathrm{an}} := n - 2w$. Given a Witt decomposition of $\mathbf V$, one may find vectors $e_i \in L_i$ such that $\{e_i,e_j\} = \delta_{j,i'}$. Together with a choice of an orthogonal basis for $L^{\mathrm{an}}$, these vectors define a basis of $\mathbf V$ which is said to be adapted to the Witt decomposition. For any $i\in I$, the direct sum $L_i \oplus L_{i'}$ is isometric to the hyperbolic plane $\mathbf H$. Therefore, we obtain a decomposition 
$$\mathbf V = w\mathbf H \oplus L^{\mathrm{an}}.$$
We may always rearrange the index set so that $I = \{-w,\ldots,-1,1,\ldots,w\}$ and $i' = -i$ for all $i \in I$. In this context, we write $L_0$ instead of $L^{\mathrm{an}}$.\\
We fix once and for all a basis $e$ of $\mathbf V$ in which the hermitian form is represented by the matrix $T_{\text{odd}}$ or $T_{\text{even}}$. In the case $n = 2\thetamax+1$ is odd, we will denote it 
$$e = (e_{-\thetamax} , \ldots , e_{-1} , e_0^{\mathrm{an}} , e_1 , \ldots , e_{\thetamax}),$$
and in the case $n = 2(\thetamax+1)$ is even we will denote it 
$$e = (e_{-\thetamax} , \ldots , e_{-1} , e_0^{\mathrm{an}} , e_1^{\mathrm{an}} , e_1 , \ldots , e_{\thetamax}).$$
The choice of such a basis gives a Witt decomposition with $L_i := \mathbb Q_{p^2}e_i$ and $L_0$ the subspace generated by $e_0^{\mathrm{an}}$, and when $n$ is even by $e_1^{\mathrm{an}}$ as well. In particular, $w(\mathbf V) = \thetamax$ and $n^{\mathrm{an}} = 1$ or $2$ depending on whether $n$ is odd or even respectively.\\

Given a perfect field extension $k$ of $\mathbb F_{p^2}$, we denote by $\mathbf V_k$ the base change $\mathbf V \otimes_{\mathbb Q_{p^2}} W(k)_{\mathbb Q}$. The form may be extended to $\mathbf V_k$ by the formula 
$$\{v\otimes x,w\otimes y\} := xy^{\sigma}\{v,w\}\in W(k)_{\mathbb Q}$$
for all $v,w\in \mathbf V$ and $x,y\in W(k)_{\mathbb Q}$. The notions of index and duality for $W(k)$-lattices can be extended as well. By \cite{vw1} Proposition 1.10, the rational points of the Rapoport-Zink space are described by the following statement.

\begin{prop}
Let $k$ be a perfect field extension of $\mathbb F_{p^2}$. There is a natural bijection between $\mathcal M(k)$ and the set of $W(k)$-lattices $M$ in $\mathbf V_k$ such that for some integer $i\in \mathbb Z$, we have 
$$
p^{i+1}M^{\vee} \overset{1}{\subset} M \overset{n-1}{\subset} p^i M^{\vee}.
$$
\end{prop}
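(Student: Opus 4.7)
\noindent\emph{Proof plan.} I would follow the standard covariant Dieudonné-theoretic argument for $k$-points of PEL Rapoport--Zink spaces, reducing the pair $(X,\iota_X,\lambda_X,\rho_X)$ to a single lattice in $\mathbf{V}_k$.

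First, by \cite{RZ} (Proposition 3.33 and Theorem 3.25), the quasi-isogeny $\rho_X$ identifies the isocrystal of $X$ with $N(\mathbb{X})\otimes_{\mathbb{Q}_{p^2}}W(k)_{\mathbb{Q}}$, and the $p$-divisible group $X$ is recovered from its Dieudonné lattice $\widetilde M\subset N(\mathbb{X})\otimes_{\mathbb{Q}_{p^2}}W(k)_{\mathbb{Q}}$, required to be stable under $\mathbf{F}$ and $\mathbf{V}$. The $\mathbb{Z}_{p^2}$-action $\iota_X$ produces a grading $\widetilde M=\widetilde M_0\oplus\widetilde M_1$ compatible with $N(\mathbb{X})=N(\mathbb{X})_0\oplus N(\mathbb{X})_1$, and the polarization $\lambda_X$ gives a scalar $c\in\mathbb{Q}_p^{\times}$ coming from ${}^t\rho_X\circ\lambda_{\mathbb{X}}\circ\rho_X=c\cdot\lambda_X$, subject to $\widetilde M=c\cdot\widetilde M^{\vee}$ for the symplectic form on $N(\mathbb{X})$.

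Next, I would reduce the data to the single lattice $M:=\widetilde M_0\subset\mathbf{V}_k$. Through the modification of \cite{vw2} 2.6 and the $\sigma$-linear isomorphism $\mathbf{F}\colon \mathbf{V}\xrightarrow{\sim} N(\mathbb{X})_1$, the symplectic duality between $N(\mathbb{X})_0$ and $N(\mathbb{X})_1$ is re-expressed in terms of the hermitian form $\{\cdot,\cdot\}$ on $\mathbf{V}$. Writing $c=up^{-i_0}$ with $u\in\mathbb{Z}_p^{\times}$ (and absorbing $u$ into $\rho_X$), there exists a unique integer $i\in\mathbb{Z}$ such that the polarization condition becomes $\widetilde M_1=p^{-i}\mathbf{F}(M^{\vee})$, where $M^{\vee}\subset \mathbf{V}_k$ now denotes the hermitian dual. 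In particular, $\widetilde M$ is entirely determined by $M$.

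Third, I would convert the $\mathbf{F}$-stability of $\widetilde M$ into the desired inclusion chain. Since $\mathbf{F}^2=p$ on the isocrystal, one computes $\mathbf{F}\widetilde M_1=p^{-i}\mathbf{F}^2(M^{\vee})=p^{1-i}M^{\vee}$, so $\mathbf{F}\widetilde M\subset\widetilde M$ decomposes as $\mathbf{F}(M)\subset\widetilde M_1$ together with $p^{1-i}M^{\vee}\subset M$, i.e. after relabelling $i\leftrightarrow -i$ to match the statement,
$$p^{i+1}M^{\vee}\subset M\subset p^iM^{\vee}.$$
The colengths are then pinned down by the signature $(1,n-1)$ condition: using the grading on $\mathrm{Lie}(X)=\widetilde M/\mathbf{F}\widetilde M$, its graded pieces identify with $M/p^{i+1}M^{\vee}$ and $p^iM^{\vee}/M$ (the latter after transport by $\mathbf{F}^{-1}$), so that the $k$-dimensions $1$ and $n-1$ yield $[M:p^{i+1}M^{\vee}]=1$ and $[p^iM^{\vee}:M]=n-1$. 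Conversely, any $M$ satisfying the inclusion chain defines $\widetilde M:=M\oplus p^{-i}\mathbf{F}(M^{\vee})$, which is a Dieudonné lattice with all the required properties and, by Dieudonné theory, corresponds to a unique unitary $p$-divisible group $(X,\iota_X,\lambda_X)$ over $k$ together with a quasi-isogeny to $\mathbb{X}\times_{\mathbb{F}_{p^2}}k$.

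The main obstacle is the careful bookkeeping in the second step: one must reconcile the symplectic pairing on $N(\mathbb{X})$ with the hermitian form $\{\cdot,\cdot\}$ on $\mathbf{V}$ and track the $\sigma$-twist introduced by $\mathbf{F}$, in order to verify that the integer $i$ appearing in the statement matches precisely the power of $p$ encoded in the similitude factor $c$ of the polarization.
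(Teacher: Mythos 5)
This proposition is not proved in the paper at all: it is imported verbatim from \cite{vw1}, Proposition~1.10, so there is no in-text argument to compare against. Your sketch is essentially the argument of loc.\ cit.\ — pass to Dieudonné lattices in $N(\mathbb X)\otimes W(k)_{\mathbb Q}$ via $\rho_X$, use the $\mathbb Z_{p^2}$-grading and the polarization to reduce $\widetilde M$ to its degree-$0$ piece $M$, translate $\mathbf F$-stability (which, since $\mathbf F=\mathbf V$ here, is the only stability condition) into the chain $p^{i+1}M^{\vee}\subset M\subset p^iM^{\vee}$, and read off the colengths from the signature condition on $\mathrm{Lie}(X)=\widetilde M/\mathbf V\widetilde M$ — and it is correct as a plan. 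The only point worth flagging is that for the map to be a bijection on isomorphism classes you should also note that the similitude scalar $c$ is only pinned down modulo $\mathbb Z_p^{\times}$, and that rescaling $\lambda_X$ by such a unit does not change the isomorphism class of the tuple, so the lattice $M$ really does determine the point of $\mathcal M(k)$.
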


There is a decomposition $\mathcal M = \bigsqcup_{i\in \mathbb Z} \mathcal M_i$ into formal connected subschemes which are open and closed. The rational points of $\mathcal M_i$ are those lattices $M$ satisfying the relation above with the given integer $i$. In particular, the lattice $\mathbf M$ defined in the previous paragraph is an element of $\mathcal M_0(\mathbb F_{p^2})$. By \cite{vw1} Proposition 1.7, the formal scheme $\mathcal M_i$ is empty if $ni$ is odd.\\
Let $J = \mathrm{GU}(\mathbf V)$ be the group of unitary similitudes of $\mathbf V$, seen as a reductive group over $\mathbb Q_p$. Then $J(\mathbb Q_p)$ consists of all $g\in \mathrm{GL}_{\mathbb Q_{p^2}}(\mathbf V)$ which preserve the hermitian form up to a unit $c(g)\in \mathbb Q_{p}^{\times}$, called the \textbf{multiplier}. By Dieudonné theory, the group $J(\mathbb Q_p)$ is also identified with the group of quasi-isogenies $\mathbb X \to \mathbb X$ of unitary $p$-divisible groups. The space $\mathcal M$ is endowed with a natural action of $J(\mathbb Q_p)$. At the level of points, the element $g$ acts by sending a lattice $M$ to $g(M)$. For $g\in J(\mathbb Q_p)$, let $\alpha(g)$ be the $p$-adic valuation of $c(g)$. This defines a continous homomorphism
$$\alpha: J\rightarrow \mathbb Z$$
where $\mathbb Z$ is given the discrete topology. Then $g$ induces an isomorphism $\mathcal M_i \xrightarrow{\sim} \mathcal M_{i+\alpha(g)}$.  According to \cite{vw1} 1.17 the image of $\alpha$ is $\mathbb Z$ if $n$ is even, and $2\mathbb Z$ if $n$ is odd. The center $\mathrm Z(J(\mathbb Q_p))$ consists of all the scalar matrices, so that it is identified with $\mathbb Q_{p^2}^{\times}$. If $\lambda \in \mathbb Q_{p^2}^{\times}$, then $c(\lambda\cdot\mathrm{id}) = \mathrm{Norm}(\lambda) \in \mathbb Q_{p}^{\times}$, where $\mathrm{Norm}$ is the norm map relative to the quadratic extension $\mathbb Q_{p^2}/\mathbb Q_{p}$. In particular, $\alpha(\mathrm Z(J)) = 2\mathbb Z$. Thus, the restriction of $\alpha$ to the center is surjective onto $\mathrm{Im}(\alpha)$ only when $n$ is odd. When $n$ is even, we define the following element 
$$g_0 :=
\left(\begin{matrix} 
     & & & I_{\thetamax} \\
     & 0 & p &   \\
     & 1 & 0 &   \\
     pI_{\thetamax} & & &
\end{matrix} \right).$$
Then $g_0 \in J(\mathbb Q_p)$ and $c(g_0) = p$ so that $\alpha(g_0) = 1$. Moreover $g_0^2 = p\cdot\mathrm{id}$ belongs to $\mathrm Z(J(\mathbb Q_p))$. Let $i$ and $i'$ be two integers such that $ni$ and $ni'$ are even. We consider the multiplication $p^{\frac{i'-i}{2}}:\mathbb X\to \mathbb X$ when $i\equiv i' \mod 2$, and the quasi-isogeny $p^{\frac{i'-i-1}{2}}g_0:\mathbb X \to \mathbb X$ when $i\not \equiv i' \mod 2$. This is well defined as the second case may only happen when $n$ is even. It induces a morphism $\psi_{i,i'}:\mathcal M_i \rightarrow \mathcal M_{i'}$.  By \cite{vw1} Proposition 1.18, the map $\psi_{i,i'}$ is an isomorphism between $\mathcal M_i$ and $\mathcal M_{i'}$, and if $i,i'$ and $i''$ are three integers such that $ni, ni'$ and $ni''$ are even, then we have $\psi_{i',i''}\circ \psi_{i,i'} = \psi_{i,i''}$.
 
\subsection{The Bruhat-Tits stratification of the special fiber $\mathcal M_{\mathrm{red}}$}

We now recall the construction of the Bruhat-Tits stratification on $\mathcal M_{\mathrm{red}}$ as in \cite{vw2}. Let $i$ be an integer such that $ni$ is even. We define 
$$\mathcal L_i := \{\Lambda\subset \mathbf V \text{ a } \mathbb Z_{p^2}-\text{lattice}\,|\, p^{i+1}\Lambda^{\vee}\subsetneq \Lambda \subset p^i\Lambda^{\vee}\}.$$
If $\Lambda\in \mathcal L_i$, we define its \textbf{orbit type} $t(\Lambda):= [\Lambda:p^{i+1}\Lambda^{\vee}]$. We also call it the type of $\Lambda$. In particular, the lattices in $\mathcal L_i$ of type $1$ are precisely the $\mathbb F_{p^2}$-rational points of $\mathcal M_{i}$. By sending $\Lambda$ to $g(\Lambda)$, an element $g\in J(\mathbb Q_p)$ defines a map $\mathcal L_i\rightarrow \mathcal L_{i+\alpha(g)}$. The following Proposition follows from \cite{vw1} Remark 2.3 and \cite{vw2} Remark 4.1.

\begin{prop}
Let $i$ be an integer such that $ni$ is even and let $\Lambda \in \mathcal L_i$. 
\setlist{nolistsep}
\begin{enumerate}[label={--},noitemsep]
\item The map $\mathcal L_i\rightarrow \mathcal L_{i+\alpha(g)}$ induced by an element $g\in J(\mathbb Q_p)$ is an inclusion preserving, type preserving bijection.
\item We have $1\leq t(\Lambda) \leq n$. Furthermore $t(\Lambda)$ is odd.
\item The sets $\mathcal L_i$'s for various $i$'s are pairwise disjoint.
\end{enumerate} 
Moreover, two lattices $\Lambda, \Lambda' \in \bigsqcup_{ni \in 2\mathbb Z}\mathcal L_i$ are in the same orbit under the action of $J(\mathbb Q_p)$ if and only if $t(\Lambda) = t(\Lambda')$.
\end{prop}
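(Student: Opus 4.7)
I would prove the four assertions by direct semi-linear algebra on the hermitian $\mathbb Q_{p^2}$-space $\mathbf V$.

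\emph{The group action.} For $g\in J$ the multiplier relation $\{gv,gw\} = c(g)\{v,w\}$ yields $g(\Lambda)^\vee = c(g)^{-1}g(\Lambda^\vee)$. Since $v_p(c(g)) = \alpha(g)$, applying $g$ to the defining inclusions of $\lcal_i$ produces $p^{i+1+\alpha(g)}g(\Lambda)^\vee \subsetneq g(\Lambda) \subset p^{i+\alpha(g)}g(\Lambda)^\vee$, so $g(\Lambda)\in\lcal_{i+\alpha(g)}$. Because $g$ is a $\mathbb Q_{p^2}$-linear isomorphism of $\mathbf V$, it preserves inclusions and indices of lattices, so the induced map is inclusion- and type-preserving; bijectivity follows by considering $g^{-1}$.

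\emph{Range, parity, disjointness.} The strictness $p^{i+1}\Lambda^\vee \subsetneq \Lambda$ gives $t(\Lambda) \geq 1$, and $\Lambda \subset p^i\Lambda^\vee$ combined with $[p^i\Lambda^\vee : p^{i+1}\Lambda^\vee]=n$ gives $t(\Lambda)\leq n$. For the oddness I would invoke the Jordan decomposition of the hermitian $\mathbb Z_{p^2}$-lattice $\Lambda$: writing $\Lambda = \bigoplus_k \Lambda^{(k)}$ with $\Lambda^{(k)}$ of scale $p^k$, the sandwich $p^{i+1}\Lambda^\vee\subsetneq\Lambda\subset p^i\Lambda^\vee$ forces only two nonzero summands of consecutive scales $p^i$ and $p^{i+1}$, whence $t(\Lambda) = \mathrm{rk}(\Lambda^{(i)})$ and the valuation of the Gram determinant of $\Lambda$ equals $in + n - t(\Lambda)$. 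This valuation must match, modulo $2$, the discriminant class of $\mathbf V$ in $\mathbb Q_p^\times/\mathrm{Nm}(\mathbb Q_{p^2}^\times)$, which is read off $T_{\text{odd}}$ and $T_{\text{even}}$ to have parity $0$ when $n$ is odd and $1$ when $n$ is even; combined with the fact that $ni$ is even, this forces $t(\Lambda)$ to be odd in both cases. Disjointness of the $\lcal_i$'s is immediate: if $i<j$ and $\Lambda \in \lcal_i\cap\lcal_j$, then $\Lambda \subset p^j\Lambda^\vee \subset p^{i+1}\Lambda^\vee$, contradicting $p^{i+1}\Lambda^\vee \subsetneq \Lambda$.

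\emph{Orbit classification.} The forward direction follows from the first part. For the converse, given $\Lambda\in\lcal_i$ and $\Lambda'\in\lcal_j$ of the same type, I would first apply $\psi_{i,j}$ — which by \ref{Multiplier} is the action of an explicit element of $J$ (a central element when $n$ is odd, a power of $p$ times $g_0$ when $n$ is even) — to reduce to $\Lambda,\Lambda'\in\lcal_j$ of the same type. Both lattices then share the same Jordan invariants by the computation above, so by the classical transitivity of $\mathrm{U}(\mathbf V)$ on hermitian $\mathbb Z_{p^2}$-lattices of a given Jordan type, some $g \in \mathrm{U}(\mathbf V) \subset J$ maps $\Lambda$ to $\Lambda'$. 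The main technical input — used both for the parity computation and for the transitivity — is the existence and uniqueness of the Jordan decomposition of hermitian lattices over the unramified extension $\mathbb Z_{p^2}/\mathbb Z_p$ at an odd prime; this is classical but not entirely trivial, and is the only ingredient I would not prove by hand.
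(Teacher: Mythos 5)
Your argument is correct, and it is more self-contained than what the paper actually does. The paper simply cites \cite{vw1} Remark 2.3 for the first three points and, for the orbit statement, performs only the reduction step you also perform (using surjectivity of $\alpha$ onto its image to move $\Lambda'$ into the same $\mathcal L_i$ as $\Lambda$) before deferring to \cite{vw2} Remark 4.1. You instead supply genuine proofs: the computation $g(\Lambda)^{\vee} = c(g)^{-1}g(\Lambda^{\vee})$ for the first point, and for the parity and the transitivity the Jordan decomposition of the hermitian $\mathbb Z_{p^2}$-lattice $\Lambda$. Your observation that the sandwich $p^{i+1}\Lambda^{\vee}\subsetneq\Lambda\subset p^{i}\Lambda^{\vee}$ forces exactly the two scales $p^{i}$ and $p^{i+1}$, with $t(\Lambda)=\mathrm{rk}(\Lambda^{(i)})$, is right, and the resulting discriminant-valuation congruence $in+n-t(\Lambda)\equiv v_p(\mathrm{disc}\,\mathbf V)\pmod 2$ does yield oddness of $t(\Lambda)$ in both parities of $n$ (using $i$ even when $n$ is odd). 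The transitivity then rests on the classical fact that, over the unramified quadratic extension at odd $p$, the Jordan type (scales and ranks) is a complete invariant for $\mathrm U(\mathbf V)$-orbits of lattices; you correctly identify this as the one black box, and it is exactly the content hidden in the cited remarks of Vollaard--Wedhorn. What your route buys is independence from those references; what it costs is reliance on the lattice classification (Jacobowitz), which is no heavier than what the references themselves use. No gaps.
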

 
We write $\mathcal L := \bigsqcup_{ni \in 2\mathbb Z}\mathcal L_i$. For any odd number $t$ between $1$ and $n$, there exists a lattice $\Lambda\in \mathcal L_0$ of orbit type $t$. Write $t_{\mathrm{max}} := 2\thetamax+1$, so that the orbit type $t$ of any lattice in $\mathcal L$ satisfies $1\leq t \leq t_{\mathrm{max}}$. The following lemma will be useful later. 

\begin{lem}\label{LatticeAndDual}
Let $i \in \mathbb Z$ such that $ni$ is even, and let $\Lambda \in \mathcal L_i$. We have $\Lambda^{\vee} \in \mathcal L$ if and only if either $n$ is even, either $n$ is odd and $t(\Lambda) = t_{\mathrm{max}}$. If $\Lambda^{\vee} \in \mathcal L$ and $n$ is even, then $\Lambda^{\vee} \in \mathcal L_{-i-1}$ and $t(\Lambda^{\vee}) = n - t(\Lambda)$. If on the contrary $n$ is odd, then $\Lambda^{\vee} \in \mathcal L_{-i}$ and $t(\Lambda^{\vee}) = t(\Lambda)$.
\end{lem}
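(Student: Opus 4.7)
The plan is to dualize the defining inclusions for $\Lambda \in \mathcal{L}_i$ and then identify the unique integer $j$ for which $\Lambda^\vee$ could lie in $\mathcal{L}_j$, checking the parity condition $nj \in 2\mathbb{Z}$ at the end.

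First I would record the two general properties of lattice duality that are needed: for a submodule $M_2 \subset M_1$ one has $[M_1 : M_2] = [M_2^\vee : M_1^\vee]$ (since the hermitian form induces a perfect pairing of the two quotients), and $(p^k M)^\vee = p^{-k} M^\vee$. Starting from $p^{i+1}\Lambda^\vee \subsetneq \Lambda \subset p^i \Lambda^\vee$ with $t := t(\Lambda) = [\Lambda : p^{i+1}\Lambda^\vee]$, which also implies $[p^i \Lambda^\vee : \Lambda] = n - t$, an application of $(\cdot)^\vee$ and the rules above gives the chain
$$p^{-i}\Lambda \overset{n-t}{\subset} \Lambda^\vee \overset{t}{\subset} p^{-i-1}\Lambda,$$
where the rightmost inclusion is always strict because $t \geq 1$, and the leftmost is strict exactly when $t < n$.

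Next I would determine $j$. Since $\Lambda^\vee \in \mathcal{L}_j$ forces $\Lambda^\vee \subset p^j \Lambda$ with $p^{j+1}\Lambda \subsetneq \Lambda^\vee$, the integer $j$ is the largest one for which $\Lambda^\vee \subset p^j \Lambda$, so the chain above leaves only two possibilities. If $t = n$, the leftmost inclusion is an equality, giving $\Lambda^\vee = p^{-i}\Lambda$ and hence $j = -i$; this case can occur only when $t_{\mathrm{max}} = n$, i.e. when $n$ is odd. If $t < n$, both inclusions in the chain are strict, and the largest $j$ is $j = -i-1$. In the first case the type is $[\Lambda^\vee : p^{-i+1}\Lambda] = [p^{-i}\Lambda : p\cdot p^{-i}\Lambda] = n = t(\Lambda)$; in the second case the type is $[\Lambda^\vee : p^{-i}\Lambda] = n - t(\Lambda)$.

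Finally, I would check membership in $\mathcal{L}$ by verifying the parity condition $nj \in 2\mathbb{Z}$. When $j = -i$ this reduces to $ni$ even, which is given. When $j = -i-1$ this reduces to $n$ even (since $ni$ is already even). Combining with the previous case analysis yields exactly the stated dichotomy: $\Lambda^\vee \in \mathcal{L}$ if and only if $n$ is even, or $n$ is odd and $t(\Lambda) = t_{\mathrm{max}}$, with the types and indices as claimed. No step here is a genuine obstacle; the only thing to be careful about is bookkeeping the two potential values of $j$ and making sure that the parity condition is applied in the right case.
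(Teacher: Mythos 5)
Your proposal is correct and follows essentially the same route as the paper: dualize the chain $p^{i+1}\Lambda^{\vee}\subsetneq\Lambda\subset p^{i}\Lambda^{\vee}$ to obtain $p^{-i}\Lambda\overset{n-t}{\subset}\Lambda^{\vee}\overset{t}{\subset}p^{-i-1}\Lambda$, read off the only possible index ($-i$ when $t=n$, $-i-1$ otherwise) together with the type, and finish with the parity condition on $nj$. Your forward direction is organized a bit more directly than the paper's (which compares two chains to get $-2\leq i+i'\leq 0$ and then argues by parity), but the substance is identical.
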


\begin{proof}
First we prove the converse. We have the following chain of inclusions 
$$p^{-i}\Lambda \overset{n-t(\Lambda)}{\subset} \Lambda^{\vee} \overset{t(\Lambda)}{\subset} p^{-i-1}\Lambda.$$
If $n$ is even, then $-n(i+1)$ is also even and $n-t(\Lambda) \not = 0$. Since $(\Lambda^{\vee})^{\vee} = \Lambda$, we deduce that $\Lambda^{\vee} \in \mathcal L_{-i-1}$ with orbit type $n-t(\Lambda)$. Assume now that $n$ is odd and that $t(\Lambda) = t_{\mathrm{max}} = n$. Then $\Lambda^{\vee} = p^{-i}\Lambda \in \mathcal L_{-i}$.\\
Let us now assume that $\Lambda^{\vee} \in \mathcal L$ and that $n$ is odd. Let $i' \in 2\mathbb Z$ such that $\Lambda^{\vee} \in \mathcal L_{i'}$. We have 
\begin{align*}
\Lambda^{\vee} \overset{n-t(\Lambda^{\vee})}{\subset} p^{i'}\Lambda \overset{n-t(\Lambda)}{\subset} p^{i'+i}\Lambda^{\vee}, & & \Lambda^{\vee} \overset{t(\Lambda)}{\subset} p^{-i-1}\Lambda \overset{t(\Lambda^{\vee})}{\subset} p^{-i-i'-2}\Lambda^{\vee},
\end{align*}
therefore $-2\leq i+i' \leq 0$. Since $i+i'$ is even it is either $-2$ or $0$. If it were $-2$, then we would have $t(\Lambda) = t(\Lambda^{\vee}) = 0$ which is absurd. Therefore $i+i' = 0$, and we have $n-t(\Lambda) = n - t(\Lambda^{\vee}) = 0$.
\end{proof}

With the help of $\mathcal L_i$, one may construct an abstract simplicial complex $\mathcal B_i$. For $s\geq 0$, an $s$-simplex of $\mathcal B_i$ is a subset $S\subset \mathcal L_i$ of cardinality $s+1$ such that for some ordering $\Lambda_0,\ldots ,\Lambda_s$ of its elements, we have a chain of inclusions $p^{i+1}\Lambda_s^{\vee}\subsetneq \Lambda_0 \subsetneq \Lambda_1 \subsetneq \ldots \subsetneq \Lambda_s$. We must have $0\leq s \leq m$ for such a simplex to exist. Let $\tilde{J} = \mathrm{SU}(\mathbf V)$ be the derived group of $J$. We consider the abstract simplicial complex $\mathrm{BT}(\tilde J,\mathbb Q_p)$ of the Bruhat-Tits building of $\tilde{J}$ over $\mathbb Q_p$. A concrete description of this complex is given in \cite{vw1} Theorem 3.5.

\begin{theo}
The Bruhat-Tits building $\mathrm{BT}(\tilde J,\mathbb Q_p)$ is naturally identified with $\mathcal B_i$ for any fixed integer $i$ such that $ni$ is even. The set $\mathcal L_i$ is identified with the set of vertices of $\mathrm{BT}(\tilde J,\mathbb Q_p)$. The identification is $\tilde J(\mathbb Q_p)$-equivariant.
\end{theo}

Apartments in the Bruhat-Tits building $\mathrm{BT}(\tilde J,\mathbb Q_p)$ are in $1$ to $1$ correspondence with Witt decompositions of $\mathbf V$. Let $L = \{L_i\}_{i\in I}$ be a Witt decomposition of $\mathbf V$ and let $f = (f_i)_{i\in I} \sqcup B^{\mathrm{an}}$ be a basis of $\mathbf V$ adapted to the decomposition, where $f_i \in L_i$ and $B^{\mathrm{an}}$ is an orthogonal basis of $L^{\mathrm{an}}$. Under the identification of $\mathrm{BT}(\tilde J,\mathbb Q_p)$ with $\mathcal B_i$, the vertices inside the apartment associated to $L$ correspond to the lattices $\Lambda \in \mathcal L_i$ which are equal to the direct sum of $\Lambda\cap L^{\mathrm{an}}$ and of the modules $p^{r_i}\mathbb Z_{p^2}f_i$ for some integers $(r_i)_{i\in I}$. The subset of $\mathcal L_i$ consisting of all such lattices will be denoted $\mathcal A_i^L$ or, with an abuse of notations, $\mathcal A_i^f$. We call such a set $\mathcal A_i^L$ the \textbf{apartment associated to $L$ in $\mathcal L_i$.} We also define $\mathcal A^L := \bigsqcup_{ni\in 2\mathbb Z} \mathcal A_i^L$. We recall a general result regarding Bruhat-Tits buildings. 

\begin{prop}\label{CommonApartment}
Let $i$ be an integer such that $ni$ is even. Any two lattices $\Lambda$ and $\Lambda'$ in $\mathcal L_i$ lie inside a common apartment $\mathcal A_i^{L}$ for some Witt decomposition $L$. Moreover, the action of $\tilde J(\mathbb Q_p)$ acts transitively on the set of apartments $\{\mathcal A_i^L\}_L$.
\end{prop}

Recall the basis $e$ of $\mathbf V$ that we have fixed earlier. We will denote by 
$$\Lambda(r_{-\thetamax},\ldots ,r_{-1},s,r_1,\ldots ,r_{\thetamax})$$
the $\mathbb Z_{p^2}$-lattice generated by the vectors $p^{r_j}e_j$ for all $j = \pm 1,\ldots, \pm \thetamax$, by $p^{s_0}e^{\mathrm{an}}_0$ and if $n$ is even, by $p^{s_1}e^{\mathrm{an}}_1$ too. Here, the $r_j$'s are integers and $s$ denotes either the integer $s_0$ if $n$ is odd or the pair of integers $(s_0,s_1)$ if $n$ is even.

\begin{prop}\label{ConditionsLi} 
Let $i$ be an integer such that $ni$ is even. Let $(r_j,s)$ be a family of integers as above. The corresponding lattice $\Lambda = \Lambda(r_{-\thetamax},\ldots ,r_{-1},s,r_1,\ldots ,r_{\thetamax})$ belongs to $\mathcal L_i$ if and only if the following conditions are satisfied 
\setlist{nolistsep}
\begin{enumerate}[label={--},noitemsep]
\item for all $1\leq j \leq \thetamax$, we have $r_{-j} + r_j \in \{i,i+1\}$,
\item $s_0 = \lfloor \frac{i+1}{2}\rfloor$,
\item if $n$ is even, then $s_1 = \lfloor \frac{i}{2}\rfloor$.
\end{enumerate}
Moreover, when $\Lambda \in \mathcal L_i$ then
$$t(\Lambda) = 1 + 2\#\{1\leq j \leq \thetamax \,|\, r_{-j} + r_j = i\}.$$
\end{prop}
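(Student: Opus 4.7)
The plan is to reduce the defining inclusions of $\mathcal L_i$ to coordinate-wise inequalities in the basis $e$, then solve them and read off both the conditions and the type. The key observation that makes this possible is that in the basis $e$ the hermitian form is block diagonal: each Witt pair $\{e_j,e_{-j}\}$ contributes an off-diagonal block with entry $1$, and each anisotropic vector contributes a diagonal block (of value $1$ for $e_0^{\mathrm{an}}$, and of value $p$ for $e_1^{\mathrm{an}}$ in the even case). Hence the dual basis is explicit: $e_j^{*}=e_{-j}$ for $j=\pm 1,\ldots,\pm m$, $(e_0^{\mathrm{an}})^{*}=e_0^{\mathrm{an}}$, and $(e_1^{\mathrm{an}})^{*}=p^{-1}e_1^{\mathrm{an}}$. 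From this, $\Lambda^{\vee}$ is itself diagonal in the basis $e$: it is the direct sum of the lines $p^{-r_{-j}}\mathbb Z_{p^2}e_j$ for $j=\pm 1,\ldots,\pm m$, of $p^{-s_0}\mathbb Z_{p^2}e_0^{\mathrm{an}}$, and (in the even case) of $p^{-s_1-1}\mathbb Z_{p^2}e_1^{\mathrm{an}}$. In particular $\Lambda$, $p^i\Lambda^{\vee}$ and $p^{i+1}\Lambda^{\vee}$ are simultaneously diagonal.

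Once this description is in hand, the chain $p^{i+1}\Lambda^{\vee}\subset \Lambda\subset p^i\Lambda^{\vee}$ decomposes into independent exponent inequalities on each basis direction. Comparing the exponents on $e_j$ (and symmetrically on $e_{-j}$) yields $r_j+r_{-j}\in\{i,i+1\}$ for $1\leq j\leq m$. The condition on $e_0^{\mathrm{an}}$ is $2s_0\in\{i,i+1\}$ and, in the even case, the condition on $e_1^{\mathrm{an}}$ is $2s_1+1\in\{i,i+1\}$. A parity argument now pins $s_0$ and $s_1$ down uniquely: since $2s_0$ is even and $2s_1+1$ is odd, one obtains $s_0=\lfloor (i+1)/2\rfloor$ and $s_1=\lfloor i/2\rfloor$. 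In the odd-$n$ case the hypothesis $ni\in 2\mathbb Z$ forces $i$ to be even and the same formula for $s_0$ holds. The strict inclusion $p^{i+1}\Lambda^{\vee}\subsetneq\Lambda$ comes for free once the non-strict inclusions are established, because at least one anisotropic direction always produces a strict exponent drop (whichever of $e_0^{\mathrm{an}}$ or $e_1^{\mathrm{an}}$ is compatible with the parity of $i$).

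It remains to compute $t(\Lambda)=[\Lambda:p^{i+1}\Lambda^{\vee}]$. By the simultaneous diagonalization, this $\mathbb Z_{p^2}$-length is the sum of the exponent gaps along each direction of $e$. The pair $\{e_j,e_{-j}\}$ contributes $2(i+1-r_j-r_{-j})\in\{0,2\}$, equal to $2$ exactly when $r_j+r_{-j}=i$; the vector $e_0^{\mathrm{an}}$ contributes $i+1-2s_0\in\{0,1\}$; in the even case $e_1^{\mathrm{an}}$ contributes $i-2s_1\in\{0,1\}$. A short parity check shows the anisotropic contributions always add up to exactly $1$ (in the odd-$n$ case only $e_0^{\mathrm{an}}$ is present, and its contribution is automatically $1$ because $i$ is even). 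Summing, one obtains the stated formula $t(\Lambda)=1+2\#\{1\leq j\leq m:r_j+r_{-j}=i\}$.

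The only mildly subtle point, and the place to watch for bookkeeping errors, is the factor $p$ in $\{e_1^{\mathrm{an}},e_1^{\mathrm{an}}\}$ in the even case: it shifts the exponent of $e_1^{\mathrm{an}}$ in $\Lambda^{\vee}$ by $-1$ relative to the naive expectation, and this is precisely what is responsible for the asymmetry between the formulas $s_0=\lfloor(i+1)/2\rfloor$ and $s_1=\lfloor i/2\rfloor$, and for the fact that the anisotropic contribution to the type is always exactly $1$ regardless of the parity of $i$.
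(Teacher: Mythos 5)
Your proposal is correct and follows essentially the same route as the paper: compute $\Lambda^{\vee}$ explicitly in the basis $e$ (including the shift by $p^{-1}$ on $e_1^{\mathrm{an}}$ coming from $\{e_1^{\mathrm{an}},e_1^{\mathrm{an}}\}=p$), reduce the chain $p^{i+1}\Lambda^{\vee}\subsetneq\Lambda\subset p^i\Lambda^{\vee}$ to coordinate-wise exponent inequalities, and read off the type as the count of left-hand equalities. Your parity analysis of the anisotropic contributions (exactly one of $s_0$, $s_1$ contributing $1$ to the index, which also supplies the strictness) matches the paper's argument.
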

 
\begin{proof}
The lattice $\Lambda$ belongs to $\mathcal L_i$ if and only if the following chain of inclusions holds
$$p^{i+1}\Lambda^{\vee} \subsetneq \Lambda \subset p^i\Lambda^{\vee}.$$
The dual lattice $\Lambda^{\vee}$ is equal to the lattice $\Lambda(-r_{\thetamax},\ldots ,-r_{1},s',-r_{-1},\ldots ,-r_{-\thetamax})$, where $s' = -s_0$ when $n$ is odd, and $s' = (-s_0,-s_1-1)$ when $n$ is even. Thus, the inclusions above are equivalent to the following inequalities:
\begin{align*}
& i - r_{-j} \leq r_j \leq i + 1 - r_{-j}, & i - s_0 \leq s_0 \leq i + 1 - s_0,\\
& i - 1 - s_1 \leq s_1 \leq i - s_1 \text{ (if }n\text{ is even)}. &
\end{align*}
This proves the desired condition on the integers $r_j$'s and on $s$. Let us now assume that $\Lambda\in \mathcal L_i$. Its orbit type is equal to the index $[\Lambda,p^{i+1}\Lambda^{\vee}]$. This corresponds to the number of times equality occurs with the left-hand side in all the inequalities above. Of course, if the equality $i - r_{-j} = r_j$ occurs for some $j$, then it occurs also for $-j$. Moreover, if $i$ is even then the equality $i - s_0 = s_0$ occurs whereas $i - 1 - s_1 \not = s_1$. On the contrary if $i$ is odd, then the equality $i-1-s_1 = s_1$ occurs whereas $i - s_0 \not = s_0$. Thus in all cases, only one of $s_0$ and $s_1$ contributes to the index. Putting things together, we deduce the desired formula.
\end{proof} 

We deduce the following corollary.

\begin{corol}\label{ApartmentDescription}
The apartment $A_i^e$ (resp. $A^e$) consists of all the lattices of the form 
$$\Lambda = \Lambda(r_{-\thetamax},\ldots ,r_{-1},s,r_1,\ldots ,r_{\thetamax})$$ 
which belong to $\mathcal L_i$ (resp. to $\mathcal L$).
\end{corol}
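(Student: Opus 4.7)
The plan is to observe that the corollary is essentially an unwinding of the definition of the apartment $\mathcal{A}_i^e$ given in \ref{CommonApartment}, combined with the characterization of $\mathcal L_i$ from the proposition in \ref{ConditionsLi}. First I would record that the basis $e$ fixed in \textbf{1.4} is by construction adapted to the Witt decomposition $\mathbf V = m\mathbf H \oplus \mathbf V^{\mathrm{an}}$, so that $\mathcal{A}_i^e$ makes sense as the apartment associated to this decomposition.

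For the forward inclusion, if $\Lambda = \Lambda(r_{-m},\ldots,s,\ldots,r_m)$ lies in $\mathcal L_i$, then by the very definition of the notation $\Lambda(\ldots)$ introduced in \ref{ConditionsLi}, it is the internal direct sum of the modules $p^{r_j}\mathbb Z_{p^2} e_j$ with the anisotropic piece $p^{s_0}\mathbb Z_{p^2} e_0^{\mathrm{an}}$ (and, when $n$ is even, $p^{s_1}\mathbb Z_{p^2} e_1^{\mathrm{an}}$), which coincides with $\Lambda \cap \mathbf V^{\mathrm{an}}$. Hence $\Lambda$ fits the description of the apartment vertices from \ref{CommonApartment} and lies in $\mathcal{A}_i^e$.

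Conversely, given $\Lambda \in \mathcal{A}_i^e$, I would write $\Lambda = M \oplus \bigoplus_{j=1}^m (p^{r_{-j}}\mathbb Z_{p^2} e_{-j} \oplus p^{r_j}\mathbb Z_{p^2} e_j)$ with $M := \Lambda \cap \mathbf V^{\mathrm{an}}$, and show that $M$ has the claimed diagonal form. When $n$ is odd, $\mathbf V^{\mathrm{an}}$ is one-dimensional over $\mathbb Q_{p^2}$ and the claim is immediate. When $n$ is even, the hermitian-orthogonality of $\mathbf V = m\mathbf H \oplus \mathbf V^{\mathrm{an}}$ splits the defining condition $p^{i+1}\Lambda^\vee \subset \Lambda \subset p^i\Lambda^\vee$ into the corresponding relation $p^{i+1}M^\vee \subset M \subset p^i M^\vee$ on the anisotropic piece. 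Writing any vector of $M$ as $u = a e_0^{\mathrm{an}} + b e_1^{\mathrm{an}}$, the inclusion $M \subset p^i M^\vee$ translates into $\{u,u\} = |a|^2 + p|b|^2 \in p^i \mathbb Z_{p^2}$; since $|a|^2$ has even and $p|b|^2$ has odd $p$-adic valuation, this forces $v_p(a) \geq \lfloor (i+1)/2\rfloor$ and $v_p(b) \geq \lfloor i/2\rfloor$, so $M \subset M_0' := p^{s_0}\mathbb Z_{p^2} e_0^{\mathrm{an}} \oplus p^{s_1}\mathbb Z_{p^2} e_1^{\mathrm{an}}$ with the exponents prescribed by the preceding proposition. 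Dualizing yields $p^{i+1}M^\vee \supset p^{i+1}M_0'^\vee$, and one checks directly that $[M_0' : p^{i+1}M_0'^\vee] = 1$, so the only candidates for $M$ are the two endpoints of the chain; the odd valuation of the discriminant of $\mathbf V^{\mathrm{an}}$ rules out the case $M = p^{i+1}M_0'^\vee$, leaving $M = M_0'$.

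The only non-formal input is this rigidity argument for the anisotropic piece in the even case; everything else is a direct unwinding of definitions. The ``resp.'' statement for $\mathcal A^e$ and $\mathcal L$ follows by taking the union over all integers $i$ with $ni$ even.
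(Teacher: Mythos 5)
Your proof is correct and follows essentially the same route as the paper: unwind the definition of $\mathcal A_i^e$ for the forward inclusion, and for the converse reduce to showing that the anisotropic piece $\Lambda\cap\mathbf V^{\mathrm{an}}$ is the unique lattice in $\mathbf V^{\mathrm{an}}$ satisfying $p^{i+1}M^{\vee}\subset M\subset p^iM^{\vee}$. The only difference is that the paper simply asserts this uniqueness as a consequence of anisotropy, whereas you verify it explicitly by the valuation and index computation in the even case; that computation is sound.
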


\begin{proof}
According to the previous proposition, it is clear that all lattices which belong to $\mathcal L_i$ and are of the form $\Lambda(r_{-\thetamax},\ldots ,r_{-1},s,r_1,\ldots ,r_{\thetamax})$ are elements of $\mathcal A_i^e$. We shall prove the converse. Let $\Lambda \in \mathcal A_i^e$. By definition, there exists integers $(r_j)_j$ such that 
$$\Lambda = \Lambda\cap\mathbf V^{\mathrm{an}} \oplus \bigoplus_{1\leq j \leq \thetamax}\left(p^{r_{-j}}\mathbb Z_{p^2}e_{-j} \oplus p^{r_j}\mathbb Z_{p^2}e_j\right).$$
Write $\Lambda' = \Lambda \cap \mathbf V^{\mathrm{an}}$. This is a lattice in $\mathbf V^{\mathrm{an}}$ which satisfies the chain of inclusions 
$$p^{i+1}\Lambda'\,^{\vee} \subset \Lambda' \subset p^i\Lambda'\,^{\vee},$$
where the duals are taken with respect to the restriction of $\{\cdot,\cdot\}$ to $\mathbf V^{\mathrm{an}}$. Since $\mathbf V^{\mathrm{an}}$ is anisotropic, there is only a single lattice satisfying the chain of inclusions above. If we write $a := \lfloor \frac{i+1}{2}\rfloor$ and $b := \lfloor \frac{i}{2}\rfloor$, it is given by $p^{a}\mathbb Z_{p^2}e_0^{\mathrm{an}}$ if $n$ is odd, and by $p^{a}\mathbb Z_{p^2}e_0^{\mathrm{an}} \oplus p^{b}\mathbb Z_{p^2}e_1^{\mathrm{an}}$ if $n$ is even. Thus, it must be equal to $\Lambda'$ and it concludes the proof.
\end{proof}

We fix a maximal simplex in $\mathcal L_0$ lying inside the apartment $\mathcal A_0^e$. For $0\leq \theta \leq \thetamax$ we define 
$$\Lambda_{\theta} := \Lambda(\underbrace{0,\ldots,0}_{\thetamax},0,\underbrace{0,\ldots,0}_{\theta}, \underbrace{1,\ldots ,1}_{\thetamax-\theta}).$$
Here, the $0$ in the middle stands for $(0,0)$ in case $n$ is even. We have $t(\Lambda_{\theta}) = 2\theta+1$ and 
$$p\Lambda_0^{\vee}\subsetneq \Lambda_0 \subset \ldots \subset \Lambda_{\thetamax}.$$
Thus, they form an $\thetamax$-simplex in $\mathcal L_0$. Given a lattice $\Lambda\in \mathcal L_i$, a subfunctor $\mathcal M_{\Lambda}$ of $\mathcal M_{i,\mathrm{red}}$ is defined in \cite{vw2}, classifying those $p$-divisible groups for which a certain quasi-isogeny, depending on $\Lambda$, is in fact an actual isogeny. In Lemma 4.2, the authors prove that it is representable by a projective scheme over $\mathbb F_{p^2}$, and that the natural morphism $\mathcal M_{\Lambda} \hookrightarrow \mathcal M_{i,\mathrm{red}}$ is a closed immersion. The schemes $\mathcal M_{\Lambda}$ are called the \textbf{closed Bruhat-Tits strata of $\mathcal M$}. Their rational points are described as follows, see Lemma 4.3 of loc. cit. 

\begin{prop}
Let $k$ be a perfect field extension of $\mathbb F_{p^2}$, and let $M\in \mathcal M_{i,\mathrm{red}}(k)$. Then 
$$M\in \mathcal M_{\Lambda}(k) \iff M \subset \Lambda_k := \Lambda \otimes_{\mathbb Z_{p^2}} W(k).$$ 
\end{prop}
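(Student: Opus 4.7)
The plan is to unwind the definition of $\mathcal M_{\Lambda}$ and translate the isogeny condition into the desired lattice inclusion via covariant Dieudonn\'e theory. First, I would recall from \cite{vw2} that the lattice $\Lambda\in \mathcal L_i$ can be used to produce a unitary $p$-divisible group $X_\Lambda$ over $\mathbb F_{p^2}$ in the fixed isogeny class of $\mathbb X$, together with a distinguished quasi-isogeny $\rho_\Lambda : X_\Lambda \to \mathbb X$. Concretely, $X_\Lambda$ is chosen so that its covariant Dieudonn\'e module, viewed inside $N(\mathbb X)$ via $\rho_\Lambda$, recovers $\Lambda$ on the degree-$0$ part; the degree-$1$ piece is then forced by the polarization and the $\mathbb Z_{p^2}$-action, so that $\Lambda$ alone determines $X_\Lambda$. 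This uses the chain $p^{i+1}\Lambda^{\vee}\subsetneq \Lambda \subset p^i\Lambda^{\vee}$ together with the signature condition to check that the underlying module does come from a genuine $p$-divisible group.

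Given a $k$-point $(X, \iota_X, \lambda_X, \rho_X)$ of $\mathcal M_{i,\mathrm{red}}$, composing $\rho_X$ with $\rho_\Lambda^{-1}$ produces a quasi-isogeny $\eta : X\times_S \overline S \to X_\Lambda \times_{\mathbb F_{p^2}} \overline S$. By definition, $(X,\iota_X,\lambda_X,\rho_X)$ lies in $\mathcal M_{\Lambda}(k)$ precisely when $\eta$ is an honest isogeny rather than only a quasi-isogeny. Here I would apply the standard dictionary: in covariant Dieudonn\'e theory, a quasi-isogeny is an honest isogeny if and only if, under the identification of both Dieudonn\'e modules as lattices in a common isocrystal, the lattice of the source is contained in the lattice of the target. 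The Dieudonn\'e module of $X$ corresponds by construction to the lattice $M$ on the degree-$0$ part, while the Dieudonn\'e module of $X_\Lambda$ corresponds to $\Lambda_k$; hence $\eta$ is an isogeny if and only if $M \subset \Lambda_k$.

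The main subtlety, and where I would spend the most care, is that the isogeny condition a priori concerns the full $\mathbb Z/2\mathbb Z$-graded Dieudonn\'e module, not just its degree-$0$ piece. However, the $\iota$-equivariance together with compatibility of $\eta$ with the polarizations reduces the full condition to a single inclusion in $\mathbf V_k$: an inclusion of the degree-$0$ part forces, by duality, the corresponding inclusion in degree $1$, and conversely. Once this reduction is made, the equivalence is immediate. Everything else is a matter of checking that $\eta$ is compatible with the extra PEL structures up to a scalar, which is automatic from the definition of $\mathcal M$ and from $\Lambda\in \mathcal L_i$. For full details one invokes \cite{vw2} Lemma 4.3.
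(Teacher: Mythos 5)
The paper does not actually prove this statement: it is quoted verbatim from Vollaard--Wedhorn (\cite{vw2} Lemma 4.3), and the surrounding text only recalls that $\mathcal M_{\Lambda}$ is \emph{defined} as the locus where a certain quasi-isogeny attached to $\Lambda$ is an honest isogeny. So there is no internal proof to compare against; your proposal is essentially a reconstruction of the argument in \cite{vw2}, and its overall shape (build $X_\Lambda$ from $\Lambda$, compose quasi-isogenies, translate ``isogeny'' into a lattice inclusion of covariant Dieudonn\'e modules inside a common isocrystal) is the right one. Note, however, that your closing appeal to \cite{vw2} Lemma 4.3 ``for full details'' is circular, since that lemma \emph{is} the statement being proved.

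One step in your sketch deserves more care than you give it: the reduction from the full $\mathbb Z/2\mathbb Z$-graded inclusion to the degree-$0$ inclusion. You assert that the degree-$0$ inclusion forces the degree-$1$ inclusion ``by duality,'' but dualizing an inclusion of lattices \emph{reverses} it: if $M_0 \subset \Lambda_k$ and the degree-$1$ components were literally the duals of the degree-$0$ components under the pairing between $N(\mathbb X)_0$ and $N(\mathbb X)_1$, you would obtain $\Lambda_k^{\sharp} \subset M_1$, the wrong direction. The actual mechanism is that the degree-$1$ component of a Dieudonn\'e lattice in this moduli problem is pinned down by the degree-$0$ component through $\mathbf F$- and $\mathbf V$-stability together with the signature and index conditions (this is the content of the bijection recalled in the paper from \cite{vw1} 1.10), and it is this explicit description --- not duality alone --- that propagates the inclusion to degree $1$ with the correct orientation. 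As written, that step of your argument would fail; with the $\mathbf F,\mathbf V$-stability argument substituted in, the proof goes through.
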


The set of lattices satisfying the condition above was conjectured in \cite{vw1} to be the set of points of a subscheme of $\mathcal M_{i,\mathrm{red}}$, and it was proved in the special cases $n=2,3$. In \cite{vw2}, the general argument is given by the construction of $\mathcal M_{\Lambda}$. The action of an element $g\in J(\mathbb Q_p)$ on $\mathcal{M}_{\mathrm{red}}$ induces an isomorphism $\mathcal M_{\Lambda} \xrightarrow{\sim} \mathcal M_{g\cdot \Lambda}$. \\
Let $\Lambda \in \mathcal L$. We denote by $J_{\Lambda}$ the fixator of $\Lambda$ under the action of $J(\mathbb Q_p)$. If $\Lambda = \Lambda_{\theta}$ for some $0\leq \theta \leq \thetamax$, we will write $J_{\theta}$ instead. These are \textbf{maximal parahoric subgroups} of $J(\mathbb Q_p)$. In unramified unitary similitude groups, maximal parahoric subgroups and maximal compact subgroups are the same. A general \textbf{parahoric subgroup} is an intersection $J_{\Lambda_1}\cap \ldots \cap J_{\Lambda_s}$ where $\{\Lambda_1,\ldots ,\Lambda_s\}$ is an $s$-simplex in $\mathcal L_i$ for some $i$. Any parahoric subgroup is compact and open in $J(\mathbb Q_p)$. Let $i$ be the integer such that $\Lambda \in \mathcal L_i$. We define $V_{\Lambda}^0:= \Lambda/p^{i+1}\Lambda^{\vee}$ and $V_{\Lambda}^1 := p^{i}\Lambda^{\vee}/\Lambda$. Since $p\Lambda \subset p\cdot p^i\Lambda^{\vee}$ and $p\cdot p^{i}\Lambda^{\vee} \subset \Lambda$, these are both $\mathbb F_{p^2}$-vector space of dimensions respectively $t(\Lambda)$ and $n-t(\Lambda)$. Both spaces come together with a non-degenerate $\sigma$-hermitian form $(\cdot,\cdot)_0$ and $(\cdot,\cdot)_1$ with values in $\mathbb F_{p^2}$, respectively induced by $p^{-i}\{\cdot,\cdot\}$ and by $p^{-i+1}\{\cdot,\cdot\}$. If $k$ is a perfect field extension of $\mathbb F_{p^2}$ and if $\epsilon \in \{0,1\}$, we may extend the pairings to $(V_{\Lambda}^{\epsilon})_k = V_{\Lambda}^{\epsilon}\otimes_{\mathbb F_{p^2}} k$ by setting 
$$(v\otimes x,w\otimes y)_{\epsilon}:=xy^{\sigma}(v,w)_{\epsilon}\in k$$
for all $v,w\in V_{\Lambda}^{\epsilon}$ and $x,y\in k$. If $U$ is a subspace of $(V_{\Lambda}^{\epsilon})_k$ we denote by $U^{\perp}$ its orthogonal. \\
Denote by $J_{\Lambda}^+$ the pro-unipotent radical of $J_{\Lambda}$ and write $\mathcal J_{\Lambda} := J_{\Lambda}/J_{\Lambda}^+$. This is a finite group of Lie type, called the \textbf{maximal reductive quotient} of $J_{\Lambda}$. We have an identification $\mathcal J_{\Lambda} \simeq \mathrm{G}(\mathrm{U}(V_{\Lambda}^0) \times \mathrm{U}(V_{\Lambda}^1))$, that is the group of pairs $(g_0,g_1)$ where for $\epsilon\in\{0,1\}$ we have $g_{\epsilon} \in \mathrm{GU}(V_{\Lambda}^{\epsilon})$ and $c(g_0) = c(g_1)$. Here, $c(g_{\epsilon}) \in \mathbb F_{p}^{\times}$ denotes the multiplier of $g_{\epsilon}$. For $0\leq \theta \leq \thetamax$ and $\epsilon \in \{0,1\}$, we will write $V_{\theta}^{\epsilon}$ and $\mathcal J_{\theta}$ instead of $V_{\Lambda_{\theta}}^{\epsilon}$ and $\mathcal J_{\Lambda_{\theta}}$.\\

Let $\Lambda \in \mathcal L_i$ where $ni$ is even. We write $t(\Lambda) = 2\theta +1$. Let $k$ be a perfect field extension of $\mathbb F_{p^2}$. Let $T$ be any $W(k)$-lattice in $\mathbf V_k$ such that 
$$p^{i+1}T^{\vee} \overset{2\theta'+1}{\subset} T \subset \Lambda_k$$
where $0 \leq \theta' \leq \theta$. Then $T$ must contain $p^{i+1}\Lambda_k^{\vee}$ and $[\Lambda_k:T] = \theta - \theta'$. We may consider $\overline{T} := T/p^{i+1}\Lambda_k^{\vee}$ the image of $T$ in $V^{(0)}_{\Lambda}$. Then $\overline{T}$ is an $\mathbb F_{p^2}$-subspace of dimension $\theta + \theta' + 1$. Moreover, one may check that $\overline{p^{i+1}T^{\vee}} = \overline{T}^{\perp}$, therefore the subspace $\overline{T}$ contains its orthogonal. These observations lead to the following proposition, see \cite{vw1} Proposition 2.7.

\begin{prop}\label{PointsOfMlambda}
The mapping $T \mapsto \overline{T}$ defines a bijection between the set of $W(k)$-lattices $T$ in $\mathbf V_k$ such that $p^{i+1}T^{\vee} \overset{2\theta'+1}{\subset} T \subset \Lambda_k$ and the set
$$\{U\subset (V_{\Lambda}^0)_k \,|\, \dim U = \theta+\theta'+1 \text{ and }U^{\perp}\subset U\}.$$
\end{prop}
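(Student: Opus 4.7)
The crucial observation, already made in the paragraph preceding the statement, is that any $W(k)$-lattice $T$ with $p^{i+1}T^{\vee} \overset{2\theta'+1}{\subset} T \subset \Lambda_k$ automatically satisfies $p^{i+1}\Lambda_k^{\vee} \subset T$. Indeed, $T\subset \Lambda_k$ dualises to $\Lambda_k^{\vee}\subset T^{\vee}$, hence $p^{i+1}\Lambda_k^{\vee}\subset p^{i+1}T^{\vee}\subset T$. Consequently $T$ is the full preimage of $\overline T = T/p^{i+1}\Lambda_k^{\vee}$ under the projection $\pi \colon \Lambda_k \twoheadrightarrow (V_\Lambda^0)_k$. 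This alone yields \emph{injectivity} of the assignment $T\mapsto \overline T$.

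Next I would check that the map is well-defined. The dimension of $\overline T$ is obtained from length additivity: $\dim_k \overline T = [\Lambda_k : p^{i+1}\Lambda_k^{\vee}] - [\Lambda_k : T] = (2\theta+1) - (\theta - \theta') = \theta + \theta' + 1$. The inclusion $\overline T^{\perp} \subset \overline T$ follows at once from the hypothesis $p^{i+1}T^{\vee}\subset T$ together with the identity $\overline{p^{i+1}T^{\vee}} = \overline T^{\perp}$ quoted just before the statement. The only point that really needs a short verification is this identity, which is a matter of chasing the normalisation of the hermitian pairing: for $v\in \Lambda_k$, unfolding the definitions gives $v\in p^{i+1}T^{\vee}$ iff $\{v,T\}\subset p^{i+1}W(k)$, and the pairing on $V_\Lambda^0$ induced by $p^{-i}\{\cdot,\cdot\}$ takes values in $W(k)/pW(k)$, so this is exactly the condition $(\bar v,\overline T)_0 = 0$. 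I expect this compatibility check to be the main (though routine) technical obstacle.

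For \emph{surjectivity}, given $U \subset (V_\Lambda^0)_k$ with $\dim U = \theta+\theta'+1$ and $U^{\perp}\subset U$, I set $T := \pi^{-1}(U)$. By construction $p^{i+1}\Lambda_k^{\vee} \subset T \subset \Lambda_k$, so $T$ is a $W(k)$-lattice in $\mathbf V_k$ and $\overline T = U$. From $p^{i+1}\Lambda_k^{\vee}\subset T$ we get $T^{\vee}\subset p^{-i-1}\Lambda_k$, hence $p^{i+1}T^{\vee}\subset \Lambda_k$, so the identity above yields $\overline{p^{i+1}T^{\vee}} = \overline T^{\perp} = U^{\perp}\subset U = \overline T$, which by taking $\pi$-preimages gives $p^{i+1}T^{\vee}\subset T$. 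Finally the index is computed by the same length additivity: $[T:p^{i+1}T^{\vee}] = \dim \overline T - \dim \overline T^{\perp} = (\theta+\theta'+1) - (\theta-\theta') = 2\theta'+1$, so $T$ lies in the source of the map and maps to $U$. Together with injectivity this completes the proof.
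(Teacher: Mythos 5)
Your proposal is correct and follows essentially the same route as the paper, which records exactly your well-definedness observations (that $p^{i+1}\Lambda_k^{\vee}\subset T$, the index count giving $\dim\overline T=\theta+\theta'+1$, and the identity $\overline{p^{i+1}T^{\vee}}=\overline T^{\perp}$) and then refers to Vollaard--Wedhorn for the remaining routine verification of bijectivity, which you carry out correctly via $T=\pi^{-1}(U)$.
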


In particular taking $\theta' = 0$, this set is in bijection with $\mathcal M_{\Lambda}(k)$.

\begin{rk}
Similarly, the set of $W(k)$-lattices $T$ such that $\Lambda_k \subset T \overset{n-2\theta'-1}{\subset} p^{i}T^{\vee}$ for some $\theta \leq \theta' \leq \thetamax$ is in bijection with 
$$\{U\subset (V_{\Lambda}^1)_k \,|\, \dim U = n - \theta' - \theta - 1 \text{ and }U^{\perp}\subset U\}.$$
The bijection is given by $T \mapsto \overline{T}^{\perp}$ where $\overline{T} := T/\Lambda_k \subset V^{(1)}_k$. These sets can be seen as the $k$-rational points of some flag variety for $\mathrm{GU}(V^{(0)}_{\Lambda})$ and $\mathrm{GU}(V^{(1)}_{\Lambda})$, which are special instances of Deligne-Lusztig varieties. This is accounted for in the next paragraph.
\end{rk}

Let $\Lambda \in \mathcal L$. The action of $J(\mathbb Q_p)$ on the Rapoport-Zink space $\mathcal M$ restricts to an action of the parahoric subgroup $J_{\Lambda}$ on the closed Bruhat-Tits stratum $\mathcal M_{\Lambda}$. This action factors through the maximal reductive quotient $\mathcal J_{\Lambda} \simeq \mathrm{G}(\mathrm{U}(V_{\Lambda}^0) \times \mathrm{U}(V_{\Lambda}^1))$. This action is trivial on the normal subgroup $\{\mathrm{id}\}\times \mathrm U(V_{\Lambda}^1) \subset \mathcal J_{\Lambda}$, thus it factors again through the quotient which is isomorphic to $\mathrm{GU}(V_{\Lambda}^0)$. With respect to this action, the variety $\mathcal M_{\Lambda}$ is isomophic to a generalized Deligne-Lusztig variety, see \cite{vw2} Theorem 4.8.

\begin{theo}\label{IsomorphismWithDL-Variety}
There is an isomorphism between $\mathcal M_{\Lambda}$ and a certain generalized parabolic Deligne-Lusztig variety for the finite group of Lie type $\mathrm{GU}(V_{\Lambda}^{0})$, compatible with the actions. In particular, if $t(\Lambda) = 2\theta + 1$ then the variety $\mathcal M_{\Lambda}$ is projective, smooth, geometrically irreducible of dimension $\theta$.
\end{theo}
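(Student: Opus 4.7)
The plan is to realize $\mathcal M_\Lambda$ concretely as a closed subvariety $Y_\Lambda$ of a Grassmannian, then identify $Y_\Lambda$ with a generalized parabolic Deligne-Lusztig variety for $\mathrm{GU}(V_\Lambda^0)$, and finally deduce the geometric properties from known results on such varieties.

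For the first step, let $Y_\Lambda$ be the closed subscheme of the $\mathbb F_{p^2}$-Grassmannian $\mathrm{Grass}(\theta+1, V_\Lambda^0)$ parametrizing rank-$(\theta+1)$ locally direct summands $U \subset (V_\Lambda^0)_S$ with $U^\perp \subset U$, the orthogonal being computed for the $\mathcal O_S$-linear extension of $(\cdot,\cdot)_0$. I would construct a morphism $\mathcal M_\Lambda \to Y_\Lambda$ functorially: for $(X, \iota_X, \lambda_X, \rho_X) \in \mathcal M_\Lambda(S)$, the Hodge filtration of the covariant Dieudonné crystal of $X$, refined by the grading from $\iota_X$ and compared with $\mathbf M$ via $\rho_X$, yields a submodule of $(V_\Lambda^0)_S$ of the expected rank; its isotropy with respect to the polarization $\lambda_X$ translates into the condition $U^\perp \subset U$. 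By construction this morphism is $J_\Lambda$-equivariant, and on perfect field points it recovers the bijection of \ref{PointsOfMlambda}. Both $\mathcal M_\Lambda$ and $Y_\Lambda$ are reduced and of finite type over $\mathbb F_{p^2}$, and $\mathcal M_\Lambda$ is formally smooth; the isomorphism then follows from bijectivity on tangent spaces at closed points, which can be checked by a standard deformation-theoretic computation matching both sides with $\Hom(U/U^\perp, U^\perp)$.

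The crux is the second step: the identification of $Y_\Lambda$ with a generalized parabolic Deligne-Lusztig variety. Over $\overline{\mathbb F_p}$, the hermitian form on $V_\Lambda^0$ may be recast as a Frobenius-twisted bilinear pairing, so that the orthogonal $U^\perp$ coincides, up to a Frobenius twist, with the bilinear orthogonal of $F(U)$. Consequently the condition $U^\perp \subset U$ becomes a relative-position condition between $U$ and $F(U)$ inside the flag variety of $V_\Lambda^0 \otimes \overline{\mathbb F_p}$. Choosing the maximal parabolic $P \subset \mathrm{GU}(V_\Lambda^0)$ stabilizing a reference subspace $U_0$ with $U_0^\perp \subset U_0$, the variety $Y_\Lambda$ identifies with the closed Deligne-Lusztig variety $\overline{X}_P(w) := \{gP \mid \mathrm{inv}(gP, F(gP)) \leq w\}$ for some $w \in W_P \backslash W / W_P$ of Coxeter type and length $\theta$. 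The geometric properties then follow from general results on such varieties: projectivity as a closed subscheme of $G/P$, and smoothness together with geometric irreducibility from the ellipticity and minimal length of $w$, with dimension equal to the length $\theta$.

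The main obstacle is the second step, namely translating the intrinsic isotropy-like condition $U^\perp \subset U$ into a Deligne-Lusztig relative-position condition. This requires a careful analysis of how the $\mathbb F_{p^2}$-structure of $V_\Lambda^0$ interacts with the $\mathbb F_p$-structure of $\mathrm{GU}(V_\Lambda^0)$ via the $p$-power Frobenius, and an explicit identification of the Weyl group element $w$ together with a check that it is indeed of Coxeter type. Once this identification is accomplished, Step 1 is essentially a routine application of Grothendieck-Messing theory, and Step 3 reduces to citing standard geometric results on Coxeter-type Deligne-Lusztig varieties.
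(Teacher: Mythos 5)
First, note that the paper does not prove this statement: it is quoted verbatim from \cite{vw2} (Theorem 4.8 there), with the variety $Y_{\Lambda}$ defined in \cite{vw2} 4.5 and the adjective \enquote{generalized} understood as in \cite{muller} Section 2, i.e.\ a Deligne--Lusztig variety $X_P(w)$ attached to a parabolic $P$ that is \emph{not} $F$-stable. Your overall strategy — realize $\mathcal M_{\Lambda}$ as the variety $Y_{\Lambda}$ of $(\theta+1)$-dimensional subspaces $U \subset (V_{\Lambda}^0)_k$ with $U^{\perp}\subset U$ via Dieudonné/crystalline theory, reinterpret $U^{\perp}\subset U$ as a relative-position condition between $U$ and $F(U)$, and then read off the geometry — is indeed the route taken in the reference, and your Step 1 is consistent with \ref{PointsOfMlambda} (though be careful: formal smoothness of $\mathcal M_{\Lambda}$ is part of the conclusion, not a hypothesis you may assume when upgrading the pointwise bijection to a scheme isomorphism; the reference establishes the morphism as a closed immersion and uses smoothness of the target).

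The genuine gap is in Step 3. Smoothness and irreducibility of a \emph{closed} (generalized) Deligne--Lusztig variety $\overline{X}_P(w)=\bigcup_{w'\leq w}X_P(w')$ do \emph{not} follow from general results about Coxeter-type elements: closures of Deligne--Lusztig varieties are singular in general, and the smoothness of $Y_{\Lambda}$ is a special feature of this unitary situation that must be proved directly. The correct argument is the tangent-space computation you relegated to Step 1 but in fact belongs here: since the differential of Frobenius vanishes, a first-order deformation $f\in\Hom(U,(V_{\Lambda}^0)_k/U)$ preserves the condition $U^{\perp}\subset U$ exactly when $f$ kills $U^{\perp}$, so $T_UY_{\Lambda}\simeq \Hom(U/U^{\perp},(V_{\Lambda}^0)_k/U)$ has constant dimension $1\cdot\theta=\theta$; combined with the fact that the open stratum (a Coxeter variety for $\mathrm{GU}_{2\theta+1}$, whence irreducible of dimension $\theta$ by Lusztig's criterion) is dense, this yields smoothness, irreducibility and the dimension count simultaneously. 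Without this computation, your Step 3 does not close. A secondary caution: since $P\neq F(P)$, the relative position $\mathrm{inv}(gP,F(gP))$ lives in $W_P\backslash W/W_{F(P)}$, so the formulation $\overline{X}_P(w)=\{gP \mid \mathrm{inv}(gP,F(gP))\leq w\}$ needs the corresponding Bruhat order on these double cosets, and the relevant element is not literally a Coxeter element of $W$ but a minimal-length representative whose associated open stratum is of Coxeter type.
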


We refer to \cite{muller} Section 1 for the definition of Deligne-Lusztig varieties. In particular, the adjective \enquote{generalized} is understood according to loc. cit. The Deligne-Lusztig variety isomorphic to $\mathcal M_{\Lambda}$ is introduced in \cite{vw2} Section 4.5, and it is denoted by $Y_{\Lambda}$ there. Theorem 5.1 of loc. cit. describes the incidence relations between the different strata.

\begin{theo}\label{IncidenceStrata}
Let $i\in \mathbb Z$ such that $ni$ is even. Let $\Lambda, \Lambda' \in \mathcal L_i$. The following statements hold.
\begin{enumerate}[label=\upshape (\arabic*)]
		\item The inclusion $\Lambda\subset \Lambda'$ is equivalent to the scheme-theoretic inclusion $\mathcal M_{\Lambda}\subset \mathcal M_{\Lambda'}$. It also implies $t(\Lambda)\leq t(\Lambda')$ and there is equality if and only if $\Lambda = \Lambda'$.
		\item The three following assertions are equivalent.
		\begin{align*}
		\mathrm{(i)}\;\Lambda\cap \Lambda'\in \mathcal L_i. & & \mathrm{(ii)}\; \Lambda\cap \Lambda' \text{ contains a lattice of }\mathcal L_i. & & \mathrm{(iii)}\;\mathcal M_{\Lambda}\cap \mathcal M_{\Lambda'} \not = \emptyset.
		\end{align*}
		If these conditions are satisfied, then $\mathcal M_{\Lambda}\cap \mathcal M_{\Lambda'}=\mathcal M_{\Lambda\cap \Lambda'}$, where we understand the left hand side as the scheme theoretic intersection inside $\mathcal M_{i,\mathrm{red}}$.
		\item The three following assertions are equivalent
		\begin{align*}
		& \mathrm{(i)}\;\Lambda+ \Lambda'\in \mathcal L_i. & & \mathrm{(ii)}\; \Lambda+ \Lambda' \text{ is contained in a lattice of }\mathcal L_i. & & \\ & \mathrm{(iii)}\;\mathcal M_{\Lambda}, \mathcal M_{\Lambda'} \subset \mathcal M_{\widetilde{\Lambda}} \text{ for some }\widetilde{\Lambda} \text{ in } \mathcal L_i.
		\end{align*}
		If these conditions are satisfied, then $\mathcal M_{\Lambda+ \Lambda'}$ is the smallest subscheme of the form $\mathcal M_{\widetilde{\Lambda}}$ containing both $\mathcal M_{\Lambda}$ and $\mathcal M_{\Lambda'}$.
		\item If $k$ is a perfect field extension of $\mathbb F_{p^2}$ then $\mathcal M_i(k)=\bigcup_{\Lambda\in \mathcal L_i}\mathcal M_{\Lambda}(k)$.
	\end{enumerate}
\end{theo}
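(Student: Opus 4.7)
My plan rests on the identification in the preceding proposition, $M \in \mathcal M_\Lambda(k) \iff M \subset \Lambda_k$, combined with the fact that each $\mathcal M_\Lambda$ is a reduced closed subscheme of $\mathcal M_{i,\mathrm{red}}$ and is therefore determined by its points over perfect field extensions. The pivotal observation is that every $\Lambda \in \lcal_i$ itself defines a point of $\mathcal M_\Lambda(\mathbb F_{p^2})$ via the tautological inclusion $\Lambda \subset \Lambda$, which will let me read back lattice containments from scheme-theoretic ones.

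\textbf{Parts (1)--(3).} For (1), one direction is immediate from $\Lambda_k \subset \Lambda'_k$; the converse is obtained by evaluating $\mathcal M_\Lambda \subset \mathcal M_{\Lambda'}$ at the tautological point $\Lambda$. For the type comparison I split lengths along the chain $p^{i+1}(\Lambda')^{\vee} \subset p^{i+1}\Lambda^{\vee} \subset \Lambda \subset \Lambda'$ and use the length-preservation of duality $[\Lambda^{\vee}:(\Lambda')^{\vee}] = [\Lambda':\Lambda]$, yielding $t(\Lambda') = t(\Lambda) + 2[\Lambda':\Lambda]$, with equality forcing $\Lambda = \Lambda'$. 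Parts (2) and (3) reduce via the point characterization to the lattice-theoretic claims that (ii)$\Rightarrow$(i). In each case I expand the dual of the intersection (resp.\ sum) as $\Lambda^{\vee} + (\Lambda')^{\vee}$ (resp.\ $\Lambda^{\vee} \cap (\Lambda')^{\vee}$) and verify the chain $p^{i+1}(\cdot)^{\vee} \subsetneq (\cdot) \subset p^i(\cdot)^{\vee}$ piece by piece, using the strict inclusion $p^{i+1}\Lambda^{\vee} \subsetneq \Lambda$ to secure strictness on the appropriate side. The scheme-theoretic identities $\mathcal M_\Lambda \cap \mathcal M_{\Lambda'} = \mathcal M_{\Lambda \cap \Lambda'}$ and the characterization of the smallest $\mathcal M_{\widetilde \Lambda}$ containing both strata then follow by combining the point description with (1) and reducedness.

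\textbf{Part (4): the main obstacle.} This is the genuinely geometric assertion that the closed Bruhat-Tits strata cover $\mathcal M_{i,\mathrm{red}}$ on $k$-points. The difficulty is purely one of descent: an arbitrary $M \in \mathcal M_i(k)$ is only a $W(k)$-lattice in $\mathbf V_k$, while $\Lambda \in \lcal_i$ must be $\mathbb Z_{p^2}$-rational. My plan is to construct a natural $W(k)$-lattice $M' \supset M$ satisfying $p^{i+1}(M')^{\vee} \subset M' \subset p^i(M')^{\vee}$ of maximal possible type, then argue that $M'$ descends to $\mathbb Z_{p^2}$. For this I expect the Frobenius structure on $\mathbf V_k$ inherited from the isocrystal $N(\mathbb X)_0$ to be essential: a canonical choice of $M'$ should automatically be $\sigma$-stable, hence $\mathbb Z_{p^2}$-rational, giving the desired $\Lambda$. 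The main hurdle is to perform this descent uniformly in $M$, and in practice I expect this to lean on the Dieudonn\'e-theoretic origin of $\mathcal M_\Lambda$ as the locus where a certain quasi-isogeny becomes an isogeny, rather than on purely combinatorial lattice manipulations.
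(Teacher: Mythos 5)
This theorem is not proved in the paper at all: it is quoted verbatim as Theorem 5.1 of \cite{vw2} (with the lattice-theoretic parts going back to \cite{vw1}), so there is no internal proof to compare against. Judged on its own terms, your proposal is a plan with several genuine gaps rather than a proof. The most concrete error is the ``tautological point'': $\Lambda$ is \emph{not} an element of $\mathcal M_{\Lambda}(\mathbb F_{p^2})$ unless $t(\Lambda)=1$, because $k$-points of $\mathcal M_{i,\mathrm{red}}$ are by definition lattices $M$ with $p^{i+1}M^{\vee}\overset{1}{\subset} M$, i.e.\ of orbit type $1$, whereas $t(\Lambda)=2\theta+1$ is usually larger. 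To recover $\Lambda\subset\Lambda'$ from $\mathcal M_{\Lambda}\subset\mathcal M_{\Lambda'}$ you must instead show that $\Lambda_k$ is the sum of all type-$1$ lattices $M$ with $p^{i+1}\Lambda_k^{\vee}\subset M\subset\Lambda_k$; this follows from the flag-variety description of \ref{PointsOfMlambda} (the subspaces $U\supset U^{\perp}$ of dimension $\theta+1$ span $V^0_{\Lambda}$), but it is a step you have not supplied. A second gap: the scheme-theoretic identity $\mathcal M_{\Lambda}\cap\mathcal M_{\Lambda'}=\mathcal M_{\Lambda\cap\Lambda'}$ cannot be deduced from ``point description plus reducedness,'' because the scheme-theoretic intersection of two reduced closed subschemes need not be reduced. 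Vollaard--Wedhorn prove this by working with the moduli description of $\mathcal M_{\Lambda}$ as the locus where a quasi-isogeny becomes an isogeny, so that the intersection is tautologically the analogous locus for $\Lambda\cap\Lambda'$; a purely point-theoretic argument only gives equality of underlying reduced subschemes. You should also note that passing from (iii) to (i)/(ii) requires checking the chain condition for the $\mathbb Z_{p^2}$-lattice $\Lambda\cap\Lambda'$ after the faithfully flat base change to $W(k)$, which is fine but is not the same reduction as (ii)$\Rightarrow$(i).

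Part (4) is where you correctly locate the real difficulty, but what you offer is an expectation, not an argument. The missing construction is explicit in \cite{vw1}: given $M\in\mathcal M_i(k)$ one forms the $\tau$-stable hull $\Lambda(M)=M+\tau(M)+\cdots+\tau^{d}(M)$ for the semilinear operator $\tau$ built from the Frobenius of the isocrystal, proves that this ascending chain stabilizes, that the resulting lattice still satisfies $p^{i+1}\Lambda(M)^{\vee}\subsetneq\Lambda(M)\subset p^{i}\Lambda(M)^{\vee}$ (this uses $\mathbf F^2=p\cdot\mathrm{id}$ and the self-duality properties of the form), and that $\tau$-stability forces descent to a $\mathbb Z_{p^2}$-lattice in $\mathbf V$, hence an element of $\mathcal L_i$ containing $M$. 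Without carrying out at least the stabilization and the verification of the chain condition for $\Lambda(M)$, part (4) remains unproved. Your lattice-theoretic computations for the type formula $t(\Lambda')=t(\Lambda)+2[\Lambda':\Lambda]$ and for (ii)$\Rightarrow$(i) via $(\Lambda\cap\Lambda')^{\vee}=\Lambda^{\vee}+(\Lambda')^{\vee}$ are correct and are indeed how these parts are handled in the source.
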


In essence, the previous statements explain how the stratification given by the $\mathcal M_{\Lambda}$ mimics the combinatorics of the Bruhat-Tits building of $\tilde J$, hence the name.

\subsection{Normalizers of maximal parahoric subgroups of $J(\mathbb Q_p)$}

In this section we compute the normalizer of the maximal parahoric subgroups $J_{\Lambda}$.

\begin{lem}\label{SameParahoric}
Let $\Lambda, \Lambda' \in \mathcal L$. 
\setlist{nolistsep}
\begin{enumerate}[label=(\roman*),noitemsep]
\item The parahoric subgroup $J_{\Lambda}$ acts transitively on the set of apartments containing $\Lambda$.
\item We have $J_{\Lambda} = J_{\Lambda'}$ if and only if there exists $k\in \mathbb Z$ such that $\Lambda = p^k\Lambda'$ or $\Lambda = p^k\Lambda'\,^{\vee}$.
\end{enumerate}
\end{lem}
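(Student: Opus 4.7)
The plan is to treat the two parts by different methods: for (ii), I would classify all $J_\Lambda$-invariant lattices in $\mathbf V$ via a representation-theoretic analysis of the reductive quotient $\mathcal J_\Lambda$; for (i), I would reduce transitivity on apartments to the transitivity of $\mathcal J_\Lambda$ on Witt decompositions of $V_\Lambda^0$ and $V_\Lambda^1$.

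For (ii), the ``if'' direction splits into two cases. The case $\Lambda = p^k\Lambda'$ is immediate as $p^k \cdot \mathrm{id}$ is central in $J$. For the case $\Lambda = p^k (\Lambda')^\vee$, the key observation is that the maximal compact subgroup $J_{\Lambda'}$ has similitude character contained in $\mathbb Z_p^\times$; for $g \in J_{\Lambda'}$ and $v \in (\Lambda')^\vee$, using $g\Lambda' = \Lambda'$ one has $\{gv,\Lambda'\} = \{gv, g\Lambda'\} = c(g)\{v,\Lambda'\} \subset \mathbb Z_{p^2}$, so $gv \in (\Lambda')^\vee$. Hence $J_{\Lambda'} \subset J_{(\Lambda')^\vee}$, and equality follows by symmetry since $((\Lambda')^\vee)^\vee = \Lambda'$.

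For the ``only if'' direction of (ii), let $i$ be the integer such that $\Lambda \in \mathcal L_i$. I would prove that the full set of $J_\Lambda$-invariant lattices in $\mathbf V$ is exactly $\{p^k\Lambda, p^k\Lambda^\vee : k \in \mathbb Z\}$. The argument proceeds by filtering $\mathbf V$ via the powers $\ldots \supset p^{-1}\Lambda^\vee \supset p^{-1}\Lambda \supset \Lambda^\vee \supset \Lambda \supset p\Lambda^\vee \supset \ldots$ and analyzing the image of an arbitrary $J_\Lambda$-invariant $L$ in the successive quotient $p^i\Lambda^\vee/p^{i+1}\Lambda^\vee \simeq \Lambda^\vee/p\Lambda^\vee$ (the $J_\Lambda$-invariance of sums and intersections reduces everything to this range). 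The short exact sequence $0 \to V_\Lambda^0 \to \Lambda^\vee/p\Lambda^\vee \to V_\Lambda^1 \to 0$ has irreducible ends under the respective factors of $\mathcal J_\Lambda \simeq \mathrm{G}(\mathrm{GU}(V_\Lambda^0) \times \mathrm{GU}(V_\Lambda^1))$. The main technical input is that this extension is non-split as a $J_\Lambda$-module: one exhibits an element of the pro-unipotent radical $J_\Lambda^+$ whose action on $\Lambda^\vee/p\Lambda^\vee$ moves some vector of a chosen complement of $V_\Lambda^0$ into $V_\Lambda^0$, concretely by constructing an element of the form $I + p(uE_{ab} - \bar u E_{cd}) + \cdots$ with $u \in \mathbb Z_{p^2}^\times$ and $(a,b), (c,d)$ dual root indices of $\mathfrak{su}(\mathbf V)$. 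It follows that the only invariant subspaces are $0$, $V_\Lambda^0$, and $\Lambda^\vee/p\Lambda^\vee$, yielding exactly three invariant lattices in this range; taking all scalings gives the full classification, and $\Lambda' \in \mathcal L$ with $J_{\Lambda'} = J_\Lambda$ therefore satisfies $\Lambda' \in \{p^k\Lambda, p^k\Lambda^\vee\}$.

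For (i), each apartment $\mathcal A^L$ containing $\Lambda$ corresponds to a Witt decomposition of $\mathbf V$ along which $\Lambda$ splits as $\Lambda \cap L^{\mathrm{an}} \oplus \bigoplus_j p^{r_j}\mathbb Z_{p^2} e_j^L$. Reducing the adapted basis modulo the appropriate lattice produces a pair of Witt decompositions of $V_\Lambda^0$ and $V_\Lambda^1$ with matching similitude factor, and conversely any such pair lifts. Since each $\mathrm{GU}(V_\Lambda^\epsilon)$ over $\mathbb F_p$ acts transitively on the Witt decompositions of its residual hermitian space (a consequence of Witt's theorem for finite unitary groups), the reductive quotient $\mathcal J_\Lambda$ is transitive on apartments containing $\Lambda$; lifting through the surjection $J_\Lambda \twoheadrightarrow \mathcal J_\Lambda$ produces the desired element of $J_\Lambda$ mapping one apartment to the other. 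The main obstacle is establishing the non-splitness of the extension in (ii), which requires a concrete Lie-theoretic construction in the parahoric; the degenerate case $t(\Lambda) = n$ with $V_\Lambda^1 = 0$ should be treated separately but is elementary, as there $\Lambda = p^i\Lambda^\vee$ and the statement is immediate.
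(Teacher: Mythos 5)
Your ``if'' direction of (ii) matches the paper's (which simply notes that $J_{x\Lambda}=J_\Lambda$ for $x\in\mathbb Q_{p^2}^\times$ and that fixing $\Lambda$ is equivalent to fixing $\Lambda^\vee$). For the ``only if'' direction you take a genuinely different route. The paper first invokes (i) to move $\Lambda$ into the standard apartment $\mathcal A^e$ by an element of $J_\Lambda=J_{\Lambda_\theta}$, writes $\Lambda=\Lambda(r_{-m},\ldots,r_m,s)$, and then uses explicit elements of $J_\theta$ (the swap $e_{-j}\leftrightarrow e_j$ for $j\le\theta$, and $e_j\mapsto p^{-1}e_{-j}$, $e_{-j}\mapsto pe_j$ for $j>\theta$) to force $r_{-j}=r_j$ or $r_j-1$, pinning $\Lambda$ down to $p^{i'}\Lambda_\theta$ or $p^{i'+1}\Lambda_\theta^\vee$. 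You instead classify \emph{all} $J_\Lambda$-stable lattices by a dévissage along the chain $\cdots\supset\Lambda^\vee\supset\Lambda\supset p\Lambda^\vee\supset p\Lambda\supset\cdots$, using irreducibility of $V_\Lambda^0,V_\Lambda^1$ and non-splitness of the intermediate extensions. Your approach is independent of (i) and proves a stronger statement; the paper's is more elementary but leans on (i), which it does not itself prove (it is cited as a general building-theoretic fact).

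Two places in your sketch need shoring up. First, the dévissage requires non-splitness of \emph{every} consecutive length-two subquotient of the chain, and these alternate between two types: $\Lambda^\vee/p\Lambda^\vee$ (extension of $V_\Lambda^1$ by $V_\Lambda^0$) and $\Lambda/p\Lambda$ (extension of $V_\Lambda^0$ by a twist of $V_\Lambda^1$). You only address the first; if the second were split, a rogue invariant lattice $L$ with $L+p\Lambda^\vee=\Lambda$ and $L\cap p\Lambda^\vee\subset p\Lambda$ could exist. (Both are in fact non-split, by essentially the same affine-root-group element, e.g.\ for $n=3$, $\theta=0$ the unitary transvection $e_1\mapsto e_1+ae_0^{\mathrm{an}}+be_{-1}$, $e_0^{\mathrm{an}}\mapsto e_0^{\mathrm{an}}-\bar a e_{-1}$ with $a\in\mathbb Z_{p^2}^\times$ works for both quotients; note no factor of $p$ appears on the relevant entry, so your displayed form $I+p(uE_{ab}-\bar uE_{cd})$ is not quite right in general.) One must also then reassemble: knowing $L+M_{j+1}=M_j$ for the chain member $M_j$ does not give $L=M_j$ without iterating the non-splitness down the chain and finishing with a Nakayama argument. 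Second, in (i) the reduction map from apartments through $\Lambda$ to pairs of residual Witt decompositions of $V_\Lambda^0$ and $V_\Lambda^1$ is not injective (one can perturb an adapted basis by elements of $p\cdot(\text{higher filtration steps})$ without changing the reduction), so transitivity of $\mathcal J_\Lambda$ on the target does not yet give transitivity of $J_\Lambda$ on apartments; you additionally need that the preimage of the stabilizer of the residual data (essentially $J_\Lambda^+$) acts transitively on each fiber. These are fixable, and your overall strategy is sound.
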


\begin{proof}
The first point is a general fact from the theory of Bruhat-Tits buildings. For the second point, the converse is clear. Indeed, if $x\in \mathbb Q_{p^2}^{\times}$ then $J_{x\Lambda} = J_{\Lambda}$, and an element $g\in J(\mathbb Q_p)$ fixes a lattice $\Lambda$ if and only if it fixes its dual $\Lambda^{\vee}$. Now, let $\Lambda, \Lambda' \in \mathcal L$ such that $J_{\Lambda} = J_{\Lambda'}$. Up to replacing $\Lambda'$ with an appropriate lattice $g\cdot \Lambda'$, it is enough to treat the case $\Lambda' = \Lambda_{\theta}$ for some $0\leq \theta \leq \thetamax$. By Proposition \ref{CommonApartment}, we can find an apartment $\mathcal A^L$ containing both $\Lambda_{\theta}$ and $\Lambda$. By the first point, we can find $g\in J_{\theta} = J_{\Lambda}$ which sends $\mathcal A^L$ to $\mathcal A^e$. Therefore $g\cdot \Lambda = \Lambda$ belongs to $\mathcal A^e$. According to Proposition \ref{ApartmentDescription}, we may write 
$$\Lambda = \Lambda(r_{-\thetamax},\ldots ,r_{-1},s,r_1,\ldots ,r_{\thetamax})$$
for some integers $(r_j,s)$. Let $i$ be the integer such that $\Lambda \in \mathcal L_i$. Then according to Proposition \ref{ConditionsLi} we have
\setlist{nolistsep}
\begin{enumerate}[label={--},noitemsep]
	\item $\forall 1\leq j \leq \thetamax, r_{-j} + r_{j} \in \{i,i+1\}$.
	\item $s_0 = \lfloor \frac{i+1}{2} \rfloor$.
	\item if $n$ is even then $s_1 = \lfloor \frac{i}{2} \rfloor$.
\end{enumerate}
For $1\leq j\leq \theta$, let $g_j$ be the automorphism of $\mathbf V$ which exchanges $e_{-j}$ and $e_j$ while fixing all the other vectors in the basis $e$. Then, from the definition of $\Lambda_{\theta}$ we have $g_j \in J_{\theta}$. Therefore $g_j$ must fix $\Lambda$ too, which implies that $r_{-j} = r_j$. And for $\theta + 1 \leq j \leq \thetamax$, let $g_j$ be the automorphism sending $e_{j}$ to $p^{-1}e_{-j}$ and $e_{-j}$ to $pe_j$ while fixing all the other vectors in the basis $e$. Then again we have $g_j \in J_{\theta} = J_{\Lambda}$ which implies that $r_{-j} = r_j - 1$.\\
Assume first that $i = 2i'$ is even. Combining the previous observations, we have $r_j = i'$ for all $1\leq j \leq \theta$ and $r_j = i' + 1$ for all $\theta + 1 \leq j \leq \thetamax$. Moreover we have $s_0 = i'$ and if $n$ is even, we have $s_1 = i'$. In other words, we have $\Lambda = p^{i'}\Lambda_{\theta}$.\\
Assume now that $i = 2i' + 1$ is odd. This implies that $n$ is even. Combining the previous observations, we have $r_j = i' + 1$ for all $1\leq j \leq \thetamax$. Moreover we have $s_0 = i' + 1$ and if $n$ is even, we have $s_1 = i'$. In other words, we have $\Lambda = p^{i'+1}\Lambda_{\theta}^{\vee}$.
\end{proof}

\begin{prop}\label{Normalizers}
Let $\Lambda \in \mathcal L$. If $t(\Lambda) \not = n - t(\Lambda)$ then the normalizer of $J_{\Lambda}$ in $J(\mathbb Q_p)$ is $\mathrm N_J(J_{\Lambda}) = \mathrm Z(J(\mathbb Q_p))J_{\Lambda}$. Otherwise, $n$ is even and there exists an element $h_0 \in J(\mathbb Q_p)$ such that $h_0^2 = p\cdot\mathrm{id}$ and $N_J(J_{\lambda})$ is the subgroup generated by $J_{\Lambda}$ and $h_0$. In particular, $\mathrm Z(J(\mathbb Q_p))J_{\Lambda}$ is a subgroup of index $2$ in $N_J(J_{\Lambda})$.
\end{prop}

\begin{rk} The condition $t(\Lambda) \not = n - t(\Lambda)$ is automatically satisfied if $n$ is odd. If $n$ is even, it is satisfied when $t(\Lambda) \not = \thetamax + 1$, this is the case in particular when $\thetamax$ is odd. 
\end{rk}

\begin{proof}
It is clear that $\mathrm Z(J(\mathbb Q_p))J_{\Lambda} \subset \mathrm N_J(J_{\Lambda})$. Conversely, let $g \in N_J(J_{\Lambda})$, so that we have $J_{\Lambda} = {}^gJ_{\Lambda} = J_{g\cdot \Lambda}$. We apply Lemma \ref{SameParahoric} to deduce the existence of $k\in \mathbb Z$ such that $g\cdot \Lambda = p^{k} \Lambda$ (case $1$) or $g\cdot \Lambda = p^{k} \Lambda^{\vee}$ (case $2$). If we are in case $1$, then $g \in p^{k}J_{\Lambda} \subset \mathrm Z(J(\mathbb Q_p))J_{\Lambda}$ and we are done. If $n$ is even, the assumption that $t(\Lambda) \not = n - t(\Lambda)$ makes the case $2$ impossible. If $n$ is odd and we are in case $2$, then in particular $\Lambda^{\vee} \in \mathcal L$. By Lemma \ref{LatticeAndDual}, we must have $\Lambda = p^{i}\Lambda^{\vee}$ for some even $i\in\mathbb Z$. In particular, we are also in case $1$. Therefore, no matter the parity of $n$, we are always in case $1$.\\
Assume now that $t(\Lambda) = n - t(\Lambda)$, in particular $n$ and $\thetamax$ are both even. We write $\thetamax = 2\thetamax'$ so that $t(\Lambda) = 2\thetamax'+1$ and we solve the case $\Lambda = \Lambda_{\thetamax'}$ first. Recall the element $g_0$ that was defined earlier. By direct computation, we see that $g_0\cdot \Lambda_{\thetamax'} = p\Lambda_{\thetamax'}^{\vee}$. Therefore ${}^{g_0}J_{\thetamax'} = J_{p\Lambda_{\thetamax'}^{\vee}} = J_{\thetamax'}$ so that $g_0 \in \mathrm{N}_J(J_{\thetamax'})$. Now let $g$ be any element normalizing $J_{\thetamax}$, so that $J_{\thetamax'} = {}^gJ_{\thetamax'} = J_{g\cdot \Lambda_{\thetamax'}}$. According to \ref{SameParahoric} there exists $k\in \mathbb Z$ such that $g\cdot \Lambda_{\thetamax'} = p^k\Lambda_{\thetamax'}$ or $g\cdot \Lambda_{\thetamax'} = p^k\Lambda_{\thetamax'}^{\vee} = p^{k-1}g_0\cdot\Lambda_{\thetamax'}$. In the first case we have $g \in p^kJ_{\thetamax'}$ and in the second case we have $g \in p^{k-1}g_0J_{\thetamax'}$. Since $g_0^2 = p\cdot\mathrm{id}$, the claim is proved with $h_0 = g_0$.\\
In the general case, we have $t(\Lambda) = 2\thetamax'+1 = t(\Lambda_{\thetamax'})$. There exists some $g\in J(\mathbb Q_p)$ such that $\Lambda = g\cdot \Lambda_{\thetamax'}$. Then $\mathrm{N}_J(\Lambda) = {}^g\mathrm{N}_J(\Lambda_{\thetamax'})$ so that the claim follows with $h_0 := gg_0g^{-1}$.
\end{proof}

\subsection{Counting the closed Bruhat-Tits strata}

In this section we count the number of closed Bruhat-Tits strata which contain or which are contained in another given one. Let $d\geq 0$ and consider $V$ a $d$-dimensional $\mathbb F_{p^2}$-vector space equipped with a non degenerate hermitian form. This structure is uniquely determined up to isomorphism as we are working over a finite field. For $\left\lceil\frac{d}{2}\right\rceil \leq r \leq d$, we define 
\begin{align*}
N(r,V) & := \{U \,|\, U \text{ is an }r\text{-dimensional subspace of }V \text{ such that }U^{\perp}\subset U\},\\
\nu(r,d) & := \# N(r,V),
\end{align*}
where $U^{\perp}$ denotes the orthogonal of $U$ with respect to the hermitian form on $V$. By Proposition \ref{PointsOfMlambda} and the following Remark, we have the following statement, see also \cite{vw2} Corollary 5.7. 

\begin{prop}\label{NumberStrata}
Let $\Lambda\in \mathcal L$. Write $t(\Lambda) = 2\theta +1$ for some $0\leq \theta \leq \thetamax$.
\setlist{nolistsep}
\begin{enumerate}[label={--},noitemsep]
\item Let $\theta'$ be an integer such that $0\leq \theta' \leq \theta$. The number of closed Bruhat-Tits strata of dimension $\theta'$ contained in $\mathcal M_{\Lambda}$ is $\nu(\theta + \theta' + 1, 2\theta + 1)$.
\item Let $\theta'$ be an integer such that $\theta \leq \theta' \leq \thetamax$. The number of closed Bruhat-Tits strata of dimension $\theta'$ containing $\mathcal M_{\Lambda}$ is $\nu(n - \theta - \theta' - 1, n - 2\theta - 1)$.
\end{enumerate}
\end{prop}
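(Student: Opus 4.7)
The plan is to assemble the proposition directly from three previously established results: the equivalence between lattice inclusion and stratum inclusion (Theorem~\ref{IncidenceStrata}(1)), the dimension formula $\dim \mathcal M_{\Lambda'} = (t(\Lambda')-1)/2$ coming from Theorem~\ref{IsomorphismWithDL-Variety}, and the lattice-to-subspace bijections recorded in Proposition~\ref{PointsOfMlambda} and its subsequent remark. Fix $\Lambda \in \mathcal L_i$ with $t(\Lambda) = 2\theta + 1$.

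For the first assertion, a closed Bruhat-Tits stratum of dimension $\theta'$ contained in $\mathcal M_\Lambda$ corresponds, by Theorem~\ref{IncidenceStrata}(1), to a lattice $\Lambda' \in \mathcal L_i$ with $\Lambda' \subset \Lambda$ and $t(\Lambda') = 2\theta' + 1$. Dualizing the defining chain for $\Lambda'$ gives $p^{i+1}\Lambda^\vee \subset p^{i+1}\Lambda'^\vee \subset \Lambda' \subset \Lambda$, so that $\Lambda'$ fits exactly into the setup of Proposition~\ref{PointsOfMlambda} taken over $k = \mathbb F_{p^2}$. That proposition then provides a bijection between the set of such $\Lambda'$ and the set of $\mathbb F_{p^2}$-subspaces $U \subset V_\Lambda^0$ of dimension $\theta + \theta' + 1$ satisfying $U^\perp \subset U$. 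Since $\dim V_\Lambda^0 = 2\theta + 1$, this number is precisely $\nu(\theta + \theta' + 1, 2\theta + 1)$.

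For the second assertion, a closed Bruhat-Tits stratum of dimension $\theta'$ containing $\mathcal M_\Lambda$ corresponds, again by Theorem~\ref{IncidenceStrata}(1), to a lattice $\Lambda' \in \mathcal L_i$ with $\Lambda \subset \Lambda'$ and $t(\Lambda') = 2\theta' + 1$. The chain $\Lambda' \subset p^i \Lambda'^\vee$ combined with $\Lambda \subset \Lambda'$, dualized to $\Lambda'^\vee \subset \Lambda^\vee$, yields $\Lambda \subset \Lambda' \subset p^i\Lambda^\vee$, which is the setup of the remark following Proposition~\ref{PointsOfMlambda}. That remark produces a bijection with the set of subspaces $U \subset V_\Lambda^1$ of dimension $n - \theta - \theta' - 1$ with $U^\perp \subset U$, and since $\dim V_\Lambda^1 = n - 2\theta - 1$ we obtain the count $\nu(n - \theta - \theta' - 1, n - 2\theta - 1)$.

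There is no real obstacle here: the entire content of the proposition is a repackaging of the bijections in \ref{PointsOfMlambda} and its remark, once one translates from closed strata to lattices via \ref{IncidenceStrata}(1) and from dimensions to orbit types via \ref{IsomorphismWithDL-Variety}. The only care needed is in checking that the chains of inclusions produced by duality match exactly the hypotheses of \ref{PointsOfMlambda}, and that the ranges $0 \leq \theta' \leq \theta$ and $\theta \leq \theta' \leq m$ correspond respectively to admissible subspace dimensions $\theta + \theta' + 1 \in [\theta + 1, 2\theta + 1]$ in $V_\Lambda^0$ and $n - \theta - \theta' - 1 \in [\lceil (n - 2\theta - 1)/2 \rceil, n - 2\theta - 1]$ in $V_\Lambda^1$, so that the isotropic flag conditions $U^\perp \subset U$ can actually be met.
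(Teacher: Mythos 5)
Your proof is correct and follows exactly the route the paper intends: the paper disposes of this proposition with the single remark that it ``follows from \ref{PointsOfMlambda} Proposition and Remark'' (it is a citation of \cite{vw2} Corollary 5.7), and your write-up simply makes explicit the translation from strata to lattices via \ref{IncidenceStrata}(1) and \ref{IsomorphismWithDL-Variety} before invoking those bijections. No discrepancy to report.
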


Another way to formulate the proposition is to say that $\nu(\theta + \theta' + 1, 2\theta + 1)$ (resp. $\nu(n - \theta - \theta' - 1, n - 2\theta - 1)$) is the number of vertices of type $2\theta' + 1$ in the Bruhat-Tits building of $\tilde{J}$ which are neighbors of a given vertex of type $2\theta + 1$ for $\theta' \leq \theta$ (resp. $\theta' \geq \theta$). In \cite{vw2}, an explicit formula is given for $\nu(d-1,d)$. The next proposition gives a formula to compute $\nu(r,d)$ for general $r$ and $d$. 

\begin{prop}\label{ExplicitNumberStrata}
Let $d\geq 0$ and let $\left\lceil\frac{d}{2}\right\rceil \leq r \leq d$. We have 
$$\nu(r,d) = \frac{\prod_{j=1}^{2(d-r)}\left(p^{2r-d+j} - (-1)^{2r-d+j}\right)}{\prod_{j=1}^{d-r}\left(p^{2j} -1\right)}$$
\end{prop}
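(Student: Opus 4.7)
The plan is to reduce the counting problem to one about totally isotropic subspaces via the orthogonality bijection, and then proceed by induction on $s:=d-r$. First, the assignment $U\mapsto U^{\perp}$ identifies $N(r,V)$ with the set of totally isotropic subspaces of $V$ of dimension $s$: if $U^{\perp}\subset U$ and $\dim U = r$ then $U^{\perp}$ has dimension $s$ and is isotropic since $(U^{\perp})^{\perp}=U\supset U^{\perp}$; conversely any totally isotropic $W$ of dimension $s$ satisfies $(W^{\perp})^{\perp}\supset W$, so $W^{\perp}\in N(r,V)$. Writing $M_s(d)$ for the number of totally isotropic $s$-subspaces of a non-degenerate $d$-dimensional hermitian space and substituting $k := d-2s+j$ in the numerator, it suffices to establish
$$M_s(d) \;=\; \frac{\prod_{k=d-2s+1}^{d}\bigl(p^k-(-1)^k\bigr)}{\prod_{j=1}^{s}\bigl(p^{2j}-1\bigr)}.$$

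Next, I would set up a recursion by double-counting the pairs $(L,U)$ with $L$ an isotropic line, $U$ a totally isotropic $s$-subspace, and $L\subset U$. Each $U$ contains $(p^{2s}-1)/(p^2-1)$ lines, all of them automatically isotropic. Conversely, given an isotropic line $L$, the quotient $L^{\perp}/L$ carries a non-degenerate hermitian form of dimension $d-2$, and totally isotropic $s$-subspaces $U$ of $V$ containing $L$ correspond bijectively to totally isotropic $(s-1)$-subspaces of $L^{\perp}/L$ via $U\leftrightarrow U/L$ (any such $U$ is contained in $L^{\perp}$ by orthogonality). This yields
$$M_s(d)\cdot \frac{p^{2s}-1}{p^2-1} \;=\; M_1(d)\cdot M_{s-1}(d-2).$$

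The base case of the induction is the formula for $\nu(d-1,d)=M_1(d)$ already recorded in \cite{vw2} (via the duality of Step~1, an element of $N(d-1,d)$ is the orthogonal of an isotropic line), and it reads $M_1(d)=(p^d-(-1)^d)(p^{d-1}-(-1)^{d-1})/(p^2-1)$. Plugging this together with the inductive hypothesis for $M_{s-1}(d-2)$ into the recursion, a telescoping simplification
$$M_s(d)\;=\;\frac{(p^d-(-1)^d)(p^{d-1}-(-1)^{d-1})}{p^{2s}-1}\cdot\frac{\prod_{k=d-2s+1}^{d-2}(p^k-(-1)^k)}{\prod_{j=1}^{s-1}(p^{2j}-1)}\;=\;\frac{\prod_{k=d-2s+1}^{d}(p^k-(-1)^k)}{\prod_{j=1}^{s}(p^{2j}-1)}$$
closes the induction.

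The only non-mechanical point is the bijective description, for a fixed isotropic line $L\subset V$, between totally isotropic $s$-subspaces of $V$ containing $L$ and totally isotropic $(s-1)$-subspaces of $L^{\perp}/L$. This is the standard passage to a non-degenerate quotient for hermitian forms: any totally isotropic subspace containing $L$ automatically lies in $L^{\perp}$, and because the form induced on $L^{\perp}/L$ is non-degenerate, totally isotropic subspaces lift and descend bijectively along $L^{\perp}\twoheadrightarrow L^{\perp}/L$. Everything else reduces to routine algebraic manipulation, and the main ingredient that is genuinely imported from elsewhere is the formula for $\nu(d-1,d)$ from \cite{vw2} (which is itself a direct counting exercise and could alternatively be derived from the order of $\mathrm{U}_d(\mathbb F_{p^2})$ via Witt's extension theorem).
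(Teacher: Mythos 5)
Your argument is correct, and it takes a genuinely different route from the paper. The paper proves the formula by orbit--stabilizer: it fixes a standard $U_0\in N(r,V)$, shows $\mathrm U_d(\mathbb F_p)$ acts transitively on $N(r,V)$ (via a Gram--Schmidt argument, where $p\neq 2$ is used), identifies the stabilizer as a maximal parabolic $P_0$, and computes $\#\mathrm U_d(\mathbb F_p)/\#P_0$ explicitly through a Levi decomposition --- the delicate part being the count of the unipotent radical, where the antidiagonal entries of the block $Z$ are constrained by a trace condition contributing a factor $p^{d-r}$. You instead pass through the duality $U\mapsto U^{\perp}$ to count totally isotropic $s$-planes ($s=d-r$), set up the flag-counting recursion $M_s(d)\cdot\frac{p^{2s}-1}{p^2-1}=M_1(d)\cdot M_{s-1}(d-2)$ via the standard non-degenerate quotient $L^{\perp}/L$, and induct from the known value of $\nu(d-1,d)$. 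All the steps you flag as needing justification (the duality being a bijection, lifting and descending isotropic subspaces along $L^{\perp}\twoheadrightarrow L^{\perp}/L$, well-definedness of $M_{s-1}(d-2)$ independently of $L$ by uniqueness of hermitian forms over finite fields) do go through, and the telescoping closes correctly; the edge case $s=0$ is trivially consistent with the empty products. What the paper's approach buys is the identification of $N(r,V)$ with the rational points of the flag variety $\mathrm U_d/P_0$, which it uses conceptually elsewhere, at the cost of a fiddly matrix count; your approach is more elementary and sidesteps both the transitivity argument and the unipotent count, at the cost of importing the base case $\nu(d-1,d)$ from \cite{vw2} (which, as you note, is itself an easy isotropic-vector count).
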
 

\begin{proof}
We fix a basis $(e_1,\ldots ,e_d)$ of $V$ in which the hermitian form is represented by the matrix $A_d$. We denote by $U_0$ the subspace generated by the vectors $e_1,\ldots, e_r$. The orthogonal of $U_0$ is generated by $e_1,\ldots , e_{d-r}$. Since $\left\lceil\frac{d}{2}\right\rceil \leq r \leq d$, $U_0$ contains its orthogonal, thus $U_0 \in N(r,V)$. The unitary group $\mathrm{U}(V) \simeq \mathrm U_{d}(\mathbb F_p)$ acts transitively on the set $N(r,V)$ (since $p \not = 2$). The stabilizer of $U_0$ in $\mathrm U_{d}(\mathbb F_p)$ is the standard parabolic subgroup 
$$P_0 := \left\{ 
\begin{pmatrix}
B & * & *\\
0 & M & *\\
0 & 0 & F(B)
\end{pmatrix} \in \mathrm U_{d}(\mathbb F_p) \; \middle| \; B \in \mathrm{GL}_{d-r}(\mathbb F_{p^2}), M\in \mathrm U_{2r-d}(\mathbb F_p)\right\}.$$
Here, $F(B) = A_{d-r}(B^{(p)})^{-T}A_{d-r}$ where $B^{(p)}$ is the matrix $B$ with all coefficients raised to the power $p$. The order of $\mathrm U_d(\mathbb F_p)$ is well known and given by the formula 
$$\#\mathrm U_d(\mathbb F_p) = p^{\frac{d(d-1)}{2}}\prod_{j=1}^{d} \left(p^j - (-1)^j\right).$$
It remains to compute the order of $P_0$. We have a Levi decomposition $P_0 = L_0N_0$ with $L_0 \cap N_0 = \{1\}$ where 
\begin{align*}
L_0 & := \left\{ 
\begin{pmatrix}
B & 0 & 0\\
0 & M & 0\\
0 & 0 & F(B)
\end{pmatrix} \in \mathrm U_{d}(\mathbb F_p) \; \middle| \; B \in \mathrm{GL}_{d-r}(\mathbb F_{p^2}), M\in \mathrm U_{2r-d}(\mathbb F_p)\right\},  \\
N_0 & := \left\{ 
\begin{pmatrix}
1 & X & Z\\
0 & 1 & Y\\
0 & 0 & 1
\end{pmatrix} \in \mathrm U_{d}(\mathbb F_p) \; \middle| \; X \in \mathrm{M}_{d-r,2r-d}(\mathbb F_{p^2}), Y\in \mathrm M_{2r-d,d-r}(\mathbb F_{p^2}), Z\in \mathrm M_{d-r}(\mathbb F_{p^2})\right\}.
\end{align*}
The order of $L_0$ is given by 
$$\#L_0 = \#\mathrm{GL}_{d-r}(\mathbb F_{p^2})\#\mathrm U_{2r-d}(\mathbb F_p) = p^{(d-r)(d-r-1) + \frac{(2r-d)(2r-d-1)}{2}}\prod_{j=1}^{d-r}\left(p^{2j}-1\right)\prod_{j=1}^{2r-d} \left(p^j - (-1)^j\right).$$
As for $N_0$, we need some more conditions on the matrices $X, Y$ and $Z$. By direct computations, one may check that 
\begin{align*}
\begin{pmatrix}
1 & X & Z\\
0 & 1 & Y\\
0 & 0 & 1
\end{pmatrix} \in N_0 \iff  
Y = -A_{2r-d}(X^{(p)})^TA_{d-r} \text{ and } Z + A_{d-r}(Z^{(p)})^TA_{d-r} = XY \in \mathrm M_{d-r}(\mathbb F_{p^2}).
\end{align*}
Thus, $X$ is any matrix of size $(d-r) \times (2r-d)$ and $Y$ is determined by $X$. In the second equation, the matrix $A_{d-r}(Z^{(p)})^TA_{d-r}$ is the reflexion of $Z^{(p)}$ with respect to the antidiagonal. The equation implies that the coefficients below the antidiagonal of $Z$ determine those above the antidiagonal. Furthermore, if $z$ is a coefficient in the antidiagonal then the equation determines the value of $\mathrm{Tr}(z) = z + z^p$, where $\mathrm{Tr}: \mathbb F_{p^2} \rightarrow \mathbb F_p$ is the trace relative to the extension $\mathbb F_{p^2}/\mathbb F_p$. The trace is surjective and its kernel has order $p$. Thus, there are only $p$ possibilities for each antidiagonal coefficient. Putting things together, the order of $N_0$ is given by 
$$\#N_0 = p^{2(d-r)(2r-d)}\cdot p^{2\frac{(d-r)(d-r-1)}{2}}\cdot p^{d-r} = p^{(d-r)(3r-d)}$$
where the three terms take account respectively of the choice of $X$, the choice of the coefficients below the antidiagonal of $Z$ and the choice of the coefficients in the antidiagonal of $Z$. Hence the order of $P_0$ is given by 
$$\#P_0 = \#L_0\#N_0 = p^{\frac{d(d-1)}{2}}\prod_{j=1}^{d-r}\left(p^{2j}-1\right)\prod_{j=1}^{2r-d} \left(p^j - (-1)^j\right).$$
Upon taking the quotient $\nu(r,d) = \#\mathrm U_d(\mathbb F_p)/\#P_0$, the result follows.
\end{proof}

In particular with $r = d-1$, we obtain 
$$\nu(d-1,d) = \frac{(p^{d-1}-(-1)^{d-1})(p^d-(-1)^{d})}{p^2-1}.$$
If $d = 2\delta$ is even, it is equal to $(p^{d-1}+1)\sum_{j=0}^{\delta-1}p^{2j}$, and if $d = 2\delta + 1$ is odd, it is equal to $(p^{d}+1)\sum_{j=0}^{\delta-1}p^{2j}$. This coincides with the formula given in \cite{vw2} Example 5.6.

\section{The cohomology of a closed Bruhat-Tits stratum}

In \cite{muller}, we computed the cohomology groups $\mathrm{H}^{\bullet}_c(\mathcal M_{\Lambda}\otimes \mathbb F,\overline{\mathbb Q_{\ell}})$ of the closed Bruhat-Tits strata. The computation relies on the Ekedahl-Oort stratification on $\mathcal M_{\Lambda}$ which, in the language of Deligne-Lusztig varieties, translates into a stratification by Coxeter varieties for unitary groups of smaller sizes. The cohomology of Coxeter varieties is well known thanks to the work of Lusztig in \cite{cox}. In order to state our results, we recall the classification of unipotent representations of the finite unitary group.

Let $q$ be a power of prime number $p$, and let $\mathbf G$ be a reductive connected group over an algebraic closure $\mathbb F$ of $\mathbb F_p$. Assume that $\mathbf G$ is equipped with an $\mathbb F_q$-structure induced by a Frobenius morphism $F$. Let $G = \mathbf G^F$ be the associated finite group of Lie type. Let $(\mathbf T,\mathbf B)$ be a pair consisting of an $F$-stable maximal torus $\mathbf T$ and an $F$-stable Borel subgroup $\mathbf B$ containing $\mathbf T$. Let $\mathbf W = \mathbf W(\mathbf T)$ denote the Weyl group of $\mathbf G$. The Frobenius $F$ induces an action on $\mathbf W$. For $w\in \mathbf W$, let $\dot{w}$ be a representative of $w$ in the normalizer $\mathrm N_{\mathbf G}(\mathbf T)$ of $\mathbf T$. By the Lang-Steinberg theorem, one can find $g\in \mathbf G$ such that $\dot{w} = g^{-1}F(g)$. Then ${}^g\mathbf T := g\mathbf T g^{-1}$ is another $F$-stable maximal torus, and $w \in \mathbf W$ is said to be the \textbf{type} of $^g \mathbf T$ with respect to $\mathbf T$. Every $F$-stable maximal torus arises in this manner. According to \cite{dl} Corollary 1.14, the $G$-conjugacy class of $^g \mathbf T$ only depends on the $F$-conjugacy class of $w$ in the Weyl group $\mathbf W$. Here, two elements $w$ and $w'$ in $\mathbf W$ are said to be $F$-conjugate if there exists some element $\tau \in \mathbf W$ such that $w = \tau w' F(\tau)^{-1}$. For every $w\in \mathbf W$, we fix $\mathbf T_w$ an $F$-stable maximal torus of type $w$ with respect to $\mathbf T$. The Deligne-Lusztig induction of the trivial representation of $\mathbf T_w$ is the virtual representation of $G$ defined by the formula 
$$R_w := \sum_{i\geq 0} (-1)^i\mathrm H^i_c(X(w)\otimes \, \mathbb F,\overline{\mathbb Q_{\ell}}),$$
where $X(w)$ is the Deligne-Lusztig variety for $\mathbf G$ given by 
$$X(w) := \{g\mathbf B \in \mathbf G/\mathbf B \,|\, g^{-1}F(g)\in \mathbf B w \mathbf B\}.$$ 
According to \cite{dl} Theorem 1.6, the virtual representation $R_w$ only depends on the $F$-conjugacy class of $w$ in $\mathbf W$. An irreducible representation of $G$ is said to be \textbf{unipotent} if it occurs in $R_w$ for some $w\in \mathbf W$. The set of isomorphism classes of unipotent representations of $G$ is denoted by $\mathcal E(G,1)$. 

\begin{rk}
Since the center $\mathrm{Z}(G)$ acts trivially on the variety $X(w)$, any irreducible unipotent representation of $G$ has trivial central character.
\end{rk}

Let $\mathbf G$ and $\mathbf G'$ be two reductive connected group over $\mathbb F$ both equipped with an $\mathbb F_q$-structure. We denote by $F$ and $F'$ the respective Frobenius morphisms. Let $f: \mathbf G\rightarrow \mathbf G'$ be an $\mathbb F_q$-isotypy, that is a homomorphism defined over $\mathbb F_q$ whose kernel is contained in the center of $\mathbf G$ and whose image contains the derived subgroup of $\mathbf G'$. Then, according to \cite{dm} Proposition 11.3.8, we have an equality 
$$\mathcal E(G,1) = \{\rho\circ f \,|\, \rho \in \mathcal E(G',1)\}.$$
Thus, the irreducible unipotent representations of $G$ and of $G'$ can be identified. We will use this observation in the case $G = \mathrm{U}_k(\mathbb F_q)$ and $G' = \mathrm{GU}_k(\mathbb F_q)$. The corresponding reductive groups are $\mathbf G = \mathrm{GL}_k$ and $\mathbf G' = \mathrm{GL}_k \times \mathrm{GL}_1$. The Frobenius morphisms can be defined as 
\begin{align*}
F(M) = \dot{w_0}(M^{(q)})^{-T}\dot{w_0}, & & F'(M,c) = (c^q\dot{w_0}(M^{(q)})^{-T}\dot{w_0}, c^q).
\end{align*}
Here, $\dot{w_0}$ is the $k\times k$ matrix with only $1$'s in the antidiagonal and $M^{(q)}$ is the matrix $M$ whose entries are all raised to the power $q$. The isotypy $f: \mathbf G\rightarrow \mathbf G'$ is defined by $f(M) = (M,1)$. It satisfies $F'\circ f = f \circ F$, it is injective and its image contains the derived subgroup $\mathrm{SL}_n\times \{1\} \subset \mathbf G'$. Hence, we obtain the following result. 

\begin{prop}\label{SameUnipotent}
The irreducible unipotent representations of the finite groups of Lie type $\mathrm U_k(\mathbb F_q)$ and $\mathrm{GU}_k(\mathbb F_q)$ can be naturally identified.
\end{prop}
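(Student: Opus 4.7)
The plan is to apply \cite{dm} Proposition 11.3.8 directly to the isotypy $f: \mathbf{G} = \mathrm{GL}_k \to \mathbf{G}' = \mathrm{GL}_k \times \mathrm{GL}_1$ introduced just before the statement. The preceding paragraph already records the identity $\mathcal{E}(G,1) = \{\rho \circ f \,|\, \rho \in \mathcal{E}(G',1)\}$ as the content of that proposition; once the three hypotheses on $f$ are checked, the conclusion of the statement follows essentially by inspection, since then $\rho \mapsto \rho \circ f$ provides the desired natural bijection between $\mathcal{E}(\mathrm{GU}_k(\mathbb{F}_q),1)$ and $\mathcal{E}(\mathrm{U}_k(\mathbb{F}_q),1)$.

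Concretely, I would organize the verifications as follows. First, confirm that the fixed-point groups are correct: for $\mathbf{G}^F$ with $F(M) = \dot w_0 (M^{(q)})^{-T}\dot w_0$, the equation $M = F(M)$ reads $M\dot w_0(M^{(q)})^T = \dot w_0$ (using $\dot w_0^2 = I$), which is precisely the defining relation for $\mathrm{U}_k(\mathbb{F}_q)$ with respect to the hermitian form of matrix $\dot w_0$. For $(\mathbf{G}')^{F'}$, the condition $(M,c) = F'(M,c)$ forces $c \in \mathbb{F}_q^\times$ and $M\dot w_0(M^{(q)})^T = c\,\dot w_0$, so $(M,c)$ is a unitary similitude of factor $c$; the projection onto the first factor identifies $(\mathbf{G}')^{F'}$ with $\mathrm{GU}_k(\mathbb{F}_q)$.

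Next, verify the three hypotheses that make $f(M) = (M,1)$ an $\mathbb{F}_q$-isotypy. Rationality amounts to $F' \circ f = f \circ F$, which is immediate: both sides send $M$ to $(\dot w_0(M^{(q)})^{-T}\dot w_0, 1)$ since the second coordinate of $F'(M,1)$ is $1^q = 1$. Injectivity of $f$ gives $\mathrm{Ker}(f) = \{1\} \subset \mathrm{Z}(\mathbf{G})$. Finally, the derived subgroup of $\mathbf{G}' = \mathrm{GL}_k \times \mathrm{GL}_1$ is $\mathrm{SL}_k \times \{1\}$, which is obviously contained in $\mathrm{Im}(f) = \mathrm{GL}_k \times \{1\}$.

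With these three points in place, \cite{dm} Proposition 11.3.8 applies and yields the identification of irreducible unipotent representations. I do not anticipate a serious obstacle: the whole argument is a routine matching of definitions, and the only mild subtlety is tracking how the $\mathbb{F}_q$-structure on $\mathrm{GL}_k \times \mathrm{GL}_1$ recovers $\mathrm{GU}_k(\mathbb{F}_q)$ (rather than, say, $\mathrm{U}_k(\mathbb{F}_q) \times \mathbb{F}_q^\times$), which is settled by the computation above using $\dot w_0^2 = I$.
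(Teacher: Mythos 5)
Your proposal is correct and follows exactly the paper's argument: the paper also deduces the identification by applying \cite{dm} Proposition 11.3.8 to the isotypy $f(M)=(M,1)$ from $\mathrm{GL}_k$ to $\mathrm{GL}_k\times\mathrm{GL}_1$, after noting $F'\circ f=f\circ F$, the injectivity of $f$, and that $\mathrm{Im}(f)$ contains $\mathrm{SL}_k\times\{1\}$. Your additional explicit check that $(\mathbf G')^{F'}$ really is $\mathrm{GU}_k(\mathbb F_q)$ (via $c=c^q$ and $M\dot w_0(M^{(q)})^T=c\dot w_0$) is a harmless elaboration of what the paper leaves implicit.
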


Assume that the Coxeter graph of the reductive group $\mathbf G$ is a union of subgraphs of type $A_m$ (for various $m$). Let $\widecheck{\mathbf W}$ be the set of isomorphism classes of irreducible representations of its Weyl group $\mathbf W$. The action of the Frobenius $F$ on $\mathbf W$ induces an action on $\widecheck{\mathbf W}$, and we consider the fixed point set $\widecheck{\mathbf W}^F$. The following theorem of \cite{ls} classifies the irreducible unipotent representations of $G$.

\begin{theo}\label{ClassificationUnipotent}
There is a bijection between $\widecheck{\mathbf W}^F$ and the set of isomorphism classes of irreducible unipotent representations of $G$. 
\end{theo}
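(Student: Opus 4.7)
The plan is to deduce the theorem from the general Deligne-Lusztig theory, specializing to type $A$ where the combinatorics becomes particularly simple. First I would reduce to the case where $\mathbf G$ is simple: since the Coxeter graph is assumed to be a union of type $A_m$ subgraphs and unipotent representations of a product are tensor products of unipotent representations of the factors (with $\widecheck{\mathbf W}^F$ decomposing accordingly), there is no loss in assuming that $\mathbf G$ has a single connected component of type $A_m$. The Frobenius $F$ either acts trivially on $\mathbf W \simeq S_{m+1}$ (linear case, when $\mathbf G = \mathrm{GL}_{m+1}$) or as the non-trivial diagram automorphism (unitary case). In both cases all irreducible characters of $\mathbf W$ are $F$-stable, so $\widecheck{\mathbf W}^F = \widecheck{\mathbf W}$ is indexed by partitions of $m+1$.

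Next, for each $\chi \in \widecheck{\mathbf W}^F$ I would introduce the almost character
\[
R_\chi \;:=\; \frac{1}{|\mathbf W|}\sum_{w \in \mathbf W} \widetilde{\chi}(wF)\, R_w,
\]
where $\widetilde\chi$ is the preferred extension of $\chi$ to the semidirect product $\mathbf W \rtimes \langle F\rangle$ (in type $A$ such an extension is canonical, e.g.\ via Lusztig's choice or Kondo's). Using the orthogonality relation $\langle R_w, R_{w'}\rangle_G = |\{\tau\in \mathbf W\,:\,\tau w F(\tau)^{-1} = w'\}|/|\mathbf W|$ from Deligne-Lusztig, one computes $\langle R_\chi,R_{\chi'}\rangle_G = \delta_{\chi,\chi'}$, so that $\{R_\chi\}_{\chi\in \widecheck{\mathbf W}^F}$ is an orthonormal family of virtual characters. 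By construction, every irreducible unipotent character occurs in the span of the $R_w$'s, hence in the span of the $R_\chi$'s, so it suffices to prove that each $R_\chi$ is, up to a sign, an irreducible unipotent character.

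For this last step, which is the crucial one, I would invoke the description of Lusztig families and the associated finite groups $\mathcal G_{\mathcal F}$. The key input specific to type $A$ (originally due to Green in the split case, and to Lusztig-Srinivasan in the twisted case via Ennola-type arguments) is that every family of unipotent characters in type $A$ is a singleton and the associated group $\mathcal G_{\mathcal F}$ is trivial. Consequently, the almost characters $R_\chi$ coincide with irreducible unipotent characters up to a well-controlled sign determined by the length of a Coxeter element, and the correspondence $\chi \mapsto \pm R_\chi$ gives a bijection with $\mathcal E(G,1)$. In the unitary case, one may alternatively derive the parametrization directly from $\mathrm{GL}_{m+1}(\mathbb F_q)$ by Ennola duality $q \mapsto -q$, together with the observation from \ref{SameUnipotent} that the unipotent set is unaffected by passing between $\mathrm U_{m+1}$ and $\mathrm{GU}_{m+1}$.

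The main obstacle is Step three, namely the verification that each $R_\chi$ is irreducible up to sign: this is exactly the content of the type $A$ analysis in \cite{ls}, and in full generality requires either the explicit computation of Green functions and Hall-Littlewood polynomials, or the non-trivial input that families in type $A$ are singletons. Once this is granted, orthonormality and completeness of $\{\pm R_\chi\}$ already force the bijection, so the rest is formal.
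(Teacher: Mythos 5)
This statement is not proved in the paper: it is quoted verbatim from \cite{ls} (Theorem 2.2), and the only thing the text adds afterwards is a recollection of how the bijection is \emph{constructed}, namely $V \mapsto R(V)$ with $R(V) = \frac{1}{|\mathbf W|}\sum_{w}\mathrm{Trace}(w\circ\widetilde F\,|\,V)R_w$. So there is no internal argument to compare yours against; what can be assessed is whether your sketch would stand as a proof of the cited result.

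The formal skeleton you give is the right one and is essentially how the result is established: reduce to a single type-$A$ component, observe that $\widecheck{\mathbf W}^F=\widecheck{\mathbf W}$ (in the unitary case $F$ acts on $\mathbf W\simeq\mathfrak S_{m+1}$ by conjugation by $w_0$, hence trivially on characters), form the almost characters $R_\chi$ with a preferred extension $\widetilde\chi$, and use the Deligne--Lusztig inner product formula to get orthonormality; completeness of the span plus irreducibility of each $\pm R_\chi$ then forces the bijection. Two caveats, though. First, the decisive step --- that each $R_\chi$ is, up to sign, an irreducible character --- is precisely the content of \cite{ls}, and you only invoke it; in the unitary case it is genuinely nontrivial (Lusztig and Srinivasan prove it by comparison with $\mathrm{GL}_{m+1}(\mathbb F_q)$ via Green functions and Ennola-type sign changes, together with Harish-Chandra induction from cuspidal data), so as written your argument is circular as a ``proof'' of the theorem and is better described as a reduction of the theorem to its own hardest ingredient. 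The appeal to ``families are singletons in type $A$ and $\mathcal G_{\mathcal F}$ is trivial'' is anachronistic --- that formalism postdates \cite{ls} and itself rests on the type-$A$ classification --- though it is a legitimate modern way to see why no sign or multiplicity ambiguity can arise. Second, your reduction to one connected component silently assumes $F$ stabilizes each component of the Coxeter graph; when $F$ permutes components one must pass to $\mathbf G_1^{F^k}$ and identify $\widecheck{\mathbf W}^F$ with $\widecheck{\mathbf W_1}$ via the twisted diagonal, a standard but non-vacuous step. For the groups actually used in the paper ($\mathrm{GU}_{2\theta+1}(\mathbb F_p)$ and $\mathrm U_{2\theta+1}(\mathbb F_p)$) none of this causes trouble, but it should be said.
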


We recall how the bijection is constructed. According to loc. cit. if $V\in \widecheck{\mathbf W}^F$ there is a unique automorphism $\widetilde{F}$ of $V$ of finite order such that 
$$R(V) := \frac{1}{|\mathbf W|}\sum_{w\in \mathbf W} \mathrm{Trace}(w\circ \widetilde{F} \,|\, V)R_w$$
is an irreducible representation of $G$. Then the map $V \mapsto R(V)$ is the desired bijection. In the case of $\mathrm U_k(\mathbb F_q)$ or $\mathrm{GU}_k(\mathbb F_q)$, the Weyl group $\mathbf W$ is identified with the symmetric group $\mathfrak S_k$ and we have an equality $\widecheck{\mathbf W}^F = \widecheck{\mathbf W}$. Moreover, the automorphism $\widetilde{F}$ is the multiplication by $w_0$, where $w_0$ is the element of maximal length in $\mathbf W$. Thus, in both cases the irreducible unipotent representations of $G$ are classified by the irreducible representations of the Weyl group $\mathbf W\simeq \mathfrak S_k$, which in turn are classified by partitions of $k$ or equivalently by Young diagrams, as we briefly recall in the next paragraph.
 
A partition of $k$ is a tuple of integers $\lambda = (\lambda_1 \geq \ldots \geq \lambda_r>0)$ with $r\geq 1$ such that $\lambda_1 + \ldots + \lambda_r = k$. The integer $k$ is called the length of the partition, and it is denoted by $|\lambda|$. A Young diagram of size $k$ is a top left justified collection of $k$ boxes, arranged in rows and columns. There is a correspondence between Young diagrams of size $k$ and partitions of $k$, by associating to a partition $\lambda = (\lambda_1, \ldots, \lambda_r)$ the Young diagram having $r$ rows consisting successively of $\lambda_1, \ldots, \lambda_r$ boxes. We will often identify a partition with its Young diagram, and conversely. For example, the Young diagram associated to $\lambda = (3^2,2^2,1)$ is the following one. 
\begin{center}\ydiagram{3,3,2,2,1}\end{center}
To any partition $\lambda$ of $k$, one can naturally associate an irreducible character $\chi_{\lambda}$ of the symmetric group $\mathfrak S_k$. An explicit construction is given, for instance, by the notion of Specht modules as explained in \cite{james} 7.1. 

The irreducible unipotent representation of $\mathrm U_k(\mathbb F_q)$ (resp. $\mathrm{GU}_k(\mathbb F_q)$) associated to $\chi_{\lambda}$ by the bijection of Theorem \ref{ClassificationUnipotent} is denoted by $\rho_{\lambda}^{\mathrm U}$ (resp. $\rho_{\lambda}^{\mathrm{GU}}$). In virtue of Proposition \ref{SameUnipotent}, for every $\lambda$ we have $\rho_{\lambda}^{\mathrm U} = \rho_{\lambda}^{\mathrm{GU}} \circ f$, where $f: \mathrm U_k(\mathbb F_q) \rightarrow \mathrm{GU}_k(\mathbb F_q)$ is the inclusion. Thus, it is harmless to identify $\rho_{\lambda}^{\mathrm U}$ and $\rho_{\lambda}^{\mathrm{GU}}$ so that from now on, we will omit the superscript. The partition $(k)$ corresponds to the trivial representation and $(1^k)$ to the Steinberg representation.  Given a box $\msquare$ in the Young diagram of $\lambda$, its \textbf{hook length} $h(\msquare)$ is $1$ plus the number of boxes lying below it or on its right. For instance, in the following figure the hook length of every box of the Young diagram of $\lambda = (3^2,2^2,1)$ has been written inside it.
\ytableausetup{mathmode, centertableaux}
\begin{center}
\begin{ytableau}
7 & 5 & 2 \\
6 & 4 & 1 \\
4 & 2 \\
3 & 1 \\
1
\end{ytableau}
\end{center}

The degree of the representations $\rho_{\lambda}$ is given by expressions known as \textbf{hook formula}, see for instance \cite{geck2} Proposition 4.3.5.

\begin{prop} 
Let $\lambda = (\lambda_1 \geq \ldots \geq \lambda_r > 0)$ be a partition of $k$. The degree of the irreducible unipotent representation $\rho_{\lambda}$ is given by the following formula
$$\deg(\rho_{\lambda}) = q^{a(\lambda)}\frac{\prod_{i=1}^k q^i - (-1)^i}{\prod_{\msquare \in \lambda} q^{h(\msquare)} - (-1)^{h(\msquare)}}$$
where $a(\lambda) = \sum_{i=1}^r (i-1)\lambda_i$. 
\end{prop}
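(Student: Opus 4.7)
The plan is to derive the unitary hook formula from the classical one for $\mathrm{GL}_n$ via Ennola duality. The starting point is Steinberg's formula
$$\deg(\rho_\lambda^{\mathrm{GL}}) = q^{a(\lambda)}\,\frac{\prod_{i=1}^{n}(q^i-1)}{\prod_{\msquare\in\lambda}(q^{h(\msquare)}-1)}$$
for the degree of the irreducible unipotent representation of $\mathrm{GL}_n(\mathbb F_q)$ attached to a partition $\lambda$ of $n$. This identity can be proved either by realising $\rho_\lambda^{\mathrm{GL}}$ as a Harish-Chandra constituent of a principal series and invoking the Frame-Robinson-Thrall hook length formula for $\mathfrak S_n$, or by directly evaluating the generic degree of the corresponding character of the Iwahori-Hecke algebra of type $A_{n-1}$ at the parameter $q$.

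The key input is then Ennola duality: the unipotent characters of $\mathrm{GU}_n(\mathbb F_q)$ and of $\mathrm{GL}_n(\mathbb F_q)$ are parametrised by the same set (partitions of $n$), and their degrees, viewed as polynomials in the Hecke parameter, are interchanged by the substitution $q \mapsto -q$. This is a manifestation of Lusztig's principle that a reductive group and its non-split twisted form share a common generic-degree polynomial, and it is established for classical groups by Fong-Srinivasan. Applying $q\mapsto -q$ to Steinberg's formula and using the identities
$$(-q)^{a(\lambda)} = (-1)^{a(\lambda)} q^{a(\lambda)}, \qquad (-q)^i - 1 = (-1)^i(q^i - (-1)^i), \qquad (-q)^{h} - 1 = (-1)^{h}(q^{h} - (-1)^{h}),$$
I can factor out all the signs from the numerator and denominator. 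The resulting global sign must equal $+1$, as a representation degree is a positive integer; alternatively this can be verified combinatorially using the total-hook identity $\sum_{\msquare\in\lambda}h(\msquare) = a(\lambda) + a(\lambda^T) + n$. This yields exactly the claimed formula.

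It remains only to observe that \ref{SameUnipotent} Proposition identifies the irreducible unipotent representations of $\mathrm U_n(\mathbb F_q)$ with those of $\mathrm{GU}_n(\mathbb F_q)$ via the natural isotypy, so that the same degree formula holds on either side and one may work with whichever group is most convenient.

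The main obstacle in this outline is the invocation of Ennola duality, which rests on Lusztig's full classification together with Shintani-descent arguments; in type $A$, however, this is by now a classical theorem and the statement we need is precisely the content of the cited \cite[Proposition~4.3.5]{geck2}, which can therefore be taken as a black box. All other steps amount to straightforward sign and combinatorial bookkeeping.
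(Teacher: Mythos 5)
The paper does not prove this proposition at all: it is imported verbatim from the literature (the bracketed citation \cite{geck2}, Proposition 4.3.5, is the entire justification given). So there is nothing in the paper to compare your argument against step by step; what you have written is the standard derivation that underlies the cited result. Your outline is correct: Steinberg's hook formula for the unipotent characters of $\mathrm{GL}_n(\mathbb F_q)$, the substitution $q\mapsto -q$ with the sign identities you record, positivity of both the degree and of the target expression to pin down the global sign, and the identification of unipotent representations of $\mathrm U_n$ and $\mathrm{GU}_n$ via the isotypy of \ref{SameUnipotent}. Two small points deserve attention. First, your closing paragraph is slightly circular as phrased: you say the statement needed for Ennola duality ``is precisely the content of the cited Proposition 4.3.5,'' but that proposition \emph{is} the hook formula you are trying to prove; the genuine black box you need is the Lusztig--Srinivasan/Hotta--Springer theorem that the unipotent character degrees of $\mathrm{GU}_n(\mathbb F_q)$ are, up to sign, the values at $-q$ of the generic degree polynomials of type $A_{n-1}$ (equivalently of the $\mathrm{GL}_n$ degree polynomials), together with the fact that this correspondence matches the partition $\lambda$ on both sides under the parametrization of \ref{ClassificationUnipotent}. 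Second, your positivity argument for the sign is fine and is cleaner than chasing the combinatorial identity $\sum_{\msquare}h(\msquare)=a(\lambda)+a(\lambda^T)+|\lambda|$, which by itself does not give $+1$ without also accounting for the sign built into the Ennola statement. With those caveats, the proposal is a legitimate proof sketch of a result the paper simply quotes.
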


We may describe the cuspidal support of the unipotent representations $\rho_{\lambda}$. According to \cite{classical} Propositions 9.2 and 9.4 there exists an irreducible unipotent cuspidal representation of $\mathrm U_k(\mathbb F_q)$ (or $\mathrm{GU}_k(\mathbb F_q)$) if and only if $k$ is an integer of the form $k = \frac{t(t+1)}{2}$ for some $t\geq 0$. When $k$ is an integer of this form, the unique unipotent cuspidal representation is associated to the partition $\Delta_t := (t, t-1,\ldots,1)$, whose Young diagram has the distinctive shape of a staircase. Here, as a convention $U_0(\mathbb F_q) = \mathrm{GU}_0(\mathbb F_q)$ denotes the trivial group. For example, here are the Young diagrams of $\Delta_1,\Delta_2$ and $\Delta_3$. Of course, the one of $\Delta_0$ the empty diagram.
\begin{center}\ydiagram{1} \quad \quad \ydiagram{2,1} \quad \quad \ydiagram{3,2,1}\end{center}

We consider an integer $t\geq 0$ such that $k$ decomposes as $k = 2e + \frac{t(t+1)}{2}$ for some $e\geq 0$. Let $G$ denote $\mathrm{U}_k(\mathbb F_q)$ or $\mathrm{GU}_k(\mathbb F_q)$, and consider $L_t$ the subgroup consisting of block-diagonal matrices having one middle block of size $\frac{t(t+1)}{2}$ and all other blocks of size $1$. This is a standard Levi subgroup of $G$. For $\mathrm U_k(\mathbb F_q)$, it is isomorphic to $\mathrm{GL}_1(\mathbb F_{q^2})^e\times \mathrm U_{\frac{t(t+1)}{2}}(\mathbb F_q)$ whereas in the case of $\mathrm{GU}_k(\mathbb F_q)$ it is isomorphic to $\mathrm{G}\left(\mathrm{U}_1(\mathbb F_{q})^e\times \mathrm{U}_{\frac{t(t+1)}{2}}(\mathbb F_q)\right)$. In both cases, $L_t$ admits a quotient which is isomorphic to a group of the same type as $G$ but of size $\frac{t(t+1)}{2}$. We write $\rho_t$ for the inflation to $L_t$ of the unipotent cuspidal representation $\rho_{\Delta_t}$ of this quotient. If $\lambda$ is a partition of $k$, the cuspidal support of the representation $\rho_{\lambda}$ is given by exactly one of the pair $(L_t,\rho_t)$ up to conjugation, where $t\geq 0$ is an integer such that for some $e\geq 0$ we have $k = 2e + \frac{t(t+1)}{2}$. Note that in particular $k$ and $\frac{t(t+1)}{2}$ must have the same parity. With these notations, the irreducible unipotent representations belonging to the principal series (ie. those whose cuspidal support is supported on a minimal parabolic subgroup) are those with cuspidal support $(L_0,\rho_0)$ if $k$ is even and $(L_1,\rho_1)$ if $k$ is odd.

Given an irreducible unipotent representation $\rho_{\lambda}$, there is a combinatorical way to determine the Harish-Chandra series to which it belongs, as we recalled in \cite{muller} Section 2. We consider the Young diagram of $\lambda$. We call \textbf{domino} any pair of adjacent boxes in the diagram. It may be either vertical or horizontal. We remove dominoes from the diagram of $\lambda$ so that the resulting shape is again a Young diagram, until one can not proceed further. This process results in the Young diagram of the partition $\Delta_t$ for some $t\geq 0$, and it is called the \textbf{$2$-core} of $\lambda$. It does not depend on the successive choices for the dominoes. Then, the representation $\rho_{\lambda}$ has cuspidal support $(L_t,\rho_t)$ if and only if $\lambda$ has $2$-core $\Delta_t$. For instance, the diagram $\lambda = (3^2,2^2,1)$ has $2$-core $\Delta_1$, as it can be determined by the following steps. We put crosses inside the successive dominoes that we remove from the diagram. 
\begin{center}
\ydiagram{3,3,1,1,1}*[\times]{0,0,1+1,1+1} $\implies$ \ydiagram{3,1,1,1,1}*[\times]{0,1+2} $\implies$ \ydiagram{3,1,1}*[\times]{0,0,0,1,1} $\implies$ \ydiagram{1,1,1}*[\times]{1+2} $\implies$ \ydiagram{1}*[\times]{0,1,1} $\implies$ \ydiagram{1}
\end{center}
Thus, the unipotent representation $\rho_{\lambda}$ of $\mathrm U_{11}(\mathbb F_q)$ or $\mathrm{GU}_{11}(\mathbb F_q)$ has cuspidal support $(L_1,\rho_1)$, so in particular it is a principal series representation.

From now on, we take $q = p$. Let $\Lambda \in \mathcal L$ with orbit type $t(\Lambda) = 2\theta + 1$. Recall that the stratum $\mathcal M_{\Lambda}$ is equipped with an action of the finite group of Lie type $\mathrm{GU}(V_{\Lambda}^0)$. Upon choosing a basis, we identify this group with $\mathrm{GU}_{2\theta+1}(\mathbb F_p)$. Let $\mathrm{Frob} = \sigma^{-2} \in \mathrm{Gal}(\mathbb F/\mathbb F_{p^2})$ be the geometric Frobenius. Then $\mathrm{Frob}$ is a topological generator of $\mathrm{Gal}(\mathbb F/\mathbb F_{p^2})$. In \cite{muller}, we computed the cohomology groups $\mathrm{H}^{\bullet}(\mathcal M_{\Lambda}\otimes \mathbb F,\overline{\mathbb Q_{\ell}})$ in terms of a $\mathrm{GU}_{2\theta+1}(\mathbb F_p) \times \langle \mathrm{Frob} \rangle$-representations. The result is summed up in the following Theorem.

\begin{theo}\label{CohomologyBT-Stratum}
Let $\Lambda \in \mathcal L$ and write $t(\Lambda) = 2\theta +1$ for some $0\leq \theta \leq \thetamax$.
\begin{enumerate}[label=\upshape (\arabic*)]
		\item The cohomology group $\mathrm H^j(\mathcal M_{\Lambda}\otimes \mathbb F,\overline{\mathbb Q_{\ell}})$ is zero unless $0 \leq j \leq 2\theta$. 
		\item The Frobenius $\mathrm{Frob}$ acts like multiplication by $(-p)^j$ on $\mathrm H^j(\mathcal M_{\Lambda}\otimes \mathbb F,\overline{\mathbb Q_{\ell}})$. 
		\item For $0\leq j \leq \theta$ we have 
		$$\mathrm H^{2j}(\mathcal M_{\Lambda}\otimes \mathbb F,\overline{\mathbb Q_{\ell}}) = \bigoplus_{s=0}^{\min(j,\theta - j)} \rho_{(2\theta + 1 - 2s, 2s)}.$$
		For $0\leq j \leq \theta - 1$ we have 
		$$\mathrm H^{2j+1}(\mathcal M_{\Lambda}\otimes \mathbb F,\overline{\mathbb Q_{\ell}}) = \bigoplus_{s=0}^{\min(j,\theta - 1 - j)} \rho_{(2\theta - 2s, 2s + 1)}.$$
	\end{enumerate}
\end{theo}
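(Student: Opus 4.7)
My plan would be to reduce the statement to known results about Deligne-Lusztig varieties via the isomorphism of \ref{IsomorphismWithDL-Variety}, and then to exploit a finer stratification whose pieces are Coxeter varieties whose cohomology is computable by Lusztig.

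First, using \ref{IsomorphismWithDL-Variety}, I would replace $\mathcal{M}_\Lambda$ by the generalized parabolic Deligne-Lusztig variety $Y_\Lambda$ for $\mathrm{GU}_{2\theta+1}(\mathbb{F}_p)$. From the fact that $Y_\Lambda$ is smooth and projective of pure dimension $\theta$, one gets immediately the concentration of $\mathrm H^j_c$ in the range $0 \leq j \leq 2\theta$ and, by Poincaré duality on a smooth projective $\mathbb F_{p^2}$-variety, the claimed duality $\mathrm H_c^j \simeq (\mathrm H_c^{2\theta-j})^\vee(\theta)$ equivariantly for $\mathrm{GU}_{2\theta+1}(\mathbb F_p)\times\langle F^2\rangle$. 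This dispatches part (1).

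For parts (2) and (3), the idea is to stratify $Y_\Lambda$ by its Ekedahl-Oort strata, which I would set up following the framework of \cite{muller}: each open EO-stratum $Y_\Lambda^{\circ,j}$ should be isomorphic to a Coxeter Deligne-Lusztig variety for a Levi of smaller unitary type, so that Lusztig's results in \cite{cox} apply. Concretely, the Coxeter stratum indexed by $j$ (with $0 \leq j \leq \theta$) should have cohomology concentrated in a narrow range with Frobenius eigenvalues of the form $\pm p^{\text{something}}$, and its $\mathrm{GU}_{2\theta+1}(\mathbb F_p)$-equivariant cohomology will be given by explicit Harish-Chandra induction of characters of an anisotropic torus, decomposing into the two-row unipotent representations $\rho_{(2\theta+1-2s,2s)}$ and $\rho_{(2\theta-2s,2s+1)}$. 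The assembly is done via the long exact sequences (or the associated spectral sequence) attached to the filtration by the closures of EO-strata; the vanishing of differentials will follow from purity, since each stratum contributes to $\mathrm H_c^j$ eigenvalues of absolute value $p^{j/2}$.

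The main technical obstacle will be the combinatorial identification: matching the indexing set of EO-strata with Coxeter elements of the relevant Levi subgroups, and then matching Lusztig's description of the unipotent constituents of $R_w$ for a Coxeck element with the specific two-row partitions $(2\theta+1-2s,2s)$ and $(2\theta-2s,2s+1)$ that appear. Once this dictionary is established, the $(-p)^j$-eigenvalue statement of (2) follows from the Frobenius eigenvalues on a Coxeter variety (each $\mathrm H_c^j$ being pure of weight $j$, with the sign provided by the non-split nature of $\mathrm U$), and the explicit summands of (3) are obtained by summing the Coxeter contributions over $s = 0,\dots,\min(j,\theta-j)$ (resp.\ $\min(j,\theta-1-j)$), the cutoff reflecting when the relevant Coxeter stratum remains non-empty. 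I would then invoke \ref{SameUnipotent} to identify unipotent representations of $\mathrm U_{2\theta+1}(\mathbb F_p)$ and $\mathrm{GU}_{2\theta+1}(\mathbb F_p)$, which is what allows us to speak of $\rho_\lambda$ unambiguously on $\mathcal M_\Lambda$.
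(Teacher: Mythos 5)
Your outline is essentially the approach of the paper: the theorem is quoted from \cite{muller}, whose proof proceeds exactly as you describe — part (1) from smoothness, projectivity and Poincaré duality on the Deligne--Lusztig model of $\mathcal M_{\Lambda}$, and parts (2) and (3) from the Ekedahl--Oort stratification into Coxeter varieties combined with Lusztig's computation in \cite{cox}, with the spectral sequence of the stratification degenerating because Frobenius acts by distinct explicit scalars on the contributions. The only quibble is your phrase ``absolute value $p^{j/2}$'': since $F^2$ is the Frobenius relative to $\mathbb F_{p^2}$, the eigenvalue $(-p)^j$ has complex absolute value $p^j=(p^2)^{j/2}$, but this does not affect the degeneration argument.
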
 

Thus, the cohomology of $\mathcal M_{\Lambda}$ consists only of unipotent representations whose associated Young diagram has at most two rows.

\begin{rks} Let us make a few comments.
\setlist{nolistsep}
\begin{enumerate}[label={--},noitemsep]
\item The cohomology groups of index $0$ and $2\theta$ are the trivial representation of $\mathrm{GU}_{2\theta+1}(\mathbb F_p)$.
\item All irreducible representations in the cohomology groups of even index belong to the unipotent principal series, whereas all the ones in the groups of odd index have cuspidal support $(L_2,\rho_2)$.
\item The cohomology group $\mathrm H^j(\mathcal M_{\Lambda}\otimes \mathbb F,\overline{\mathbb Q_{\ell}})$ contains no cuspidal representation unless $\theta = j = 0$ or $\theta = j = 1$. If $\theta = 0$ then $\mathrm H^0$ is the trivial representation of $\mathrm{GU}_1(\mathbb F_p) = \mathbb F_{p^2}^{\times}$, and if $\theta = 1$ then $\mathrm H^1$ is the representation $\rho_{\Delta_2}$ of $\mathrm{GU}_3(\mathbb F_p)$. Both of them are cuspidal.
\end{enumerate}
\end{rks}

\section{Shimura variety and $p$-adic uniformization of the supersingular locus}\label{Shimura}

In this section, we introduce the PEL unitary Shimura variety with signature $(1,n-1)$ as in \cite{vw2} Sections 6.1 and 6.2, and we recall the $p$-adic uniformization theorem of its basic (or supersingular) locus. The Shimura variety can be defined as a moduli problem classifying abelian varieties with additional structures, as follows. Let $\mathbb E$ be a quadratic imaginary extension of $\mathbb Q$ such that $\mathbb E_p \simeq E$. In particular \textbf{$p$ is inert} in $\mathbb E$. Let $B/\mathbb E$ be a simple central algebra of degree $d\geq 1$ which splits over $p$ and at infinity. Let $*$ be a positive involution of the second kind on $B$, and let $\mathbb V$ be a non-zero finitely generated left $B$-module equipped with a non-degenerate $*$-alternating form $\langle\cdot,\cdot\rangle$ taking values in $\mathbb Q$. Assume also that $\dim_{\mathbb E}(\mathbb V) = nd$. Let $\mathbb G$ be the connected reductive group over $\mathbb Q$ whose points over a $\mathbb Q$-algebra $R$ are given by 
$$\mathbb G(R) := \{g \in \mathrm{GL}_{\mathbb E\otimes R}(\mathbb V\otimes R) \,|\, \exists c\in R^{\times} \text{ such that for all } v,w \in \mathbb V\otimes R, \langle gv,gw\rangle = c\langle v,w\rangle \}.$$
We denote by $c:\mathbb G\rightarrow \mathbb G_{m}$ the \textbf{multiplier} character. The base change $\mathbb G_{\mathbb R}$ is isomorphic to a group of unitary similitudes $\mathrm{GU}(r,s)$ of a hermitian space with signature $(r,s)$ where $r+s=n$. We assume that $r=1$ and $s=n-1$. We consider a Shimura datum of the form $(\mathbb G,X)$, where $X$ denotes the unique $\mathbb G(\mathbb R)$-conjugacy class of homorphisms $h:\mathbb C^{\times} \rightarrow \mathbb G_{\mathbb R}$ such that for all $z\in \mathbb C^{\times}$ we have $\langle h(z)\cdot,\cdot \rangle = \langle \cdot, h(\overline{z})\cdot\rangle$, and such that the $\mathbb R$-pairing $\langle\cdot,h(i)\cdot\rangle$ is positive definite. Such a homomorphism $h$ induces a decomposition $\mathbb V\otimes \mathbb C = \mathbb V_1 \oplus \mathbb V_2$. Concretely, $\mathbb V_1$ (resp. $\mathbb V_2$) is the subspace where $h(z)$ acts like $z$ (resp. like $\overline{z}$). Let $F$ be the unique subfield of $\mathbb C$ isomorphic to $\mathbb E$. The reflex field associated to the PEL data, that is the field of definition of $\mathbb V_1$ as a complex representation of $B$, is equal to $F$ unless $n=2$, in which case it is $\mathbb Q$. Nonetheless, for simplicity we will consider the associated Shimura varieties over $F$ even in the case $n=2$. 

\begin{rk}
As remarked in \cite{vw1} Section 6, the group $G$ satisfies the Hasse principle, ie. $\mathrm{ker}^1(\mathbb Q,\mathbb G)$ is a singleton. Therefore, the Shimura variety associated to the Shimura datum $(G,X)$ coincides with the moduli space of abelian varieties that we are going to define.
\end{rk}

Let $\mathbb A_f$ denote the ring of finite adèles over $\mathbb Q$ and let $K\subset G(\mathbb A_f)$ be an open compact subgroup. We define a functor $\mathrm{Sh}_K$ by associating to an $F$-scheme $S$ the set of isomorphism classes of tuples $(A,\lambda_A,\iota_A,\overline{\eta}_A)$ where
\begin{enumerate}[label={--}]
\item $A$ is an abelian scheme over $S$.
\item $\lambda_A: A\rightarrow \widehat{A}$ is a polarization.
\item $\iota_A:B\rightarrow \mathrm{End}(A)\otimes \mathbb Q$ is a morphism of algebras such that $\iota_A(b^*) = \iota_A(b)^{\dagger}$ where $\cdot^{\dagger}$ denotes the Rosati involution associated to $\lambda_A$, and such that the Kottwitz determinant condition is satisfied:
$$\forall b \in B,\, \det(\iota_A(b)) = \det(b\,|\, \mathbb V_1).$$
\item $\overline{\eta}_A$ is a $K$-level structure, that is a $K$-orbit of isomorphisms of $B\otimes \mathbb A_f$-modules $\mathrm{H}_1(A,\mathbb A_f) \xrightarrow{\sim} \mathbb V\otimes \mathbb A_f$ that is compatible with the other data.
\end{enumerate}

The Kottwitz condition in the third point is independent on the choice of $h\in X$. If $K$ is sufficiently small, this moduli problem is represented by a smooth quasi-projective scheme $\mathrm{Sh}_K$ over $F$. When the level $K$ varies, the Shimura varieties form a projective system $(\mathrm{Sh}_K)_K$ equipped with an action of $\mathbb G(\mathbb A_f)$ by Hecke correspondences. 

We assume the existence of a $\mathbb Z_{(p)}$-order $\mathcal O_B$ in $B$, stable under the involution $*$, such that its $p$-adic completion is a maximal order in $B_{\mathbb Q_p}$. We also assume that there is a $\mathbb Z_p$-lattice $\Gamma$ in $\mathbb V\otimes \mathbb Q_p$, invariant under $\mathcal O_B$ and self-dual for $\langle\cdot,\cdot\rangle$. We may fix isomorphisms $\mathbb E_p \simeq E$ and $B_{\mathbb Q_p} \simeq \mathrm M_d(E)$ such that $\mathcal O_B\otimes \mathbb Z_{p}$ is identified with $\mathrm M_d(\mathcal O_E)$. \\
As a consequence of the existence of $\Gamma$, the group $G := \mathbb G_{\mathbb Q_p}$ is unramified. Let $K_0 := \mathrm{Fix}(\Gamma)$ be the subgroup of $G(\mathbb Q_p)$ consisting of all $g$ such that $g\cdot \Gamma = \Gamma$. It is a hyperspecial maximal compact subgroup of $G(\mathbb Q_p)$. We will consider levels of the form $K = K_0K^p$ where $K^p$ is an open compact subgroup of $\mathbb G(\mathbb A_f^p)$. Note that $K$ is sufficiently small as soon as $K^p$ is sufficiently small. By the work of Kottwitz in \cite{kottwitzpoints}, the Shimura varieties $\mathrm{Sh}_{K_0K^p}$ admit integral models over $\mathcal O_{F,(p)}$ which have the following moduli interpretation. We define a functor $\mathrm{S}_{K^p}$ by associating to an $\mathcal O_{F,(p)}$-scheme $S$ the set of isomorphism classes of tuples $(A,\lambda_A,\iota_A,\overline{\eta}^p_A)$ where
\begin{enumerate}[label={--}]
\item $A$ is an abelian scheme over $S$.
\item $\lambda_A: A\rightarrow \widehat{A}$ is a polarization whose order is prime to $p$.
\item $\iota_A:\mathcal O_B\rightarrow \mathrm{End}(A)\otimes \mathbb Z_{(p)}$ is a morphism of algebras such that $\iota_A(b^*) = \iota_A(b)^{\dagger}$ where $\cdot^{\dagger}$ denotes the Rosati involution associated to $\lambda_A$, and such that the Kottwitz determinant condition is satisfied:
$$\forall b \in \mathcal O_B,\, \det(\iota_A(b)) = \det(b\,|\, \mathbb V_1).$$
\item $\overline{\eta}^p_A$ is a $K^p$-level structure, that is a $K^p$-orbit of isomorphisms of $B\otimes \mathbb A_f^p$-modules $\mathrm{H}_1(A,\mathbb A_f^p) \xrightarrow{\sim} \mathbb V\otimes \mathbb A_f^p$ that is compatible with the other data.
\end{enumerate}

If $K^p$ is sufficiently small, this moduli problem is also representable by a smooth quasi-projective scheme over $\mathcal O_{F,(p)}$. When the level $K^p$ varies, these integral Shimura varieties form a projective system $(\mathrm{S}_{K^p})_{K^p}$ equipped with an action of $\mathbb G(\mathbb A_f^p)$ by Hecke correspondences. We have a family of isomorphisms 
$$\mathrm{Sh}_{K_0K^p} \simeq \mathrm{S}_{K^p}\otimes_{\mathcal O_{F,(p)}} F$$
which are compatible as the level $K^p$ varies. 

\begin{notation}
From now on, we identify $F_p$ with $\mathbb Q_{p^2}$ and $\mathcal O_{F_p}$ with $\mathbb Z_{p^2}$. Moreover, the notation $\mathrm S_{K^p}$ will refer to the base change $\mathrm S_{K^p} \otimes_{\mathcal O_{F,(p)}} \mathbb Z_{p^2}$.
\end{notation}

Therefore, under this convention we have isomorphisms $\mathrm{Sh}_{K_0K^p}\otimes_F \mathbb Q_{p^2} \simeq \mathrm{S_{K^p}}\otimes_{\mathbb Z_{p^2}} \mathbb Q_{p^2}$ compatible as the level $K^p$ varies. Let $\overline{\mathrm S}_{K^p} := \mathrm S_{K^p}\otimes_{\mathbb Z_{p^2}} \mathbb F_{p^2}$ denote the special fiber of the Shimura variety. Let $\overline{\mathrm S}_{K^p}^{\mathrm{ss}}$ denote the \textbf{supersingular locus} of the Shimura variety, ie. the locus of points $x \in \overline{\mathrm S}_{K^p}$ such that the universal abelian scheme is supersingular at $x$. Then $\overline{\mathrm S}_{K^p}^{\mathrm{ss}}$ is a closed subvariety of $\overline{\mathrm S}_{K^p}$, and its geometry can be described using the Rapoport-Zink space $\mathcal M$ in a process called $p$-adic uniformization, see \cite{RZ} and \cite{fargues}.\\
Let $x = [\mathcal A_x,\lambda_x,\iota_x,\overline{\eta}^p_x]$ be a geometric point of $\overline{\mathrm S}_{K^p}^{\mathrm{ss}}$. Since $\mathbb G$ satisfies the Hasse principle, according to \cite{fargues} Proposition 3.1.8 the isogeny class of $(\mathcal A_x, \lambda_x, \iota_x)$ does not depend on the choice of $x$. The $p$-divisible group $\mathcal A_x[p^{\infty}]$ inherits an $\mathcal O_B \otimes \mathbb Z_p \simeq \mathrm M_d(\mathcal O_E)$-action from $\iota_A$. Let $\mathbb X_x := \mathcal O_E^d \otimes_{\mathrm M_d(\mathcal O_E)} \mathcal A_x[p^{\infty}]$ with $\mathcal O_E$-action induced by the diagonal inclusion $\mathcal O_E \hookrightarrow \mathcal O_E^d$. According to \cite{vw2} Section 6.3, $\mathbb X_x$ is a unitary $p$-divisible group of signature $(1,n-1)$ over $\mathbb F$ in the sense of Section 1. Let also $\mathcal M_x$ be the Rapoport-Zink space defined as in Section 1, but using $\mathbb X_x$ as a framing object. In particular $\mathcal M_x$ is a formal scheme over $\mathrm{Spf}(W(\mathbb F))$. There exists an isogeny $\mathbb X \otimes \mathbb F \to \mathbb X_x$ of unitary $p$-divisible group, inducing an isomorphism $\mathcal M_{W(\mathbb F)} := \mathcal M \otimes_{\mathbb Z_{p^2}} W(\mathbb F) \xrightarrow{\sim} \mathcal M_x$, see \cite{vw2} Section 6.4. The Rapoport-Zink space $\mathcal M_x$ is equipped with an action of the group $J_x(\mathbb Q_p)$ where $J_x$ is the group of quasi-isogenies of the unitary $p$-divisible group $\mathbb X_x$. The quasi-isogeny $\mathbb X \otimes \mathbb F \to \mathbb X_x$ identifies $J_x$ with $J$ and makes the isomorphism between the Rapoport-Zink spaces $J(\mathbb Q_p)$-equivariant. We define $I := \mathrm{Aut}(\mathcal A_x,\lambda_x,\iota_x)$ as a reductive group over $\mathbb Q$. Since $x$ is in the supersingular locus, the group $I$ is the inner form of $\mathbb G$ such that $I_{\mathbb Q_p} = J$ (in fact $J_x$, which is identified with $J$), $I_{\mathbb A_f} = \mathbb G_{\mathbb A_f^p}$ and $I(\mathbb R) \simeq \mathrm{GU}(0,n)$, which is the unique inner form of $G(\mathbb R)$ that is compact modulo center. In particular, one can think of $I(\mathbb Q)$ as a subgroup both of $J(\mathbb Q_p)$ and of $G(\mathbb A_f^p)$. Let $(\widehat{\mathrm S}_{K^p})^{\mathrm{ss}}$ denote the formal completion of $\mathrm S_{K^p}$ along the supersingular locus. The $p$-adic uniformization theorem relates $(\widehat{\mathrm S}_{K^p})^{\mathrm{ss}}$ with a certain quotient of $\mathcal M_x$, see \cite{RZ} Theorem 6.23. Using the isomorphism above, we may replace $\mathcal M_x$ with $\mathcal M_{W(\mathbb F)}$ and obtain the following statement.

\begin{theo}\label{Uniformization}
There is an isomorphism of formal schemes over $\mathrm{Spf}(W(\mathbb F))$
$$\Theta_{K^p}: I(\mathbb Q)\backslash \left( \mathcal M_{W(\mathbb F)} \times \mathbb G(\mathbb A_f^p) / K^p \right) \xrightarrow{\sim} (\widehat{\mathrm S}_{K^p})^{\mathrm{ss}}\otimes_{\mathbb Z_{p^2}} W(\mathbb F)$$
which is compatible with the $\mathbb G(\mathbb A_f^p)$-action by Hecke correspondences as the level $K^p$ varies.
\end{theo}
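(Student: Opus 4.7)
The strategy is to produce a morphism of formal schemes by first constructing a map of functors on $\mathrm{Nilp}$-schemes from the total space $\mathcal M \times G(\mathbb A_f^p)/K^p$ to $(\widehat{\mathrm S}_{K^p})_{|b_0}$, and then showing that it descends to the quotient by $I(\mathbb Q)$ and induces an isomorphism there. To set up the construction, I would fix a geometric basepoint $\overline{x}_0$ of the basic stratum together with an identification $(\mathcal A_{\overline{x}_0}[p^\infty],\lambda,\iota) \simeq (\mathbb X, \lambda_{\mathbb X}, \iota_{\mathbb X})$ and a trivialisation of the prime-to-$p$ rational Tate module $\mathrm H_1(\mathcal A_{\overline{x}_0}, \mathbb A_f^p) \simeq \mathbb V \otimes \mathbb A_f^p$. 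These choices fix, in particular, the inner form $I = \mathrm{Aut}(\mathcal A_{\overline{x}_0},\lambda,\iota)$, and they embed $I(\mathbb Q)$ diagonally into $J \times G(\mathbb A_f^p)$.

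Given a point $((X, \iota_X, \lambda_X, \rho_X),\, g K^p)$ in $\mathcal M(S) \times G(\mathbb A_f^p)/K^p$ over some $S \in \mathrm{Nilp}$, the abelian scheme attached to it is built by a variant of Serre's construction: the quasi-isogeny $\rho_X$ identifies $X$ rationally with a deformation of $\mathcal A_{\overline{x}_0}[p^\infty]$, while the element $g$ modifies the prime-to-$p$ integral structure on $\mathcal A_{\overline{x}_0}$. Concretely, one uses the lattice data encoded by $X$ at $p$ together with the lattice determined by $g$ and $K^p$ away from $p$ to carve out an actual abelian scheme $A$ inside the isogeny class of $\mathcal A_{\overline{x}_0}$. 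Equipped with the induced polarisation, $\mathcal O_B$-action and $K^p$-level structure, it produces a point of $\mathrm S_{K^p}$ whose reduction lies in $\overline{\mathrm S}_{K^p}(b_0)$. One then checks invariance under the diagonal $I(\mathbb Q)$-action: replacing the base-point rigidifications by an inner automorphism $\gamma \in I(\mathbb Q)$ twists $(\rho_X,g)$ to $(\gamma\rho_X,\gamma g)$ without altering the resulting tuple up to isomorphism, so the map descends to $\Theta_{K^p}$.

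To prove that $\Theta_{K^p}$ is an isomorphism, I would argue in two steps. On geometric points of the special fibre, both sides classify triples $(A, \lambda, \iota, \overline{\eta}^p)$ whose $p$-divisible group is basic; by the Hasse principle for $G$ (already invoked in \ref{RationalPEL-Datum}), every such isogeny class is unique, and the lattice data supplied by $\mathcal M(k)$ together with the double coset $I(\mathbb Q) \backslash G(\mathbb A_f^p)/K^p$ account precisely for the remaining integral choices. Passing from geometric points to formal neighbourhoods uses the Serre--Tate theorem: infinitesimal deformations of an abelian variety in characteristic $p$ are equivalent to deformations of its $p$-divisible group respecting the polarisation and the $\mathcal O_B$-action, which is exactly what $\mathcal M$ parametrises. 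Hecke-equivariance as $K^p$ varies is then a direct consequence of the functoriality of the construction in the $g$-variable.

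The main obstacle is not the local construction but the verification that the left-hand side is well defined as a formal scheme and that $\Theta_{K^p}$ is a global isomorphism. This requires showing that $I(\mathbb Q)$ acts on $\mathcal M \times G(\mathbb A_f^p)/K^p$ properly and with finite stabilisers, so that the quotient is representable. The crucial input here is that $I(\mathbb R) \simeq \mathrm{GU}(0,n)$ is compact modulo centre, combined with standard reduction theory for adelic quotients, which together ensure that each orbit is discrete and that only finitely many orbits meet any quasi-compact open of $\mathcal M$. Granting this, the isomorphism of formal schemes can be checked étale-locally, reducing the statement to the identification of a single connected component with a formal neighbourhood in $(\widehat{\mathrm S}_{K^p})_{|b_0}$, which is precisely the content of the Serre--Tate step above.
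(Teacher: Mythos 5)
The paper does not prove this statement: it is quoted verbatim from Rapoport--Zink (\cite{RZ}, Theorem 6.30 in their numbering; the surrounding discussion in \ref{Uniformization} only records the consequences on special and generic fibers via \cite{fargues}). So there is no internal proof to compare against; what you have written is a reconstruction of the argument in the cited source, and as an outline it is faithful to it. The ingredients you name are the right ones and appear in the right order: rigidification at a basepoint of the basic stratum, the construction of an abelian scheme in the fixed isogeny class from the pair (quasi-isogeny of $p$-divisible groups, prime-to-$p$ level), descent to the $I(\mathbb Q)$-quotient, Serre--Tate theory to match formal neighbourhoods with the deformation problem that $\mathcal M$ represents, and discreteness of the arithmetic subgroups $\Gamma_k = I(\mathbb Q)\cap g_kK^pg_k^{-1}$ coming from compactness of $I(\mathbb R)$ modulo centre.

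Two points deserve to be flagged as more than routine, since your sketch names them without carrying them out. First, the ``variant of Serre's construction'' is the genuinely technical heart of the proof: given $S\in\mathrm{Nilp}$ and a quasi-isogeny $\rho_X$ of $p$-divisible groups over $\overline S$, one must show that the modification of $\mathcal A_{\overline x_0}\times S$ it dictates is an \emph{algebraizable} (polarized, $\mathcal O_B$-linear) abelian scheme over $S$, not merely a formal one; this is the content of \cite{RZ} Chapter 6 (descent of quasi-isogenies) and cannot be waved through. Second, surjectivity of $\Theta_{K^p}$ onto the whole basic stratum requires that \emph{every} geometric point of $\overline{\mathrm S}_{K^p}(b_0)$ lies in the isogeny class of the basepoint; you correctly invoke the Hasse principle here, but note that this is exactly where the hypothesis that $b_0$ is basic is used (for non-basic strata the analogous uniformization fails to be surjective, and the statement must be replaced by Mantovan-type formulas). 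With those caveats, your proposal is a correct account of the standard proof.
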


This isomorphism is known as the \textbf{$p$-adic uniformization} of the supersingular locus. The induced map on the special fiber is an isomorphism 
$$(\Theta_{K^p})_s : I(\mathbb Q)\backslash \left( \mathcal M_{\mathrm{red}}\otimes_{\mathbb F_{p^2}} \mathbb F \times G(\mathbb A_f^p) / K^p \right) \xrightarrow{\sim} \overline{\mathrm S}_{K^p}^{\mathrm{ss}}\otimes_{\mathbb F_{p^2}} \mathbb F$$
of schemes over $\mathbb F$. The double coset space $I(\mathbb Q)\backslash \mathbb G(\mathbb A_f^p) / K^p$ is finite, so that we may fix a system of representatives $g_1,\ldots ,g_s \in \mathbb G(\mathbb A_f^p)$. For every $1 \leq k \leq s$, we define $\Gamma_k := I(\mathbb Q) \cap g_k K^p g_k^{-1}$, which we see as a discrete subgroup of $J(\mathbb Q_p)$ that is cocompact modulo the center. The left hand side of the $p$-adic uniformization theorem is isomorphic to the disjoint union of the quotients $\Gamma_k \backslash \mathcal M_{W(\mathbb F)}$. In particular for the special fiber, it is an isomorphism
$$(\Theta_{K^p})_s:\bigsqcup_{k=1}^s \Gamma_k \backslash (\mathcal M_{\mathrm{red}} \otimes \mathbb F) \xrightarrow{\sim} \overline{\mathrm S}_{K^p}^{\mathrm{ss}} \otimes \mathbb F.$$
Let $\Phi_{K^p}^k$ be the composition $\mathcal M_{\mathrm{red}} \otimes \mathbb F \rightarrow \Gamma_k \backslash (\mathcal M_{\mathrm{red}} \otimes \mathbb F) \rightarrow \overline{\mathrm{Sh}}_{C^p}^{\,\mathrm{ss}} \otimes \mathbb F$ and let $\Phi_{K^p}$ be the disjoint union of the $\Phi_{K^p}^k$. The map $\Phi_{K^p}$ is surjective. According to \cite{vw2} Section 6.4, it is a local isomorphism which can be used to transport the Bruhat-Tits stratification from $\mathcal M_{\mathrm{red}}$ to $\overline{\mathrm S}_{K^p}^{\mathrm{ss}}$.

\begin{prop}\label{ShimuraBT}
Let $\Lambda \in \mathcal L$. For any $1\leq k \leq s$, the restriction of $\Phi_{K^p}^k$ to $\mathcal M_{\Lambda} \otimes \mathbb F$ is an isomorphism onto its image.
\end{prop}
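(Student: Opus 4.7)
The plan is to show that $f := \Phi_{K^p}^k|_{\mathcal M_{\Lambda}}$ is finite étale of degree $1$ onto its image $Z := f(\mathcal M_{\Lambda})$, and hence an isomorphism onto $Z$. Since $\mathcal M_{\Lambda}$ is projective by \ref{IsomorphismWithDL-Variety}, the map $f$ is proper, so $Z$ is closed in $\overline{\mathrm S}_{K^p}(b_0)$. Since $\Phi_{K^p}$ is a local isomorphism, every point of $\mathcal M_{\Lambda}$ has a Zariski open neighborhood on which $f$ is an open immersion into $Z$; in particular, $f$ is a local isomorphism onto $Z$, hence étale. Being proper and étale, $f$ is finite étale, so its degree is locally constant on $Z$. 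As $\mathcal M_{\Lambda}$ is irreducible by \ref{IsomorphismWithDL-Variety}, the image $Z$ is connected, and the degree is therefore constant on $Z$. It thus suffices to verify that the fiber of $f$ has cardinality $1$ over a single suitable point of $Z$.

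I would compute this fiber over the image of a geometric point $x \in \mathcal M_{\Lambda}(\overline{\mathbb F_p})$ lying in the open dense stratum
\[
\mathcal M_{\Lambda}^{\circ} := \mathcal M_{\Lambda} \setminus \bigcup_{\substack{\Lambda' \in \mathcal L \\ \Lambda' \subsetneq \Lambda}} \mathcal M_{\Lambda'}.
\]
Any other lift $y \in \mathcal M_{\Lambda}$ of $f(x)$ has the form $y = \gamma \cdot x$ for some $\gamma \in \Gamma_k$. Since the action of $\gamma$ induces an isomorphism $\mathcal M_{\Lambda} \xrightarrow{\sim} \mathcal M_{\gamma \cdot \Lambda}$, one has $x = \gamma^{-1} \cdot y \in \mathcal M_{\Lambda} \cap \mathcal M_{\gamma^{-1} \cdot \Lambda}$; by \ref{IncidenceStrata}~(2) this intersection equals $\mathcal M_{\Lambda \cap \gamma^{-1} \cdot \Lambda}$, which is a closed sub-stratum of $\mathcal M_{\Lambda}$. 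Since $x \in \mathcal M_{\Lambda}^{\circ}$ is not contained in any strictly smaller stratum, one must have $\Lambda \cap \gamma^{-1} \cdot \Lambda = \Lambda$, i.e.\ $\Lambda \subset \gamma^{-1} \cdot \Lambda$. By \ref{OrbitType} the action of $J$ preserves orbit types, so $t(\gamma^{-1} \cdot \Lambda) = t(\Lambda)$, and \ref{IncidenceStrata}~(1) then forces $\gamma^{-1} \cdot \Lambda = \Lambda$, i.e.\ $\gamma \in J_{\Lambda}$.

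Because $K^p$ is taken sufficiently small, the discrete subgroup $\Gamma_k$ of $J$ is torsion-free, and the intersection $\Gamma_k \cap J_{\Lambda}$ of the discrete group $\Gamma_k$ with the compact subgroup $J_{\Lambda}$ is then trivial. Hence $\gamma = 1$ and $y = x$, so the fiber of $f$ over $f(x)$ within $\mathcal M_{\Lambda}$ reduces to $\{x\}$. Combined with the constancy of degree for finite étale covers, this proves that $f$ has degree $1$ everywhere on $Z$, whence $f: \mathcal M_{\Lambda} \to Z$ is an isomorphism. The main subtlety of the argument lies in the first step, namely in identifying $f$ as étale onto its schematic image $Z$, which rests on a careful reading of the local isomorphism property of $\Phi_{K^p}$; once this is granted, the generic fiber computation and the rigidity of finite étale covers complete the proof.
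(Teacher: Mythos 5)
There is a genuine gap, and it sits exactly where you flag the "main subtlety". The fact that $\Phi_{K^p}^k$ is a local isomorphism onto the ambient space $\overline{\mathrm S}_{K^p}(b_0)$ only makes $f := \Phi_{K^p}^k|_{\mathcal M_{\Lambda}}$ a local immersion, i.e. unramified; it does \emph{not} make $f$ a local isomorphism onto $Z := f(\mathcal M_{\Lambda})$. Near a point $f(x)$ the image $Z$ may also contain the images of points of $\mathcal M_{\Lambda}$ far away from $x$ (other "sheets"), in which case $Z$ would be singular there and $f$ would fail to be flat over $Z$; ruling this out is precisely the injectivity you are trying to prove, so the appeal to constancy of the degree of a finite étale cover is circular. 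The correct scheme-theoretic packaging is rather: a proper, unramified, universally injective morphism is a closed immersion, hence an isomorphism onto its scheme-theoretic image. This shifts the whole burden onto proving injectivity on \emph{all} geometric points, with no help from any degree argument.

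That is the second gap: your fibre computation is correct, but only over the open stratum $\mathcal M_{\Lambda}^{\circ}$. At a point $x$ lying in a proper sub-stratum $\mathcal M_{\Lambda'} \subsetneq \mathcal M_{\Lambda}$, the same computation only yields $\Lambda \cap \gamma^{-1}\Lambda \in \mathcal L$, i.e. $\gamma^{-1}\Lambda$ is equal to, or a neighbour of, $\Lambda$ in the building; it does not force $\gamma^{-1}\Lambda = \Lambda$, so you cannot conclude $\gamma \in J_{\Lambda}$. The set of $\gamma \in J$ carrying $\Lambda$ to one of its finitely many same-type neighbours is a finite union of cosets of $J_{\Lambda}$, hence compact, so it meets $\Gamma_k$ in a finite set --- but these elements need not be torsion, so torsion-freeness of $\Gamma_k$ does not dispose of them. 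One genuinely needs the stronger consequence of "$K^p$ sufficiently small", namely that $\Gamma_k$ meets this compact transporter set only in the identity (uniformly in $\Lambda$); this is the input used in the proof of Proposition 6.5 of \cite{vw2}, which the present paper cites rather than reproves. With that input in hand, injectivity holds at every point and the "proper $+$ unramified $+$ universally injective" criterion finishes the argument; as written, your proof establishes neither the étaleness onto $Z$ nor the injectivity away from the generic stratum.
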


We will denote by $\overline{\mathrm{S}}_{K^p,\Lambda,k}$ the scheme theoretic image of $\mathcal M_{\Lambda} \otimes \mathbb F$ through $\Phi^k$. A subscheme of the form $\overline{\mathrm{S}}_{K^p,\Lambda,k}$ is called a \textbf{closed Bruhat-Tits stratum} of the Shimura variety. Together, they form the Bruhat-Tits stratification of the supersingular locus, whose combinatorics is described by the union of the complexes $\Gamma_k \backslash \mathcal L$.

\section{The cohomology of the Rapoport-Zink space at maximal level}

\subsection{The spectral sequence associated to an open cover of $\mathcal M^{\mathrm{an}}$}

The formal scheme $\mathcal M$ is special in the sense of \cite{berk2} since it is formally locally of finite type. Thus, we may consider the associated analytic space $\mathcal M^{\mathrm{an}}$ over $\mathbb Q_{p^2}$ in the sense of loc. cit. We note that $\mathcal M^{\mathrm{an}}$ is smooth, as follows from \cite{RZ} Proposition 5.17 (to be precise, this statement is about the rigid space $\mathcal M^{\mathrm{rig}}$ in the sense of Berthelot, but it is equivalent to the corresponding statement for $\mathcal M^{\mathrm{an}}$, see for instance \cite{fargues} Lemme 2.3.24, or Appendice D for a brief summary of various comparisons between analytic, rigid and adic spaces). We refer to $\mathcal M^{\mathrm{an}}$ as the generic fiber of $\mathcal M$. It is equipped with a reduction (or specialization) map $\mathrm{red}: \mathcal M^{\mathrm{an}} \to \mathcal M_{\mathrm{red}}$ which is anticontinuous, ie. the preimage of a closed (resp. open) subset is open (resp. closed). If $Z$ is a locally closed subset of $\mathcal M_{\mathrm{red}}$, then the preimage $\mathrm{red}^{-1}(Z)$ is called the \textbf{analytical tube over $Z$}. It is an analytic domain in $\mathcal M^{\mathrm{an}}$ and it coincides with the generic fiber of the formal completion of $\mathcal M_{\mathrm{red}}$ along $Z$. If $i\in \mathbb Z$ such that $ni$ is even, then the tube $\mathrm{red}^{-1}(\mathcal M_i) = \mathcal M_i^{\mathrm{an}}$ is open and closed in $\mathcal M^{\mathrm{an}}$ and we have $\mathcal M^{\mathrm{an}} = \bigsqcup_{ni\in 2\mathbb Z} \mathcal M_i^{\mathrm{an}}.$ If $\Lambda \in \mathcal L$, we define
$$U_{\Lambda} := \mathrm{red}^{-1}(\mathcal M_{\Lambda}),$$ 
the tube over $\mathcal M_{\Lambda}$. The action of $J(\mathbb Q_p)$ on $\mathcal M$ induces an action on the generic fiber $\mathcal M^{\mathrm{an}}$ such that $\mathrm{red}$ is $J(\mathbb Q_p)$-equivariant. By restriction it induces an action of $J_{\Lambda}$ on $U_{\Lambda}$. The analytic space $\mathcal M^{\mathrm{an}}$ and each of the open subspaces $U_{\Lambda}$ have dimension $n-1$.

We fix a prime number $\ell \not = p$. In \cite{berk}, Berkovich developped a theory of étale cohomology for his analytic spaces. Using it we may define the cohomology of the Rapoport-Zink space $\mathcal{M}^{\mathrm{an}}$ by the formula
\begin{align*}
\mathrm H^{\bullet}_c(\mathcal M^{\mathrm{an}}\widehat{\otimes}\, \mathbb C_p,\overline{\mathbb Q_{\ell}}) & := \varinjlim_U \mathrm H_c^{\bullet}(U\widehat{\otimes} \, \mathbb C_p,\overline{\mathbb Q_{\ell}}) \\
& = \varinjlim_U\varprojlim_n \mathrm H_c^{\bullet}(U\widehat{\otimes} \, \mathbb C_p,\mathbb Z/\ell^n\mathbb Z)\otimes \overline{\mathbb Q_{\ell}}
\end{align*}
where $U$ goes over all relatively compact open of $\mathcal M^{\mathrm{an}}$. These cohomology groups are equipped with commuting actions of $J(\mathbb Q_p)$ and of $W$, the absolute Weil group of $\mathbb Q_{p^2}$. The $J(\mathbb Q_p)$-action causes no problem of interpretation, but the $W$-action requires some explanations, see \cite{fargues} Section 4.4.1. Let $\mathrm{Frob} = \sigma^{-2}$ be the geometric Frobenius in $W$. The inertia subgroup $I\subset W$ acts on $\mathrm H^{\bullet}_c(\mathcal M^{\mathrm{an}}\widehat{\otimes}\, \mathbb C_p,\overline{\mathbb Q_{\ell}})$ via the coefficients $\mathbb C_p$, whereas $\mathrm{Frob}$ acts via the \textbf{Weil descent datum} defined by Rapoport and Zink in \cite{RZ} 3.48. Let 
$$F_{\mathbb X}: \mathbb X \otimes \mathbb F \to (\mathbb X \otimes \mathbb F)^{(p^2)}$$
denote the Frobenius morphism relative to $\mathbb F_{p^2}$. Let $(\mathcal M \otimes W(\mathbb F))^{(p^2)}$ be the functor defined by 
$$(\mathcal M \otimes W(\mathbb F))^{(p^2)}(S) := \mathcal M (S^{(p^2)}),$$ 
for all $W(\mathbb F)$-scheme $S$ where $p$ is locally nilpotent. The Weil descent datum is the isomorphism $\alpha_{\mathrm{RZ}}: \mathcal M \otimes W(\mathbb F)  \xrightarrow{\sim} (\mathcal M \otimes W(\mathbb F))^{(p^2)}$ given by $(X,\iota,\lambda,\rho) \in \mathcal M(S) \mapsto (X,\iota,\lambda,F_{\mathbb X} \circ \rho)$. We may describe this in terms of rational points and Dieudonné modules. If $k/\mathbb F$ is a perfect field extension, let $\tau := \mathrm{id}\otimes \sigma^2$ on $\mathbf V_k = \mathbf V \otimes_{\mathbb Q_{p^2}} W(k)_{\mathbb Q}$. Since we use covariant Dieudonné theory, the relative Frobenius $F_{\mathbb X}$ corresponds to the Verschiebung $\mathbf V^2$. By construction of $\mathbb X$, we have $\mathbf V^2 = p\tau^{-1}$ in $\mathbf V_k$. Therefore, $\alpha_{\mathrm{RZ}}$ sends a Dieudonné module $M\in \mathcal M(k)$ to $p\tau^{-1}(M)$.

\begin{rk}
We stress that the Weil descent datum $\alpha_{\mathrm{RZ}}$ is not effective, however the Rapoport-Zink space is defined over $\mathbb Z_{p^2}$, and this rational structure is induced by the effective descent datum $p^{-1}\alpha_{\mathrm{RZ}}$, with $p = p\cdot\mathrm{id} \in \mathrm Z(J(\mathbb Q_p))$.
\end{rk}

We define 
$$\varphi = (p^{-1}\cdot\mathrm{id},\mathrm{Frob}) \in J(\mathbb Q_p)\times W.$$ 
The action of $\varphi$ on the cohomology of $\mathcal M^{\mathrm{an}}$ coincides with the action of a geometric Frobenius induced by the effective descent datum $p^{-1}\alpha_{\mathrm{RZ}}$. Thus, we refer to $\varphi$ as the \textbf{rational Frobenius element}.

\begin{notation}
To alleviate the notations, we will omit the coefficients $\mathbb C_p$. Thefore we write $\mathrm H^{\bullet}_c(\mathcal M^{\mathrm{an}},\overline{\mathbb Q_{\ell}})$ and similarly for subspaces of $\mathcal M^{\mathrm{an}}$.
\end{notation}

The cohomology groups $\mathrm H^{\bullet}_c(\mathcal M^{\mathrm{an}},\overline{\mathbb Q_{\ell}})$ are concentrated in degrees $0$ to $2(n-1)$. According to \cite{fargues} Corollaire 4.4.7, these  groups are smooth for the $J(\mathbb Q_p)$-action and continous for the $I$-action. For $g\in J(\mathbb Q_p)$, we have an isomorphism
$$g : \mathrm H^{\bullet}_c(\mathcal M_i^{\mathrm{an}},\overline{\mathbb Q_{\ell}}) \xrightarrow{\sim} \mathrm H^{\bullet}_c(\mathcal M_{i+\alpha(g)}^{\mathrm{an}},\overline{\mathbb Q_{\ell}}),$$
which is induced by $g^{-1}$ and contravariance of cohomology. In particular, the action of $\mathrm{Frob}$ gives an isomorphism $\mathrm H^{\bullet}(\mathcal M_i,\overline{\mathbb Q_{\ell}}) \xrightarrow{\sim} \mathrm H^{\bullet}(\mathcal M_{i+2},\overline{\mathbb Q_{\ell}})$. Let $(J(\mathbb Q_p)\times W)^{\circ}$ be the subgroup of $J(\mathbb Q_p)\times W$ consisting of all elements of the form $(g,u\mathrm{Frob}^j)$ with $u\in I$ and $\alpha(g) = -2j$. In fact, we have $(J(\mathbb Q_p)\times W)^{\circ} = (J^{\circ}\times I)\varphi^{\mathbb Z}$ where $J^{\circ} := \mathrm{Ker}(\alpha) \subset J(\mathbb Q_p)$, and $\alpha = v_p \circ c$ was introduced in Section 1.1. Each group $\mathrm H^{\bullet}_c(\mathcal M_i^{\mathrm{an}},\overline{\mathbb Q_{\ell}})$ is a $(J(\mathbb Q_p)\times W)^{\circ}$-representation, and we have an isomorphism
$$\mathrm H^{\bullet}_c(\mathcal M^{\mathrm{an}},\overline{\mathbb Q_{\ell}}) \simeq \mathrm{c-Ind}_{(J(\mathbb Q_p)\times W)^{\circ}}^{J(\mathbb Q_p)\times W} \, \mathrm H^{\bullet}_c(\mathcal M_0^{\mathrm{an}},\overline{\mathbb Q_{\ell}}).$$
In particular, when $\mathrm H^{k}_c(\mathcal M^{\mathrm{an}},\overline{\mathbb Q_{\ell}})$ is non-zero it is infinite dimensional. However, by \cite{fargues} Proposition 4.4.13, these cohomology groups are always of finite type as $J(\mathbb Q_p)$-modules. 

We introduce the \v{C}ech spectral sequence associated to the locally finite covering of $\mathcal M^{\mathrm{an}}$ by the $U_{\Lambda}$'s. For $i\in \mathbb Z$ such that $ni$ is even and for $0\leq \theta \leq \thetamax$, we denote by $\mathcal L_i^{(\theta)}$ the subset of $\mathcal L_i$ whose elements are those lattices of orbit type $2\theta+1$. We also write $\mathcal L^{(\theta)}$ for the union of the $\mathcal L_i^{(\theta)}$. Then $\{U_{\Lambda}\}_{\Lambda \in \mathcal L^{(\thetamax)}}$ is an open cover of $\mathcal M^{\mathrm{an}}$. We may apply \cite{fargues} Proposition 4.2.2 to deduce the existence of the following \v{C}ech spectral sequence computing the cohomology of the Rapoport-Zink space, concentrated in degrees $a\leq 0$ and $0\leq b \leq 2(n-1)$,
\begin{equation}\label{SpectralSequence}
E_{1}^{a,b}: \bigoplus_{\gamma \in I_{-a+1}} \mathrm H^b_c(U(\gamma),\overline{\mathbb Q_{\ell}}) \implies \mathrm H^{a+b}_c(\mathcal M^{\mathrm{an}},\overline{\mathbb Q_{\ell}}).
\tag{$E$}
\end{equation}
Here, for $s\geq 1$ the set $I_s$ is defined by 
$$I_s := \left\{\gamma = (\Lambda^1,\ldots ,\Lambda^s) \, \middle | \, \forall 1\leq j \leq s, \Lambda^j \in \mathcal L^{(\thetamax)} \text{ and } U(\gamma):= \bigcap_{j=1}^s U_{\Lambda^j} \not = \emptyset \right\}.$$
Necessarily, if $\gamma = (\Lambda^1,\ldots ,\Lambda^s)\in I_s$ then there exists a unique $i$ such that $ni$ is even and $\Lambda^j \in \mathcal L_i^{(\thetamax)}$ for all $J(\mathbb Q_p)$. We then define $\Lambda(\gamma) := \bigcap_{j=1}^s \Lambda^j \in \mathcal L_i$ so that $U(\gamma) = U_{\Lambda(\gamma)}$. In particular, the open subspace $U(\gamma)$ depends only on the intersection $\Lambda(\gamma)$ of the elements in the $s$-tuple $\gamma$. \\
For $s\geq 2$ and $\gamma = (\Lambda^1,\ldots,\Lambda^{s}) \in I_s$, define $\gamma_j := (\Lambda^1,\ldots ,\widehat{\Lambda^j},\ldots ,\Lambda^s) \in I_{s-1}$ for the $(s-1)$-tuple obtained from $\gamma$ by removing the $j$-th term. Besides, for $\Lambda,\Lambda' \in \mathcal L_i$ with $\Lambda' \subset \Lambda$, we write $f_{\Lambda',\Lambda}^b$ for the natural map $\mathrm H^b_c(U_{\Lambda'},\overline{\mathbb Q_{\ell}}) \to \mathrm H^b_c(U_{\Lambda},\overline{\mathbb Q_{\ell}})$ induced by the open immersion $U_{\Lambda'} \subset U_{\Lambda}$. For $a\leq -1$, the differential $E_1^{a,b}\to E_1^{a+1,b}$ is denoted by $\varphi^{a,b}$. It is the direct sum over all $\gamma \in I_{-a+1}$ of the maps 
\begin{align*}
\mathrm H^b_c(U(\gamma),\overline{\mathbb Q_{\ell}}) & \to \bigoplus_{\delta \in \{\gamma_1,\ldots \gamma_{-a+1}\}} \mathrm H^b_c(U(\delta),\overline{\mathbb Q_{\ell}})\\
v & \mapsto \left(\sum_{\substack{j=1\\ \gamma_j = \delta}}^{-a+1}(-1)^{j+1}f_{\Lambda(\gamma),\Lambda(\gamma_j)}^b(v)\right)_{\delta \in \{\gamma_1,\ldots, \gamma_{-a+1}\}}.
\end{align*}

An element $g\in J(\mathbb Q_p)$ acts on the set $I_s$ by sending $\gamma$ to $g\cdot\gamma := (g\Lambda^1,\ldots,g\Lambda^s)$. The action of $g^{-1}$ induces an isomorphism
$$\mathrm H^{\bullet}_c(U(\gamma),\overline{\mathbb Q_{\ell}}) \xrightarrow{\sim} \mathrm H^{\bullet}_c(U(g\cdot \gamma),\overline{\mathbb Q_{\ell}}).$$
Likewise, $\mathrm{Frob} \in W$ induces an isomorphism $\mathrm H_c^{\bullet}(U(\gamma),\overline{\mathbb Q_{\ell}}) \xrightarrow{\sim} \mathrm H_c^{\bullet}(U(p\cdot\gamma),\overline{\mathbb Q_{\ell}})$. This defines a natural $J(\mathbb Q_p)\times W$-action on the terms $E_1^{a,b}$, with respect to which the spectral sequence is equivariant. 


In order to analyze the spectral sequence \eqref{SpectralSequence}, we begin by relating the cohomology of a tube $U_{\Lambda}$ to the cohomology of the corresponding closed Bruhat-Tits stratum $\mathcal M_{\Lambda}$. Note that by restriction, $\mathrm H_c^{\bullet}(U_{\Lambda},\overline{\mathbb Q_{\ell}})$ is naturally a representation of the subgroup $(J_{\Lambda}\times I)\varphi^{\mathbb Z} \subset J(\mathbb Q_p)\times W$. 

\begin{prop} \label{NearbyCycles}
Let $\Lambda \in \mathcal L$ and let $0\leq b \leq 2(n-1)$. There is a $(J_{\Lambda}\times I)\varphi^{\mathbb Z}$-equivariant isomorphism 
$$\mathrm H^{b}(\mathcal M_{\Lambda}\otimes \mathbb F, \overline{\mathbb Q_{\ell}}) \simeq \mathrm H^{b}(U_{\Lambda},\overline{\mathbb Q_{\ell}})$$
where, on the left-hand side, the inertia $I$ acts trivially and $\varphi$ acts like the geometric Frobenius $\mathrm{Frob}$.
\end{prop}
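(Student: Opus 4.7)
The plan is to apply Berkovich's comparison theorem, which relates the $\ell$-adic étale cohomology of the generic fiber of a formal scheme to the cohomology of its reduced special fiber with coefficients in the nearby cycles sheaf. Because $\mathcal M$ is formally smooth over $\mathrm{Spf}(\mathbb Z_{p^2})$, the nearby cycles functor applied to the constant sheaf $\overline{\mathbb Q_{\ell}}$ yields the constant sheaf in degree $0$ and vanishes in higher degrees. Specializing this to the formal completion $\widehat{\mathcal M}_{|\mathcal M_{\Lambda}}$, whose reduced special fiber is the projective scheme $\mathcal M_{\Lambda}$ and whose generic fiber is the compact analytic space $U_{\Lambda}\widehat{\otimes}\mathbb C_p$, the corresponding Leray spectral sequence for the reduction map degenerates and produces the desired isomorphism
$$\mathrm H^{b}(\mathcal M_{\Lambda}\otimes \mathbb F, \overline{\mathbb Q_{\ell}}) \xrightarrow{\sim} \mathrm H^{b}(U_{\Lambda}\widehat{\otimes}\mathbb C_p,\overline{\mathbb Q_{\ell}}).$$
Note that compactly supported and usual cohomology agree on both sides by properness, so this is consistent with the use of compactly supported cohomology in \ref{SpectralSequence}.

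Equivariance under $J_{\Lambda}$ follows from the naturality of Berkovich's construction: any $g\in J_{\Lambda}$ induces compatible automorphisms of $\mathcal M_{\Lambda}$ and of $U_{\Lambda}$ through the reduction map, and Berkovich's comparison is functorial in the pair consisting of a formal scheme and a closed subset of its special fiber. For the Galois action, I would analyze the effective descent datum $p\alpha_{\mathrm{RZ}}^{-1}$. On $k$-rational points, by the computation in \ref{SpectralSequence}, it sends a Dieudonné module $M$ to $\tau(M)$; since the lattice $\Lambda$ is defined over $\mathbb Z_{p^2}$, this preserves the condition $M\subset \Lambda\otimes W(k)$ and therefore stabilizes $\mathcal M_{\Lambda}$. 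Its restriction to $\mathcal M_{\Lambda}\otimes\mathbb F$ is the geometric Frobenius $F^2$ attached to the $\mathbb F_{p^2}$-structure, while it restricts to an effective descent datum on $U_{\Lambda}$ inducing the rational Frobenius $\tau = (p^{-1}\cdot\mathrm{id},\mathrm{Frob})$ on its cohomology. Effectiveness of the descent implies that $U_{\Lambda}\widehat{\otimes}\mathbb C_p$ descends to an analytic space over $\mathbb Q_{p^2}$, whence the inertia subgroup $I\subset W$ acts trivially; naturality of the comparison isomorphism then identifies the action of $\tau$ with that of $F^2$ on $\mathrm H^{b}(\mathcal M_{\Lambda}\otimes\mathbb F,\overline{\mathbb Q_{\ell}})$.

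The main obstacle is bookkeeping rather than substantive mathematics: verifying that the smooth-base-change input for Berkovich's comparison applies in the precise form required, and tracking the various sign and inverse conventions (geometric versus arithmetic Frobenius, and the role of the compensating central element $p^{-1}\cdot\mathrm{id}$ that cancels the shift of connected components introduced by $\alpha_{\mathrm{RZ}}^{-1}$). Once one checks that $p\alpha_{\mathrm{RZ}}^{-1}$ is indeed the effective descent datum giving the natural $\mathbb F_{p^2}$-structure and that its restriction to $\mathcal M_{\Lambda}$ is the geometric Frobenius $F^2$, the identification of the actions of $\tau$ and $F^2$ is transparent through Berkovich's comparison.
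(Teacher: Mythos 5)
There is a genuine gap at the very first step. You apply Berkovich's comparison theorem directly to the formal completion of $\mathcal M$ along $\mathcal M_{\Lambda}$, on the grounds that $\mathcal M$ is formally smooth over $\mathrm{Spf}(\mathbb Z_{p^2})$. But formal smoothness is not the hypothesis of Berkovich's theorem: \cite{berk2} Corollary 3.7 requires a \emph{smooth} formal scheme in Berkovich's sense, which in particular is adic over the base. The Rapoport-Zink space $\mathcal M$ is formally locally of finite type and formally smooth, but it is \emph{not} adic over $\mathrm{Spf}(\mathbb Z_{p^2})$ once $n\geq 3$ (see \cite{fargues} Remarque 2.3.5), so the vanishing of higher nearby cycles and the comparison isomorphism do not follow from Berkovich's results applied to $\mathcal M$ itself. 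This is precisely why the paper's proof does not argue on $\mathcal M$ directly: it uses the $p$-adic uniformization theorem to identify $\mathcal M_{\Lambda}$ with a closed Bruhat-Tits stratum $\overline{\mathrm S}_{K^p,\Lambda,k}$ of the Shimura variety and $U_{\Lambda}$ with the corresponding analytic tube, and then applies \cite{berk2} Corollary 3.7 to the formal completion $\widehat{\mathrm S}_{K^p}$ of the integral Shimura variety along its special fiber, which \emph{is} a smooth adic formal scheme over $\mathrm{Spf}(\mathbb Z_{p^2})$. The detour through the Shimura variety is not bookkeeping; it is the substantive content of the proof, and your argument is missing it.

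A secondary error: you assert that $U_{\Lambda}\widehat{\otimes}\,\mathbb C_p$ is compact and that compactly supported and ordinary cohomology agree on it by properness. Neither is true. The tube over a closed subscheme is an \emph{open} analytic subspace (the reduction map is anticontinuous), and it is not compact in general -- in the Lubin-Tate analogue the tube of a point is an open ball. Indeed $\mathrm H^{b}_c(U_{\Lambda})$ and $\mathrm H^{b}(U_{\Lambda})$ genuinely differ: the proposition concerns ordinary cohomology on both sides, and the passage to compactly supported cohomology is carried out afterwards in the paper via Poincar\'e duality on both $U_{\Lambda}$ and $\mathcal M_{\Lambda}$, producing the degree shift by $2(n-1-\theta)$ and the Tate twist. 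Your treatment of the Galois action via the descent datum $p\alpha_{\mathrm{RZ}}^{-1}$ is in the right spirit and consistent with the paper, but it cannot rescue the argument without first establishing the comparison isomorphism by a legitimate route.
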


In particular, the inertia acts trivially on the cohomology of $U_{\Lambda}$. 

\begin{proof}
The closed subvariety $\mathcal M_{\Lambda} \subset \mathcal M_{\mathrm{red}}$ is bounded in the sense of \cite{RZ} Paragraph 2.30. Indeed, it is irreducible and all irreducible components of $\mathcal M_{\mathrm{red}}$ are bounded by the proof of loc. cit. Proposition 2.32. Thus, there exists a quasi-compact open formal subscheme $\mathcal U$ of $\mathcal M$ containing $\mathcal M_{\Lambda}$ (these are denoted by $U^f$ and are introduced in the proof of Theorem 2.16 in loc. cit.). The formal scheme $\mathcal U$ is of finite type, in particular the structure morphism $\mathcal U \to \mathrm{Spf}(\mathbb Z_{p^2})$ is adic. Since $\mathcal M$ is formally smooth, $\mathcal U$ is actually a smooth formal scheme. Replacing $\mathcal U$ by $J_{\Lambda} \cdot \mathcal U$, we may assume that $\mathcal U$ is stable under the action of $J_{\Lambda}$.\\
Let $\mathrm R\Psi_{\eta}\overline{\mathbb Q_{\ell}}$ denote Berkovich's nearby cycles on $\mathcal U_{\mathrm{red}}$ as defined in \cite{berk1}. Since $\mathcal U$ is smooth, by Corollary 5.4 of loc. cit. we actually have $\mathrm R\Psi_{\eta}\overline{\mathbb Q_{\ell}} \simeq \overline{\mathbb Q_{\ell}}$. Besides, let $\mathrm R\widetilde{\lambda}_*\overline{\mathbb Q_{\ell}}$ denote Huber's nearby cycles as defined in \cite{huber} Paragraph 3.12, where $\widetilde{\lambda}: \widetilde d(\mathcal U) \to \mathcal U$ is the natural reduction map attached to the adic space $\widetilde d(\mathcal U)$ associated to the formal scheme $\mathcal U$. Since the etale sites of $\mathcal U$ and of $\mathcal U_{\mathrm{red}}$ are naturally identified, we can think of $\mathrm R\widetilde{\lambda}_*\overline{\mathbb Q_{\ell}}$ as an object of the derived category of $\ell$-adic sheaves on $\mathcal U_{\mathrm{red}}$. According to \cite{fargues} Section 5.4.2, both notions of nearby cycles coincide, ie.
$$\mathrm R\widetilde{\lambda}_*\overline{\mathbb Q_{\ell}} \simeq \mathrm R\Psi_{\eta}\overline{\mathbb Q_{\ell}} \simeq \overline{\mathbb Q_{\ell}}.$$
In particular, the inertia acts trivially on the nearby cycles. Let $\mathcal U_{|\mathcal M_{\Lambda}}^{\wedge}$ denote the formal completion of $\mathcal U$ along $\mathcal M_{\Lambda}$. Since $\mathcal U$ is open in $\mathcal M$, it coincides with the formal completion of $\mathcal M$ along $\mathcal M_{\Lambda}$. Thus, we have $(\mathcal U_{|\mathcal M_{\Lambda}}^{\wedge})^{\mathrm{an}} = U_{\Lambda}$. Moreover, $\widetilde d(\mathcal U_{|\mathcal M_{\Lambda}}^{\wedge}) = U_{\Lambda}^{\mathrm{rig}}$ according to \cite{fargues} Appendice D, where $(\,\cdot\,)^{\mathrm{rig}}$ is the natural functor from the category of Hausdorff analytic spaces to the category of quasiseparated adic spaces. Therefore, by \cite{huberbook} Theorem 8.3.5.iii) we have an isomorphism $\mathrm H^b(U_{\Lambda},\overline{\mathbb Q_{\ell}}) \simeq \mathrm H^b(\widetilde d(\mathcal U_{|\mathcal M_{\Lambda}}^{\wedge}) \otimes \mathbb C_p,\overline{\mathbb Q_{\ell}})$. Moreover, by \cite{huber} Proposition 3.15 applied to the smooth formal scheme $\mathcal U$, we have 
$$\mathrm H^b(\widetilde d(\mathcal U_{|\mathcal M_{\Lambda}}^{\wedge}) \otimes \mathbb C_p,\overline{\mathbb Q_{\ell}}) \simeq \mathrm H^b(\mathcal M_{\Lambda}\otimes \mathbb F, (\mathrm R\widetilde{\lambda}_*\overline{\mathbb Q_{\ell}})_{|\mathcal M_{\Lambda}}) = \mathrm H^b(\mathcal M_{\Lambda}\otimes \mathbb F, \overline{\mathbb Q_{\ell}}).$$
The isomorphisms are compatible with the actions of $J_{\Lambda}$ and of the Frobenius.
\end{proof}


\begin{corol}
Let $\Lambda \in \mathcal L$ and let $0 \leq b \leq 2(n-1)$. There is a $(J_{\Lambda}\times I)\varphi^{\mathbb Z}$-equivariant isomorphism
$$\mathrm H^{b}_c(U_{\Lambda},\overline{\mathbb Q_{\ell}}) \xrightarrow{\sim} \mathrm H^{b - 2(n-1-\theta)}(\mathcal M_{\Lambda}\otimes\mathbb F,\overline{\mathbb Q_{\ell}})(n-1-\theta)$$
where $t(\Lambda) = 2\theta + 1$.
\end{corol}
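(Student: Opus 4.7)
The plan is to chain together Poincaré duality on the smooth Berkovich analytic space $U_{\Lambda}$, the comparison isomorphism of the preceding proposition, and Poincaré duality on the smooth projective variety $\mathcal{M}_{\Lambda}$. Because $U_{\Lambda}$ has analytic dimension $n-1$ while $\mathcal{M}_{\Lambda}$ has dimension $\theta$, this chain naturally produces both the degree shift $-2(n-1-\theta)$ and the Tate twist $(n-1-\theta)$ appearing in the statement.

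First I would invoke Berkovich's Poincaré duality for smooth $k$-analytic spaces applied to the tube $U_{\Lambda}$, which is smooth of pure dimension $n-1$ over $\mathbb{Q}_{p^2}$. This yields a $(J_{\Lambda}\times I)\tau^{\mathbb{Z}}$-equivariant isomorphism
$$\mathrm{H}^{b}_c(U_{\Lambda}, \overline{\mathbb{Q}_{\ell}}) \simeq \mathrm{H}^{2(n-1)-b}(U_{\Lambda}, \overline{\mathbb{Q}_{\ell}})^{\vee}(n-1).$$
Next, the preceding proposition identifies the ordinary cohomology of $U_{\Lambda}$ equivariantly with that of $\mathcal{M}_{\Lambda}\otimes\mathbb{F}$, with the inertia acting trivially and $\tau$ corresponding to $F^2$. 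Since $\mathcal{M}_{\Lambda}$ is projective, ordinary and compactly supported cohomologies coincide on it, and part~(1) of the theorem in Section~2 then supplies the second Poincaré duality
$$\mathrm{H}^{2(n-1)-b}_c(\mathcal{M}_{\Lambda}\otimes\mathbb{F}, \overline{\mathbb{Q}_{\ell}}) \simeq \mathrm{H}^{b-2(n-1-\theta)}_c(\mathcal{M}_{\Lambda}\otimes\mathbb{F}, \overline{\mathbb{Q}_{\ell}})^{\vee}(\theta).$$
Splicing the three isomorphisms and simplifying by the formal identity $(V^{\vee}(\theta))^{\vee}(n-1) = V(n-1-\theta)$ produces the claimed formula. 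As a sanity check one can recompute Frobenius eigenvalues: using the paper's convention that the twist $(k)$ multiplies the action of $F^2$ by $p^{2k}$, both sides are seen to have $\tau$ acting as multiplication by $(-p)^b$.

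The main obstacle will be keeping track of full $(J_{\Lambda}\times I)\tau^{\mathbb{Z}}$-equivariance throughout the chain. The middle step is equivariant by construction in the preceding proposition, and both Poincaré dualities are natural with respect to the automorphisms induced by $J_{\Lambda}$ and the Weil descent datum $p\alpha_{\mathrm{RZ}}^{-1}$, so equivariance should propagate formally. Triviality of the inertia action on $\mathrm{H}^{b}_c(U_{\Lambda})$ is then inherited by duality from its triviality on $\mathrm{H}^{2(n-1)-b}(U_{\Lambda})$ given by the previous proposition, and no additional argument is required there.
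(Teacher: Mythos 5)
Your proposal is correct and coincides with the paper's own argument: the paper proves this corollary by exactly the same chain of analytic Poincaré duality on $U_{\Lambda}$, the comparison isomorphism of the preceding proposition, and algebraic Poincaré duality on the smooth projective variety $\mathcal M_{\Lambda}$, with the twists combining as $(V^{\vee}(\theta))^{\vee}(n-1) = V(n-1-\theta)$. Your Frobenius-eigenvalue sanity check is also consistent with the paper's later description of the $\tau$-action.
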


\begin{proof}
This is a consequence of algebraic and analytic Poincaré duality, respectively for $U_{\Lambda}$ and for $\mathcal M_{\Lambda}$. Indeed, we have
\begin{align*}
\mathrm H^{b}_c(U_{\Lambda},\overline{\mathbb Q_{\ell}}) & \simeq \mathrm H^{2(n-1) - b}(U_{\Lambda},\overline{\mathbb Q_{\ell}})^{\vee}(n-1)\\
& \simeq \mathrm H^{2(n-1) - b}(\mathcal M_{\Lambda}\otimes\mathbb F,\overline{\mathbb Q_{\ell}})^{\vee}(n-1) \\
& \simeq  \mathrm H^{b - 2(n-1-\theta)}(\mathcal M_{\Lambda}\otimes\mathbb F,\overline{\mathbb Q_{\ell}})(n-1-\theta).
\end{align*}
\end{proof}

Let $\Lambda \in \mathcal L$ and write $t(\Lambda) = 2\theta + 1$. If $\lambda$ is a partition of $2\theta + 1$, recall the unipotent irreducible representation $\rho_{\lambda}$ of $\mathrm{GU}(V_{\Lambda}^{0}) \simeq \mathrm{GU}_{2\theta+1}(\mathbb F_p)$ that we introduced in Section 2. It can be inflated to the maximal reductive quotient $\mathcal J_{\Lambda} \simeq \mathrm{G}(\mathrm{U}(V^0_{\Lambda})\times \mathrm{U}(V^1_{\Lambda}))$, and then to the maximal parahoric subgroup $J_{\Lambda}$. With an abuse of notation, we still denote this inflated representation by $\rho_{\lambda}$. In virtue of Theorem \ref{CohomologyBT-Stratum}, the isomorphism in the last paragraph translates into the following result.

\begin{prop}\label{CohomologyOpenBT}
Let $\Lambda \in \mathcal L$ and write $t(\Lambda) = 2\theta + 1$. The following statements hold. 
\begin{enumerate}[label=\upshape (\arabic*)]
		\item The cohomology group $\mathrm H_c^b(U_{\Lambda},\overline{\mathbb Q_{\ell}})$ is zero unless $2(n-1-\theta) \leq b \leq 2(n-1)$.
		\item The action of $J_{\Lambda}$ on the cohomology factors through an action of the finite group of Lie type $\mathrm{GU}(V^{0}_{\Lambda})$. The rational Frobenius $\varphi$ acts like multiplication by $(-p)^{b}$ on $\mathrm H_c^b(U_{\Lambda},\overline{\mathbb Q_{\ell}})$. 
		\item For $0\leq b \leq \theta$ we have 
		$$\mathrm H_c^{2b + 2(n-1-\theta)}(U_{\Lambda},\overline{\mathbb Q_{\ell}}) = \bigoplus_{s=0}^{\min(b,\theta - b)} \rho_{(2\theta + 1 - 2s, 2s)}.$$
		For $0\leq b \leq \theta - 1$ we have 
		$$\mathrm H_c^{2b+1 + 2(n-1-\theta)}(U_{\Lambda},\overline{\mathbb Q_{\ell}}) = \bigoplus_{s=0}^{\min(b,\theta - 1 - b)} \rho_{(2\delta - 2s, 2s + 1)}.$$
	\end{enumerate}
\end{prop}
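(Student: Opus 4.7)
The plan is to deduce all three claims by directly transporting the known description of $\mathrm H^{\bullet}_c(\mathcal M_{\Lambda}\otimes \mathbb F,\overline{\mathbb Q_{\ell}})$ recalled in \ref{CohomologyBT-Stratum} through the $(J_{\Lambda}\times I)\tau^{\mathbb Z}$-equivariant isomorphism
$$\mathrm H^{b}_c(U_{\Lambda},\overline{\mathbb Q_{\ell}}) \simeq \mathrm H^{b - 2(n-1-\theta)}_c(\mathcal M_{\Lambda}\otimes\mathbb F,\overline{\mathbb Q_{\ell}})(n-1-\theta)$$
established in the corollary immediately above. In this way the proposition is essentially a bookkeeping translation of the Deligne--Lusztig computation from \cite{muller}, and no new geometric input is needed.

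For (1), since $\mathcal M_{\Lambda}$ is projective smooth of dimension $\theta$, its cohomology with compact support is concentrated in degrees $0 \leq j \leq 2\theta$. Under the shift $j = b - 2(n-1-\theta)$, this range translates precisely into $2(n-1-\theta) \leq b \leq 2(n-1)$.

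For (2), I would first recall from \ref{IsomorphismWithDL-Variety} that the $\mathcal J_{\Lambda}$-action on $\mathcal M_{\Lambda}$ is trivial on the normal subgroup $\{\mathrm{id}\}\times \mathrm U(V_{\Lambda}^1)$, so the $J_{\Lambda}$-action on $\mathrm H^{\bullet}_c(\mathcal M_{\Lambda}\otimes \mathbb F,\overline{\mathbb Q_{\ell}})$ factors through $\mathrm{GU}(V_{\Lambda}^0)$; this factorization is preserved by the isomorphism above. For the Frobenius, recall that under the comparison isomorphism the rational Frobenius $\tau$ corresponds to $F^2$. By \ref{CohomologyBT-Stratum}(2), $F^2$ acts on $\mathrm H^{j}_c(\mathcal M_{\Lambda}\otimes \mathbb F,\overline{\mathbb Q_{\ell}})$ by the scalar $(-p)^{j}$, and the Tate twist $(n-1-\theta)$ multiplies this scalar by $p^{2(n-1-\theta)}$. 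With $j = b - 2(n-1-\theta)$, and using that $2(n-1-\theta)$ is even, the resulting eigenvalue is
$$(-p)^{b-2(n-1-\theta)}\cdot p^{2(n-1-\theta)} = (-p)^{b},$$
which is exactly the claim.

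Finally, for (3), it suffices to substitute the explicit formulas of \ref{CohomologyBT-Stratum}(3) after replacing the source degree $j$ by $b + 2(n-1-\theta)$. Since each unipotent representation $\rho_{\lambda}$ has trivial central character (see the remark in the paragraph preceding \ref{SameUnipotent}), the Tate twist does not alter the isomorphism at the level of $\mathrm{GU}(V_{\Lambda}^0)$-representations, so the direct sums carry over verbatim. The only subtlety that I expect to need some attention is precisely this compatibility of the Tate twist with the factorization of the $J_{\Lambda}$-action through $\mathrm{GU}(V_{\Lambda}^0)$ in part (2), together with the parity check that makes $(-p)^{b}$ come out cleanly; beyond this, the proof is a routine substitution and should not present any obstacle.
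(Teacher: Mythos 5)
Your proof is correct and is exactly the argument the paper intends: the paper simply states that the proposition follows "in virtue of \ref{CohomologyBT-Stratum}" from the comparison isomorphism $\mathrm H^{b}_c(U_{\Lambda},\overline{\mathbb Q_{\ell}}) \simeq \mathrm H^{b - 2(n-1-\theta)}_c(\mathcal M_{\Lambda}\otimes\mathbb F,\overline{\mathbb Q_{\ell}})(n-1-\theta)$, and your degree shift, Tate-twist/parity check for the eigenvalue $(-p)^{b}$, and transport of the $\mathrm{GU}(V^{0}_{\Lambda})$-module structure spell out precisely that translation.
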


The description of the rational Frobenius action yields the following corollary.

\begin{corol}\label{SecondPage}
The spectral sequence degenerates on the second page $E_2$. For $0 \leq b \leq 2(n-1)$, the induced filtration on $\mathrm H_c^b(\mathcal M^{\mathrm{an}},\overline{\mathbb Q_{\ell}})$ splits, ie. we have an isomorphism 
$$\mathrm H_c^b(\mathcal M^{\mathrm{an}},\overline{\mathbb Q_{\ell}}) \simeq \bigoplus_{b \leq b' \leq 2(n-1)} E_2^{b-b',b'}.$$
The action of $W$ on $\mathrm H_c^b(\mathcal M^{\mathrm{an}},\overline{\mathbb Q_{\ell}})$ is trivial on the inertia subgroup and the action of the rational Frobenius element $\varphi$ is semisimple. The subspace $E_2^{b-b',b'}$ is identified with the eigenspace of $\varphi$ associated to the eigenvalue $(-p)^{b'}$.
\end{corol}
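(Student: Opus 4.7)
The plan is to use the fact, provided by Proposition \ref{CohomologyOpenBT}(2), that on every term $E_1^{a,b} = \bigoplus_{\gamma \in I_{-a+1}} \mathrm H^b_c(U(\gamma),\overline{\mathbb Q_{\ell}})$ the rational Frobenius $\tau$ acts by the single scalar $(-p)^b$. Since the spectral sequence is $J\times W$-equivariant, every differential commutes with $\tau$, so for $r\geq 2$ the differential $d_r : E_r^{a,b} \to E_r^{a+r,b-r+1}$ intertwines an action by the scalar $(-p)^b$ on the source with the scalar $(-p)^{b-r+1}$ on the target. These scalars are distinct (since $p\neq 1$), forcing $d_r = 0$ for all $r\geq 2$, which gives the degeneration at $E_2$.

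Next, I would extract the splitting. The convergence of the spectral sequence provides a filtration $F^{\bullet}$ on $\mathrm H_c^b(\mathcal M^{\mathrm{an}},\overline{\mathbb Q_{\ell}})$ whose associated graded pieces are $F^a/F^{a+1} \simeq E_{\infty}^{a,b-a} = E_2^{a,b-a}$. Because the filtration is $\tau$-stable and $\tau$ acts on the $a$-th graded piece as $(-p)^{b-a}$, an elementary induction on the length of the filtration — using that pairwise distinct scalars give pairwise complementary generalized eigenspaces — produces a canonical $\tau$-equivariant decomposition $\mathrm H_c^b(\mathcal M^{\mathrm{an}},\overline{\mathbb Q_{\ell}}) \simeq \bigoplus_{b\leq b'\leq 2(n-1)} E_2^{b-b',b'}$, with the summand $E_2^{b-b',b'}$ identified with the $(-p)^{b'}$-eigenspace of $\tau$. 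In particular $\tau$ is semisimple.

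For the inertia part, I would simply invoke the equivariant isomorphism $\mathrm H_c^b(U_{\Lambda},\overline{\mathbb Q_{\ell}}) \simeq \mathrm H_c^{b-2(n-1-\theta)}(\mathcal M_{\Lambda}\otimes \mathbb F,\overline{\mathbb Q_{\ell}})(n-1-\theta)$ from the previous corollary: the right-hand side carries a trivial $I$-action, hence so does every $E_1^{a,b}$, and therefore so do $E_2^{a,b}$ and $\mathrm H_c^{a+b}(\mathcal M^{\mathrm{an}},\overline{\mathbb Q_{\ell}})$.

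I do not anticipate any serious obstacle: the only subtle point is the splitting of the filtration, for which the diagonalizability of $\tau$ on the graded via distinct scalars is a standard fact, and it transfers to the ambient module by a straightforward lifting of eigenvectors starting from the smallest (or largest) filtration step. The computation of the eigenvalues of $\tau$ on the tubes, which is the real geometric input, has already been done upstream and need not be re-established here.
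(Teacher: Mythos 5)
Your argument is correct and is essentially the paper's own proof: the rational Frobenius acts by the scalar $(-p)^b$ on the entire $b$-th row of $E_1$, so every differential from the second page onward connects rows with distinct scalars and must vanish, and the finite $\tau$-stable filtration on the abutment splits because $\tau$ acts by pairwise distinct scalars on its graded pieces. The only (harmless) addition is that you spell out the triviality of the inertia action, which the paper leaves implicit from the earlier comparison with the cohomology of the closed strata.
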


We stress that in the previous statement, the terms $E_2^{b-b',b'}$ may be zero.

\begin{proof}
The $(a,b)$-term in the first page of the spectral sequence is the direct sum of the cohomology groups $H^{b}_c(U(\gamma),\overline{\mathbb Q_{\ell}})$ for all $\gamma \in I_{-a+1}$. On each of these cohomology groups, the rational Frobenius $\varphi$ acts via multiplication by $(-p)^b$. This action is in particular independant of $\gamma$ and of $a$. Thus, on the $b$-th row of the first page of the sequence, the Frobenius acts everywhere as multiplication by $(-p)^b$. Starting from the second page, the differentials in the sequence connect two terms lying in different rows. Since the differentials are equivariant for the $\varphi$-action, they must all be zero. Thus, the sequence degenerates on the second page. By the machinery of spectral sequences, there is a filtration on $\mathrm H_c^b(\mathcal M^{\mathrm{an}},\overline{\mathbb Q_{\ell}})$ whose graded factors are given by the terms $E_2^{b-b',b'}$ of the second page. Only a finite number of these terms are non-zero, and since they all lie on different rows, the Frobenius $\varphi$ acts via multiplication by a different scalar on each graded factor of the filtration. It follows that the filtration splits, ie. the abutment is the direct sum of the graded pieces of the filtration, as they correspond to the eigenspaces of $\varphi$. Consequently, its action is semisimple.
\end{proof}

The spectral sequence $E_1^{a,b}$ has non-zero terms extending indefinitely in the range $a\leq 0$. For instance, if $\Lambda \in \mathcal L^{(\thetamax)}$ then $(\Lambda,\ldots ,\Lambda) \in I_{-a+1}$ so that $E_1^{a,b} \not = 0$ for all $a\leq 0$ and $2(n-1-\thetamax)\leq b \leq 2(n-1)$. To rectify this, we introduce the alternating \v{C}ech spectral sequence. If $v \in E_1^{a,b}$ and $\gamma \in I_{-a+1}$, we denote by $v_{\gamma} \in \mathrm H_c^{b}(U(\gamma),\overline{\mathbb Q_{\ell}})$ the component of $v$ in the summand of $E_1^{a,b}$ indexed by $\gamma$. Besides, if $\gamma = (\Lambda^1,\ldots ,\Lambda^{-a+1}) \in I_{-a+1}$ and if $\sigma \in \mathfrak S_{-a+1}$ then we write $\sigma(\gamma) := (\Lambda^{\sigma(1)},\ldots ,\Lambda^{\sigma(-a+1)}) \in I_{-a+1}$. For all $a,b$ we define 
$$E_{1,\mathrm{alt}}^{a,b} := \{v\in E_1^{a,b} \,|\, \forall \gamma \in I_{-a+1}, \forall \sigma \in \mathfrak S_{-a+1}, v_{\sigma(\gamma)} = \mathrm{sgn}(\sigma)v_{\gamma}\}.$$ 
In particular, if $\gamma = (\Lambda^1,\ldots ,\Lambda^{-a+1})$ with $\Lambda^{j} = \Lambda^{j'}$ for some $j\not = j'$ then $v \in E_{1,\mathrm{alt}}^{a,b} \implies v_{\gamma} = 0$. The subspace $E_{1,\mathrm{alt}}^{a,b} \subset E_1^{a,b}$ is stable under the action of $J(\mathbb Q_p)\times W$, and the differential $\varphi^{a,b}:E_1^{a,b} \to E_1^{a+1,b}$ sends $E_{1,\mathrm{alt}}^{a,b}$ to $E_{1,\mathrm{alt}}^{a+1,b}$. Thus, for all $b$ we have a chain complex $E_{1,\mathrm{alt}}^{\bullet,b}$ and the following proposition is well-known, see eg. \cite{stacks-project} Lemma 01FM.

\begin{prop}\label{AlternatingCech}
The inclusion map $E_{1,\mathrm{alt}}^{\bullet,b} \hookrightarrow E_{1}^{\bullet,b}$ is a homotopy equivalence. In particular we have canonical isomorphisms $E_{2,\mathrm{alt}}^{a,b} \simeq E_2^{a,b}$ for all $a,b$.
\end{prop}

The advantage of the alternating \v{C}ech spectral sequence is that it is concentrated in a finite strip. Indeed, if $\gamma = (\Lambda^1,\ldots ,\Lambda^{-a+1}) \in I_{-a+1}$, let $i \in \mathbb Z$ such that $\Lambda(\gamma) \in \mathcal L_i$. Then all the $\Lambda^j$'s belong to the set of lattices in $\mathcal L^{(\thetamax)}_i$ containing $\Lambda(\gamma)$. This set is finite of cardinality $\nu(n-\theta-\thetamax-1,n-2\theta-1)$ where $t(\Lambda(\gamma)) = 2\theta+1$ according to Proposition \ref{NumberStrata}. Thus, if $-a+1$ is big enough then all the $\gamma$'s in $I_{-a+1}$ will have some repetition, so that $E_{1,\mathrm{alt}}^{a,b} = 0$.

\begin{rk}
The Lemma 01FM of \cite{stacks-project} is stated in the context of \v{C}ech cohomology of an abelian presheaf $\mathcal F$ on a topological space $X$. However, the proof may be adapted to \v{C}ech homology of precosheaves such as $U \mapsto \mathrm H_c^b(U,\overline{\mathbb Q_{\ell}})$. 
\end{rk}

For $a = 0$, we have $E_{1,\mathrm{alt}}^{0,b} = E_1^{0,b}$ by definition. Let us consider the cases $b = 2(n-1-\thetamax)$ and $b = 2(n-1-\thetamax)+1$. For such $b$, it follows from \ref{CohomologyOpenBT} that $\mathrm H_c^b(U_{\Lambda},\overline{\mathbb Q_{\ell}}) = 0$ if $t(\Lambda) < t_{\mathrm{max}}$. If $a \leq -1$, we have $-a+1 \geq 2$ so that for all $\gamma = (\Lambda^1,\ldots ,\Lambda^{-a+1}) \in I_{-a+1}$, if there exists $j \not = j'$ such that $\Lambda^{j} \not = \Lambda^{j'}$, then $t(\Lambda(\gamma)) < t_{\mathrm{max}}$ so that $\mathrm H_c^b(U(\gamma),\overline{\mathbb Q_{\ell}})=0$. It follows that $E_{1,\mathrm{alt}}^{a,b} = 0$ for all $a\leq -1$ and $b$ as above. This observation, along with the previous paragraph, yields the following proposition. 

\begin{prop}\label{MiddleCohomologyFirst}
We have $E_2^{0,2(n-1-\thetamax)} \simeq E_1^{0,2(n-1-\thetamax)}$. If moreover $\thetamax\geq 1$ (ie. $n\geq 3$), then we have $E_2^{0,2(n-1-\thetamax)+1} \simeq E_1^{0,2(n-1-\thetamax)+1}$ as well.
\end{prop} 

In order to study the action of $J(\mathbb Q_p)$, we may rewrite $E_1^{a,b}$ conveniently in terms of compactly induced representations. To do this, let us introduce a few more notations. For $0\leq \theta \leq \thetamax$ and $s\geq 1$, we define
$$I_s^{(\theta)} := \{\gamma \in I_s \,|\, t(\Lambda(\gamma)) = 2\theta +1\}.$$
The subset $I_s^{(\theta)}\subset I_s$ is stable under the action of $J(\mathbb Q_p)$. We denote by $N(\Lambda_{\theta})$ the set of lattices $\Lambda\in \mathcal L_0$ of maximal orbit type containing $\Lambda_{\theta}$. For $s\geq 1$ we define
$$K_s^{(\theta)} := \{\delta = (\Lambda^1,\ldots ,\Lambda^s) \,|\, \forall 1\leq j \leq s, \Lambda^j \in N(\Lambda_{\theta}) \text{ and } \Lambda(\delta) = \Lambda_{\theta}\}.$$
Then $K_s^{(\theta)}$ is a finite subset of $I_s^{(\theta)}$ and it is stable under the action of $J_{\theta}$. If $\gamma \in I_s^{(\theta)}$, there exists some $g\in J(\mathbb Q_p)$ such that $g \cdot \Lambda(\gamma) = \Lambda_{\theta}$ since both lattices share the same orbit type. Moreover, the coset $J_{\theta}\cdot g$ is uniquely determined, and $g\cdot\gamma$ is an element of $K_s^{(\theta)}$. This mapping results in a natural bijection between the orbit sets 
$$J \backslash I_s^{(\theta)} \xrightarrow{\sim} J_{\theta} \backslash K_s^{(\theta)}.$$
The bijection sends the orbit $J\cdot \alpha$ to the orbit $J_{\theta} \cdot (g\cdot \alpha)$ where $g$ is chosen as above. The inverse sends an orbit $J_{\theta}\cdot \beta$ to $J\cdot \beta$. We note that both orbit sets are finite. We may now rearrange the terms in the spectral sequence.

\begin{prop}\label{RearrangementNotations}
We have an isomorphism 
\begin{align*}
E_1^{a,b} & \simeq \bigoplus_{\theta = 0}^{\thetamax} \bigoplus_{[\delta]\in J_{\theta}\backslash K_{-a+1}^{(\theta)}} \mathrm{c-Ind}_{\mathrm{Fix}(\delta)}^J\,\mathrm H_c^b(U_{\Lambda_{\theta}},\overline{\mathbb Q_{\ell}})_{|\mathrm{Fix}(\delta)}\\
& \simeq \bigoplus_{\theta = 0}^{\thetamax} \mathrm{c-Ind}_{J_{\theta}}^J \, \left( \mathrm H_c^b(U_{\Lambda_{\theta}},\overline{\mathbb Q_{\ell}}) \otimes \overline{\mathbb Q_{\ell}}[K_{-a+1}^{(\theta)}]\right),
\end{align*}
where $\overline{\mathbb Q_{\ell}}[K_{-a+1}^{(\theta)}]$ is the permutation representation associated to the action of $J_{\theta}$ on the finite set $K_{-a+1}^{(\theta)}$.
\end{prop}

\begin{rk}
For $\delta \in K_s^{(\theta)}$, the group $\mathrm{Fix}(\delta)$ consists of the elements $g\in J(\mathbb Q_p)$ such that $g\cdot\delta = \delta$. Any such $g$ satisfies $g\Lambda(\delta) = \Lambda(\delta)$, and since $\Lambda(\delta) = \Lambda_{\theta}$ we have $\mathrm{Fix}(\delta) \subset J_{\theta}$. If $\delta = (\Lambda^1,\ldots ,\Lambda^s)$ then $\mathrm{Fix}(\delta)$ is the intersection of the maximal parahoric subgroups $J_{\Lambda^1},\ldots ,J_{\Lambda^s}$. We note that in general, $\mathrm{Fix}(\delta)$ is itself not a parahoric subgroup of $J(\mathbb Q_p)$ since the lattices $\Lambda^1,\ldots,\Lambda^s$ need not form a simplex in $\mathcal L$, as they all share the same orbit type. If however $\Lambda^1 = \ldots = \Lambda^s$ then $\mathrm{Fix}(\delta) = J_{\Lambda^1}$ is a conjugate of the maximal parahoric subgroup $J_{\thetamax}$.
\end{rk}

\begin{proof}
First, by decomposing $I_{-a+1}$ as the disjoint union of the $I_{-a+1}^{(\theta)}$ for $0\leq \theta \leq \thetamax$, we may write 
$$E_1^{a,b} = \bigoplus_{\theta = 0}^{\thetamax} \, \bigoplus_{\gamma\in I_{-a+1}^{(\theta)}} \mathrm H^b_c(U(\gamma),\overline{\mathbb Q_{\ell}}).$$
For each orbit $X\in J \backslash I_{-a+1}^{(\theta)}$, we fix a representative $\delta_X$ which lies in $K_{-a+1}^{(\theta)}$. We may write 
$$E_1^{a,b} = \bigoplus_{\theta = 0}^{\thetamax} \, \bigoplus_{X\in J\backslash I_{-a+1}^{(\theta)}} \, \bigoplus_{\gamma\in X} \mathrm H^b_c(U(\gamma),\overline{\mathbb Q_{\ell}}) = \bigoplus_{\theta = 0}^{\thetamax} \, \bigoplus_{X\in J\backslash I_{-a+1}^{(\theta)}} \, \bigoplus_{g\in J/\mathrm{Fix}(\delta_X)} g\cdot \mathrm H^b_c(U(\delta_X),\overline{\mathbb Q_{\ell}}).$$
The rightmost sum can be identified with a compact induction from $\mathrm{Fix}(\delta_X)$ to $J(\mathbb Q_p)$. Identifying the orbit sets $J \backslash I_{-a+1}^{(\theta)} \xrightarrow{\sim} J_{\theta} \backslash K_{-a+1}^{(\theta)}$, we have
$$E_1^{a,b} \simeq \bigoplus_{\theta = 0}^{\thetamax} \bigoplus_{[\delta]\in J_{\theta}\backslash K_{-a+1}^{(\theta)}} \mathrm{c-Ind}_{\mathrm{Fix}(\delta)}^J\,\mathrm H_c^b(U_{\Lambda_{\theta}},\overline{\mathbb Q_{\ell}})_{|\mathrm{Fix}(\delta)}.$$
By transitivity of compact induction, we have 
$$\mathrm{c-Ind}_{\mathrm{Fix}(\delta)}^J\,\mathrm H_c^b(U_{\Lambda_{\theta}},\overline{\mathbb Q_{\ell}})_{|\mathrm{Fix}(\delta)} = \mathrm{c-Ind}_{J_{\theta}}^J \, \mathrm{c-Ind}_{\mathrm{Fix}(\delta)}^{J_{\theta}} \, \mathrm H_c^b(U_{\Lambda_{\theta}},\overline{\mathbb Q_{\ell}})_{|\mathrm{Fix}(\delta)}.$$
Since $H_c^b(U_{\Lambda_{\theta}},\overline{\mathbb Q_{\ell}})_{|\mathrm{Fix}(\delta)}$ is the restriction of a representation of $J_{\theta}$ to $\mathrm{Fix}(\delta)$, applying compact induction from $\mathrm{Fix}(\delta)$ to $J_{\theta}$ results in tensoring with the permutation representation of $J_{\theta}/\mathrm{Fix}(\delta)$. Thus 
\begin{align*} 
E_1^{a,b} & \simeq \bigoplus_{\theta = 0}^{\thetamax} \bigoplus_{[\delta]\in J_{\theta}\backslash K_{-a+1}^{(\theta)}} \mathrm{c-Ind}_{J_{\theta}}^J \, \left( \mathrm H_c^b(U_{\Lambda_{\theta}},\overline{\mathbb Q_{\ell}}) \otimes \overline{\mathbb Q_{\ell}}[J_{\theta}/\mathrm{Fix}(\delta)]\right)\\
& \simeq \bigoplus_{\theta = 0}^{\thetamax} \mathrm{c-Ind}_{J_{\theta}}^J \, \left( \mathrm H_c^b(U_{\Lambda_{\theta}},\overline{\mathbb Q_{\ell}}) \otimes \bigoplus_{[\delta]\in J_{\theta}\backslash K_{-a+1}^{(\theta)}} \overline{\mathbb Q_{\ell}}[J_{\theta}/\mathrm{Fix}(\delta)]\right),
\end{align*}
where on the second line we used additivity of compact induction. Now, $J_{\theta}/\mathrm{Fix}(\delta)$ is identified with the $J_{\theta}$-orbit $J_{\theta}\cdot\delta$ of $\delta$ in $K_{-a+1}^{(\theta)}$, so that 
$$\bigoplus_{[\delta]\in J_{\theta}\backslash K_{-a+1}^{(\theta)}} \overline{\mathbb Q_{\ell}}[J_{\theta}/\mathrm{Fix}(\delta)] \simeq \overline{\mathbb Q_{\ell}}[ \bigsqcup_{[\delta]\in J_{\theta}\backslash K_{-a+1}^{(\theta)}} J_{\theta}\cdot\delta] \simeq \overline{\mathbb Q_{\ell}}[K_{-a+1}^{(\theta)}],$$
which concludes the proof.
\end{proof}

By Proposition \ref{PointsOfMlambda}, we may identify $N(\Lambda_{\theta})$ with the set $N(n-\theta-\thetamax-1,V_{\theta}^1)$ as defined in Section 1.4. Thus, for $s\geq 1$, $K_s^{(\theta)}$ is naturally identified with
$$\overline{K}_s^{(\theta)} \simeq \left\{\overline{\delta} = (U^1,\ldots,U^s) \,\middle |\, \forall 1\leq j \leq s, U^j \in N(n-\theta-\thetamax-1,V_{\theta}^1) \text{ and } \sum_{j=1}^s U^j = V_{\theta}^1 \right\}.$$
The action of $J_{\theta}$ on $K_s^{(\theta)}$ corresponds to the natural action of $\mathrm{GU}(V_{\theta}^1)$ on $\overline{K}_s^{(\theta)}$, which factors through an action of the finite projective unitary group $\mathrm{PU}(V_{\theta}^1) := \mathrm U(V_{\theta}^1)/\mathrm Z(\mathrm U(V_{\theta}^1)) \simeq \mathrm{GU}(V_{\theta}^1)/\mathrm Z(\mathrm{GU}(V_{\theta}^1))$. Thus, the representation $\overline{\mathbb Q_{\ell}}[K_{-a+1}^{(\theta)}]$ is the inflation to $J_{\theta}$ of the representation $\overline{\mathbb Q_{\ell}}[\overline{K}_{-a+1}^{(\theta)}]$ of the finite projective unitary group $\mathrm{PU}(V_{\theta}^1)$. When $\theta = \thetamax$ or when $s=1$, we trivially have the following proposition.

\begin{prop}
For $s\geq 1$, we have $\overline{\mathbb Q_{\ell}}[K_s^{(\thetamax)}] = \mathbf 1$. For $0\leq \theta \leq \thetamax-1$, we have $\overline{\mathbb Q_{\ell}}[K_1^{(\theta)}] = 0$.
\end{prop}

\begin{proof}
If $\delta = (\Lambda^1,\ldots ,\Lambda^s) \in K_s^{(\thetamax)}$ then $\Lambda(\delta) = \Lambda_{\thetamax}$ has maximal orbit type $t_{\mathrm{max}} = 2\thetamax+1$. For any $1\leq j \leq s$ we have $\Lambda_{\thetamax} \subset \Lambda^j$, therefore $\Lambda^1 = \ldots = \Lambda^s = \Lambda_{\thetamax}$. Thus $K_s^{(\thetamax)}$ is a singleton and so $\overline{\mathbb Q_{\ell}}[K_s^{(\thetamax)}]$ is trivial. Besides, if $\theta < \thetamax$ then $K_1^{(\theta)}$ is clearly empty.
\end{proof}

Recall Proposition \ref{MiddleCohomologyFirst}. We obtain the following corollary.

\begin{corol}\label{MiddleCohomologyGroups}
We have 
$$E_1^{0,b} \simeq \mathrm{c-Ind}_{J_{\thetamax}}^J \, \mathrm H_c^b(U_{\Lambda_{\thetamax}},\overline{\mathbb Q_{\ell}}).$$
In particular, we have 
$$E_2^{0,b} \simeq 
\begin{cases}
\mathrm{c-Ind}_{J_{\thetamax}}^J \, \rho_{(2\thetamax+1)} & \text{if } b = 2(n-1-\thetamax),\\
\mathrm{c-Ind}_{J_{\thetamax}}^J \, \rho_{(2\thetamax,1)} & \text{if } m\geq 1 \text{ and } b = 2(n-1-\thetamax)+1.
\end{cases}$$
\end{corol}

\begin{rk}
The representation $\rho_{(2\thetamax+1)} = \mathbf 1$ is the trivial representation of $J_{\thetamax}$.
\end{rk}

Let us now consider the top row of the spectral sequence, corresponding to $b=2(n-1)$. For $\Lambda' \subset \Lambda$, recall the map $f_{\Lambda',\Lambda}^{2(n-1)} : \mathrm H_c^{2(n-1)}(U_{\Lambda'},\overline{\mathbb Q_{\ell}}) \to \mathrm H_c^{2(n-1)}(U_{\Lambda},\overline{\mathbb Q_{\ell}})$. By Poincaré duality, it is the dual map of the restriction morphism $\mathrm H^{0}(U_{\Lambda},\overline{\mathbb Q_{\ell}}) \to \mathrm H^{0}(U_{\Lambda'},\overline{\mathbb Q_{\ell}})$. Both spaces are one-dimensional by Proposition \ref{NearbyCycles}, and the restriction morphism is the identity. Thus, $E_1^{a,2(n-1)}$ is the $\overline{\mathbb Q_{\ell}}$-vector space generated by $I_{-a+1}$, and the differential $\varphi^{a,2(n-1)}$ is given by
\begin{align*}
\gamma \in I_{-a+1} \mapsto \sum_{j=1}^{-a+1}(-1)^{j+1}\gamma_j.
\end{align*}

Using this description, we may compute the highest cohomology group $\mathrm H_c^{2(n-1)}(\mathcal M^{\mathrm{an}},\overline{\mathbb Q_{\ell}})$ explicitely. 

\begin{prop}\label{HighestDegreeCohomologyGroup}
There is an isomorphism 
$$\mathrm H_c^{2(n-1)}(\mathcal M^{\mathrm{an}},\overline{\mathbb Q_{\ell}}) \simeq \mathrm{c-Ind}_{J^{\circ}}^J \, \mathbf 1,$$
and the rational Frobenius $\varphi$ acts via multiplication by $p^{2(n-1)}$. 
\end{prop}

\begin{proof}
The statement on the Frobenius action is already known by Corollary \ref{SecondPage}. Besides, we have $\mathrm H_c^{2(n-1)}(\mathcal M^{\mathrm{an}},\overline{\mathbb Q_{\ell}}) \simeq E_2^{0,2(n-1)} = \mathrm{Coker}(\varphi^{-1,2(n-1)})$. The differential $\varphi^{-1,2(n-1)}$ is described by
\begin{align*}
(\Lambda,\Lambda) & \mapsto 0, & & \forall \Lambda \in \mathcal L^{(\thetamax)},\\
(\Lambda,\Lambda') & \mapsto (\Lambda') - (\Lambda), & & \forall \Lambda,\Lambda' \in \mathcal L^{(\thetamax)} \text{ such that } U_{\Lambda} \cap U_{\Lambda'} \not = \emptyset. 
\end{align*}
Let $i \in \mathbb Z$ such that $ni$ is even, and let $\Lambda,\Lambda' \in \mathcal L_i^{(\thetamax)}$. Since the Bruhat-Tits building $\mathrm{BT}(\widetilde{J},\mathbb Q_p) \simeq \mathcal L_i$ is connected, there exists a sequence $\Lambda = \Lambda^0,\ldots ,\Lambda^{d} = \Lambda'$ of lattices in $\mathcal L_i$ such that for all $0\leq j \leq d-1$, $\{\Lambda^j,\Lambda^{j+1}\}$ is an edge in $\mathcal L_i$. Assume that $d\geq 0$ is minimal satisfying this property. Since $t(\Lambda) = t(\Lambda') = t_{\mathrm{max}}$, the integer $d$ is even and we may assume that $t(\Lambda^j)$ is equal to $t_{\mathrm{max}}$ when $j$ is even, and equal to $1$ when $j$ is odd. In particular, for all $0 \leq j \leq \frac{d}{2}-1$ we have $\Lambda^{2j},\Lambda^{2j+2} \in \mathcal L_i^{(\thetamax)}$ and $U_{\Lambda^{2j}}\cap U_{\Lambda^{2j+2}} \not = \emptyset$. Consider the vector 
$$w := \sum_{j=0}^{\frac{d}{2}-1} (\Lambda^{2j},\Lambda^{2j+2}) \in E_1^{-1,2(n-1)}.$$
Then we compute $\varphi^{-1,2(n-1)}(w) = (\Lambda') - (\Lambda)$. It follows that for all $\Lambda,\Lambda' \in \mathcal L_i$, we have $(\Lambda) \cong (\Lambda')$ in $\mathrm{Coker}(\varphi^{-1,2(n-1)})$. Thus, $\mathrm{Coker}(\varphi_1^{2(n-1)})$ consists of one copy of $\overline{\mathbb Q_{\ell}}$ for each $i\in \mathbb Z$ such that $ni$ is even. Considering the action of $J(\mathbb Q_p)$ as well, it readily follows that $\mathrm{Coker}(\varphi^{-1,2(n-1)}) \simeq \mathrm{c-Ind}_{J^{\circ}}^J\,\mathbf 1$.
\end{proof}

\begin{rk} The cohomology group $\mathrm H_c^{2(n-1)}(\mathcal M^{\mathrm{an}},\overline{\mathbb Q_{\ell}})$ can also be computed in another way which does not require the spectral sequence. Indeed, we have an isomorphism
$$\mathrm H_c^{2(n-1)}(\mathcal M^{\mathrm{an}},\overline{\mathbb Q_{\ell}}) \simeq \mathrm{c-Ind}_{J^{\circ}}^J \, \mathrm H_c^{2(n-1)}(\mathcal M^{\mathrm{an}}_0,\overline{\mathbb Q_{\ell}}).$$
By definition, we have 
$$\mathrm H_c^{2(n-1)}(\mathcal M^{\mathrm{an}}_0,\overline{\mathbb Q_{\ell}}) = \varinjlim_U \mathrm H_c^{2(n-1)}(U\widehat{\otimes} \, \mathbb C_p,\overline{\mathbb Q_{\ell}}),$$
where $U$ runs over the relatively compact open subspaces of $\mathcal M^{\mathrm{an}}_0$. Since $U$ is smooth, Poincaré duality gives 
$$\mathrm H_c^{2(n-1)}(U\widehat{\otimes} \, \mathbb C_p,\overline{\mathbb Q_{\ell}}) \simeq \mathrm H^0(U\widehat{\otimes} \, \mathbb C_p,\overline{\mathbb Q_{\ell}})^{\vee}.$$
Using the connectedness of the Bruhat-Tits building $\mathrm{BT}(\widetilde{J},\mathbb Q_p) \simeq \mathcal L_0$, one may prove that $\mathcal M_0^{\mathrm{an}}$ is connected. Thus we can insure that all the $U$'s involved in the limit are connected as well. Therefore $\mathrm H^0(U\widehat{\otimes} \, \mathbb C_p,\overline{\mathbb Q_{\ell}}) \simeq \overline{\mathbb Q_{\ell}}$, and all the transition maps in the direct limit are identity. It follows that $\mathrm H_c^{2(n-1)}(\mathcal M^{\mathrm{an}}_0,\overline{\mathbb Q_{\ell}})$ is trivial.
\end{rk}

\subsection{Compactly induced representations and type theory}

Let $\mathrm{Rep}(J(\mathbb Q_p))$ denote the category of smooth $\overline{\mathbb Q_{\ell}}$-representations of $J(\mathbb Q_p)$. Let $\chi$ be a continuous character of the center $\mathrm Z(J(\mathbb Q_p)) \simeq \mathbb Q_{p^2}^{\times}$ and let $V\in \mathrm{Rep}(J(\mathbb Q_p))$. We define \textbf{the maximal quotient of $V$ on which the center acts like $\chi$} as follows. Let us consider the set
$$\Omega := \{W \,|\, W \text{ is a subrepresentation of }V\text{ and }\mathrm Z(J(\mathbb Q_p))\text{ acts like }\chi\text{ on }V/W\}.$$
The set $\Omega$ is stable under arbitrary intersection, so that $W_{\circ} := \bigcap_{W\in \Omega} W \in \Omega$. The maximal quotient is defined by 
$$V_{\chi} := V/W_{\circ}.$$
It satisfies the following universal property. 

\begin{prop}
Let $\chi$ be a continuous character of $\mathrm Z(J(\mathbb Q_p))$ and let $V,V' \in \mathrm{Rep}(J(\mathbb Q_p))$. Assume that $\mathrm Z(J(\mathbb Q_p))$ acts like $\chi$ on $V'$. Then any morphism $V\to V'$ factors through $V_{\chi}$. 
\end{prop}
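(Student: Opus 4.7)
The plan is to exhibit $W_{\circ}$ as being contained in the kernel of any such morphism, which will immediately give the factorization by the universal property of quotients.

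First I would take an arbitrary morphism $\varphi: V \to V'$ in $\mathrm{Rep}(J)$ and set $W := \ker(\varphi)$. This is a subrepresentation of $V$, and $\varphi$ induces an injection $V/W \hookrightarrow V'$ of smooth $J$-representations. Since $\mathrm{Z}(J)$ acts like $\chi$ on $V'$ by hypothesis, the same holds for the subrepresentation $V/W$. Therefore $W$ belongs to the set $\Omega$ introduced in the construction of $V_{\chi}$.

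Now by definition $W_{\circ} = \bigcap_{W' \in \Omega} W'$, so in particular $W_{\circ} \subset W = \ker(\varphi)$. By the universal property of the quotient representation $V_{\chi} = V/W_{\circ}$, the morphism $\varphi$ factors uniquely through a morphism $\overline{\varphi}: V_{\chi} \to V'$ such that $\varphi$ equals the composition of the projection $V \twoheadrightarrow V_{\chi}$ with $\overline{\varphi}$.

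There is essentially no obstacle here: the proof is a direct unwinding of the definition of $V_{\chi}$, since the construction was tailored precisely to make this universal property hold. The only minor point to verify is that the intersection $W_{\circ}$ of the family $\Omega$ is itself an element of $\Omega$, but this was already observed in the construction preceding the proposition (the set $\Omega$ is stable under arbitrary intersection because a quotient of $V$ by an intersection of subrepresentations injects into the product of the individual quotients, on each of which the center acts by $\chi$).
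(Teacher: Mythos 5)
Your argument is correct and is essentially identical to the paper's proof: both take $W=\ker(\varphi)$, note that $V/W\simeq\mathrm{Im}(\varphi)\subset V'$ forces $W\in\Omega$, and conclude $W_{\circ}\subset\ker(\varphi)$ so that $\varphi$ factors through $V_{\chi}$. Nothing further is needed.
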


\begin{proof}
Let $f:V\to V'$ be a morphism of $J(\mathbb Q_p)$-representations. Since $V/\mathrm{Ker}(f) \simeq \mathrm{Im}(f) \subset V'$, the center $\mathrm Z(J(\mathbb Q_p))$ acts like $\chi$ on the quotient $V/\mathrm{Ker}(f)$. Therefore $\mathrm{Ker}(f) \in \Omega$. It follows that $\mathrm{Ker}(f)$ contains $W_{\circ}$ and as a consequence, $f$ factors through $V_{\chi}$.
\end{proof}

The terms $E_1^{a,b}$ of the spectral sequence \eqref{SpectralSequence} consist of representations of the form
$$\mathrm{c-Ind}_{J_{\theta}}^J \,\rho,$$
where $\rho$ is the inflation to $J_{\theta}$ of a representation of the finite group of Lie type $\mathcal J_{\theta}$. We note that such a compactly induced representation does not contain any smooth irreducible subrepresentation of $J(\mathbb Q_p)$. Indeed, the center $\mathrm Z(J(\mathbb Q_p)) \simeq \mathbb Q_{p^2}^{\times}$ does not fix any finite dimensional subspace. In order to rectify this, it is customary to fix a continuous character $\chi$ of $\mathrm{Z}(J(\mathbb Q_p))$ which agrees with the central character of $\rho$ on $\mathrm Z(J(\mathbb Q_p))\cap J_{\theta} \simeq \mathbb Z_{p^2}^{\times}$, and to describe the space $(\mathrm{c-Ind}_{J_{\theta}}^J \,\rho)_{\chi}$ instead.

\begin{lem}\label{AdjonctionForc-Ind}
We have $(\mathrm{c-Ind}_{J_{\theta}}^J \,\rho)_{\chi} \simeq \mathrm{c-Ind}_{\mathrm Z(J(\mathbb Q_p))J_{\theta}}^J \,\chi\otimes\rho$.
\end{lem}

\begin{proof}
By Frobenius reciprocity, the identity map on $\mathrm{c-Ind}_{\mathrm Z(J(\mathbb Q_p))J_{\theta}}^J \,\chi\otimes\rho$ gives a morphism $\chi\otimes \rho \rightarrow \left(\mathrm{c-Ind}_{\mathrm Z(J(\mathbb Q_p))J_{\theta}}^J \,\chi\otimes\rho\right)_{|\mathrm Z(J(\mathbb Q_p))J_{\theta}}$ of $\mathrm Z(J(\mathbb Q_p))J_{\theta}$-representations. Restricting further to $J_{\theta}$, we obtain a morphism $\rho \rightarrow \left(\mathrm{c-Ind}_{\mathrm Z(J(\mathbb Q_p))J_{\theta}}^J \,\chi\otimes\rho\right)_{|J_{\theta}}$. This corresponds to a morphism $\mathrm{c-Ind}_{J_{\theta}}^J \,\rho \rightarrow \mathrm{c-Ind}_{\mathrm Z(J(\mathbb Q_p))J_{\theta}}^J \,\chi\otimes\rho$ of $J(\mathbb Q_p)$-representations by Frobenius reciprocity. Since $\mathrm Z(J(\mathbb Q_p))$ acts via the character $\chi$ on the target space, this morphism factors through a map $(\mathrm{c-Ind}_{J_{\theta}}^J \,\rho)_{\chi} \rightarrow \mathrm{c-Ind}_{\mathrm Z(J(\mathbb Q_p))J_{\theta}}^J \,\chi\otimes\rho$. In order to prove that this is an isomorphism, we build its inverse. The quotient morphism $\mathrm{c-Ind}_{J_{\theta}}^J \,\rho \rightarrow (\mathrm{c-Ind}_{J_{\theta}}^J \,\rho)_{\chi}$ corresponds, via Frobenius reciprocity, to a morphism $\rho \rightarrow (\mathrm{c-Ind}_{J_{\theta}}^J \,\rho)_{\chi\,|J_{\theta}}$ of $J_{\theta}$-representations. Because $\mathrm Z(J(\mathbb Q_p))$ acts via the character $\chi$ on the target space, this arrow may be extended to a morphism $\chi \otimes \rho \rightarrow (\mathrm{c-Ind}_{J_{\theta}}^J \,\rho)_{\chi \, |\mathrm Z(J(\mathbb Q_p))J_{\theta}}$ of $\mathrm Z(J(\mathbb Q_p))J_{\theta}$-representations. By Frobenius reciprocity, this corresponds to a morphism $\mathrm{c-Ind}_{\mathrm Z(J(\mathbb Q_p))J_{\theta}}^J \,\chi\otimes\rho \rightarrow (\mathrm{c-Ind}_{J_{\theta}}^J \,\rho)_{\chi}$, and this is our desired inverse.
\end{proof}

We recall Theorem 2 (supp) from \cite{bushnell} describing certain compactly induced representations. In this paragraph only, let $G$ be any $p$-adic group, and let $L$ be an open subgroup of $G$ which contains the center $\mathrm Z(G)$ and which is compact modulo $\mathrm{Z}(G)$. 

\begin{theo}\label{0-InfinityParts} 
Let $(\sigma,V)$ be an irreducible smooth representation of $L$. There is a canonical decomposition 
$$\mathrm{c-Ind}_{L}^{G} \, \sigma \simeq V_0 \oplus V_{\infty},$$
where $V_0$ is the sum of all supercuspidal subrepresentations of $\mathrm{c-Ind}_{L}^{G} \, \sigma$, and where $V_{\infty}$ contains no non-zero admissible subrepresentation. Moreover, $V_0$ is a finite sum of irreducible supercuspidal subrepresentations of $G$.
\end{theo}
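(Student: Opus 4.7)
The plan is to decompose $\mathrm{c-Ind}_L^G\,\sigma$ using Bernstein's center and then rule out admissible subrepresentations in the non-supercuspidal part via a direct matrix-coefficient argument exploiting that elements of $\mathrm{c-Ind}_L^G\,\sigma$ have support compact modulo $\mathrm Z(G)$.

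The central technical input is the following lemma: every admissible subrepresentation $\pi \hookrightarrow \mathrm{c-Ind}_L^G\,\sigma$ is supercuspidal. To prove it, fix $v \in \pi$ and $\tilde v \in \pi^{\vee}$, and choose a compact open subgroup $K_1 \subset G$ fixing both; by admissibility, $\pi^{K_1}$ is finite-dimensional. Since $\tilde v$ is $K_1$-fixed, a short computation using $\langle \pi(kg)v,\tilde v\rangle = \langle \pi(g)v,\tilde v\rangle$ for $k\in K_1$ gives
$$c_{v,\tilde v}(g) = \langle \pi(g)v,\tilde v \rangle = \langle (\pi(g)v)^{K_1}, \tilde v \rangle,$$
where $(\pi(g)v)^{K_1}$ is the $K_1$-average of $\pi(g)v$, lying in $\pi^{K_1}$. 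Realize $v$ as a section $f: G \to V_\sigma$ with $\mathrm{supp}(f) \subset L\cdot K_0$ for some compact $K_0$. On one hand, the section $(\pi(g)v)^{K_1}$ has support contained in $\mathrm{supp}(v)\,g^{-1}\,K_1$. On the other hand, fixing a basis $(w_i)$ of the finite-dimensional space $\pi^{K_1}$ and letting $C' := \bigcup_i \mathrm{supp}(w_i)$, every element of $\pi^{K_1}$ is supported inside the fixed compact-mod-center set $C'$. Therefore $(\pi(g)v)^{K_1}$ vanishes as soon as these two support constraints are incompatible, which forces $g$ to lie in the compact-mod-center set $K_1\,\mathrm{supp}(v)^{-1}\,C'$. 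Hence $c_{v,\tilde v}$ has compact support modulo $\mathrm Z(G)$, and $\pi$ is supercuspidal.

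Next I would invoke the Bernstein decomposition of the category of smooth representations of $G$ with central character $\omega_\sigma$ to write $\mathrm{c-Ind}_L^G\,\sigma \simeq V_0 \oplus V_\infty$ canonically, where $V_0$ is the projection onto the Bernstein blocks containing a supercuspidal representation and $V_\infty$ is the projection onto the remaining blocks. Any admissible subrepresentation of $V_\infty$ would, by the key lemma, be supercuspidal; since $V_\infty$ lies in non-supercuspidal blocks, this is impossible, giving the claimed absence of admissible subrepresentations in $V_\infty$. The $V_0$ term is then tautologically the sum of the irreducible supercuspidal subrepresentations of $\mathrm{c-Ind}_L^G\,\sigma$, and each such $\pi$ appears with finite multiplicity, since $\dim\mathrm{Hom}_G(\mathrm{c-Ind}_L^G\,\sigma,\pi) = \dim\mathrm{Hom}_L(\sigma,\pi|_L)$ is finite for admissible $\pi$.

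The main obstacle is the finiteness of the number of isomorphism classes of irreducible supercuspidal representations appearing in $V_0$. This requires an analysis of the intertwining algebra $\mathcal H(G,L,\sigma) = \mathrm{End}_G(\mathrm{c-Ind}_L^G\,\sigma)$ via Mackey theory: decompose it into convolution pieces indexed by $L$-double cosets and show that the supercuspidal constituents are exhausted by a finite-rank subalgebra corresponding to double cosets whose representatives lie in a compact-mod-center portion of $G$, while the remaining pieces precisely generate the non-admissible part $V_\infty$. The key lemma itself, by contrast, is a fairly direct support-chasing argument once admissibility is brought in through the $K_1$-isotypic projector.
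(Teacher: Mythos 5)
First, a point of reference: the paper does not prove this statement at all --- it is quoted verbatim from the literature (\cite{bushnell}, Theorem 2 (supp)) and used as a black box in \ref{0-InfinityParts}. So your proposal has to be judged on its own merits. Your key lemma is correct and is indeed the heart of the standard argument: for $v,\tilde v$ fixed by $K_1$ one has $c_{v,\tilde v}(g)=\langle (\pi(g)v)^{K_1},\tilde v\rangle$, the $K_1$-average lands in the finite-dimensional space $\pi^{K_1}$ whose elements are supported in a fixed set $C'$ that is compact modulo $\mathrm Z(G)$ (because $L$ is), while its support is also contained in $\mathrm{supp}(v)g^{-1}K_1$; nonvanishing forces $g$ into a product of compact-mod-center sets. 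This correctly shows every admissible subrepresentation has matrix coefficients compactly supported modulo the center, hence vanishing Jacquet modules, hence only supercuspidal subquotients.

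The gaps are in the second half. (1) It is not ``tautological'' that the projection of $\mathrm{c\text{-}Ind}_L^G\sigma$ onto the supercuspidal Bernstein blocks equals the \emph{sum of supercuspidal subrepresentations}: a priori an object of a block $[G,\tau]$ need not be semisimple, and could even have no irreducible subrepresentations. What saves you is that $\mathrm{c\text{-}Ind}_L^G\sigma$ is finitely generated and has a central character $\omega$ (since $\mathrm Z(G)\subset L$), so its supercuspidal-block part is admissible of finite length, and irreducible supercuspidals with a fixed central character are projective and injective in $\mathrm{Rep}_\omega(G)$, which kills all extensions and yields semisimplicity. None of this is in your write-up, and it is exactly the same input needed for (2), the finiteness of $V_0$, which you explicitly leave open. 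Your proposed Hecke-algebra/Mackey route is not carried out, and the heuristic ``supercuspidal constituents correspond to double cosets in a compact-mod-center portion of $G$'' is misleading: the subspace of sections supported in a fixed compact-mod-center set is finite dimensional, so the supercuspidal summands are certainly not confined to it. The clean way to close the gap is Bernstein's finiteness theorem: a finitely generated smooth representation has nonzero component in only finitely many Bernstein blocks, and each supercuspidal block contains only finitely many irreducibles with central character $\omega$ (using that $G/\mathrm Z(G)G^{\circ}$ is finite); combined with $\dim\mathrm{Hom}_G(\mathrm{c\text{-}Ind}_L^G\sigma,\pi)=\dim\mathrm{Hom}_L(\sigma,\pi|_L)<\infty$ this gives both the finiteness of the multiplicities and of the set of isomorphism classes. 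As it stands, your argument establishes the key lemma but not the decomposition nor the finiteness assertion of the theorem.
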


The spaces $V_0$ or $V_{\infty}$ could be zero. Note also that since $G$ is $p$-adic, any irreducible representation is admissible. So in particular, $V_{\infty}$ does not contain any irreducible subrepresentation. However, it may have many irreducible quotients and subquotients. Thus, the space $V_{\infty}$ is in general not $G$-semisimple. Hence, the structure of the compactly induced representation $\mathrm{c-Ind}_{L}^{G} \, \sigma$ heavily depends on the supercuspidal supports of its irreducible subquotients.\\
We go back to our previous notations. Let $0\leq \theta \leq \thetamax$, let $\rho$ be a smooth irreducible representation of $J_{\theta}$ and let $\chi$ be a character of $\mathrm{Z}(J(\mathbb Q_p))$ agreeing with the central character of $\rho$ on $\mathrm{Z}(J(\mathbb Q_p)) \cap J_{\theta}$. Since the group $\mathrm{Z}(J(\mathbb Q_p))J_{\theta}$ contains the center and is compact modulo the center, we have a canonical decomposition 
$$(\mathrm{c-Ind}_{J_{\theta}}^{J}\, \rho)_{\chi} \simeq V_{\rho,\chi,0} \oplus V_{\rho,\chi,\infty}.$$ 
In order to describe the spaces $V_{\rho,\chi,0}$ and $V_{\rho,\chi,\infty}$, we determine the supercuspidal supports of the irreducible subquotients of $\mathrm{c-Ind}_{J_{\theta}}^{J}\, \rho$ through type theory, with the assumption that $\rho$ is inflated from $\mathcal J_{\theta}$. For our purpose, it will be enough to analyze only the case $\theta = \thetamax$. In this case, $\dim V_{\thetamax}^1$ is equal to $0$ or $1$ so that $\mathrm{GU}(V^1_{\thetamax}) = \{1\}$ or $\mathbb F_{p^2}^{\times}$ has no proper parabolic subgroup. In particular, if $\rho$ is a cuspidal representation of $\mathrm{GU}(V^0_{\thetamax})$, then its inflation to the reductive quotient
$$\mathcal J_{\thetamax} \simeq \mathrm{G}(\mathrm{U}(V^0_{\thetamax})\times \mathrm{U}(V^1_{\thetamax}))$$
is also cuspidal. 

In the following paragraphs, we recall a few general facts from type theory. For more details, we refer to \cite{bk} and \cite{morris}. Let $G$ be the group of $F$-rational points of a reductive connected group $\mathbf G$ over a $p$-adic field $F$. A parabolic subgroup $P$ (resp. Levi complement $L$) of $G$ is defined as the group of $F$-rational points of an $F$-rational parabolic subgroup $\mathbf P \subset \mathbf G$ (resp. an $F$-rational Levi complement $\mathbf L \subset \mathbf G$). Every parabolic subgroup $P$ admits a Levi decomposition $P = LU$ where $U$ is the unipotent radical of $P$. We denote by $X^{\mathrm{un}}(G)$ the set of \textbf{unramified characters} of $G$, ie. the continuous characters of $G$ which are trivial $G^{\circ} := \bigcap_{\psi} \mathrm{Ker} |\psi|_F$ where $\psi$ runs over all the $F$-rational algebraic characters of $G$ and $|\,\cdot\,|_F$ is the normalized valuation on $F$. We consider pairs $(L,\tau)$ where $L$ is a Levi complement of $G$ and $\tau$ is a supercuspidal representation of $L$. Two pairs $(L,\tau)$ and $(L',\tau')$ are said to be \textbf{inertially equivalent} if for some $g\in G$ and $\chi \in X^{\mathrm{un}}(G)$ we have $L' = L^g$ and $\tau' \simeq \tau^g \otimes \chi$ where $\tau^g$ is the representation of $L^g$ defined by $\tau^g(l) := \tau(g^{-1}lg)$. This is an equivalence relation, and we denote by $[L,\tau]_G$ or $[L,\tau]$ the inertial equivalence class of $(L,\tau)$ in $G$. The set of all inertial equivalence classes is denoted $\mathrm{IC}(G)$. If $P$ is a parabolic subgroup of $G$, we write $\iota_P^G$ for the normalised parabolic induction functor. Any smooth irreducible representation $\pi$ of $G$ is isomorphic to a subquotient of some parabolically induced representation $\iota_P^G(\tau)$, where $P = LU$ for some Levi complement $L$ and $\tau$ is a supercuspidal representation of $L$. We denote by $\ell(\pi) \in \mathrm{IC}(G)$ the inertial equivalence class $[L,\tau]$. This is uniquely determined by $\pi$ and it is called the \textbf{inertial support} of $\pi$.

Let $\mathfrak s \in \mathrm{IC}(G)$. We denote by $\mathrm{Rep}^{\mathfrak s}(G)$ the full subcategory of $\mathrm{Rep}(G)$ whose objects are the smooth representations of $G$ all of whose irreducible subquotients have inertial support $\mathfrak s$. This definition corresponds to the one given in \cite{bernstein} Proposition-Définition 2.8. If $\mathfrak S \subset \mathrm{IC}(G)$, we write $\mathrm{Rep}^{\mathfrak S}(G)$ for the direct product of the categories $\mathrm{Rep}^{\mathfrak s}(G)$ where $\mathfrak s$ runs over $\mathfrak S$. The following statement is Proposition 2.10 of loc. cit.

\begin{theo}\label{BernsteinDecomposition}
The category $\mathrm{Rep}(G)$ decomposes as the direct product of the subcategories $\mathrm{Rep}^{\mathfrak s}(G)$ where $\mathfrak s$ runs over $\mathrm{IC}(G)$. Moreover, if $\mathfrak S \subset \mathrm{IC}(G)$ then the category $\mathrm{Rep}^{\mathfrak S}(G)$ is stable under direct sums and subquotients. 
\end{theo}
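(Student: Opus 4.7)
The plan is to follow Bernstein's original strategy in \cite{bernstein}, since the result is cited directly from there. One can simply refer to loc. cit. Sections 2.8 and 2.10 for the complete proof; however, let me outline how I would approach it independently.

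First I would dispatch the stability claims, which are formal. If $V \in \mathrm{Rep}^{\mathfrak S}(G)$ and $W$ is a subquotient of $V$, then every irreducible subquotient of $W$ is an irreducible subquotient of $V$, hence has inertial support in $\mathfrak S$; so $W \in \mathrm{Rep}^{\mathfrak S}(G)$. For direct sums, any irreducible subquotient of $\bigoplus_i V_i$ arises as a subquotient of some $V_i$, again with inertial support in $\mathfrak S$. So the content of the theorem really lies in the first assertion about the block decomposition.

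For the decomposition, my strategy would be to construct, for each inertial class $\mathfrak s \in \mathrm{IC}(G)$, a functor $V \mapsto V^{\mathfrak s}$ where $V^{\mathfrak s}$ is the (unique) maximal subrepresentation of $V$ lying in $\mathrm{Rep}^{\mathfrak s}(G)$, and to show (i) $V = \sum_{\mathfrak s} V^{\mathfrak s}$ and (ii) this sum is direct. Step (i) reduces, by expressing $V$ as the union of its finitely generated subrepresentations, to the case when $V$ is finitely generated. In that case, one proves using the finiteness of Jacquet modules (Jacquet's theorem on admissibility of Jacquet restrictions of finitely generated smooth representations) that the set of inertial classes of irreducible subquotients of $V$ is finite, and that $V$ is the sum of subrepresentations each lying in a single block. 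Step (ii) amounts to verifying that $\mathrm{Hom}_G(V_1,V_2) = 0$ whenever $V_1 \in \mathrm{Rep}^{\mathfrak s_1}(G)$ and $V_2 \in \mathrm{Rep}^{\mathfrak s_2}(G)$ with $\mathfrak s_1 \neq \mathfrak s_2$, which reduces, by the stability under subquotients, to the statement that two irreducible representations with distinct inertial support cannot be isomorphic --- obvious from the definition of inertial class via the Jacquet-Bernstein-Zelevinsky description of supercuspidal supports.

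The main obstacle is really the construction of the projection functors $V \mapsto V^{\mathfrak s}$, which in Bernstein's approach uses the machinery of the Bernstein center: one builds central idempotents $e_{\mathfrak s}$ in a suitable completion of the Hecke algebra $\mathcal H(G)$, where each $e_{\mathfrak s}$ acts as the projector onto $\mathrm{Rep}^{\mathfrak s}(G)$ on every smooth module. This in turn relies on a careful geometric analysis of the variety of supercuspidal pairs modulo inertia (whose connected components are parametrized exactly by $\mathrm{IC}(G)$), together with second adjunction for parabolic induction and restriction. Given the depth of this machinery, I would present the proof as a direct appeal to \cite{bernstein}, only sketching the formal reductions above.
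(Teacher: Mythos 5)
Your proposal is correct and matches the paper's treatment: the paper states this result as a direct citation of \cite{bernstein} 2.8 and 2.10 without reproducing a proof, which is exactly what you ultimately do. Your formal reductions (stability under subquotients and direct sums from the definition, and the outline of the block decomposition via the Bernstein center) are an accurate sketch of the standard argument.
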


Type theory was then introduced in \cite{bk} in order to describe the categories $\mathrm{Rep}^{\mathfrak s}(G)$ which are called the \textbf{Bernstein blocks}. Let $\mathfrak S$ be a subset of $\mathrm{IC}(G)$. A $\mathfrak S$\textbf{-type} in $G$ is a pair $(K,\rho)$ where $K$ is an open compact subgroup of $G$ and $\rho$ is a smooth irreducible representation of $K$, such that for every smooth irreducible representation $\pi$ of $G$ we have 
$$\pi_{|K} \text{ contains } \rho \iff \ell(\pi) \in \mathfrak S.$$
When $\mathfrak S$ is a singleton $\{\mathfrak s\}$, we call it an $\mathfrak s$-type instead.

\begin{rk}
By Frobenius reciprocity, the condition that $\pi_{|K}$ contains $\rho$ is equivalent to $\pi$ being isomorphic to an irreducible quotient of $\mathrm{c-Ind}_K^G\,\rho$. In fact, we can say a little bit more. Let $K$ be an open compact subgroup of $G$ and let $\rho$ be an irreducible smooth representation of $K$. Let $\mathrm{Rep}_{\rho}(G)$ denote the full subcategory of $\mathrm{Rep}(G)$ whose objects are those representations which are generated by their $\rho$-isotypic component. If $(K,\rho)$ is an $\mathfrak S$-type, then \cite{bk} Theorem 4.3 establishes the equality of categories $\mathrm{Rep}_{\rho}(G) = \mathrm{Rep}^{\mathfrak S}(G)$. By definition of compact induction, the representation $\mathrm{c-Ind}_K^G\, \rho$ is generated by its $\rho$-isotypic vectors. Therefore any irreducible subquotient of $\mathrm{c-Ind}_K^G\,\rho$ has inertial support in $\mathfrak S$.
\end{rk}

An important class of types are those of depth zero, and they are the only ones we shall encounter. First, we recall the following result. If $K$ is a parahoric subgroup of $G$, we denote by $\mathcal K$ its maximal reductive quotient. It is a finite group of Lie type over the residue field of $F$. The following statement is \cite{morris} Proposition 4.1

\begin{prop}\label{Depth-0Types}
Let $K$ be a maximal parahoric subgroup of $G$ and let $\rho$ be an irreducible cuspidal representation of $\mathcal K$, seen as a representation of $K$ by inflation. Let $\pi$ be an irreducible smooth representation of $G$ and assume that $\pi_{|K}$ contains $\rho$. Then $\pi$ is supercuspidal and there exists an irreducible smooth representation $\tilde{\rho}$ of the normalizer $\mathrm N_G(K)$ such that $\tilde{\rho}_{|K}$ contains $\rho$ and $\pi \simeq \mathrm{c-Ind}_{N_G(K)}^G \tilde{\rho}$.
\end{prop}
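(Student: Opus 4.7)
The plan is to establish the result through the intertwining criterion for compact induction from a subgroup which is open and compact modulo the center, applied to $N_G(K)$.

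First, I would analyze the $G$-intertwining of $\rho$. Define
$$I_G(\rho) := \{ g \in G \mid \Hom_{K \cap K^g}(\rho, \rho^g) \not = 0\},$$
where $K^g = g^{-1}Kg$. The first key step is to prove $I_G(\rho) = N_G(K)$. The inclusion $\supseteq$ is easy: if $g$ normalizes $K$, then $\rho^g$ is again the inflation of a cuspidal representation of $\mathcal K$, and conjugation gives a non-zero intertwiner. The harder direction $\subseteq$ is the main technical step. For $g \notin N_G(K)$, one uses the Bruhat-Tits decomposition of $G$ relative to $K$ and the Moy-Prasad (or Iwahori-type) filtration of $K$: the reduction of $K \cap K^g$ modulo the pro-unipotent radical $K^+$ lies in a proper parabolic subgroup of $\mathcal K$ and contains its unipotent radical $\mathcal U$. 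The cuspidality of $\rho$ forces $\rho^{\mathcal U} = 0$, so the space of $(K \cap K^g)$-intertwiners vanishes.

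Once the intertwining is computed, I would extend $\rho$ to $N_G(K)$. Since $K$ is a maximal parahoric subgroup, the quotient $N_G(K)/K$ is finite and $N_G(K)$ is compact modulo the center $Z(G) \subset N_G(K)$. Clifford theory then produces an irreducible smooth representation $\tilde{\rho}$ of $N_G(K)$ whose restriction to $K$ contains $\rho$. By the first step we have $I_G(\tilde{\rho}) \subset I_G(\rho) \cap N_G(K) = N_G(K)$, so the standard intertwining criterion (for compact induction from an open subgroup compact modulo the center, see e.g. \cite{bk}) implies that $\mathrm{c-Ind}_{N_G(K)}^G \tilde{\rho}$ is irreducible.

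Now, given $\pi$ containing $\rho$ upon restriction to $K$, Frobenius reciprocity yields a non-zero map $\mathrm{c-Ind}_K^G \rho \twoheadrightarrow \pi$. By a Mackey decomposition along $K \subset N_G(K) \subset G$, the representation $\mathrm{c-Ind}_K^G \rho$ decomposes (up to regrouping isotypic parts for the $N_G(K)/K$-action) as a direct sum of $\mathrm{c-Ind}_{N_G(K)}^G \tilde{\rho}$ over all extensions $\tilde{\rho}$ of $\rho$ to $N_G(K)$; each of these is irreducible, so $\pi \simeq \mathrm{c-Ind}_{N_G(K)}^G \tilde{\rho}$ for one such extension. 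Finally, supercuspidality of $\pi$ follows from its realization as a compact induction from a subgroup which is compact modulo center: its matrix coefficients are supported compactly modulo center, so all Jacquet modules $\pi_U$ with respect to proper parabolics $P = LU$ vanish. The main obstacle of the whole argument is the identification $I_G(\rho) = N_G(K)$, which rests on a careful combinatorial analysis of the intersections $K \cap K^g$ via the Bruhat-Tits building together with the cuspidality of $\rho$ in $\mathcal K$.
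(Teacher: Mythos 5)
The paper does not actually prove this proposition; it is quoted verbatim from Morris (\cite{morris} 4.1), so there is no internal proof to compare against. Your sketch follows the standard route (intertwining computation, Clifford theory over $\mathrm N_G(K)$, irreducibility of the compact induction, supercuspidality from compactness of coefficients modulo the center), which is essentially the Morris/Moy--Prasad argument, and the technical heart --- showing $I_G(\rho)=\mathrm N_G(K)$ via the building and the cuspidality of $\rho$ --- is correctly identified and correctly sketched.

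Two steps as written would fail and need repair. First, $\mathrm N_G(K)/K$ is \emph{not} finite when $\mathrm Z(G)$ is non-compact: in the setting of this paper $\mathrm N_J(J_\Lambda)=\mathrm Z(J)J_\Lambda$ (or an index-two overgroup), and $\mathrm Z(J)/(\mathrm Z(J)\cap J_\Lambda)$ contains a copy of $\mathbb Z$. Only $\mathrm N_G(K)/\mathrm Z(G)K$ is finite; Clifford theory still produces the representations $\tilde\rho$ because $\mathrm N_G(K)$ is open and compact modulo the center, but the finiteness claim should be dropped. Second, and more seriously, the asserted Mackey decomposition of $\mathrm{c\text{-}Ind}_K^G\,\rho$ as a \emph{direct sum} of the $\mathrm{c\text{-}Ind}_{\mathrm N_G(K)}^G\,\tilde\rho$ over all extensions is false: $\mathrm{c\text{-}Ind}_K^{\mathrm N_G(K)}\rho$ is not semisimple (already $\mathrm{c\text{-}Ind}_{\mathbb Z_{p^2}^\times}^{\mathbb Q_{p^2}^\times}\mathbf 1$ is the group algebra of $\mathbb Z$, which has no irreducible submodule), and the $\tilde\rho$ form a continuum, a torsor under unramified characters of $\mathrm N_G(K)$. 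The correct way to finish is dual: since $\pi$ is admissible, its $\rho$-isotypic component is finite-dimensional and stable under $\mathrm N_G(K)$ (after passing to the stabilizer of the class of $\rho$ if necessary), so it contains an irreducible $\mathrm N_G(K)$-subrepresentation $\tilde\rho$ with $\tilde\rho_{|K}\supset\rho$; Frobenius reciprocity gives a non-zero map $\mathrm{c\text{-}Ind}_{\mathrm N_G(K)}^G\,\tilde\rho\to\pi$, which is an isomorphism because the source is irreducible by your intertwining computation. With these two corrections the argument is complete.
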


Such representations $\pi$ are called \textbf{depth-0 supercupidal representations} of $G$. More generally, a smooth irreducible representation $\pi$ of $G$ is said to be of \textbf{depth-0} if it contains a non-zero vector that is fixed by the pro-unipotent radical of some parahoric subgroup of $G$. A \textbf{depth-0 type} in $G$ is a pair $(K,\rho)$ where $K$ is a parahoric subgroup of $G$ and $\rho$ is an irreducible cuspidal representation of $\mathcal K$, inflated to $K$. The name is justified by \cite{morris} Theorem 4.8. 

\begin{theo}
Let $(K,\rho)$ be a depth-0 type. Then there exists a (unique) finite set $\mathfrak S \subset \mathrm{IC}(G)$ such that $(K,\rho)$ is an $\mathfrak S$-type of $G$. 
\end{theo}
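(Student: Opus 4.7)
The plan is to produce an explicit finite set $\mathfrak{S}$ of inertial classes of $G$ and then verify the defining property of a type.

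\textbf{Step 1 (Construction of $\mathfrak{S}$).} Let $K^+$ denote the pro-unipotent radical of $K$, so that $\rho$ is the inflation of a cuspidal representation of $\mathcal K = K/K^+$. I would first enumerate, up to $G$-conjugacy, those Levi complements $L\subset G$ such that $K\cap L$ is a parahoric subgroup of $L$ with reductive quotient $\mathcal L$ forming a Levi of $\mathcal K$, and such that $\rho$ is Harish-Chandra induced from an irreducible cuspidal representation $\rho_L$ of $\mathcal L$. Since $\rho$ is itself cuspidal, only finitely many such $(L,\rho_L)$ occur. For each, applying the depth-zero construction recalled in Proposition~\ref{Depth-0Types} to the parahoric $K\cap L$ of $L$, one obtains irreducible supercuspidal representations $\tau = \mathrm{c\text{-}Ind}_{\mathrm N_L(K\cap L)}^L \widetilde{\rho_L}$ of $L$, where $\widetilde{\rho_L}$ extends the inflation of $\rho_L$ from $K\cap L$ to $\mathrm N_L(K\cap L)$; there are finitely many such extensions modulo unramified twist. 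Set $\mathfrak{S}$ to be the (finite) collection of inertial classes $[L,\tau]_G$ so produced.

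\textbf{Step 2 (Forward direction).} Let $\pi$ be an irreducible smooth representation of $G$ with $\pi_{|K}\supset \rho$. Since $\rho$ is trivial on $K^+$, $\pi^{K^+}\neq 0$, so $\pi$ has depth zero. By Moy--Prasad / Morris theory of depth-zero representations, $\pi$ is an irreducible subquotient of a parabolically induced representation $\iota_P^G(\tau)$ for some Levi $L$ of $G$ and some depth-zero supercuspidal $\tau$ of $L$, with $(L,\tau)$ unique up to unramified twist and $G$-conjugacy. The $\rho$-isotypic vectors in $\pi$ carry a module structure over the Hecke algebra $\mathcal H(G,\rho)$; via the Howlett--Lehrer / Morris Hecke-algebra isomorphism, this module factors through parabolic induction from one of the Levi subgroups $L$ appearing in Step 1, with $\tau$ of the specific form prescribed there. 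Consequently $\ell(\pi)\in\mathfrak{S}$.

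\textbf{Step 3 (Reverse direction).} Conversely, assume $\ell(\pi)=[L,\tau]\in\mathfrak{S}$, so that $\pi$ is an irreducible constituent of $\iota_P^G(\tau\otimes\chi)$ for some $\chi\in X^{\mathrm{un}}(L)$ and $P=LU$. Unramified twists act trivially on compact subgroups, hence $(\tau\otimes\chi)_{|K\cap L}$ still contains the inflation of $\rho_L$ by construction of $\tau$. Applying Mackey restriction along the Iwasawa decomposition $G=PK$ and reducing modulo $K^+$ brings the restriction of $\iota_P^G(\tau\otimes\chi)$ to $K$ into correspondence with the Harish-Chandra induction from $\mathcal L$ to $\mathcal K$ of $\rho_L$, which by construction contains $\rho$. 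Hence $\rho$ appears in $\pi_{|K}$.

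The main obstacle is Step 2: the passage from the finite-group statement (that $\rho$ appears in some cohomological piece of $\pi^{K^+}$) to the $p$-adic statement (that $\pi$ has inertial support in $\mathfrak{S}$) is not formal and requires the Hecke algebra isomorphism $\mathcal H(G,\rho)\simeq \mathcal H(\mathcal W_\rho,q)$ together with Howlett--Lehrer comparison results, matching Harish-Chandra series in $\mathcal K$ with Bernstein components of $G$. Finiteness of $\mathfrak{S}$ is then automatic from the finiteness of Levi subgroups modulo $G$-conjugacy and the finiteness of extensions of $\rho_L$ to $\mathrm N_L(K\cap L)$ modulo unramified twists.
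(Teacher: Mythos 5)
First, note that the paper does not prove this statement: it is imported verbatim from Morris (\cite{morris}, Theorem 4.8), so there is no internal proof to compare against. Judged on its own terms, your proposal correctly identifies the standard strategy behind Morris's theorem (reduce to the canonical Levi attached to the parahoric, realise the putative supercuspidal supports via compact induction from the normalizer, and control everything through the Hecke algebra $\mathcal H(G,\rho)$ and its Howlett--Lehrer-type presentation), but as written it is a road map rather than a proof, and two steps have genuine gaps.

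In Step 1 your enumeration is internally inconsistent: since $\rho$ is by hypothesis an irreducible \emph{cuspidal} representation of $\mathcal K$, it cannot be Harish-Chandra induced from a proper Levi $\mathcal L$ of $\mathcal K$, so your family of pairs $(L,\rho_L)$ collapses to the single canonical Levi $L$ with $K\cap L$ maximal parahoric in $L$ and $\mathcal K_L\simeq\mathcal K$ (this is exactly the Levi of \ref{Depth-0Types}); the construction survives, but only after this correction. In Step 2 you concede yourself that the passage from ``$\rho$ occurs in $\pi^{K^+}$'' to ``$\ell(\pi)\in\mathfrak S$'' rests entirely on the isomorphism $\mathcal H(G,\rho)\simeq\mathcal H(\mathcal W_\rho,q)$ and the comparison of Harish-Chandra series with Bernstein components; since that isomorphism is the substance of Morris's theorem, invoking it without proof makes the argument circular as a self-contained proof. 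Finally, Step 3 proves the wrong statement: the Mackey/Iwasawa computation shows that $\rho$ occurs in $\iota_P^G(\tau\otimes\chi)_{|K}$, but the definition of an $\mathfrak S$-type requires $\rho$ to occur in $\pi_{|K}$ for \emph{every} irreducible subquotient $\pi$ of $\iota_P^G(\tau\otimes\chi)$; a priori the $\rho$-isotypic vectors could all lie in a single constituent. Closing this gap is precisely what the Bushnell--Kutzko notion of a $G$-cover (exactness and faithfulness of $\pi\mapsto\mathrm{Hom}_K(\rho,\pi_{|K})$ on the relevant Bernstein block) is designed to do, and it again runs through the Hecke algebra. So the proposal captures the architecture of the known proof but does not constitute one.
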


Let $K$ be a parahoric subgroup of $G$. Using the Bruhat-Tits building of $G$, one may canonically associate a Levi complement $L$ of $G$ such that $K_L := L \cap K$ is a maximal parahoric subgroup of $L$, whose maximal reductive quotient $\mathcal K_L$ is naturally identified with $\mathcal K$. This is precisely described in \cite{morris} paragraph 2.1. Moreover, we have $L = G$ if and only if $K$ is a maximal parahoric subgroup of $G$. Now, let $(K,\rho)$ be a depth-0 type of $G$ and denote by $\mathfrak S$ the finite subset of $\mathrm{IC}(G)$ such that it is an $\mathfrak S$-type of $G$. Since $\rho$ is a cuspidal representation of $\mathcal K \simeq \mathcal K_L$, we may inflate it to $K_L$. Then, the pair $(K_L,\rho)$ is a depth-0 type of $L$. We say that $(K,\rho)$ is a $G$\textbf{-cover} of $(K_L,\rho)$. By the previous theorem, there is a finite set $\mathfrak S_L \subset \mathrm{IC}(L)$ such that $(K_L,\rho)$ is an $\mathfrak S_L$-type of $L$. Then the proof of Theorem 4.8 in \cite{morris} shows that we have the relation 
$$\mathfrak S = \left\{[M,\tau]_G \,\big|\, [M,\tau]_L \in \mathfrak S_L\right\}.$$
In this set, $M$ is some Levi complement of $L$, therefore it may also be seen as a Levi complement in $G$. Thus, an inertial equivalence class $[M,\tau]_L$ in $L$ gives rise to a class $[M,\tau]_G$ in $G$. Since $K_L$ is maximal in $L$, in virtue of the proposition above any element of $\mathfrak S_L$ has the form $[L,\pi]_L$ for some supercuspidal representation $\pi$ of $L$. In particular, every smooth irreducible representation of $G$ containing the type $(K,\rho)$ has a conjugate of $L$ as cuspidal support. We deduce the following corollary. 

\begin{corol}
Let $(K,\rho)$ be a depth-0 type in $G$ and assume that $K$ is not a maximal parahoric subgroup. Then no smooth irreducible representation $\pi$ of $G$ containing the type $(K,\rho)$ is supercuspidal.
\end{corol}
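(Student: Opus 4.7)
The plan is to apply directly the structural results recalled in \ref{Depth-0Types}. Since $(K,\rho)$ is a depth-0 type, the construction recalled there canonically associates to the parahoric subgroup $K$ a Levi complement $L$ of $G$ such that $K_L := L\cap K$ is a maximal parahoric subgroup of $L$ with maximal reductive quotient canonically identified with $\mathcal K$. The key input is the dichotomy: $L = G$ if and only if $K$ is a maximal parahoric subgroup of $G$. Under our hypothesis, $L$ is therefore a proper Levi complement of $G$.

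Next, I would invoke the fact that $(K,\rho)$ is a $G$-cover of the depth-0 type $(K_L,\rho)$ of $L$, together with the relation
$$\mathfrak S = \bigl\{[M,\tau]_G \;\big|\; [M,\tau]_L \in \mathfrak S_L\bigr\}$$
from the proof of \cite{morris} Theorem 4.8, where $\mathfrak S$ (resp.\ $\mathfrak S_L$) is the finite subset of $\mathrm{IC}(G)$ (resp.\ $\mathrm{IC}(L)$) such that $(K,\rho)$ (resp.\ $(K_L,\rho)$) is an $\mathfrak S$-type (resp.\ $\mathfrak S_L$-type). Since $K_L$ is a maximal parahoric subgroup of $L$, the Proposition of \ref{Depth-0Types} applied to $L$ forces every irreducible smooth representation of $L$ containing $(K_L,\rho)$ to be supercuspidal. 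Consequently, every inertial class in $\mathfrak S_L$ is of the form $[L,\tau]_L$ for some supercuspidal representation $\tau$ of $L$.

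Combining these two observations, every element of $\mathfrak S$ has the shape $[L,\tau]_G$ where $L$ is a proper Levi complement of $G$. Now let $\pi$ be a smooth irreducible representation of $G$ containing the type $(K,\rho)$, so that $\ell(\pi) \in \mathfrak S$ by the defining property of types. Then the cuspidal support of $\pi$ is carried by a conjugate of the proper Levi $L \subsetneq G$. But the cuspidal support of a supercuspidal representation of $G$ is $[G,\pi]_G$, supported on $G$ itself. This contradicts the properness of $L$, which completes the argument; there is no real obstacle here, the work is entirely done by the theorems of Morris recalled above.
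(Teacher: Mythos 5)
Your proposal is correct and follows essentially the same route as the paper: associate to $K$ the (necessarily proper) Levi complement $L$ with $K_L = L\cap K$ maximal in $L$, use Morris's cover relation to see that every class in $\mathfrak S$ is of the form $[L,\tau]_G$ with $\tau$ supercuspidal on the proper Levi $L$, and conclude that no representation containing the type can have inertial support $[G,\pi]_G$. The paper's own justification is exactly this chain of observations, given in the discussion immediately preceding the corollary.
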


Thus, up to replacing $G$ with a Levi complement, the study of any depth-0 type $(K,\rho)$ can be reduced to the case where $K$ is a maximal parahoric subgroup. Let us assume that it is the case, and let $\mathfrak S$ be the associated finite subset of $\mathrm{IC}(G)$. While $\mathfrak S$ is in general not a singleton, it becomes one once we modify the pair $(K,\rho)$ a little bit according to \cite{morris} Theorem Variant 4.7. Let $\widehat{K}$ be the maximal open compact subgroup of $\mathrm N_G(K)$. We have $K \subset \widehat{K}$ but in general this inclusion may be strict. Let $\tilde{\rho}$ be a smooth irreducible representation of $\mathrm N_G(K)$ such that $\tilde{\rho}_{|K}$ contains $\rho$. Let $\widehat{\rho}$ be any irreducible component of the restriction $\tilde{\rho}_{|\widehat{K}}$. Eventually, let $\pi := \mathrm{c-Ind}_{\mathrm N_G(K)}^G \, \tilde{\rho}$ be the associated depth-0 supercuspidal representation of $G$. 

\begin{theo}\label{TypeSingleton}
The pair $(\widehat{K},\widehat{\rho})$ is a $[G,\pi]$-type.
\end{theo}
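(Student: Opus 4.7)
\noindent \emph{Proof plan.} The strategy is to adapt the argument of Morris \cite{morris} Theorem 4.7, using that passing from $K$ to $\widehat K$ amounts to cutting down the finite set $\mathfrak S$ to a singleton. Set $\Omega := \mathrm N_G(K)/\widehat K$. A first step is to observe that $\widehat K/K$ is finite, so $\widehat\rho$ is finite-dimensional, while $\Omega$ is a finitely generated abelian group whose characters are precisely the restrictions to $\mathrm N_G(K)$ of unramified characters of $G$. In the unitary setting of the paper this last fact can be read off from \ref{Normalizers} Corollary: either $\mathrm N_J(K) = \mathrm Z(J) K$, in which case $\widehat K = K$ and $\Omega \simeq \mathbb Z$ via $\alpha$, or $\mathrm N_J(K)$ has the extra generator $h_0$ with $h_0^2 = p\cdot\mathrm{id}$, in which case $\widehat K = K \cup h_0 K$ and $\Omega$ is again a rank-one lattice on which every character comes from an unramified character of $J$.

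The ``only if'' direction I would handle first: suppose $\pi'$ is an irreducible smooth representation of $G$ with $\pi'|_{\widehat K}$ containing $\widehat\rho$. Since $K \triangleleft \widehat K$ and $\widehat\rho$ is irreducible, Clifford theory gives that $\widehat\rho|_K$ is a direct sum of $\widehat K$-conjugates of $\rho$, hence in particular $\pi'|_K$ contains $\rho$. The hypothesis that $(K,\rho)$ is an $\mathfrak S$-type then forces $\ell(\pi') \in \mathfrak S$, so by \ref{Depth-0Types} Proposition and the corollary following it $\pi'$ is a depth-$0$ supercuspidal representation of the form $\mathrm{c-Ind}_{\mathrm N_G(K)}^G \tilde\rho'$, where $\tilde\rho'$ is an irreducible extension of (a conjugate of) $\rho$ to $\mathrm N_G(K)$. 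Up to $G$-conjugation, we may arrange $\tilde\rho'|_K \supset \rho$. The key remaining task is to show that $\tilde\rho' \simeq \tilde\rho \otimes \chi$ for some unramified character $\chi$ of $G$.

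For this step I would argue as follows. Both $\tilde\rho$ and $\tilde\rho'$ are irreducible extensions of $\rho$ to $\mathrm N_G(K)$, so by Mackey/Clifford theory they differ by a character $\eta$ of $\mathrm N_G(K)/K$. The group $\mathrm N_G(K)/K$ sits in a short exact sequence $1 \to \widehat K/K \to \mathrm N_G(K)/K \to \Omega \to 1$, so $\eta$ is determined by its restrictions $\eta|_{\widehat K/K}$ and a lift on $\Omega$. Since $\tilde\rho'|_{\widehat K}$ must contain $\widehat\rho$ (Frobenius reciprocity applied to $\pi'|_{\widehat K} \supset \widehat\rho$), and $\tilde\rho|_{\widehat K}$ contains $\widehat\rho$ by construction, the character $\eta|_{\widehat K/K}$ must fix the isomorphism class of $\widehat\rho$ inside $\tilde\rho|_{\widehat K}$, which (combined with the irreducibility of $\widehat\rho$) forces $\eta|_{\widehat K/K}$ to be trivial. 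Thus $\eta$ factors through a character of $\Omega$, and the first paragraph shows that every such character extends to an unramified character $\chi$ of $G$; hence $\ell(\pi') = [G,\pi]$.

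The ``if'' direction is then the easier half: any $\pi' = \pi \otimes \chi$ with $\chi$ unramified equals $\mathrm{c-Ind}_{\mathrm N_G(K)}^G (\tilde\rho \otimes \chi|_{\mathrm N_G(K)})$, whose restriction to $\widehat K$ agrees with $\tilde\rho|_{\widehat K}$ (since $\chi$ is trivial on $\widehat K$) and therefore still contains $\widehat\rho$; Frobenius reciprocity yields $\pi'|_{\widehat K} \supset \widehat\rho$. The main obstacle in the whole argument is the character-lifting step at the heart of the third paragraph, namely showing that every character of $\Omega$ is the restriction of an unramified character of $G$; in the general setting this rests on the exhaustion $G = G^\circ \cdot \mathrm N_G(K)$ obtained from the Iwasawa-type decomposition and the fact that $G/G^\circ$ is a free abelian group, while in the specific unitary situation of this paper it is immediate from \ref{GroupGeneratedByParahorics} and \ref{Normalizers}.
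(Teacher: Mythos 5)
The paper does not prove this statement: it is imported verbatim from Morris (\cite{morris}, Variant 4.7), so there is no internal argument to compare yours against. Judged on its own terms, your reconstruction has the right skeleton (reduce via Morris 4.1 to depth-zero supercuspidals $\mathrm{c\text{-}Ind}_{\mathrm N_G(K)}^G\tilde\rho'$, then show any two candidates differ by an unramified twist), but two steps are genuinely flawed. First, the assertion that $\eta|_{\widehat K/K}$ is \emph{forced to be trivial} does not follow from what you invoke: if $\tilde\rho'\simeq\tilde\rho\otimes\eta$ and both restrictions to $\widehat K$ contain $\widehat\rho$, you only get $\widehat\rho\otimes\eta|_{\widehat K}\simeq\widehat\rho$, i.e.\ $\eta|_{\widehat K}$ lies in the (possibly nontrivial) self-twist group of $\widehat\rho$; that is weaker than triviality, and it is exactly the point where passing from $K$ to $\widehat K$ has to do work. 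The cleaner route is to run Clifford theory directly for $\widehat K\triangleleft\mathrm N_G(K)$ (not for $K\triangleleft\mathrm N_G(K)$): since $\mathrm N_G(K)/\widehat K$ is free abelian of rank one, all irreducible smooth representations of $\mathrm N_G(K)$ lying over the fixed irreducible $\widehat\rho$ form a single orbit under twisting by characters of $\mathrm N_G(K)/\widehat K$. Your detour through $\mathrm N_G(K)/K$ also tacitly assumes $\tilde\rho|_K=\rho$ irreducibly (Morris only gives that $\tilde\rho|_K$ \emph{contains} $\rho$) and that the relevant quotient is abelian.

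Second, the ``main obstacle'' as you resolve it is resolved incorrectly: the decomposition $G=G^{\circ}\cdot\mathrm N_G(K)$ is false in the very setting of this paper. For $n$ even and $t(\Lambda)\neq n-t(\Lambda)$, one has $\mathrm N_J(J_\Lambda)=\mathrm Z(J)J_\Lambda$ by \ref{Normalizers}, so $\alpha(\mathrm N_J(J_\Lambda))=2\mathbb Z\subsetneq\mathbb Z=\alpha(J)$ and $J^{\circ}\mathrm N_J(J_\Lambda)$ is a proper index-two subgroup of $J$. The surjectivity you need --- every character of $\mathrm N_G(K)/\widehat K$ is the restriction of an unramified character of $G$ --- still holds, but for a different reason: the restriction map $X^{\mathrm{un}}(J)\to\mathrm{Hom}(\mathrm N_J(J_\Lambda)/\widehat K,\overline{\mathbb Q_\ell}^{\times})$ is, in the case above, the squaring map $z\mapsto z^{2}$ on $\overline{\mathbb Q_\ell}^{\times}$, which is surjective because $\overline{\mathbb Q_\ell}^{\times}$ is divisible, not because the two subgroups exhaust $G$. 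With those two repairs your argument goes through; without them the ``only if'' direction is not established.
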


The conclusion does not depend on the choice of $\widehat{\rho}$ as an irreducible component of $\tilde{\rho}_{|\widehat{K}}$. Any one of them affords a type for the same singleton $\mathfrak s = [G,\pi]$. Let us now consider a parahoric subgroup $K$ along with an irreducible representation $\rho$ of its maximal reductive quotient $\mathcal K = K/K^+$, where $K^+$ is the pro-unipotent radical of $K$. Assume that $\rho$ is not cuspidal. Thus, there exists a proper parabolic subgroup $\mathcal P \subset \mathcal K$ with Levi complement $\mathcal L$, and a cuspidal irreducible representation $\tau$ of $\mathcal L$, such that $\rho$ is an irreducible component of the Harish-Chandra induction $\iota_{\mathcal P}^{\mathcal K}\,\tau$. The preimage of $\mathcal P$ via the quotient map $K \twoheadrightarrow \mathcal K$ is a parahoric subgroup $K' \subsetneq K$, whose maximal reductive quotient $\mathcal K' := K'/K'^+$ is naturally identified with $\mathcal L$. We have $K^+ \subset K'^+ \subset K'$ and the intermediate quotient $K'^+/K^+$ is identified with the unipotent radical $\mathcal N$ of $\mathcal P \simeq K'/K^+$. Consider $\rho$ as an irreducible representation of $K$ inflated from $\mathcal K$. The invariants $\rho^{K'^+}$ form a representation of $K'$ which coincides with the inflation of the Harish-Chandra restriction of $\rho$ (as a representation of $\mathcal K$) to $\mathcal L$. Thus, $\rho^{K'^+}$ contains the inflation of $\tau$ to a representation of $K'$. In other words, we have a $K'$-equivariant map 
$$\tau \rightarrow \rho_{|K'}.$$
By Frobenius reciprocity, it gives a map 
$$\mathrm{c-Ind}_{K'}^{K}\,\tau \rightarrow \rho,$$
which is surjective by irreducibility of $\rho$. Applying the functor $\mathrm{c-Ind}_K^G: \mathrm{Rep}(K) \to \mathrm{Rep}(G)$, which is exact, and using transitivity of compact induction, we deduce the existence of a natural surjection 
$$\mathrm{c-Ind}_{K'}^G \, \tau \twoheadrightarrow \mathrm{c-Ind}_K^G\, \rho.$$
Now, $(K',\tau)$ is a depth-0 type in $G$. Let $\mathfrak S \subset \mathrm{IC}(G)$ be the subset such that $(K',\tau)$ is an $\mathfrak S$-type, and let $L$ be the (proper) Levi complement of $G$ associated to $K'$ as in the previous paragraph. By Remark \ref{TypeDefinition}, it follows that any irreducible subquotient of $\mathrm{c-Ind}_K^G\, \rho$ has inertial support in $\mathfrak S$. Since all elements of $\mathfrak S$ are of the form $[L,\pi]$ for some supercuspidal representation $\pi$ of $L$, we reach the following conclusion. 

\begin{prop}\label{NoSupercuspidals}
Let $K$ be a parahoric subgroup of $G$ and let $\rho$ be a non cuspidal irreducible representation of its maximal reductive quotient $\mathcal K$. Then no irreducible subquotient of $\mathrm{c-Ind}_K^G\,\rho$ is supercuspidal.
\end{prop}

We go back to the context of the unitary similitude group $J(\mathbb Q_p)$. We may now determine the inertial support of any irreducible subquotient of a representation of the form $\mathrm{c-Ind}_{J_{\thetamax}}^J \, \rho$ with $\rho$ inflated from a unipotent representation of $\mathrm{GU}(V_{\thetamax}^0)$. In particular, all the terms $E_1^{0,b}$ are of this form according to Corollary \ref{MiddleCohomologyGroups}. More precisely, let $\lambda$ be a partition of $2\thetamax+1$ and let $\Delta_t$ be its $2$-core (see Section 2). Thus $2\thetamax+1 = \frac{t(t+1)}{2} + 2e$ for some $e\geq 0$. The integer $\frac{t(t+1)}{2}$ is odd, so it can be written as $2f+1$ for some $f\geq 0$, and we have $\thetamax = f+e$. Recall the basis of $\mathbf V$ that we fixed in Section 1.1. The images of the vectors $e_{\pm i}$ for $1 \leq i \leq \thetamax$ and of $e_0^{\mathrm{an}}$ in $V_{\thetamax}^0 = \Lambda_{\thetamax} / p\Lambda_{\thetamax}$ define a basis of $V_{\thetamax}^0$, allowing us to identify $\mathrm{GU}(V_{\thetamax}^0)$ with the matrix group $\mathrm{GU}_{2\thetamax+1}(\mathbb F_p)$. The cuspidal support of $\rho_{\lambda}$ is $(L_t,\rho_t)$ according to Section 2. Let $P_t$ be the standard parabolic subgroup with Levi complement $L_t$. By direct computation, one may check that the preimage of $P_t$ in $J_{\thetamax}$ is the parahoric subgroup $J_{f,\ldots,\thetamax} := J_f \cap J_{f+1} \cap \ldots \cap J_{\thetamax}$. Let $L_f$ be the Levi complement of $J(\mathbb Q_p)$ that is associated to the parahoric subgroup $J_{f,\ldots ,\thetamax}$. Let $\mathbf V^f$ be the subspace of $\mathbf V$ generated by $\mathbf V^{\mathrm{an}}$ and by the vectors $e_{\pm 1},\ldots , e_{\pm f}$. It is equipped with the restriction of the hermitian form of $\mathbf V$. Then $L_f \simeq \mathrm{G}(\mathrm{U}(\mathbf V^f)\times\mathrm{U}_1(\mathbb Q_{p})^{e})$.\\
The group $L_f\cap J_{f,\ldots,\thetamax}$ is a maximal parahoric subgroup of $L_f$, and $\rho_t$ can be inflated to it. In particular, the pair $(L_f\cap J_{f,\ldots,\thetamax},\rho_t)$ is a level-0 type in $L_f$. Since we work with unitary groups over an unramified quadratic extension, $L_f\cap J_{f,\ldots,\thetamax}$ is also a maximal compact subgroup of $L_f$. In particular, $(L_f\cap J_{f,\ldots,\thetamax},\rho_t)$ is a type for a singleton of the form $[L_f,\tau_f]_{L_f}$. Then $\tau_f$ has the form
$$\tau_f = \mathrm{c-Ind}_{\mathrm N_{L_f}(L_f\cap J_{f,\ldots,\thetamax})}^{L_f} \, \widetilde{\rho_t},$$
where $\widetilde{\rho_t}$ is some smooth irreducible representation of $\mathrm N_{L_f}(L_f\cap J_{f,\ldots,\thetamax})$ containing $\rho_t$ upon restriction. It follows that if we inflate $\rho_t$ to $J_{f,\ldots ,\thetamax}$ then $(J_{f,\ldots ,\thetamax},\rho_t)$ is a $[L_f,\tau_f]$-type in $J(\mathbb Q_p)$. Moreover the compactly induced representation $\mathrm{c-Ind}_{J_{\thetamax}}^{J} \, \rho_{\lambda}$ is a quotient of $\mathrm{c-Ind}_{J_{f,\ldots ,\thetamax}}^{J}\, \rho_t$. In particular, we reach the following conclusion. 

\begin{prop}
Let $\lambda$ be a partition of $2\thetamax+1$ with $2$-core $\Delta_t$. Write $\frac{t(t+1)}{2} = 2f+1$ for some $f\geq 0$. Any irreducible subquotient of $\mathrm{c-Ind}_{J_{\thetamax}}^{J}\,\rho_{\lambda}$ has inertial support $[L_f,\tau_f]$. 
\end{prop}

In particular, if $f < \thetamax$ then none of these irreducible subquotients are supercuspidal.

Let us keep the notations of the previous paragraph. Since unipotent representations of finite groups of Lie type have trivial central characters, if $\chi$ is an unramified character of $\mathrm Z(J(\mathbb Q_p))$ then $\chi_{\mathrm Z(J(\mathbb Q_p)) \cap J_{\thetamax}}$ coincides with the central character of $\rho_{\lambda}$ inflated to $J_{\thetamax}$. As in Theorem \ref{0-InfinityParts}, we have 
$$\left(\mathrm{c-Ind}_{J_{\thetamax}}^J\,\rho_{\lambda}\right)_{\chi} \simeq V_{\rho_{\lambda},\chi,0} \oplus V_{\rho_{\lambda},\chi,\infty}.$$
If $f < \thetamax$, then no irreducible supercuspidal representation can occur. Thus $V_{\rho_{\lambda},\chi,0} = 0$. \\
On the other hand, assume now that $f = \thetamax$ so that $L_f = J$ and $\rho_{\lambda}$ is equal to the cuspidal representation $\rho_{\Delta_{\thetamax}}$. As seen in Proposition \ref{Normalizers}, we have $\mathrm N_J(J_{\thetamax}) = \mathrm Z(J(\mathbb Q_p))J_{\thetamax}$ unless $n=2$ (thus $\thetamax=0$) in which case $J_0 = J^{\circ}$ and $\mathrm Z(J(\mathbb Q_p))J_0$ is of index $2$ in $\mathrm N_J(J_0) = J$. A representative of the non-trivial coset is given by $g_0$ as defined in Section 1.1. If $n\not = 2$, define 
$$\tau_{\thetamax,\chi} := \mathrm{c-Ind}_{\mathrm Z(J(\mathbb Q_p))J_{\thetamax}}^J\, \chi\otimes\rho_{\lambda}.$$
Then $\tau_{\thetamax,\chi}$ is an irreducible supercuspidal representation of $J(\mathbb Q_p)$, and we have
$$\left(\mathrm{c-Ind}_{J_{\thetamax}}^J\,\rho_{\lambda}\right)_{\chi} \simeq \mathrm{c-Ind}_{\mathrm Z(J(\mathbb Q_p))J_{\thetamax}}^J\,\chi\otimes\rho_{\lambda} = \tau_{\thetamax,\chi}.$$
Thus $V_{\rho_{\lambda},\chi,\infty} = 0$ and $V_{\rho_{\lambda},\chi\infty} = \tau_{\thetamax,\chi}$ in this case.\\
When $n = 2$, $\rho_{\lambda} = \rho_{\Delta_0} = \mathbf 1$ is the trivial representation of $J_0 = J^{\circ}$. Let $\chi_0:J\rightarrow \overline{\mathbb Q_{\ell}}^{\times}$ be the unique non-trivial character of $J(\mathbb Q_p)$ which is trivial on $\mathrm Z(J(\mathbb Q_p))J_0$. Then $\left(\mathrm{c-Ind}_{J_0}^J\,\mathbf 1\right)_{\chi}$ is the sum of an unramified character $\tau_{0,\chi}$ of $J(\mathbb Q_p)$ whose central character is $\chi$, and of the character $\chi_0\tau_{0,\chi}$. Both characters are supercuspidal, and they are the only unramified characters of $J(\mathbb Q_p)$ with central character $\chi$.

According to Proposition \ref{CohomologyOpenBT} and Corollary \ref{MiddleCohomologyGroups}, the terms $E_1^{0,b}$ are a sum of representations of the form 
$$\mathrm{c-Ind}_{J_{\thetamax}}^J\,\rho_{\lambda},$$
with $\lambda$ a partition of $2\thetamax+1$ having $2$-core $\Delta_0$ if $b$ is even, and $\Delta_1$ if $b$ is odd. Moreover, we have 
\begin{align*}
E_2^{0,2(n-1-\thetamax)} \simeq \mathrm{c-Ind}_{J_{\thetamax}}^J \, \mathbf 1, & & E_2^{0,2(n-1-\thetamax)+1} \simeq \mathrm{c-Ind}_{J_{\thetamax}}^J \, \rho_{(2\thetamax,1)}.
\end{align*}
In particular, summing up the discussion of the previous paragraph, we have reached the following statement.

\begin{prop}\label{InertialSupportMiddleCohomology}
Let $\chi$ be an unramified character of $\mathrm Z(J(\mathbb Q_p))$. 
\begin{enumerate}[label={--},noitemsep]
\item Assume that $n\geq 3$. The representation $(E_2^{0,2(n-1-\thetamax)})_{\chi}$ contains no non-zero admissible subrepresentation, and it is not $J(\mathbb Q_p)$-semisimple. Moreover, any irreducible subquotient has inertial support $[L_0,\tau_0]$. If $n\geq 5$, then the same statement holds for $(E_2^{0,2(n-1-\thetamax)+1})_{\chi}$ with the inertial support being $[L_1,\tau_1]$.
\item For $n = 1,2,3,4$, let $b = 0,2,3,5$ respectively. Then $\thetamax = 0$ when $1,2$ and $\thetamax=1$ when $n = 3,4$. Let $\chi$ be an unramified character of $\mathrm{Z}(J(\mathbb Q_p))$. The representation $\tau_{\thetamax,\chi}$ is irreducible supercuspidal, and we have 
$$(E_2^{0,b})_{\chi} \simeq 
\begin{cases}
\tau_{\thetamax,\chi} & \text{if } n = 1,3,4,\\
\tau_{\thetamax,\chi}\oplus\chi_0\tau_{\thetamax,\chi} & \text{if } n=2.
\end{cases}$$ 
\end{enumerate}
\end{prop}

In particular, we deduce the following important corollary.

\begin{corol}
Let $\chi$ be an unramified character of $\mathrm Z(J(\mathbb Q_p))$. If $n\geq 3$ then $\mathrm H_c^{2(n-1-\thetamax)}(\mathcal M^{\mathrm{an}},\overline{\mathbb Q_{\ell}})_{\chi}$ is not $J(\mathbb Q_p)$-admissible. If $n\geq 5$ then the same holds for $\mathrm H_c^{2(n-1-\thetamax)+1}(\mathcal M^{\mathrm{an}},\overline{\mathbb Q_{\ell}})_{\chi}$.
\end{corol}

\subsection{The case $n=3,4$}

Let us focus on the case $\thetamax=1$, that is $n=3$ or $4$. Recall that $N(\Lambda_0)$ denotes the set of lattices $\Lambda \in \mathcal L_0$ with type $t(\Lambda) = t_{\mathrm{max}} = 3$ containing $\Lambda_0$. It has cardinality $\nu(1,2) = p+1$ when $n=3$ and $\nu(2,3) = p^3+1$ when $n=4$. In particular, we may locate the non zero terms $E_{1,\mathrm{alt}}^{a,b}$ of the alternating \v{C}ech spectral sequence as follows.
$$E_{1,\mathrm{alt}}^{a,b} \not = 0 \iff \begin{cases}
  (a,b) \in \{(0,2);(0,3);(-k,4) \,|\, 0\leq k \leq p\} & \text{if } n = 3, \\
  (a,b) \in \{(0,4);(0,5);(-k,6) \,|\, 0\leq k \leq p^3\} & \text{if } n = 4.
\end{cases}$$ 

In Figure 1 below, we draw the shape of the first page $E_{1,\mathrm{alt}}$ for $n=3$. The case of $n=4$ is similar, except that two more $0$ rows should be added at the bottom. To alleviate the notations, we write $\varphi_{-a}$ for the differential $\varphi^{a,2(n-1)}$. 

\begin{figure}[h]
\centering
\begin{tikzcd}
\ldots \arrow{r}{\varphi_4} & E_{1,\mathrm{alt}}^{-3,4} \arrow{r}{\varphi_3} & E_{1,\mathrm{alt}}^{-2,4} \arrow{r}{\varphi_2} & E_{1,\mathrm{alt}}^{-1,4} \arrow{r}{\varphi_1} & \mathrm{c-Ind}^J_{J_{1}}\mathbf{1} \\
\, & \, & \, & \, & \mathrm{c-Ind}^J_{J_{1}}\,\rho_{\Delta_2} \\
\, & \, & \, & \, & \mathrm{c-Ind}^J_{J_{1}}\,\mathbf{1}\\
\, & \, & \, & \, & 0 \\
\, & \, & \, & \, & 0 
\end{tikzcd}
\caption{The first page $E_{1,\mathrm{alt}}$ of the alternating \v{C}ech spectral sequence when $n=3$.}
\end{figure}

Let $i\in \mathbb Z$ such that $ni$ is even. For $\Lambda,\Lambda' \in \mathcal L_i$, we define the distance $d(\Lambda,\Lambda')$ as the smallest integer $d\geq 0$ such that there exists a sequence $\Lambda = \Lambda^0,\ldots ,\Lambda^{d} = \Lambda'$ of lattices of $\mathcal L_i$ with $\{\Lambda^j,\Lambda^{j+1}\}$ being an edge for all $0 \leq j \leq d-1$. This definition makes sense for any $n$. When $\thetamax=1$, any lattice $\Lambda \in \mathcal L_i$ has type $1$ or $3$, and two lattices forming an edge can not have the same type. Therefore, the value of $t(\Lambda^j)$ alternates between $1$ and $3$. In particular, if $t(\Lambda) = t(\Lambda')$ then $d(\Lambda,\Lambda')$ is even. According to \cite{vw1} Proposition 3.7, the simplicial complex $\mathcal L_i$ is in fact a tree. We will use this to prove the following proposition.

\begin{prop}\label{AcyclicProof}
Assume that $n=3$ or $4$. We have $E_2^{-1,2(n-1)} = 0$.
\end{prop}

For now, $n \geq 3$ is still any integer. By Proposition \ref{AlternatingCech}, we may use the alternating \v{C}ech spectral sequence to show that $E_2^{-1,2(n-1)} = \mathrm{Ker}(\varphi_1)/\mathrm{Im}(\varphi_2)$ vanishes. The term $E_1^{a,2(n-1)}$ is the $\overline{\mathbb Q_{\ell}}$-vector space generated by the set $I_{-a+1}$, and $E_{1,\mathrm{alt}}^{a,2(n-1)}$ is the subspace consisting of all the vectors $v = \sum_{\gamma \in I_{-a+1}} \lambda_{\gamma}\gamma$ such that for all $\sigma \in \mathfrak S_{-a+1}$ we have $\lambda_{\sigma(\gamma)} = \mathrm{sgn}(\sigma)\lambda_{\gamma}$. Here the $\lambda_{\gamma}$'s are scalars which are almost all zero. To prove the proposition, let us look at the differential $\varphi_2$. It acts on the basis vectors in the following way.
\begin{align*}
\left.
\begin{array}{c}
(\Lambda,\Lambda,\Lambda)\\
(\Lambda,\Lambda,\Lambda')\\
(\Lambda',\Lambda,\Lambda)
\end{array}
\right\} & \mapsto (\Lambda,\Lambda), & & \forall \Lambda,\Lambda' \in \mathcal L^{(1)} \text{ such that } U_{\Lambda} \cap U_{\Lambda'} \not = \emptyset,\\
(\Lambda,\Lambda',\Lambda) & \mapsto (\Lambda',\Lambda) - (\Lambda,\Lambda) + (\Lambda,\Lambda'), & & \forall \Lambda,\Lambda' \in \mathcal L^{(1)} \text{ such that } U_{\Lambda} \cap U_{\Lambda'} \not = \emptyset, \\
(\Lambda,\Lambda',\Lambda'') & \mapsto (\Lambda',\Lambda'') - (\Lambda,\Lambda'') + (\Lambda,\Lambda'), & & \forall \Lambda,\Lambda',\Lambda'' \in \mathcal L^{(1)} \text{ such that } U_{\Lambda} \cap U_{\Lambda'} \cap U_{\Lambda''} \not = \emptyset.
\end{align*}

We note that for a collection of lattices $\Lambda^1,\ldots ,\Lambda^s \in \mathcal L_i^{(1)}$, the condition $U_{\Lambda^1} \cap \ldots \cap U_{\Lambda^s} \not = \emptyset$ is equivalent to $d(\Lambda^j,\Lambda^{j'}) = 2$ for all $1\leq j \not = j' \leq s$. Towards a contradiction, we assume that $\mathrm{Im}(\varphi_2) \subsetneq \mathrm{Ker}(\varphi_1)$. Let $v \in \mathrm{Ker}(\varphi_1) \setminus \mathrm{Im}(\varphi_2)$. Since $v \in E^{-1,2(n-1)}_{1,\mathrm{alt}}$, it decomposes under the form
$$v = \sum_{j=1}^{r} \lambda_{j} (\gamma_j - \tau(\gamma_j)),$$
where $r\geq 1$, the $\gamma_j$'s are of the form $(\Lambda,\Lambda')$ with $d(\Lambda,\Lambda') = 2$, the scalars $\lambda_j$'s are non zero and $\tau \in \mathfrak S_2$ is the transposition. We may assume that $r$ is minimal among all the vectors in the complement $\mathrm{Ker}(\varphi_1) \setminus \mathrm{Im}(\varphi_2)$. In particular, there exists a single $i\in \mathbb Z$ such that $ni$ is even, and for all $1\leq j \leq r$ the lattices in $\gamma_j$ belong to $\mathcal L_i^{(1)}$. We may further assume $i=0$ without loss of generality. We say that an element $\gamma \in I_2$ occurs in $v$ if $\gamma = \gamma_j$ or $\tau(\gamma_j)$ for some $1\leq j \leq r$. Similarly, we say that a lattice $\Lambda \in \mathcal L^{(1)}_0$ occurs in $v$ if it is a constituent of some $\gamma_j$.

\begin{lem}
Let $\gamma = (\Lambda,\Lambda') \in I_2$ be an element occuring in $v$. Then there exists $\Lambda'' \in \mathcal L^{(1)}_0$ such that $(\Lambda,\Lambda'') \in I_2$ occurs in $v$ and $d(\Lambda',\Lambda'') = 4$.
\end{lem}

\begin{proof}
Let us write $(\Lambda,\Lambda^j) \in I_2, 1\leq j \leq s$ for the various elements occuring in $v$ whose first component is $\Lambda$. Up to reordering the $\gamma_j$'s and swapping them with $\tau(\gamma_j)$ if necessary, we may assume that $(\Lambda,\Lambda^j) = \gamma_j$ for all $1\leq j \leq s$, and that $\Lambda^1 = \Lambda'$. The coordinate of $\varphi_1(v)$ along the basis vector $(\Lambda) \in I_1$ is equal to $-2\sum_{j=1}^{s} \lambda_j$. Since $\varphi_1(v) = 0$, this sum is zero. Since $\lambda_1 \not = 0$ by hypothesis, we have in particular $s\geq 2$. For all $2\leq j \leq s$, we have $2\leq d(\Lambda',\Lambda^j) \leq 4$ by the triangular inequality. Towards a contradiction, assume that $d(\Lambda',\Lambda^j) = 2$ for all $2\leq j \leq s$. In particular, $\delta_j := (\Lambda^j,\Lambda,\Lambda') \in I_3$ for all $2\leq j \leq s$. Consider the vector 
$$w := \frac{1}{3}\sum_{j=2}^s \sum_{\sigma \in \mathfrak S_6} \mathrm{sgn}(\sigma)\lambda_j\sigma(\delta_j) \in E_{1,\mathrm{alt}}^{-2,2(n-1)}.$$
Then we compute
\begin{align*}
\varphi_2(w) & = -\lambda_1((\Lambda,\Lambda') - (\Lambda',\Lambda)) - \sum_{j=2}^s \lambda_j((\Lambda,\Lambda^j) - (\Lambda^j,\Lambda)) + \sum_{j=2}^s \lambda_j((\Lambda',\Lambda^j) - (\Lambda^j,\Lambda'))\\
& = - \sum_{j=1}^s \lambda_j(\gamma_j - \tau(\gamma_j)) + \sum_{j=2}^s \lambda_j((\Lambda',\Lambda^j) - (\Lambda^j,\Lambda')).
\end{align*}
In particular, we get 
$$v + \varphi_2(w) = \sum_{j=2}^{s} \lambda_j((\Lambda^j,\Lambda') - (\Lambda',\Lambda^j)) + \sum_{j=s+1}^r \lambda_j(\gamma_j-\tau(\gamma_j)) \in \mathrm{Ker}(\varphi_1) \setminus \mathrm{Im}(\varphi_2),$$
which contradicts the minimality of $r$.
\end{proof}

From now on, let us assume that $n= 3$ or $4$, so that $\mathcal L_0$ is a tree. To conclude the proof of the proposition, let us pick $\Lambda = \Lambda^0 \in \mathcal L^{(1)}_0$ which occurs in $v$, say in a pair $(\Lambda,\Lambda') \in I_2$. Write $\Lambda^1 := \Lambda'$. By induction, we build a sequence $(\Lambda^k)_{k\geq 0}$ of lattices in $\mathcal L^{(1)}_0$ such that for all $k$, the pair $(\Lambda^{k},\Lambda^{k+1})$ occurs in $v$ and we have $d(\Lambda^0,\Lambda^{k}) = 2k$. It follows that the $\Lambda^k$'s are pairwise distinct, and it leads to a contradiction since only a finite number of such lattices can occur in $v$.\\
Let us assume that $\Lambda^0,\ldots ,\Lambda^k$ are already built for some $k\geq 1$. Since $(\Lambda^{k-1},\Lambda^k)$ occurs in $v$, so does $(\Lambda^k, \Lambda^{k-1})$. By the Lemma applied to latter pair, there exists $\Lambda^{k+1} \in \mathcal L_0^{(1)}$ such that the pair $(\Lambda^{k},\Lambda^{k+1}) \in I_2$ occurs in $v$ and $d(\Lambda^{k-1},\Lambda^{k+1}) = 4$. By the triangular inequality, we have 
\begin{align*}
d(\Lambda^0,\Lambda^{k+1}) & \leq d(\Lambda^0,\Lambda^{k}) + d(\Lambda^{k},\Lambda^{k+1}) = 2k+2 = 2(k+1),\\
d(\Lambda^0,\Lambda^{k+1}) & \geq |d(\Lambda^0,\Lambda^k) - d(\Lambda^k,\Lambda^{k+1})| = 2(k-1).
\end{align*}
Thus $d(\Lambda^0,\Lambda^{k+1}) = 2(k-1),2k$ or $2(k+1)$. We prove that it must be equal to the latter. 

Assume that $d(\Lambda^0,\Lambda^{k+1}) = 2(k-1)$. There exists a path $\Lambda^0 = L^0,\ldots , L^{2(k-1)} = \Lambda^{k+1}$. We obtain a cycle
\begin{center}
\begin{tikzcd}[column sep = 1em, row sep = 1em]
& \Lambda^0\cap\Lambda^1 \arrow[r,dash] & \Lambda^1 \arrow[r,dash] & \ldots \arrow[r,dash] & \Lambda^{k-1} \arrow[r,dash] & \Lambda^{k-1}\cap\Lambda^{k} \arrow[dr,dash] & \\
\Lambda^0 \arrow[dr,dash] \arrow[ur,dash] & & & & & & \Lambda^k \\
& L^1 \arrow[r,dash] & L^2 \arrow[r,dash] & \ldots \arrow[r,dash] & L^{2(k-1)} = \Lambda^{k+1} \arrow[r,dash] & \Lambda^{k}\cap \Lambda^{k+1} \arrow[ur,dash] & 
\end{tikzcd}
\end{center}
Since $\mathcal L_0$ is a tree, this cycle must be trivial, ie. the lower and upper paths, which are of the same length, are the same. In particular, we have $\Lambda^{k-1} = \Lambda^{k+1}$, which is absurd since $d(\Lambda^{k-1},\Lambda^{k+1}) = 4$. 

Assume that $d(\Lambda^0,\Lambda^{k+1}) = 2k$. There exists a path $\Lambda^0 = L_0,\ldots , L^{2k} = \Lambda^{k+1}$. We obtain a cycle
\begin{center}
\begin{tikzcd}[column sep = 1em, row sep = 1em]
& \Lambda^0\cap\Lambda^1 \arrow[r,dash] & \Lambda^1 \arrow[r,dash] & \ldots \arrow[r,dash] & \Lambda^{k-1} \cap \Lambda^k \arrow[r,dash] & \Lambda^{k} \arrow[dr,dash] & \\
\Lambda^0 \arrow[dr,dash] \arrow[ur,dash] & & & & & & \Lambda^k\cap\Lambda^{k+1} \\
& L^1 \arrow[r,dash] & L^2 \arrow[r,dash] & \ldots \arrow[r,dash] & L^{2k-1} \arrow[r,dash] & L^{2k} = \Lambda^{k+1} \arrow[ur,dash] & 
\end{tikzcd}
\end{center}
Since $\mathcal L_0$ is a tree, this cycle must be trivial, ie. the lower and upper paths, which are of the same length, are the same. In particular, we have $\Lambda^{k} = \Lambda^{k+1}$, which is absurd since $d(\Lambda^k,\Lambda^{k+1}) = 2$.

Thus, we have $d(\Lambda^0,\Lambda^{k+1}) = 2(k+1)$ so that $\Lambda^{k+1}$ meets all the requirements. It concludes the proof of Proposition \ref{AcyclicProof}.\\

In particular, we obtain the following statement. 

\begin{theo}\label{AlmostHighestCohomology}
Assume that $n=3$ or $4$. We have 
$$\mathrm H_c^{2(n-1)-1}(\mathcal M^{\mathrm{an}},\overline{\mathbb Q_{\ell}}) \simeq \mathrm{c-Ind}_{J_1}^{J} \, \rho_{\Delta_2},$$
with the rational Frobenius $\tau$ acting like multiplication by $-p^{2(n-1)-1}$.
\end{theo}

\section{The cohomology of the supersingular locus of the Shimura variety for $n = 3, 4$}

\subsection{The Hochschild-Serre spectral sequence induced by $p$-adic uniformization}

In this section, $n \geq 1$ is still any integer. We recover the notations of Section 3. Let $\xi: \mathbb G \rightarrow W_{\xi}$ be a finite-dimensional irreducible algebraic $\overline{\mathbb Q_{\ell}}$-representation of $\mathbb G$. Such representations have been classified in \cite{harris} Chapter III.2. We think of $\mathbb V_{\overline{\mathbb Q_{\ell}}} := \mathbb V \otimes \overline{\mathbb Q_{\ell}}$ as a representation of $\mathbb G$, whose dual is denoted by $\mathbb V_0$. Using the alternating form $\langle\cdot,\cdot\rangle$, we have an isomorphism $\mathbb V_0 \simeq \mathbb V_{\overline{\mathbb Q_{\ell}}} \otimes c^{-1}$, where $c$ is the multiplier character of $G$. Then, $W_{\xi}$ can be described as follows.

\begin{prop}\label{ClassificationAlgebraicRepresentations}
There exists unique integers $t(\xi), m(\xi) \geq 0$ and an idempotent $\epsilon(\xi) \in \mathrm{End}(\mathbb V_0^{\otimes m(\xi)})$ such that
$$W_{\xi} \simeq c^{t(\xi)}\otimes\epsilon(\xi)(\mathbb V_0^{\otimes m(\xi)}).$$
\end{prop}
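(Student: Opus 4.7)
The plan is to deduce this from the classical Schur--Weyl theory of general linear groups, after passing to a splitting field. First I would fix an embedding $E \hookrightarrow \overline{\mathbb Q_{\ell}}$, which trivializes the Galois action on the center of $B$ and turns $G_{\overline{\mathbb Q_{\ell}}}$ into a product isomorphic to $\mathrm{GL}_n \times \mathbb G_m$, where the $\mathrm{GL}_n$-factor acts on the eigenspace $\mathbb V_1 \subset \mathbb V \otimes \overline{\mathbb Q_{\ell}}$ via the standard representation (after Morita-reducing away the $\mathrm M_d$-action of $B$), and the $\mathbb G_m$-factor corresponds to the multiplier character $c$. Since algebraic representations of $G$ over $\overline{\mathbb Q_{\ell}}$ correspond to those of $G_{\overline{\mathbb Q_{\ell}}}$, I may classify the irreducible $W_\xi$ on the split side.

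Next I would use the classification of irreducible algebraic representations of $\mathrm{GL}_n \times \mathbb G_m$ by pairs $(\lambda, k)$, where $\lambda = (\lambda_1 \geq \cdots \geq \lambda_n)$ is a dominant integral weight and $k \in \mathbb Z$. Writing $\lambda = (\lambda_n, \ldots, \lambda_n) + \mu$ with $\mu$ a partition with at most $n-1$ parts, the corresponding representation is $\det^{\lambda_n} \otimes \mathbb S^{\mu}(\mathbb V_1) \otimes c^{k}$, where $\mathbb S^{\mu}$ is the Schur functor. By Weyl's construction, $\mathbb S^{\mu}(\mathbb V_1) \simeq c_{\mu} \cdot \mathbb V_1^{\otimes |\mu|}$ for a Young symmetrizer $c_{\mu} \in \overline{\mathbb Q_{\ell}}[\mathfrak S_{|\mu|}]$, giving (up to normalization) an idempotent in $\mathrm{End}(\mathbb V_1^{\otimes |\mu|})$.

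The third step is to re-express the construction in terms of $\mathbb V_0$ rather than $\mathbb V_1$. Using the alternating form we have $\mathbb V_0 \simeq \mathbb V \otimes c^{-1}$, so tensor powers of $\mathbb V_0$ simultaneously give tensor powers of $\mathbb V$ and a negative twist by $c$ with exponent equal to the tensor degree. By playing off the exponents of $c$ coming from the ambient twist against the $c^{-m(\xi)}$ hidden inside $\mathbb V_0^{\otimes m(\xi)}$, and by cutting out the isotypic summand supported on $\mathbb V_1^{\otimes m(\xi)}$ inside $\mathbb V^{\otimes m(\xi)}$, I can realize every $W_\xi$ as $c^{t(\xi)} \otimes \epsilon(\xi)(\mathbb V_0^{\otimes m(\xi)})$ with $t(\xi), m(\xi) \geq 0$ and $\epsilon(\xi)$ a product of the corresponding Young symmetrizer with the projection on $\mathbb V_1^{\otimes m(\xi)}$. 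Uniqueness, under a suitable minimality convention for $m(\xi)$, follows from the multiplicity-one structure of the Schur--Weyl decomposition of $\mathbb V_0^{\otimes m}$: the partition $\mu$ (hence $m(\xi)$ and $\epsilon(\xi)$) is read off the highest weight of $W_\xi$, and $t(\xi)$ is then determined by the central character.

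The main obstacle is the bookkeeping when passing between $\mathbb V$, its dual $\mathbb V_0$, and the split pieces $\mathbb V_1 \oplus \mathbb V_2$: the twist by $c^{-1}$ from the duality must be reconciled with the contribution of $\mathbb V_2$ on the split side, and the idempotent $\epsilon(\xi)$ must incorporate not only the Young symmetrizer (defined on the full tensor power of $\mathbb V$) but also the projection to the $\mathbb V_1^{\otimes m(\xi)}$-isotypic component. Getting $t(\xi) \geq 0$ may in addition force a larger choice of $m(\xi)$ than the naive one, but this can always be arranged by tensoring with $(\mathbb V_0 \otimes c)^{\otimes 1}$, which is a direct summand of $\mathbb V_0^{\otimes n} \otimes c^{\,n-1}$ cut out by an appropriate idempotent.
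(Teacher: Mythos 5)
The paper does not prove this statement --- it is quoted from \cite{harris} III.2 --- so your proposal can only be compared with the standard argument, which is indeed the one you sketch: split $G$ over $\overline{\mathbb Q_{\ell}}$ into $\mathrm{GL}_n\times\mathbb G_m$, classify $W_{\xi}$ by its highest weight, and realize it by a Schur--Weyl idempotent inside a twisted tensor power. There is, however, a genuine gap in the realization step. Over $\overline{\mathbb Q_{\ell}}$ one has $\mathbb V\otimes\overline{\mathbb Q_{\ell}}\simeq\mathbb V_1\oplus\mathbb V_2$ with (after Morita reduction) $\mathbb V_1\simeq\mathrm{std}$ and $\mathbb V_2\simeq\mathrm{std}^{\vee}\otimes c$, because the alternating form pairs the two $E$-eigenspaces into $c$. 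Consequently the $\mathbb V_1^{\otimes m(\xi)}$-isotypic component of $\mathbb V^{\otimes m(\xi)}$, and any twist of it by a power of $c$, contains only \emph{polynomial} $\mathrm{GL}_n$-types, i.e.\ highest weights $\lambda$ with $\lambda_n\geq 0$; since $\det^{-1}$ is not a power of $c$ (the two characters are independent on $\mathrm{GL}_n\times\mathbb G_m$), a representation such as $\mathrm{std}^{\vee}$ with trivial similitude twist is never reached by your recipe, even though it is a direct summand of $\mathbb V_0$ itself (namely $\mathbb V_1^{\vee}$). The correct idempotent must cut out a \emph{mixed} component $(\mathbb V_1^{\vee})^{\otimes a}\otimes(\mathbb V_2^{\vee})^{\otimes b}\simeq(\mathrm{std}^{\vee})^{\otimes a}\otimes\mathrm{std}^{\otimes b}\otimes c^{-b}$ of $\mathbb V_0^{\otimes(a+b)}$ and then apply Young symmetrizers to each block; this is precisely where the ``reconciliation with $\mathbb V_2$'' that you flag as an obstacle has to happen, and once it does, every dominant weight is obtained.

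On uniqueness you are right to be cautious, and you should say more: strict uniqueness of $(t(\xi),m(\xi))$ fails, because the form exhibits $c^{-1}$ as a $G$-equivariant direct summand of $\mathbb V_0^{\otimes 2}$, so $(t,m)\mapsto(t+1,m+2)$ always produces another realization of the same $W_{\xi}$; only the combination $w(\xi)=m(\xi)-2t(\xi)$ is canonical, which is all the paper subsequently uses. This same summand $c^{-1}\subset\mathbb V_0^{\otimes 2}$ is the device you want for forcing $t(\xi)\geq 0$. Your alternative device is incorrect as stated: the claim that $\mathbb V_0\otimes c$ is a summand of $\mathbb V_0^{\otimes n}\otimes c^{\,n-1}$ is equivalent to $\mathbb V\otimes c$ being a summand of $\mathbb V^{\otimes n}$, and comparing the grading $a-b\equiv n\ (\mathrm{mod}\ 2)$ on the components $\mathbb V_1^{\otimes a}\otimes\mathbb V_2^{\otimes b}$ with the odd grading of $\mathbb V$ shows this fails whenever $n$ is even.
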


The weight $w(\xi)$ is defined by 
$$w(\xi) := m(\xi) - 2t(\xi) \in \mathbb Z.$$
To any $\xi$ as above, we can associate a local system $\mathcal L_{\xi}$ which is defined on the tower $(\mathrm{S}_{K^p})_{K^p}$ of Shimura varieties. We denote by $\overline{\mathcal L_{\xi}}$ its restriction to the special fiber $\overline{\mathrm S}_{K^p}$. Let $A_{K^p}$ be the universal abelian scheme over $\mathrm{S}_{K^p}$. We write $\pi_{K^p}^m : A_{K^p}^m \to \mathrm{S}_{K^p}$ for the structure morphism of the $m$-fold product of $A_{K^p}$ with itself over $\mathrm{S}_{K^p}$. If $m=0$ it is just the identity on $\mathrm{S}_{K^p}$. According to \cite{harris} Chapter III.2, we have an isomorphism
$$\mathcal L_{\xi} \simeq \epsilon(\xi)\epsilon_{m(\xi)} \left( \mathrm{R}^{m(\xi)}({\pi_{K^p}^{m(\xi)}})_{*}\overline{\mathbb Q_{\ell}}(t(\xi))\right),$$
where $\epsilon_{m(\xi)}$ is some idempotent. In particular, if $\xi$ is the trivial representation of $\mathbb G$ then $\mathcal L_{\xi} = \overline{\mathbb Q_{\ell}}$.

We fix an irreducible algebraic representation $\xi: \mathbb G \rightarrow W_{\xi}$ as above. We associate the space $\mathcal A_{\xi}$ of \textbf{automorphic forms of $I$ of type $\xi$ at infinity}. Explicitly, it is given by 
$$\mathcal A_{\xi} = \left\{f: I(\mathbb A_{f})\rightarrow W_{\xi} \, \middle| \, \begin{array}{l}
f \text{ is } I(\mathbb A_{f}) \text{-smooth by right translations} \\
\text{and } \forall \gamma \in I(\mathbb Q), f(\gamma\,\cdot) = \xi(\gamma)f(\cdot) \end{array}\right\}.$$

\begin{notation}
Let $\mathrm{Sh}_{K_0K^p}^{\mathrm{an}} := (\mathrm S_{K^p} \otimes_{\mathbb Z_{p^2}} \mathbb Q_{p^2})^{\mathrm{an}}$ denote the analytification of the generic fiber of $\mathrm S_{K^p}$, on which the analytified local system $\mathcal L_{\xi}^{\mathrm{an}}$ is defined. Let $(\widehat{\mathrm S}_{K^p})^{\mathrm{ss},\mathrm{an}} \subset \mathrm{Sh}_{K_0K^p}^{\mathrm{an}}$ denote the analytical tube of the supersingular locus, or in other words the generic fiber of the formal scheme $(\widehat{\mathrm S}_{K^p})^{\mathrm{ss}}$. We write $\mathrm{H}^{\bullet}((\widehat{\mathrm S}_{K^p})^{\mathrm{ss},\mathrm{an}},\mathcal L_{\xi}^{\mathrm{an}})$ for the cohomology of $(\widehat{\mathrm S}_{K^p})^{\mathrm{ss},\mathrm{an}} \otimes \mathbb C_p$ with coefficients in $\mathcal L_{\xi}^{\mathrm{an}}$.
\end{notation}

In \cite{fargues} Théorème 4.5.12, Fargues builds a spectral sequence associated to the $p$-adic uniformization theorem in order to compute the cohomology of $(\widehat{\mathrm S}_{K^p})^{\mathrm{ss},\mathrm{an}}$.

\begin{theo}\label{FarguesSpectralSequence}
There is a $W$-equivariant spectral sequence 
$$F_2^{a,b}(K^p) : \mathrm{Ext}_{J}^a \left (\mathrm H_c^{2(n-1)-b}(\mathcal M^{\mathrm{an}}, \overline{\mathbb Q_{\ell}})(n-1), \mathcal A^{K^p}_{\xi}\right) \implies \mathrm H^{a+b}((\widehat{\mathrm S}_{K^p})^{\mathrm{ss},\mathrm{an}}, \mathcal L_{\xi}^{\mathrm{an}}).$$
These spectral sequences are compatible as the open compact subgroup $K^p$ varies in $\mathbb G(\mathbb A_f^p)$. 
\end{theo}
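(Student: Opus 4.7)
The plan is to transport the cohomology of the basic stratum to the Rapoport-Zink side via the $p$-adic uniformization of \ref{Uniformization}, and then apply a Hochschild-Serre type spectral sequence for the action of each discrete subgroup $\Gamma_k \subset J$ of \ref{ShimuraBT}. Combining the uniformization isomorphism with the finite double coset decomposition gives a $G(\mathbb A_f^p) \times W$-equivariant identification
$$\mathrm H^{a+b}((\widehat{\mathrm S}_{K^p})_{|b_0}^{\mathrm{an}}, \mathcal L_\xi^{\mathrm{an}}) \simeq \bigoplus_{k=1}^s \mathrm H^{a+b}(\Gamma_k \backslash \mathcal M^{\mathrm{an}}, \mathcal L_\xi^{\mathrm{an}}).$$
The first nontrivial ingredient is to identify the pullback of $\mathcal L_\xi^{\mathrm{an}}$ along each quotient map $\mathcal M^{\mathrm{an}} \to \Gamma_k \backslash \mathcal M^{\mathrm{an}}$. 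Since the basic locus consists of a single prime-to-$p$ isogeny class of abelian varieties and $\mathcal L_\xi$ is built from the universal abelian scheme (cf. \ref{ClassificationAlgebraicRepresentations}), this pullback is isomorphic to the constant local system $W_\xi \otimes \overline{\mathbb Q_\ell}$ on which $\Gamma_k \subset I(\mathbb Q)$ acts through the restriction of $\xi$, viewed as a representation of $I$ via the inner form identification $I_{\overline{\mathbb Q}} \simeq G_{\overline{\mathbb Q}}$.

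For each $k$, I would then apply a Hochschild-Serre spectral sequence to the quotient by the discrete group $\Gamma_k$, producing
$$H^a(\Gamma_k, \mathrm H^b(\mathcal M^{\mathrm{an}}\widehat{\otimes}\, \mathbb C_p, \overline{\mathbb Q_\ell}) \otimes_{\overline{\mathbb Q_\ell}} W_\xi) \Longrightarrow \mathrm H^{a+b}(\Gamma_k \backslash \mathcal M^{\mathrm{an}}, \mathcal L_\xi^{\mathrm{an}}).$$
To reassemble the contributions of all $k$ into the single Ext appearing in the statement, two reformulations are needed. Direct inspection of the definition of $\mathcal A_\xi$ using the representatives $g_1,\ldots,g_s$ yields a $J$-equivariant decomposition
$$\mathcal A^{K^p}_\xi \simeq \bigoplus_{k=1}^s \mathrm{Ind}_{\Gamma_k}^J W_\xi,$$
where $\Gamma_k$ acts on $W_\xi$ through $\xi$. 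Analytic Poincaré duality on the smooth space $\mathcal M^{\mathrm{an}}$ provides the $J$-equivariant identification $\mathrm H^b(\mathcal M^{\mathrm{an}}\widehat{\otimes}\, \mathbb C_p, \overline{\mathbb Q_\ell}) \simeq \mathrm H_c^{2(n-1)-b}(\mathcal M^{\mathrm{an}}\widehat{\otimes}\, \mathbb C_p, \overline{\mathbb Q_\ell})(1-n)^\vee$, accounting for both the index shift and the Tate twist $(1-n)$ in the statement. Combined with Shapiro's lemma in its Ext form, these two ingredients produce the identification
$$\bigoplus_{k=1}^s H^a(\Gamma_k, \mathrm H^b(\mathcal M^{\mathrm{an}}\widehat{\otimes}\, \mathbb C_p, \overline{\mathbb Q_\ell}) \otimes W_\xi) \simeq \mathrm{Ext}^a_{J\text{-sm}}\!\left(\mathrm H_c^{2(n-1)-b}(\mathcal M^{\mathrm{an}}\widehat{\otimes}\, \mathbb C_p, \overline{\mathbb Q_\ell})(1-n), \mathcal A^{K^p}_\xi\right),$$
which is the desired $E_2$-term. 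The $W$-equivariance follows from the compatibility of the Weil descent datum $p\alpha_{\mathrm{RZ}}^{-1}$ on $\mathcal M^{\mathrm{an}}$ with the one inherited from the integral model $\mathrm S_{K^p}$.

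The main obstacle will be the rigorous handling of the derived functors in the category of smooth $J$-representations: one must verify that this category has enough injectives for the Ext groups to be well-defined, and the Shapiro/Frobenius reciprocity identity above must be promoted from $\mathrm{Hom}$ to higher Ext, which requires the induction functor $\mathrm{Ind}_{\Gamma_k}^J$ to preserve injectives or at least to commute with the formation of derived functors. A related technical subtlety is that $\Gamma_k$ is cocompact in $J$ only modulo the center $\mathrm Z(J)$, so the Hochschild-Serre spectral sequence in the $p$-adic analytic setting and the careful distinction between smooth and compact induction from $\Gamma_k$ need to be handled with extra attention; in particular one must check that the continuous $\Gamma_k$-action on $\mathrm H^b(\mathcal M^{\mathrm{an}}\widehat{\otimes}\, \mathbb C_p, \overline{\mathbb Q_\ell})$ has the finiteness properties ensuring convergence of the resulting spectral sequence.
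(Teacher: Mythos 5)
This theorem is not proved in the paper at all: it is quoted directly from Fargues (\cite{fargues}, 4.5.12), so there is no internal argument to compare yours against. Your sketch does, however, faithfully reconstruct the strategy of the cited source: uniformization reduces the tube of the basic stratum to the quotients $\Gamma_k\backslash\mathcal M^{\mathrm{an}}$, a Hochschild--Serre spectral sequence is formed for each discrete cocompact-mod-center $\Gamma_k$, and the $E_2$-term is rewritten using Poincar\'e duality on $\mathcal M^{\mathrm{an}}$ together with the decomposition $\mathcal A_{\xi}^{K^p}\simeq\bigoplus_k \mathrm{Ind}_{\Gamma_k}^{J} W_{\xi}$ and a Shapiro-type isomorphism converting $\bigoplus_k \mathrm{H}^a(\Gamma_k,\cdot)$ into $\mathrm{Ext}^a_{J\text{-sm}}(\cdot,\mathcal A_{\xi}^{K^p})$. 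The difficulties you flag at the end --- promoting Frobenius reciprocity to higher $\mathrm{Ext}$, the smooth versus full dual in $\Hom(M,W_{\xi})\simeq M^{\vee}\otimes W_{\xi}$, and the finiteness needed for convergence (supplied in the paper's setting by the fact that the $\mathrm H^{\bullet}_c$ are $J$-modules of finite type, cf.\ \ref{CohomologyIsInduced}) --- are precisely the content of the technical lemmas in \cite{fargues} Section 4.5, so your outline is the right one; a complete write-up would simply have to carry out those verifications rather than defer them.
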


The $W$-action on $F_2^{a,b}(K^p)$ is inherited from the cohomology group $\mathrm H_c^{2(n-1)-b}(\mathcal M^{\mathrm{an}}, \overline{\mathbb Q_{\ell}})(n-1)$. By the compatibility with variation of the level $K^p$, we may take the limit and obtain a $\mathbb G(\mathbb A_f^p) \times W$-equivariant spectral sequence $F_2^{a,b} := \varinjlim_{K^p} F_2^{a,b}(K^p)$. Since $\thetamax$ is the semisimple rank of $J(\mathbb Q_p)$, the terms $F_2^{a,b}(K^p)$ are zero for $a > \thetamax$ according to \cite{fargues} Lemme 4.4.12. Therefore, the non-zero terms $F_2^{a,b}$ are located in the finite strip delimited by $0 \leq a \leq \thetamax$ and $0 \leq b \leq 2(n-1)$. Let us look at the abutment of the sequence. Since $\mathrm S_{K^p}$ is smooth, Berkovich's comparison theorem, cf \cite{berk2} Corollary 3.6, gives an isomorphism
$$\mathrm{H}^{a+b}(\overline{\mathrm S}_{K^p}^{\mathrm{ss}} \otimes \, \mathbb F, \overline{\mathcal L_{\xi}}) \simeq \mathrm H^{a+b}((\widehat{\mathrm S}_{K^p})^{\mathrm{ss},\mathrm{an}}, \mathcal L_{\xi}^{\mathrm{an}}).$$
Since $\overline{\mathrm S}_{K^p}^{\mathrm{ss}}$ has dimension $\thetamax$, the cohomology $\mathrm H^{\bullet}((\widehat{\mathrm S}_{K^p})^{\mathrm{ss},\mathrm{an}}, \mathcal L_{\xi}^{\mathrm{an}})$ is concentrated in degrees $0$ to $2\thetamax$.

Let $\mathcal A(I)$ denote the set of all automorphic representations of $I$ counted with multiplicities. We write $\widecheck{\xi}$ for the dual of $\xi$. We also define 
$$\mathcal A_{\xi}(I) := \{\Pi \in \mathcal A(I) \,|\, \Pi_{\infty} = \widecheck{\xi}\}.$$ 
According to \cite{fargues} Section 4.6, we have an identification 
$$\mathcal A_{\xi}^{K_p} \simeq \bigoplus_{\Pi\in\mathcal A_{\xi}(I)} \Pi_p \otimes (\Pi^p)^{K_p}.$$
It yields, for every $a$ and $b$, an isomorphism 
$$F_2^{a,b}(K^p) \simeq \bigoplus_{\Pi\in\mathcal A_{\xi}(I)} \mathrm{Ext}_{J}^a \left (\mathrm H_c^{2(n-1)-b}(\mathcal M^{\mathrm{an}}, \overline{\mathbb Q_{\ell}})(n-1), \Pi_p\right) \otimes (\Pi^p)^{K_p}.$$
Taking the limit over $K^p$, we deduce that 
$$F_2^{a,b} \simeq \bigoplus_{\Pi\in\mathcal A_{\xi}(I)} \mathrm{Ext}_{J}^a \left (\mathrm H_c^{2(n-1)-b}(\mathcal M^{\mathrm{an}}, \overline{\mathbb Q_{\ell}})(n-1), \Pi_p\right) \otimes \Pi^p.$$
The spectral sequence defined by the terms $F_2^{a,b}$ computes $\mathrm H^{a+b}((\widehat{\mathrm S})^{\mathrm{ss},\mathrm{an}}, \mathcal L_{\xi}^{\mathrm{an}}) := \varinjlim_{K^p} \mathrm H^{a+b}((\widehat{\mathrm S}_{K^p})^{\mathrm{ss},\mathrm{an}}, \mathcal L_{\xi}^{\mathrm{an}})$. It is isomorphic to $\mathrm{H}^{a+b}(\overline{\mathrm S}^{\mathrm{ss}} \otimes \, \mathbb F, \overline{\mathcal L_{\xi}}) := \varinjlim_{K^p} \mathrm{H}^{a+b}(\overline{\mathrm S}_{K^p}^{\mathrm{ss}} \otimes \, \mathbb F, \overline{\mathcal L_{\xi}})$.

Recall from Corollary \ref{SecondPage} that we have a decomposition 
$$\mathrm H_c^b(\mathcal M^{\mathrm{an}},\overline{\mathbb Q_{\ell}}) \simeq \bigoplus_{b \leq b' \leq 2(n-1)} E_2^{b-b',b'},$$
and $E_2^{b-b',b'}$ corresponds to the eigenspace of $\tau$ associated to the eigenvalue $(-p)^{b'}$. Accordingly, we have a decomposition 
$$F_2^{a,b} \simeq \bigoplus_{\substack{2(n-1)-b\, \leq \\
b' \leq \, 2(n-1)}} \, \bigoplus_{\Pi\in\mathcal A_{\xi}(I)} \mathrm{Ext}_{J}^a \left (E_2^{2(n-1)-b-b',b'}(n-1), \Pi_p\right) \otimes \Pi^p.$$
For $\Pi\in\mathcal A_{\xi}(I)$, we denote by $\omega_{\Pi}$ the central character. We define 
$$\delta_{\Pi_p} := \omega_{\Pi_p}(p^{-1}\cdot\mathrm{id})p^{-w(\xi)} \in \overline{\mathbb Q_{\ell}}^{\times}.$$
Let $\iota$ be any isomorphism $\overline{\mathbb Q_{\ell}} \simeq \mathbb C$, and write $|\cdot|_{\iota} := |\iota(\cdot)|$. The center of $I(\mathbb Q)$ is identified with $\mathbb E^{\times}$, and the element $p^{-1}\cdot\mathrm{id} \in \mathrm{Z}(J(\mathbb Q_p))$ is the image of $p^{-1} \in \mathbb E^{\times} \simeq \mathrm{Z}(I(\mathbb Q)) \hookrightarrow \mathrm Z(J(\mathbb Q_p))$. We have $\omega_{\Pi}(p^{-1}) = 1$. Moreover, for any finite place $q \not = p$, the element $p^{-1}$ lies inside the maximal compact subgroup of $\mathrm Z(I(\mathbb Q_q))$, so $|\omega_{\Pi_q}(p^{-1})|_{\iota} = 1$. Besides $\Pi_{\infty} = \widecheck{\xi}$, so we have
$$|\omega_{\Pi_p}(p^{-1}\cdot\mathrm{id})|_{\iota} = |\omega_{\widecheck{\xi}}(p^{-1})|_{\iota}^{-1} = |\omega_{\xi}(p^{-1})|_{\iota} = |p^{w(\xi)}|_{\iota} = p^{w(\xi)}.$$
The last equality comes from the isomorphism $W_{\xi} \simeq c^{t(\xi)}\otimes\epsilon(\xi)(\mathbb V_0^{\otimes m(\xi)})$, see Proposition \ref{ClassificationAlgebraicRepresentations}. In particular $|\delta_{\Pi_p}|_{\iota} = 1$ for any isomorphism $\iota$.

\begin{prop}\label{FrobeniusActionOnHom}
The $W$-action on $\mathrm{Ext}^a_{J} (E_2^{2(n-1)-b-b',b'}(n-1), \Pi_p)$ is trivial on the inertia $I$, and the Frobenius element $\mathrm{Frob}$ acts like multiplication by $(-1)^{-b'}\delta_{\Pi_p}p^{-b'+2(n-1)+w(\xi)}$.
\end{prop}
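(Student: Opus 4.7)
The plan is to decompose the $\tau$-action into its $J$ and $W$ components and then transfer the computation through the contravariance of $\mathrm{Ext}$. For the triviality of inertia: by \ref{SecondPage}, $I\subset W$ acts trivially on $\mathrm H^\bullet_c(\mathcal M^{\mathrm{an}},\overline{\mathbb Q_\ell})$, hence trivially on its direct summand $E_2^{2(n-1)-b-b',b'}$. Because the Tate twist only affects the $\mathrm{Frob}$-component of the $W$-action, inertia still acts trivially on $V := E_2^{2(n-1)-b-b',b'}(1-n)$, and this triviality propagates to $\mathrm{Ext}^a_{J\text{-sm}}(V, \Pi_p)$ by functoriality of the induced $W$-action.

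For the $\mathrm{Frob}$-action, I would exploit the identity $\tau = (p^{-1}\cdot\mathrm{id}, \mathrm{Frob}) \in J\times W$, which expresses $\tau$ as a product of commuting elements. Since \ref{SecondPage} shows that $\tau$ acts on $E := E_2^{2(n-1)-b-b',b'}$ as the scalar $(-p)^{b'}$, one gets the identity of endomorphisms of $E$: $\mathrm{Frob}|_E = (-p)^{b'}\cdot (p\cdot\mathrm{id})_J$, where $(p\cdot\mathrm{id})_J$ stands for the action of the central element $p\cdot\mathrm{id} \in J$. Incorporating the Tate twist $(1-n)$ -- in the convention implicitly fixed by \ref{CohomologyOpenBT}, under which $\mathrm{Frob}$ acts on $\overline{\mathbb Q_\ell}(m)$ as $p^{2m}$ -- one finds that $\mathrm{Frob}$ acts on $V$ as $(-p)^{b'}\, p^{-2(n-1)}\cdot (p\cdot\mathrm{id})_J$.

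The final step is to transfer this computation to $\mathrm{Ext}^a_{J\text{-sm}}(V, \Pi_p)$. Under the standard convention $(w\cdot f)(v) = f(w^{-1}v)$, the $\mathrm{Frob}$-action becomes precomposition with its inverse on $V$, so $\mathrm{Frob}\cdot f = (-p)^{-b'}\, p^{2(n-1)}\cdot f\circ (p^{-1}\cdot\mathrm{id})_J$. The crucial identity is that since $p^{-1}\cdot\mathrm{id}$ lies in $\mathrm Z(J)$ and $f$ is $J$-equivariant, one has $f\circ (p^{-1}\cdot\mathrm{id})_J = \omega_{\Pi_p}(p^{-1}\cdot\mathrm{id})\cdot f$; for $a\geq 1$ this extends readily by applying the argument termwise along a projective resolution of $V$. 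Substituting $\omega_{\Pi_p}(p^{-1}\cdot\mathrm{id}) = \delta_{\Pi_p}\, p^{w(\xi)}$ then yields the desired scalar $(-1)^{-b'}\delta_{\Pi_p}\, p^{-b'+2(n-1)+w(\xi)}$. The main technical hurdle is bookkeeping the Tate twist convention correctly, which I would double-check against the consistency of \ref{CohomologyOpenBT} before committing to signs in the exponent of $p$.
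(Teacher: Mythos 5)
Your proposal is correct and follows essentially the same route as the paper: decompose $\mathrm{Frob}$ as $\tau\cdot(p\cdot\mathrm{id})$, use that $\tau$ acts on $E_2^{2(n-1)-b-b',b'}$ by the scalar $(-p)^{b'}$ together with the twist convention $p^{2(1-n)}$, and transfer the resulting scalar-times-central-element action through $\mathrm{Ext}^a$ by lifting $\mathrm{Frob}^{-1}$ termwise along a projective resolution, where the central element $p^{-1}\cdot\mathrm{id}$ contributes $\omega_{\Pi_p}(p^{-1}\cdot\mathrm{id})=\delta_{\Pi_p}p^{w(\xi)}$. The sign and exponent bookkeeping matches the paper's.
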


\begin{proof}
Let us write $X := E_2^{2(n-1)-b-b',b'}(n-1)$. By convention, the action of $\mathrm{Frob}$ on a space $\mathrm{Ext}^a_{J}(X,\Pi_p)$ is induced by functoriality of $\mathrm{Ext}$ applied to $\mathrm{Frob}^{-1}:X\rightarrow X$. Let us consider a projective resolution of $X$ in the category of smooth representations of $J(\mathbb Q_p)$ 
\begin{center}
\begin{tikzcd}
\ldots \arrow{r}{u_3} & P_2 \arrow{r}{u_2} & P_1 \arrow{r}{u_1} & P_0 \arrow{r}{u_0} & X \arrow{r} & 0.
\end{tikzcd}
\end{center}
Since $\mathrm{Frob}^{-1}$ commutes with the action of $J(\mathbb Q_p)$, we can choose a lift $\mathcal F = (\mathcal F_i)_{i\geq 0}$ of $\mathrm{Frob}^{-1}$ to a morphism of chain complexes. 
\begin{center}
\begin{tikzcd}
\ldots \arrow{r}{u_3} & P_2 \arrow{r}{u_2} \arrow{d}{\mathcal F_2} & P_1 \arrow{r}{u_1} \arrow{d}{\mathcal F_1} & P_0 \arrow{r}{u_0} \arrow{d}{\mathcal F_0} & X \arrow{r} \arrow{d}{\mathrm{Frob}^{-1}} & 0 \\
\ldots \arrow{r}{u_3} & P_2 \arrow{r}{u_2} & P_1 \arrow{r}{u_1} & P_0 \arrow{r}{u_0} & X \arrow{r} & 0
\end{tikzcd}
\end{center}
After applying $\mathrm{Hom}_J(\cdot,\Pi_p)$ and forgetting about the first term, we obtain a morphism $\mathcal F^{*}$ of chain complexes.
\begin{center}
\begin{tikzcd}
0 \arrow{r} & \mathrm{Hom}_J(P_0,\Pi_p) \arrow{d}{\mathcal F_0^{*}} \arrow{r} & \mathrm{Hom}_J(P_1,\Pi_p) \arrow{d}{\mathcal F_1^{*}} \arrow{r} & \mathrm{Hom}_J(P_2,\Pi_p) \arrow{d}{\mathcal F_2^{*}} \arrow{r} & \ldots \\
0 \arrow{r} & \mathrm{Hom}_J(P_0,\Pi_p) \arrow{r} & \mathrm{Hom}_J(P_1,\Pi_p) \arrow{r} & \mathrm{Hom}_J(P_2,\Pi_p) \arrow{r} & \ldots
\end{tikzcd}
\end{center}
Here $\mathcal F_i^{*}f(v) := f(\mathcal F_i(v))$. It induces morphisms on the cohomology 
$$\mathcal F_i^{*}: \mathrm{Ext}^i_{J}(X,\Pi_p) \rightarrow \mathrm{Ext}^i_J(X,\Pi_p),$$
which do not depend on the choice of the lift $\mathcal F$. Recall that $\mathrm{Frob}$ is the composition of $\varphi$ and $p\cdot\mathrm{id} \in J(\mathbb Q_p)$. Since $\varphi$ is multiplication by the scalar $(-1)^{b'}p^{b'-2(n-1)}$ on $X$, we may choose the lift $\mathcal F_i := (-1)^{b'}p^{-b'+2(n-1)} (p^{-1}\cdot\mathrm{id})$ for all $i$.\\
Consider an element of $\mathrm{Ext}^i_{J}(X,\Pi_p)$ represented by a morphism $f:P_i\to \Pi_p$. For any $v\in P_i$ we have
$$\mathcal F_i^{*}f(v) = f(\mathcal F_i(v)) = (-1)^{-b'}p^{-b'+2(n-1)}f((p^{-1}\cdot\mathrm{id})\cdot v) = (-1)^{-b'}p^{-b'+2(n-1)}\omega_{\Pi_p}(p^{-1}\cdot\mathrm{id})f(v).$$ 
It follows that $\mathrm{Frob}$ acts on $\mathrm{Ext}^i_{J}(X,\Pi_p)$ via multiplication by the scalar $(-1)^{-b'}\delta_{\Pi_p}p^{-b'+2(n-1)+w(\xi)}$.
\end{proof}

In general, the Hochschild-Serre spectral sequence has many differentials between non-zero terms. However, focusing on the diagonal defined by $a+b=0$, it is possible to compute $\mathrm{H}^{0}(\overline{\mathrm S}^{\mathrm{ss}} \otimes \, \mathbb F, \overline{\mathcal L_{\xi}})$. Recall that $X^{\mathrm{un}}(J)$ denotes the set of unramified characters of $J(\mathbb Q_p)$, ie. the characters which are trivial on $J^{\circ}$. If $x \in \overline{\mathbb Q_{\ell}}^{\times}$ is any non-zero scalar, we denote by $\overline{\mathbb Q_{\ell}}[x]$ the $1$-dimensional representation of $W$ where the inertia $I$ acts trivially and $\mathrm{Frob}$ acts like multiplication by $x$.

\begin{prop}\label{H0Shimura}
We have an isomorphism of $\mathbb G(\mathbb A_f^p)\times W$-representations
$$\mathrm{H}^{0}(\overline{\mathrm S}^{\mathrm{ss}} \otimes \, \mathbb F, \overline{\mathcal L_{\xi}}) \simeq \bigoplus_{\substack{\Pi\in\mathcal A_{\xi}(I) \\ \Pi_p \in X^{\mathrm{un}}(J)}} \Pi^p \otimes \overline{\mathbb Q_{\ell}}[\delta_{\Pi_p}p^{w(\xi)}].$$
\end{prop}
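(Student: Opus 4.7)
The plan is to extract the cohomology group $\mathrm H^0_c$ directly from the Hochschild-Serre spectral sequence of \ref{FarguesSpectralSequenceBis}, and show that only one term contributes, which can be computed explicitly thanks to \ref{HighestDegreeCohomologyGroup}.

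First, I would observe that in the abutment of total degree $0$, the only potentially non-vanishing term is $F_\infty^{0,0}$. Indeed, $F_r^{a,b}$ vanishes whenever $a<0$ or $b<0$ (parabolic induction degree and cohomological degree are non-negative), so the only place on the antidiagonal $a+b=0$ where there can be something non-zero is $(0,0)$. Moreover, no non-trivial differential touches $F_2^{0,0}$: outgoing differentials $d_r\colon F_r^{0,0}\to F_r^{r,1-r}$ land in $0$ for $r\ge 2$ because $1-r<0$, and incoming differentials originate from $F_r^{-r,r-1}=0$. Consequently $\mathrm H_c^0(\overline{\mathrm S}(b_0)\otimes\mathbb F,\overline{\mathcal L_\xi}) = F_\infty^{0,0} = F_2^{0,0}$.

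Next, I would unwind $F_2^{0,0}$. For each $\Pi\in\mathcal A_\xi(I)$, the $\Pi$-summand equals
$$\mathrm{Hom}_{J\text{-sm}}\!\left(\mathrm H_c^{2(n-1)}(\mathcal M^{\mathrm{an}},\overline{\mathbb Q_\ell})(1-n),\Pi_p\right)\otimes\Pi^p.$$
By \ref{HighestDegreeCohomologyGroup}, the top cohomology of $\mathcal M^{\mathrm{an}}$ is isomorphic as a $J$-representation to $\mathrm{c\text{-}Ind}_{J^\circ}^{J}\mathbf 1$. Frobenius reciprocity then identifies the Hom-space with $\Pi_p^{J^\circ}$. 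Since $J^\circ$ is a normal subgroup of $J$ with quotient $J/J^\circ\simeq\mathbb Z$, irreducibility of $\Pi_p$ forces $\Pi_p^{J^\circ}$ to be either zero or equal to $\Pi_p$ itself; in the latter case $\Pi_p$ factors through $J/J^\circ$ and hence is a (necessarily one-dimensional, smooth) character, i.e. $\Pi_p\in X^{\mathrm{un}}(J)$. Conversely, any unramified character contributes a one-dimensional Hom-space. This isolates exactly the $\Pi$'s appearing in the right-hand side.

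Finally, I would compute the $W$-action on each contributing summand using \ref{FrobeniusActionOnHom}. Taking $a=b=0$, the decomposition of \ref{FrobeniusActionOnHom} retains only the index $b'=2(n-1)$, and the formula there gives that $\mathrm{Frob}$ acts on $\mathrm{Ext}^0_{J\text{-sm}}(E_2^{0,2(n-1)}(1-n),\Pi_p)$ by the scalar
$(-1)^{-2(n-1)}\delta_{\Pi_p}\,p^{-2(n-1)+2(n-1)+w(\xi)}=\delta_{\Pi_p}p^{w(\xi)}$, while the inertia acts trivially. Combining these three steps yields the announced $G(\mathbb A_f^p)\times W$-equivariant isomorphism. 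There is no real obstacle here: the argument is essentially mechanical once \ref{HighestDegreeCohomologyGroup}, \ref{FarguesSpectralSequenceBis} and \ref{FrobeniusActionOnHom} are in place; the only subtle point is the identification of $\Pi_p^{J^\circ}$ with the unramified characters of $J$, which is handled by the normality of $J^\circ$ and the resulting stability of $\Pi_p^{J^\circ}$ under $J$.
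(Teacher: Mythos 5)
Your argument is correct and follows essentially the same route as the paper: isolate $F_2^{0,0}$ as the only surviving term in total degree $0$, identify it via \ref{HighestDegreeCohomologyGroup} and Frobenius reciprocity with $\bigoplus_\Pi \Pi_p^{J^\circ}\otimes\Pi^p$, use normality of $J^\circ$ and the isomorphism $J/J^\circ\simeq\mathbb Z$ to characterize the contributing $\Pi_p$ as the unramified characters, and read off the Frobenius eigenvalue from \ref{FrobeniusActionOnHom}. No gaps.
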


\begin{proof} 
The only non-zero term $F_2^{a,b}$ on the diagonal $a+b=0$ is $F_2^{0,0}$. Since there is no non-zero arrow pointing at nor coming from this term, it is untouched in all the successive pages of the sequence. Therefore we have an isomorphism 
$$F_2^{0,0} \simeq \mathrm{H}^{0}(\overline{\mathrm S}^{\mathrm{ss}} \otimes \, \mathbb F, \overline{\mathcal L_{\xi}}).$$
Using Proposition \ref{HighestDegreeCohomologyGroup}, we also have isomorphisms 
\begin{align*}
F_2^{0,0} & \simeq \bigoplus_{\Pi\in\mathcal A_{\xi}(I)} \mathrm{Hom}_{J} \left (\mathrm H_c^{2(n-1)}(\mathcal M^{\mathrm{an}}, \overline{\mathbb Q_{\ell}})(n-1), \Pi_p\right) \otimes \Pi^p \\
& \simeq  \bigoplus_{\Pi\in\mathcal A_{\xi}(I)} \mathrm{Hom}_{J} \left ((\mathrm{c-Ind}_{J^{\circ}}^J \, \mathbf 1)(n-1), \Pi_p\right) \otimes \Pi^p \\
& \simeq  \bigoplus_{\Pi\in\mathcal A_{\xi}(I)} \mathrm{Hom}_{J^{\circ}} \left (\mathbf 1(n-1), \Pi_p{}_{|J^{\circ}}\right) \otimes \Pi^p.
\end{align*}
Thus, only the automorphic representations $\Pi\in\mathcal A_{\xi}(I)$ with $\Pi_p^{J^{\circ}} \not = 0$ contribute to the sum. Consider such a $\Pi$. The irreducible representation $\Pi_p$ is generated by a $J^{\circ}$-invariant vector. Since $J^{\circ}$ is normal in $J(\mathbb Q_p)$, the whole representation $\Pi_p$ is trivial on $J^{\circ}$. Thus, it is an irreducible representation of $J/J^{\circ} \simeq \mathbb Z$. Therefore, it is an unramified character. Moreover the $W$-representation $V^0_{\Pi} := \mathrm{Hom}_{J^{\circ}} \left (\mathbf 1(n-1), \Pi_p\right)$ is $1$-dimensional and the Frobenius action was described in Proposition \ref{FrobeniusActionOnHom}.
\end{proof}

\subsection{The case $n = 3,4$}

In this section, we assume that $\thetamax=1$, ie. $n=3$ or $4$. Let $\xi$ be an irreducible finite dimensional algebraic representation of $\mathbb G$. The semisimple rank of $J(\mathbb Q_p)$ is $1$, therefore the terms $F_2^{a,b}$ are zero for $a>1$. In particular, the spectral sequence already degenerates on the second page. Since it computes the cohomology of the supersingular locus $\overline{\mathrm S}^{\mathrm{ss}}$ which is $1$-dimensional, we also have $F_2^{0,b} = 0$ for $b\geq 3$, and $F_2^{1,b} = 0$ for $b\geq 2$. In Figure 2, we draw the second page $F_2$ and we write between brackets the \textit{complex modulus} of the possible eigenvalues of $\mathrm{Frob}$ on each term under any isomorphism $\iota:\overline{\mathbb Q_{\ell}} \simeq \mathbb C$, as computed in Proposition \ref{FrobeniusActionOnHom}. 

\begin{rk}
The fact that no eigenvalue of complex modulus $p^{w(\xi)}$ appears in $F_2^{0,1}$ nor in $F_2^{1,1}$ follows from Proposition \ref{AcyclicProof}, where we proved that $E_2^{-1,2(n-1)} = 0$.
\end{rk}

\begin{figure}[h]
\centering
\begin{tikzcd}
F_2^{0,2}[p^{w(\xi)+2},p^{w(\xi)}] & 0 \\
F_2^{0,1}[p^{w(\xi)+1}] & F_2^{1,1}[p^{w(\xi)+1}]\\
F_2^{0,0}[p^{w(\xi)}] & F_2^{1,0}[p^{w(\xi)}]
\end{tikzcd}
\caption{The second page $F_2$ with the complex modulus of possible eigenvalues of $\mathrm{Frob}$ on each term.}
\end{figure}

\begin{prop}\label{EigenvaluesOnH2}
We have $F_2^{1,1} = 0$ and the eigenspaces of $\mathrm{Frob}$ on $F_2^{0,2}$ attached to any eigenvalue of complex modulus $p^{w(\xi)}$ are zero.
\end{prop}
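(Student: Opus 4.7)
My plan is to deduce both vanishing assertions simultaneously from the purity of top compactly supported cohomology, exploiting the fact that for $n = 3, 4$ we have $m = 1$, so that $\mathrm H^2_c(\overline{\mathrm S}(b_0)\otimes\mathbb F, \overline{\mathcal L_{\xi}})$ is precisely the top-degree compactly supported cohomology of a $1$-dimensional scheme.

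First I would verify that $\overline{\mathrm S}_{K^p}(b_0)$ is proper over $\mathbb F_{p^2}$ for every $K^p$ small enough. By the $p$-adic uniformization of \ref{ShimuraBT}, the basic stratum is identified with a finite disjoint union $\bigsqcup_{k=1}^s \Gamma_k \backslash \mathcal M_{\mathrm{red}}$ where each $\Gamma_k \subset J$ is discrete and cocompact modulo the center. The image of $\Gamma_k$ under $\alpha : J \to \mathbb Z$ is then necessarily a subgroup of finite index, so $\Gamma_k$ acts on the set of connected components $\{\mathcal M_{i,\mathrm{red}}\}_i$ with only finitely many orbits; combined with the fact that each irreducible component of $\mathcal M_{\mathrm{red}}$ is projective by \cite{RZ} Proposition 2.32, each quotient $\Gamma_k\backslash\mathcal M_{\mathrm{red}}$ is proper.

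Since $\overline{\mathcal L_{\xi}}$ is lisse and pure of weight $w(\xi)$ and $\dim \overline{\mathrm S}_{K^p}(b_0) = m = 1$, the top compactly supported cohomology $\mathrm H^2_c(\overline{\mathrm S}_{K^p}(b_0)\otimes\mathbb F, \overline{\mathcal L_{\xi}})$ is then pure of weight $w(\xi)+2$ in Deligne's sense. This follows from the standard principle that top compactly supported cohomology of a proper scheme is insensitive to birational modifications and reduces to Deligne's purity on a smooth resolution; alternatively, one may use that $\mathrm H^{2m}_c$ is supported on the dense open locus of top-dimensional irreducible components, which in our setting is a disjoint union of the smooth projective Deligne–Lusztig varieties $\mathcal M_{\Lambda_1}$ of \ref{IsomorphismWithDL-Variety}. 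Passing to the limit $\varinjlim_{K^p}$, every eigenvalue of $\mathrm{Frob}$ on $\mathrm H^2_c(\overline{\mathrm S}(b_0)\otimes\mathbb F, \overline{\mathcal L_{\xi}})$ therefore has complex modulus $p^{w(\xi)+2}$ under any isomorphism $\iota: \overline{\mathbb Q_{\ell}} \simeq \mathbb C$.

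The conclusion is then immediate from the degeneration of the Hochschild-Serre spectral sequence recalled above, which provides a $W$-equivariant decomposition $\mathrm H^2_c(\overline{\mathrm S}(b_0)\otimes\mathbb F, \overline{\mathcal L_{\xi}}) \simeq F_2^{0,2}\oplus F_2^{1,1}$. By the analysis recorded in Figure 2 (built from \ref{FrobeniusActionOnHom} and the eigenspace decomposition of $\mathrm H^{\bullet}_c(\mathcal M^{\mathrm{an}})$ under $\tau$), every Frobenius eigenvalue on $F_2^{1,1}$ has complex modulus $p^{w(\xi)+1}$ or $p^{w(\xi)}$, while those on $F_2^{0,2}$ have modulus $p^{w(\xi)+2}$ or $p^{w(\xi)}$. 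Purity rules out every modulus except $p^{w(\xi)+2}$, forcing $F_2^{1,1} = 0$ and forcing the eigenspaces in $F_2^{0,2}$ of complex modulus $p^{w(\xi)}$ to vanish. The main obstacle is making the purity step completely rigorous, since the basic stratum is typically singular at intersections of Bruhat-Tits strata and so one must be careful about which purity statement one quotes; once this is settled, everything else is formal.
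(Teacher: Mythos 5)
Your proposal is correct and follows essentially the same route as the paper: establish that every Frobenius eigenvalue on $\mathrm H^2_c(\overline{\mathrm S}(b_0)\otimes\mathbb F,\overline{\mathcal L_{\xi}})$ has complex modulus $p^{w(\xi)+2}$ by reducing the top-degree compactly supported cohomology to that of the smooth projective one-dimensional Bruhat--Tits strata, then combine this with the eigenvalue bookkeeping of Figure 2 to kill $F_2^{1,1}$ and the modulus-$p^{w(\xi)}$ eigenspaces of $F_2^{0,2}$. The only difference is in how the reduction to a smooth dense open is made rigorous: the paper uses the Ekedahl--Oort stratification of the basic stratum and of the closed Bruhat--Tits strata, which is exactly the precise form of your "dense open locus of top-dimensional irreducible components" alternative.
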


\begin{proof}
By the machinery of spectral sequences, there is a $\mathbb G(\mathbb A_f^p)\times W$-subspace of $\mathrm H^2(\overline{\mathrm S}^{\mathrm{ss}} \otimes \, \mathbb F, \overline{\mathcal L_{\xi}})$ isomorphic to $F_2^{1,1}$, and the quotient by this subspace is isomorphic to $F_2^{0,2}$. We prove that all eigenvalues of $\mathrm{Frob}$ on $\mathrm H^2(\overline{\mathrm S}^{\mathrm{ss}} \otimes \, \mathbb F, \overline{\mathcal L_{\xi}})$ have complex modulus $p^{w(\xi)+2}$. The proposition then readily follows.\\
We need the Ekedahl-Oort stratification on the supersingular locus of the Shimura variety. Let $K^p \subset G(\mathbb A_f^p)$ be small enough. In \cite{vw2} Sections 3.3 and 6.3, the authors define the Ekedahl-Oort stratification on $\mathcal M_{\mathrm{red}}$ and on $\overline{\mathrm{S}}_{K^p}^{\mathrm{ss}}$ respectively, and they are compatible via the $p$-adic uniformization isomorphism. For $n=3$ or $4$, the stratification on the supersingular locus take the following form 
$$\overline{\mathrm{S}}_{K^p}^{\mathrm{ss}} = \overline{\mathrm{S}}_{K^p}^{\mathrm{ss}}[1] \sqcup \overline{\mathrm{S}}_{K^p}^{\mathrm{ss}}[3].$$
The stratum $\overline{\mathrm{S}}_{K^p}^{\mathrm{ss}}[1]$ is closed and $0$-dimensional, whereas the other stratum $\overline{\mathrm{S}}_{K^p}^{\mathrm{ss}}[3]$ is open, dense and $1$-dimensional. In particular, we have a Frobenius equivariant isomorphism between the cohomology groups of highest degree
$$\mathrm{H}^{2}(\overline{\mathrm S}_{K^p}^{\mathrm{ss}} \otimes \, \mathbb F, \overline{\mathcal L_{\xi}}) \simeq \mathrm{H}^{2}_c(\overline{\mathrm{S}}_{K^p}[3] \otimes \, \mathbb F, \overline{\mathcal L_{\xi}}).$$
According the \cite{vw2} Section 5.3, the closed Bruhat-Tits strata $\mathcal M_{\Lambda}$ and 
$\overline{\mathrm{S}}_{K^p,\Lambda,k}$ also admit an Ekedahl-Oort stratification of a similar form, and we have a decomposition 
$$\overline{\mathrm{S}}_{K^p}^{\mathrm{ss}}[3] = \bigsqcup_{\substack{1 \leq k \leq s \\ [\Lambda] \in \Gamma_k\backslash \mathcal L^{(1)}}} \overline{\mathrm{S}}_{K^p,\Lambda,k}[3],$$
into a finite disjoint union of open and closed subvarieties (we used the notations of Section 3). As a consequence, we have the following Frobenius equivariant isomorphisms
$$\mathrm{H}^{2}_c(\overline{\mathrm{S}}_{K^p}[3] \otimes \, \mathbb F, \overline{\mathcal L_{\xi}}) \simeq \bigoplus_{\substack{1 \leq k \leq s \\ [\Lambda] \in \Gamma_k\backslash \mathcal L^{(1)}}} \mathrm{H}^{2}_c(\overline{\mathrm{S}}_{K^p,\Lambda,k}[3] \otimes \, \mathbb F, \overline{\mathcal L_{\xi}}) \simeq \bigoplus_{\substack{1 \leq k \leq s \\ [\Lambda] \in \Gamma_k\backslash \mathcal L^{(1)}}} \mathrm{H}^{2}(\overline{\mathrm{S}}_{K^p,\Lambda,k} \otimes \, \mathbb F, \overline{\mathcal L_{\xi}})$$
where the last isomorphism follows from the stratification on the closed Bruhat-Tits strata $\overline{\mathrm{S}}_{K^p,\Lambda,k}$. Now, recall that the local system $\mathcal L_{\xi}$ is given by
$$\mathcal L_{\xi} \simeq \epsilon(\xi)\epsilon_{m(\xi)} \left( \mathrm{R}^{m(\xi)}({\pi_{K^p}^{m(\xi)}})_{*}\overline{\mathbb Q_{\ell}}(t(\xi))\right).$$
It implies that $\overline{\mathcal L_{\xi}}$ is pure of weight $w(\xi)$. Since the variety $\overline{\mathrm{S}}_{K^p,\Lambda,k}$ is smooth and projective, it follows that all the eigenvalues of $\mathrm{Frob}$ on the cohomology group $\mathrm{H}^{2}(\overline{\mathrm{S}}_{K^p,\Lambda,k} \otimes \, \mathbb F, \overline{\mathcal L_{\xi}})$ have complex modulus $p^{w(\xi)+2}$ under any isomorphism $\iota:\overline{\mathbb Q_{\ell}} \simeq \mathbb C$. The result follows by taking the limit over $K^p$.
\end{proof}

In this paragraph, let us compute the term 
\begin{align*}
F_2^{1,0} & \simeq \bigoplus_{\Pi\in\mathcal A_{\xi}(I)} \mathrm{Ext}^1_{J} \left (\mathrm H_c^{2(n-1)}(\mathcal M^{\mathrm{an}},\overline{\mathbb Q_{\ell}})(n-1), \Pi_p\right) \otimes \Pi^p \\
& \simeq 
\bigoplus_{\Pi\in\mathcal A_{\xi}(I)} \mathrm{Ext}^1_{J} \left (\mathrm{c-Ind}_{J^{\circ}}^J\,\mathbf 1(n-1), \Pi_p\right) \otimes \Pi^p.
\end{align*}
Let $\mathrm{St}_J$ denote the Steinberg representation of $J(\mathbb Q_p)$. 

\begin{prop}\label{ComputationExt1}
Let $\pi$ be an irreducible smooth representation of $J(\mathbb Q_p)$. Then 
$$\mathrm{Ext}_{J}^1(\mathrm{c-Ind}_{J^{\circ}}^J\,\mathbf 1,\pi) = 
\begin{cases}
\overline{\mathbb Q_{\ell}} & \text{if } \exists \chi \in X^{\mathrm{un}}(J), \pi \simeq \chi\cdot\mathrm{St}_J,\\
0 & \text{otherwise.}
\end{cases}$$
\end{prop}

In order to prove this proposition, we need a few general facts about restriction of smooth representations to normal subgroups. Let $G$ be a locally profinite group and let $H$ be a closed normal subgroup. If $(\sigma,W)$ is a representation of $H$, for $g\in G$ we define the representation $(\sigma^g,W)$ by $\sigma^{g}:h\mapsto \sigma(g^{-1}hg)$. The representation $\sigma$ is irreducible if and only if $\sigma^g$ is for any (or for all) $g\in G$. 

\begin{lem}\label{UsefulLemma}
Assume that $\mathrm Z(G)H$ has finite index in $G$. 
\begin{enumerate}[label=\upshape (\arabic*)]
\item Let $\pi$ be a smooth irreducible admissible representation of $G$. There exists a smooth irreducible representation $\sigma$ of $H$, an integer $r\geq 1$ and $g_1,\ldots,g_r \in G$ such that 
$$\pi_{|H} \simeq \sigma^{g_1} \oplus \ldots \oplus \sigma^{g_r}.$$
Moreover $r\leq [\mathrm Z(G)H:G]$, and for any $g\in G$ there exists some $1 \leq i \leq r$ such that $\sigma^g \simeq \sigma^{g_i}$.
\item Assume furthermore that $G/H$ is abelian. Let $\pi_1$ and $\pi_2$ be two smooth admissible irreducible representations of $G$. The three following statements are equivalent.
\begin{enumerate}[label={--},noitemsep,topsep=0pt]
\item $(\pi_1)_{|H} \simeq (\pi_2)_{|H}$.
\item There exists a smooth character $\chi$ of $G$ which is trivial on $H$ such that $\pi_2 \simeq \chi\cdot\pi_1$. 
\item $\mathrm{Hom}_H(\pi_1,\pi_2) \not = 0$. 
\end{enumerate}
\item Assume that $G/H$ is abelian and that $[\mathrm Z(G)H:G] = 2$. Let $g_0 \in G \setminus \mathrm Z(G)H$ and let $\pi$ be a smooth admissible irreducible representation of $G$. If there exists an irreducible representation $\sigma$ of $H$ such that $\pi_{|H} \simeq \sigma \oplus \sigma^{g_0}$, then $\sigma \not \simeq \sigma^{g_0}$.
\end{enumerate}
\end{lem}

\begin{proof}
For (1) and (2), we refer to \cite{Renard} VI.3.2 Proposition. The result there is stated in the context of a $p$-adic group $G$ with normal subgroup $H = {}^0G$ such that $G/{}^0G \simeq \mathbb Z^d$ for some $d\geq 0$, but the same arguments work as verbatim in the generality of the lemma. Admissibility of the representations involved is assumed only in order to apply Schur's lemma, insuring for instance the existence of central characters of smooth irreducible representations. In particular, if $G/K$ is at most countable for any open compact subgroup $K$ of $G$, then it is not necessary to assume admissibility.\\
Let us prove (3). Assume towards a contradiction that $\pi_{|H} \simeq \sigma \oplus \sigma^{g_0}$ and that $\sigma \simeq \sigma^{g_0}$. We build a smooth admissible irreducible representation $\Pi$ of $G$ such that $\Pi_{|H} = \sigma$, which results in a contradiction in regards to (2) since $\mathrm{Hom}_H(\Pi,\pi) \not = 0$ but $\Pi_{|H} \not \simeq \pi_{|H}$. Let $\chi$ be the central character of $\pi$. Then $\chi_{|\mathrm Z(G) \cap H}$ coincides with the central character of $\sigma$. Let $W$ denote the underlying vector space of $\sigma$. By hypothesis, there exists a linear automorphism $f:W\rightarrow W$ such that for every $h \in H$ and $w\in W$, 
$$f(\sigma(g_0^{-1}hg_0)\cdot w) = \sigma(h)\circ f(w).$$
Let us write $g_0^2 = z_0h_0$ for some $z_0 \in \mathrm Z(G)$ and $h_0 \in H$. We define $\varphi := f^2\circ \sigma(h_0)^{-1}$. Then for all $h\in H$ and $w\in W$, we have 
\begin{align*}
\varphi(\sigma(h)\cdot w) = f^2(\sigma(h_0^{-1}h)\cdot w) & = f^2(\sigma(h_0^{-1}hh_0)\sigma(h_0^{-1})\cdot w) \\
& = f^2(\sigma(g_0^{-2}hg_0^2)\sigma(h_0^{-1})\cdot w)\\
& = \sigma(h) \circ f^2(\sigma(h_0)^{-1}\cdot w) \\
& = \sigma(h) \circ \varphi(w).
\end{align*}
Thus $\varphi:\sigma\xrightarrow{\sim} \sigma$. By Schur's lemma we have $\varphi = \lambda\cdot\mathrm{id}$ for some $\lambda \in \overline{\mathbb Q_{\ell}}$. Up to replacing $f$ by $(\chi(z_0)\lambda^{-1})^{1/2}f$, we may assume that $\varphi = \chi(z_0)\cdot\mathrm{id}$, ie. $f^2 = \chi(z_0)\sigma(h_0)$. \\
We build a $G$-representation $\Pi$ on $W$ which extends $\sigma$. Let $g \in G$ and define 
$$\Pi(g) = 
\begin{cases}
\chi(z)\sigma(h) & \text{if } g = zh \in Z(G)H,\\
\chi(z) f\circ \sigma(h) & \text{if } g = g_0zh \in g_0Z(G)H.
\end{cases}$$
Then one may check that $\Pi$ is a well defined group morphism $G \rightarrow \mathrm{GL}(W)$. The fact that it is smooth irreducible and admissible follows from $\Pi_{|H} \simeq \sigma$ by construction, and it concludes the proof.
\end{proof}


We may now move on to the proof of Proposition \ref{ComputationExt1}. 

\begin{proof}
First, let us compute $\mathrm{Ext}_{J^{\circ}}^1(\mathbf 1,\sigma)$ for any irreducible representation $\sigma$ of $J^{\circ}$ with trivial central character. Let $J^1 = \mathrm U(\mathbf V)$ denote the unitary group of $\mathbf V$ (recall that $J = \mathrm{GU}(\mathbf V)$ is the group of unitary similitudes). Then $J^1(\mathbb Q_p)$ is a normal subgroup both of $J^{\circ}$ and of $J(\mathbb Q_p)$. Moreover, $J^{\circ}/J^1(\mathbb Q_p)$ is isomorphic to the image of the multiplier $c_{|J^{\circ}}:J^{\circ} \rightarrow \mathbb Z_p^{\times}$, in particular it is compact and abelian. Thus, we have 
$$\mathrm{Ext}_{J^{\circ}}^1(\mathbf 1,\sigma) \simeq \mathrm{Ext}_{J^1}^1(\mathbf 1,\sigma_{|J^1(\mathbb Q_p)})^{J^{\circ}/J^1(\mathbb Q_p)}.$$
Since $\sigma$ has trivial central character, the $J^{\circ}$-action on $\mathrm{Ext}_{J^1}^1(\mathbf 1,\sigma_{|J^1(\mathbb Q_p)})$ is actually trivial on $\mathrm Z(J^{\circ})J^1(\mathbb Q_p)$. Since $\mathbb Q_{p^2}/\mathbb Q_p$ is unramified, we actually have $\mathrm Z(J^{\circ})J^1(\mathbb Q_p) = J^{\circ}$. Hence, $J^{\circ}$ acts trivially on $\mathrm{Ext}_{J^1}^1(\mathbf 1,\sigma_{|J^1(\mathbb Q_p)})$.\\
Since $J^1$ is an algebraic group, we may use Theorem 2 of \cite{noriprasad}, a generalization of a duality theorem of Schneider and Stühler, to finish the computation. Namely, we have
$$\mathrm{Ext}_{J^1}^1(\mathbf 1,\sigma_{|J^1(\mathbb Q_p)}) \simeq \mathrm{Hom}_{J^1}(\sigma_{|J^1(\mathbb Q_p)},D(\mathbf 1))^{\vee},$$
where $D$ denotes the Aubert-Zelevinsky involution in $J^1(\mathbb Q_p)$. We note that $D(\mathbf 1) = \mathrm{St}_{J^1}$ is the Steinberg representation of $J^1(\mathbb Q_p)$. Let $\mathrm{St}_{J^{\circ}}$ denote the representation of $J^{\circ} = \mathrm Z(J^{\circ})J^1(\mathbb Q_p)$ obtained by letting the center act trivially on $\mathrm{St}_{J^1}$. We have proved that for any irreducible representation $\sigma$ of $J^{\circ}$ with trivial central character, we have
$$\mathrm{Ext}_{J^{\circ}}^1(\mathbf 1,\sigma) \simeq \mathrm{Hom}_{J^1}(\sigma_{|J^1(\mathbb Q_p)},\mathrm{St}_{J^1})^{\vee} \simeq 
\begin{cases}
\overline{\mathbb Q_{\ell}} & \text{if } \sigma \simeq \mathrm{St}_{J^{\circ}}, \\
0 & \text{else}.
\end{cases}$$
Now, let $\pi$ be an irreducible representation of $J(\mathbb Q_p)$. By Frobenius reciprocity we have 
$$\mathrm{Ext}_{J}^1(\mathrm{c-Ind}_{J^{\circ}}^J\,\mathbf 1,\pi) \simeq \mathrm{Ext}_{J^{\circ}}^1(\mathbf 1,\pi_{|J^{\circ}}).$$
By functoriality of $\mathrm{Ext}$, we have $\mathrm{Ext}_{J^{\circ}}^1(\mathbf 1,\pi_{|J^{\circ}}) = 0$ if the central character of $\pi$ is not unramified. Thus, let us now assume that the central character is unramified. By the above, $\mathrm{Ext}_{J}^1(\mathrm{c-Ind}_{J^{\circ}}^J\,\mathbf 1,\pi)$ is non zero if and only if $\pi_{|J^{\circ}}$ contains $\mathrm{St}_{J^{\circ}}$. Besides, as will be proved in Lemma \ref{RestrictionSteinberg}, we have $(\mathrm{St}_J)_{|J^{\circ}} = \mathrm{St}_{J^{\circ}}$. Thus, Lemma \ref{UsefulLemma} (2) implies that $\pi_{|J^{\circ}}$ contains $\mathrm{St}_{J^{\circ}}$ if and only if $\pi \simeq \chi\cdot\mathrm{St}_J$ for some unramified character $\chi \in X^{\mathrm{un}}(J)$. Since $\mathrm{Ext}_{J}^1(\mathrm{c-Ind}_{J^{\circ}}^J\,\mathbf 1,\chi\cdot\mathrm{St}_J) \simeq \overline{\mathbb Q_{\ell}}$, we are done.
\end{proof}

\begin{lem}\label{RestrictionSteinberg}
We have $(\mathrm{St}_{J})_{|J^{\circ}} \simeq \mathrm{St}_{J^{\circ}}$.
\end{lem}

\begin{proof}
Since the Steinberg representation $\mathrm{St}_J$ has trivial central character, it is enough to prove that $(\mathrm{St}_{J})_{|J^1(\mathbb Q_p)} \simeq \mathrm{St}_{J^1}$. The Steinberg representation $\mathrm{St}_J$ (resp. $\mathrm{St}_{J^1}$) can be characterized as the unique irreducible representation $\rho$ of $J(\mathbb Q_p)$ (resp. of $J^1(\mathbb Q_p)$) such that $\mathrm{Ext}^2_{J}(\mathbf 1,\rho) \not = 0$ (resp. $\mathrm{Ext}^1_{J^1}(\mathbf 1,\rho) \not = 0$). The gap between the degrees of the $\mathrm{Ext}$ groups for $J(\mathbb Q_p)$ and for $J^1(\mathbb Q_p)$ is explained by the non-compactness of the center of $J(\mathbb Q_p)$. By \cite{noriprasad} Proposition 3.4 we have 
$$\mathrm{Ext}^2_J(\mathbf 1,\mathrm{St}_J) \simeq \mathrm{Ext}^1_{J,\mathbf 1}(\mathbf 1,\mathrm{St}_J) \oplus \mathrm{Ext}^2_{J,\mathbf 1}(\mathbf 1,\mathrm{St}_J),$$
where the $\mathrm{Ext}$ groups on the right-hand side are taken in the category of smooth representations of $J(\mathbb Q_p)$ on which the center acts trivially. Equivalently, this is the category of smooth representations of $J(\mathbb Q_p)/\mathrm Z(J(\mathbb Q_p))$. Consider the normal subgroup $\mathrm Z(J(\mathbb Q_p))J^1(\mathbb Q_p)/\mathrm Z(J(\mathbb Q_p)) \simeq J^1(\mathbb Q_p)/\mathrm Z(J(\mathbb Q_p))\cap J^1(\mathbb Q_p) = J^1(\mathbb Q_p) / \mathrm Z(J^1(\mathbb Q_p))$. The quotient group is isomorphic to $J(\mathbb Q_p)/\mathrm Z(J(\mathbb Q_p))J^1(\mathbb Q_p)$, which is trivial if $n$ is odd and $\mathbb Z/2\mathbb Z$ is $n$ is even. Thus, we have 
\begin{align*}
\mathrm{Ext}_{J,\mathbf 1}^{\bullet}(\mathbf 1,\mathrm{St}_J) & \simeq \mathrm{Ext}_{J/\mathrm Z(J)}^{\bullet}(\mathbf 1,\mathrm{St}_J) \\
& \simeq \mathrm{Ext}_{J^1/\mathrm Z(J^1)}^{\bullet}(\mathbf 1,(\mathrm{St}_J)_{|J^1(\mathbb Q_p)})^{J(\mathbb Q_p)/\mathrm Z(J(\mathbb Q_p))J^1(\mathbb Q_p)} \\
& \simeq \mathrm{Ext}_{J^1,\mathbf 1}^{\bullet}(\mathbf 1,(\mathrm{St}_J)_{|J^1(\mathbb Q_p)})^{J(\mathbb Q_p)/\mathrm Z(J(\mathbb Q_p))J^1(\mathbb Q_p)} \\
& \simeq \mathrm{Ext}_{J^1}^{\bullet}(\mathbf 1,(\mathrm{St}_J)_{|J^1(\mathbb Q_p)})^{J(\mathbb Q_p)/\mathrm Z(J(\mathbb Q_p))J^1(\mathbb Q_p)},
\end{align*}
the last line following from the same Proposition 3.4 as above, but applied to $J^1(\mathbb Q_p)$. In \cite{fargues} Lemme 4.4.12, it is explained that $\mathrm{Ext}^i_{J^1}(\pi_1,\pi_2)$ vanishes for any smooth representations $\pi_1,\pi_2$ of $J^1(\mathbb Q_p)$ as soon as $i$ is greater than the semisimple rank of $J(\mathbb Q_p)$, that is $1$ in our case. Hence, $\mathrm{Ext}^2_{J,\mathbf 1}(\mathbf 1,\mathrm{St}_J) = 0$ and we have 
$$\mathrm{Ext}^2_J(\mathbf 1,\mathrm{St}_J) \simeq \mathrm{Ext}^1_{J,\mathbf 1}(\mathbf 1,\mathrm{St}_J) \simeq \mathrm{Ext}_{J^1}^1(\mathbf 1,(\mathrm{St}_J)_{|J^1(\mathbb Q_p)})^{J(\mathbb Q_p)/\mathrm Z(J(\mathbb Q_p))J^1(\mathbb Q_p)}.$$
In particular, the right-hand side is non zero, which proves that $(\mathrm{St}_J)_{|J^1(\mathbb Q_p)}$ contains $\mathrm{St}_{J^1}$. It remains that to justify that $(\mathrm{St}_J)_{|J^1(\mathbb Q_p)}$ is irreducible. If $n$ is odd so that $\mathrm Z(J(\mathbb Q_p))J^1(\mathbb Q_p) = J(\mathbb Q_p)$, it is automatic. If $n$ is even, in virtue of point (3) of Lemma \ref{UsefulLemma}, it remains to justify that for any $g\in J(\mathbb Q_p)$ we have $\mathrm{St}_{J^1}^g \simeq \mathrm{St}_{J^1}$. This follows from the following computation 
$$\mathrm{Ext}^1_{J^1}(\mathbf 1,\mathrm{St}_{J^1}^g) = \mathrm{Ext}^1_{J^1}(\mathbf 1^{g^{-1}},\mathrm{St}_{J^1}) = \mathrm{Ext}^1_{J^1}(\mathbf 1,\mathrm{St}_{J^1}) \not = 0.$$
\end{proof}

We may now compute the cohomology of the supersingular locus. Recall the supercuspidal representation $\tau_1$ of the Levi complement $M_1 \subset J(\mathbb Q_p)$ that we defined in Section 4.2. When $n=3$ or $4$, we actually have $M_1 = J(\mathbb Q_p)$ and 
$$\tau_1 = \mathrm{c-Ind}_{\mathrm{N}_J(J_1)}^{J} \, \widetilde{\rho_{\Delta_2}}$$
is a supercuspidal representation of $J(\mathbb Q_p)$, where $\mathrm{N}_J(J_1) = \mathrm{Z}(J(\mathbb Q_p))J_1$ (see Proposition \ref{Normalizers}) and $\widetilde{\rho_{\Delta_2}}$ is the inflation of $\rho_{\Delta_2}$ to $\mathrm{N}_J(J_1)$.

\begin{theo}\label{CohomologyBasicStratum}
Assume that $n=3$ or $4$. There are $\mathbb G(\mathbb A_f^p) \times W$-equivariant isomorphisms
\begin{align*}
\mathrm{H}^{0}(\overline{\mathrm S}^{\mathrm{ss}} \otimes \, \mathbb F, \overline{\mathcal L_{\xi}}) & \simeq \bigoplus_{\substack{\Pi\in\mathcal A_{\xi}(I) \\ \Pi_p \in X^{\mathrm{un}}(J)}} \Pi^p \otimes \overline{\mathbb Q_{\ell}}[\delta_{\Pi_p}p^{w(\xi)}], \\
\mathrm{H}^{1}(\overline{\mathrm S}^{\mathrm{ss}} \otimes \, \mathbb F, \overline{\mathcal L_{\xi}}) & \simeq \bigoplus_{\substack{\Pi\in\mathcal A_{\xi}(I) \\ \exists \chi \in X^{\mathrm{un}}(J),\\ \Pi_p = \chi\cdot\mathrm{St}_J}} \Pi^p \otimes \overline{\mathbb Q_{\ell}}[\delta_{\Pi_p}p^{w(\xi)}] \oplus \bigoplus_{\substack{\Pi\in\mathcal A_{\xi}(I) \\ \exists \chi \in X^{\mathrm{un}}(J),\\ \Pi_p = \chi\cdot\tau_1}} \Pi^p \otimes \overline{\mathbb Q_{\ell}}[-\delta_{\Pi_p}p^{w(\xi)+1}],\\
\mathrm{H}^{2}(\overline{\mathrm S}^{\mathrm{ss}} \otimes \, \mathbb F, \overline{\mathcal L_{\xi}}) & \simeq \bigoplus_{\substack{\Pi\in\mathcal A_{\xi}(I) \\ \Pi_p^{J_1}\not = 0}} \Pi^p \otimes \overline{\mathbb Q_{\ell}}[\delta_{\Pi_p}p^{w(\xi)+2}].
\end{align*}
\end{theo}

\begin{proof}
The statement regarding $\mathrm{H}^{0}(\overline{\mathrm S}^{\mathrm{ss}} \otimes \, \mathbb F, \overline{\mathcal L_{\xi}})$ was already proved in Proposition \ref{H0Shimura}. Let us prove the statement regarding $\mathrm{H}^{2}(\overline{\mathrm S}^{\mathrm{ss}} \otimes \, \mathbb F, \overline{\mathcal L_{\xi}})$ first. By Proposition \ref{EigenvaluesOnH2}, we have 
$$\mathrm{H}^{2}(\overline{\mathrm S}^{\mathrm{ss}} \otimes \, \mathbb F, \overline{\mathcal L_{\xi}}) \simeq F_2^{0,2} \simeq \bigoplus_{\Pi\in\mathcal A_{\xi}(I)} \mathrm{Hom}_{J} \left (E_2^{0,2(n-2)}(n-1), \Pi_p\right) \otimes \Pi^p.$$
The term $E_2^{0,2(n-2)}$ is isomorphic to $\mathrm{c-Ind}_{J_1}^{J}\,\mathbf 1$. Therefore, by Frobenius reciprocity we have 
$$\mathrm{Hom}_{J} \left (E_2^{0,b}(n-1), \Pi_p\right) \simeq \mathrm{Hom}_{J_1} \left (\mathbf 1(n-1), \Pi_p\right).$$
Hence, only the automorphic representations $\Pi \in \mathcal A_{\xi}(I)$ with $\Pi_p^{J_1} \not = 0$ contribute to $F_2^{0,2}$. Such a representation $\Pi_p$ is said to be \textbf{$J_1$-spherical}. Since $J_1$ is a special maximal compact subgroup of $J(\mathbb Q_p)$, according to \cite{minguez} 2.1, we have $\dim(\pi^{J_1}) = 1$ for every smooth irreducible $J_1$-spherical representation $\pi$ of $J(\mathbb Q_p)$. The result follows using Proposition \ref{FrobeniusActionOnHom} to describe the eigenvalues of $\mathrm{Frob}$.\\

We now prove the statement regarding $\mathrm{H}^{1}(\overline{\mathrm S}^{\mathrm{ss}} \otimes \, \mathbb F, \overline{\mathcal L_{\xi}})$. By the Hochschild-Serre spectral sequence, there exists a $G(\mathbb A_f^p)\times W$-subspace $V'$ of this cohomology group such that 
$$V' \simeq F_2^{1,0} \quad \text{and} \quad \mathrm{H}^{1}(\overline{\mathrm S}^{\mathrm{ss}} \otimes \, \mathbb F, \overline{\mathcal L_{\xi}})/V' \simeq F_2^{0,1}.$$
We have 
\begin{align*}
F_2^{1,0} & \simeq \bigoplus_{\Pi\in\mathcal A_{\xi}(I)} \mathrm{Ext}^1_{J} \left (\mathrm H_c^{2(n-1)}(\mathcal M^{\mathrm{an}},\overline{\mathbb Q_{\ell}})(n-1), \Pi_p\right) \otimes \Pi^p \\
& \simeq \bigoplus_{\Pi\in\mathcal A_{\xi}(I)} \mathrm{Ext}^1_{J} \left (\mathrm{c-Ind}_{J^{\circ}}^J\,\mathbf 1(n-1), \Pi_p\right) \otimes \Pi^p\\
& \simeq \bigoplus_{\substack{\Pi\in\mathcal A_{\xi}(I) \\ \exists \chi \in X^{\mathrm{un}}(J),\\ \Pi_p = \chi\cdot\mathrm{St}_J}} \Pi^p \otimes \overline{\mathbb Q_{\ell}}[\delta_{\Pi_p}p^{w(\xi)}],
\end{align*}
according to Proposition \ref{ComputationExt1}, and with the eigenvalues of $\mathrm{Frob}$ being given by Proposition \ref{FrobeniusActionOnHom}. On the other hand, we have 
$$F_2^{0,1} \simeq \bigoplus_{\Pi\in\mathcal A_{\xi}(I)} \mathrm{Hom}_{J} \left (E_2^{0,2(n-1)-1}(n-1), \Pi_p\right) \otimes \Pi^p.$$
By Proposition \ref{FrobeniusActionOnHom}, $\mathrm{Frob}$ acts on a summand of $F_2^{0,1}$ by the scalar $-\delta_{\Pi_p}p^{w(\xi)+1}$. Since $\mathrm{Frob}_{|V'}$ has no eigenvalue of complex modulus $p^{w(\xi)+1}$, the quotient actually splits so that $F_2^{0,1}$ is naturally a subspace of $\mathrm{H}^{1}(\overline{\mathrm S}^{\mathrm{ss}} \otimes \, \mathbb F, \overline{\mathcal L_{\xi}})$. It remains to compute it. We have 
$$E_2^{0,2(n-1)-1} \simeq \mathrm{c-Ind}_{J_1}^{J}\, \rho_{\Delta_2}.$$
Hence, we have an isomorphism
\begin{align*}
F_2^{0,1} & \simeq \bigoplus_{\Pi\in\mathcal A_{\xi}(I)} \mathrm{Hom}_{J} \left (\mathrm{c-Ind}_{J_1}^{J}\,\rho_{\Delta_2}(n-1), \Pi_p\right) \otimes \Pi^p\\
& \simeq \bigoplus_{\Pi\in\mathcal A_{\xi}(I)} \mathrm{Hom}_{J_1} \left (\rho_{\Delta_2}(n-1), \Pi_p{}_{|J_1}\right) \otimes \Pi^p.
\end{align*}
It follows that only the automorphic representations $\Pi\in\mathcal A_{\xi}(I)$ whose $p$-component $\Pi_p$ contains the supercuspidal representation $\rho_{\Delta_2}$ when restricted to $J_1$, contribute to the sum. According to Proposition \ref{Depth-0Types}, such $\Pi_p$ are precisely those of the form $\chi\cdot\tau_1$ for some $\chi \in X^{\mathrm{un}}(J)$. By the Mackey formula we have 
\begin{align*}
\mathrm{Hom}_{J} \left (\mathrm{c-Ind}_{J_1}^{J}\,\rho_{\Delta_2}, \chi\cdot\tau_1\right)& \simeq \mathrm{Hom}_{J_1} \left (\rho_{\Delta_2}, \tau_1{}_{|J_1}\right)\\
& \simeq \mathrm{Hom}_{J_1} \left (\rho_{\Delta_2}, (\mathrm{c-Ind}_{\mathrm{N}_J(J_1)}^{J} \, \widetilde{\rho_{\Delta_2}})_{|J_1}\right) \\
& \simeq \bigoplus_{h\in J_1 \backslash J(\mathbb Q_p) / \mathrm{N}_J(J_1)} \mathrm{Hom}_{J_1\cap {}^h\mathrm{N}_J(J_1)}(\rho_{\Delta_2},{}^h\widetilde{\rho_{\Delta_2}}),
\end{align*}
where in the last formula we omitted to write the restrictions to $J_1\cap {}^h\mathrm{N}_J(J_1)$. We used the fact that $\chi_{|J_1}$ is trivial. Since $\widetilde{\rho_{\Delta_2}}$ is just the inflation of $\rho_{\Delta_2}$ from $J_1$ to $\mathrm{N}_J(J_1) = \mathrm{Z}(J(\mathbb Q_p))J_1$, we have a bijection
$$\mathrm{Hom}_{J_1\cap {}^h\mathrm{N}_J(J_1)}(\rho_{\Delta_2},{}^h\widetilde{\rho_{\Delta_2}}) \simeq \mathrm{Hom}_{\mathrm{N}_J(J_1)\cap {}^h\mathrm{N}_J(J_1)}(\widetilde{\rho_{\Delta_2}},{}^h\widetilde{\rho_{\Delta_2}}).$$
Now, $\mathrm{N}_J(J_1)$ contains the center, is compact modulo the center, and $\tau_1 = \mathrm{c-Ind}_{\mathrm{N}_J(J_1)}^{J}\, \widetilde{\rho_{\Delta_2}}$ is supercuspidal. It follows that an element $h\in J(\mathbb Q_p)$ intertwines $\widetilde{\rho_{\Delta_2}}$ if and only if $h\in \mathrm{N}_J(J_1)$ (see for instance \cite{bushnellbook} 11.4 Theorem along with Remarks 1 and 2). Therefore, only the trivial double coset contributes to the sum and we have 
$$\mathrm{Hom}_{J} \left (\mathrm{c-Ind}_{J_1}^{J}\,\rho_{\Delta_2}, \chi\cdot\tau_1\right) \simeq \mathrm{Hom}_{J_1}(\rho_{\Delta_2},\rho_{\Delta_2}) \simeq \overline{\mathbb Q_{\ell}}.$$
To sum up, we have 
$$F_2^{0,1} \simeq \bigoplus_{\substack{\Pi\in\mathcal A_{\xi}(I) \\ \exists \chi \in X^{\mathrm{un}}(J),\\ \Pi_p = \chi\cdot\tau_1}} \Pi^p \otimes \overline{\mathbb Q_{\ell}}[-\delta_{\Pi_p}p^{w(\xi)+1}].$$
It concludes the proof.

\end{proof}

\newpage

\printbibliography[heading=bibintoc, title={Bibliography}]

\end{document}